\newcommand {\debeq}	{\begin{eqnarray*}}
\newcommand {\fineq}	{\end{eqnarray*}}
\newcommand     {\eps}  {\epsilon}
\newcommand     {\vareps}       {\varepsilon}
\newcommand	{\tr}{\mathbbm{t}}
\newcommand	{\PP}{\mathbb{P}}
\newcommand	{\EE}{\mathbb{E}}
\newcommand{\mud}{g}
\newtheorem{result}{Result}
\newcommand\bi{\begin{itemize}}
\newcommand\ei{\end{itemize}}
\def\input{symbols} \clearpage{\input{symbols} \clearpage}
\def\addsymbol #1: #2#3{$#1$ \> \parbox{5in}{#2 \dotfill \pageref{#3}}\\}
\newtheorem{fact}{Fact}
          \newtheorem{teo}{Theorem}[section]
          \newtheorem{defin}[teo]{Definition}
          \newtheorem{prop}[teo]{Proposition}
\newtheorem{cor}[teo]{Corollary}
          \newtheorem{con}{Conjecture}
          \newtheorem{cond}{Condition}
          \newtheorem	{lem} 	[teo]	{Lemma}
          \newtheorem{rmk}[teo]{Remark}
          \newcommand{\bfact}{\begin{fact}}
          \newcommand{\efact}{\end{fact}}
          \newcommand{\tB}{\tilde B}
          \newcommand{\tV}{\tilde V}
          \newcommand{\ts}{\tilde s}
          \newcommand{\beq}{\begin{equation}}
          \newcommand{\eeq}{\end{equation}}
          \newcommand{\beqn}{\begin{eqnarray}}
          \newcommand{\beqnn}{\begin{eqnarray*}}
          \newcommand{\eeqn}{\end{eqnarray}}
          \newcommand{\eeqnn}{\end{eqnarray*}}
          \newcommand{\bprop}{\begin{prop}}
          \newcommand{\eprop}{\end{prop}}
          \newcommand{\bc}{\be\begin{array}{r@{\,}c@{\,}l}}
\newcommand{\ec}{\end{array}\ee}
          \newcommand{\bcor}{\begin{cor}}
          \newcommand{\ecor}{\end{cor}}
          \newcommand{\bcon}{\begin{con}}
          \newcommand{\econ}{\end{con}}
          \newcommand{\bcond}{\begin{cond}}
          \newcommand{\econd}{\end{cond}}
          \newcommand{\bteo}{\begin{teo}}
          \newcommand{\eteo}{\end{teo}}
          \newcommand{\brm}{\begin{rmk}}
          \newcommand{\erm}{\end{rmk}}
          \newcommand{\blem}{\begin{lem}}
          \newcommand{\elem}{\end{lem}}
          \newcommand{\ben}{\begin{enumerate}}
          \newcommand{\een}{\end{enumerate}}
          \newcommand{\bei}{\begin{itemize}}
          \newcommand{\eei}{\end{itemize}}
          \newcommand{\bdf}{\begin{defin}}
          \newcommand{\edf}{\end{defin}}
          \renewcommand{\=}{&=&}
          \renewcommand{\>}{&>&}
          \renewcommand{\le}{\leq}
          \newcommand{\+}{&+&}
          \newcommand{\R}{{\mathbb R}}
          \newcommand{\E}{{\mathbb E}}
          \renewcommand{\P}{{\mathbb P}}
          \newcommand{\N}{{\mathbb N}}
          \renewcommand{\S}{{s}}
          \newcommand{\cP}{{\mathcal P}}
           \newcommand{\Sm}{Smoluchowski }
	\newcommand{\tC}{\tilde C}
          \newcommand{\cE}{{\mathcal E}}
          \newcommand{\cC}{{\mathcal C}}
            \newcommand{\cS}{\mathcal S}
          \newcommand{\D}{{\mathcal D}}
          \newcommand{\cT}{{\mathcal T}}
 \newcommand{\Hc}{\mathscr{H}}
\newcommand{\btt}{\begin{theorem}}
\newcommand{\ett}{\end{theorem}}
\newcommand{\be}{\begin{equation}}
\newcommand{\ee}{\end{equation}}
\newcommand\no{\nonumber}
\newcommand\tPi{\tilde \Pi^n}
\newcommand\tmu{\tilde \mud^n}
          \newcommand{\cD}{\mathcal D}
          \newcommand{\cL}{{\mathcal L}}
          \newcommand\sqr{\vcenter{
          \hrule height.1mm
          \hbox{\vrule width.1mm height2.2mm\kern2.18mm\vrule width.1mm}
          \hrule height.1mm}}        
\title{Coagulation-transport equations and the nested coalescents}
\author{Amaury Lambert $^{\dagger \star}$, Emmanuel Schertzer$^{\dagger \star}$}
\date{\today}
\begin{document}

\maketitle{}
{$^{\dagger}$Laboratoire de Probabilit\'es, Statistiques et Mod\'elisation (LPSM), Sorbonne Universit\'e,
CNRS UMR 8001, Paris, France}

{$^{\star}$Center for Interdisciplinary Research in Biology (CIRB), Coll\`ege de France, CNRS UMR 7241,
PSL Research University, Paris, France}

\paragraph{\bf Abstract.}

The nested Kingman coalescent describes the dynamics of particles (called genes) contained in larger components (called species), where pairs of species coalesce at constant rate and pairs of genes coalesce at constant rate provided they lie within the same species. We prove that starting from $rn$ species, the empirical distribution of species masses (numbers of genes$/n$) at time $t/n$ converges as $n\to\infty$ to a solution of the deterministic coagulation-transport equation
$$
 \partial_t d \ = \  \partial_x ( \psi d ) \ + \ a(t)\left(d\star d - d \right),
$$
where $\psi(x) = cx^2$, $\star$ denotes convolution and $a(t)= 1/(t+\delta)$ with $\delta=2/r$. The most interesting case when $\delta =0$ corresponds to an infinite initial number of species. This equation describes the evolution of the distribution of species of mass $x$, where pairs of species can coalesce and each species' mass evolves like $\dot x = -\psi(x)$. We provide two natural probabilistic solutions of the latter IPDE and address in detail the case when $\delta=0$. The first solution is expressed in terms of a branching particle system where particles carry masses behaving as independent continuous-state branching processes. The second one is the law of the solution to the following McKean-Vlasov equation
$$
dx_t \ = \ - \psi(x_t) \,dt \ + \ v_t\,\Delta J_t 
$$
where $J$ is an inhomogeneous Poisson process with rate $1/(t+\delta)$ and $(v_t; t\geq0)$ is a sequence 
of independent rvs such that $\cL(v_t) = \cL(x_t)$. We show that there is a unique solution to this equation and we construct this solution with the help of a marked Brownian coalescent point process. When $\psi(x)=x^\gamma$, we show the existence of a self-similar solution for the PDE which relates when $\gamma=2$ to the speed of coming down from infinity of the nested Kingman coalescent. 

\vspace{.5cm}

\paragraph{\bf Keywords and phrases.} Kingman coalescent; Smoluchowski equation; McKean-Vlasov equation; degenerate PDE; PDE probabilistic solution; hydrodynamic limit; entrance boundary; empirical measure; coalescent point process; continuous-state branching process; phylogenetics.

\paragraph{\bf MSC2010 Classification.} Primary 60K35; secondary 35Q91; 35R09; 60G09; 60B10; 60G55; 60G57; 60J25; 60J75; 60J80; 62G30; 92D15.

\paragraph{\bf Acknowledgements.} The authors thank \textit{Center for Interdisciplinary Research
in Biology} (CIRB, Coll\`ege de France) for funding.

\setcounter{tocdepth}{1}
\tableofcontents

\section{Introduction and informal description of the main results}

\subsection{The nested Kingman coalescent}

The Kingman coalescent \cite{K82} is a stochastic process describing the dynamics of a system of coalescing particles, where each pair of particles independently merges at constant rate. It originates from population genetics, where it is used to model the dynamics of gene lineages in the backward direction of time, thus generating a random genealogy. This model can be enriched by embedding gene lineages into species (each gene belongs to a living organism which in turn belongs to some species). \emph{Nested coalescents} were recently introduced \cite{BDLS18} to model jointly the genealogy of the genes and the genealogy of the species. The nested Kingman coalescent is the simplest example of a nested coalescent, where both the gene tree and the species tree are given by (non-independent) Kingman coalescents:
\begin{itemize}
\item  Each pair of species independently coalesces at rate $1$, and when two species coalesce into one so-called mother species, all the genes they harbor are pooled together into the mother species (but do not merge). See Fig. \ref{fig:KiK}. 
\item Conditional on the species tree, each pair of gene lineages \emph{lying in the same species} independently coalesces at rate $c$.
\end{itemize}
It is easy to see (and well-known) that $\infty$ is an entrance boundary for the number of particles in the Kingman coalescent, it is said that the Kingman coalescent \emph{comes down from infinity}. It is further known that for the Kingman coalescent started at $\infty$, called the \emph{standard} coalescent, the  number of particles $K_t$ behaves as $t\to 0$ like the solution to 
\be \label{Kingman:descent-from-infinity}
\dot x=-x^2/2, \ \ x_0=+\infty
\ee so that $tK_t$ converges to 2 a.s. \cite{B09}. In the nested Kingman coalescent starting from infinitely many species containing infinitely many genes, the number of species behaves like $2/t$ and the mass of each species decreases at the same speed, but is constantly replenished with the genes of other species upon species coalescences. However, because of the domination by the standard coalescent, it is reasonable to conjecture that each species at time $t$ still carries of the order of $1/t$ genes, so that the total number of genes at time $t$ scales like $1/t^2$. This conjecture has been confirmed to hold in a recent work \cite{BRSS18}. 

\begin{figure}[h] \includegraphics[width=13cm]{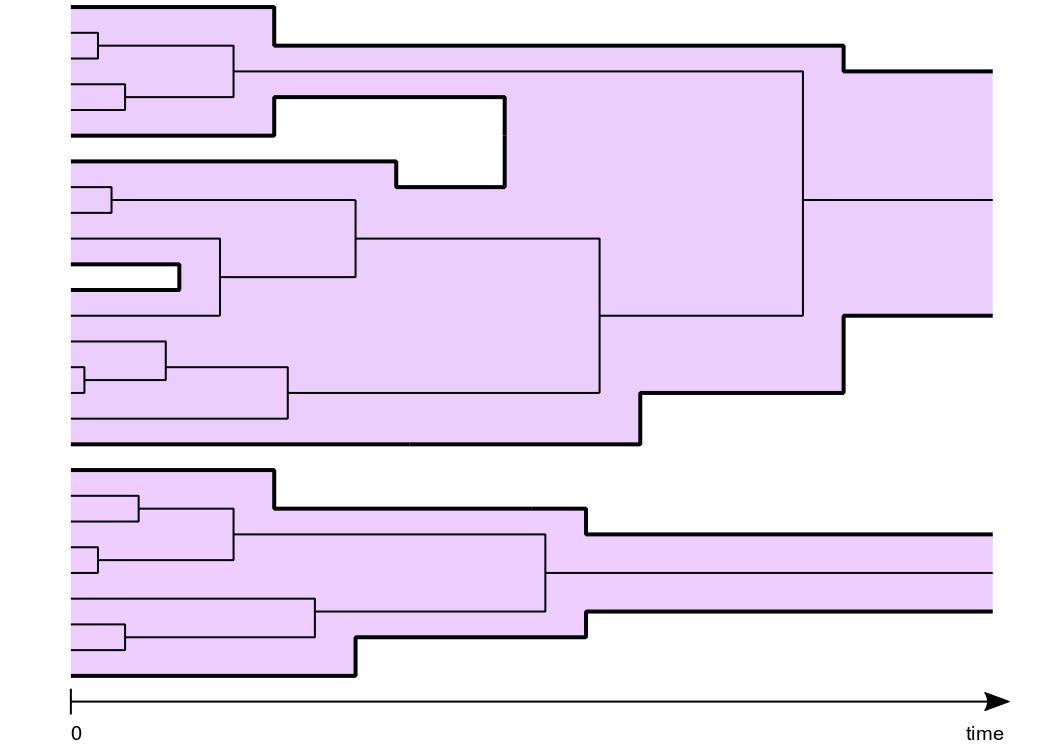} 
 \caption{The nested coalescent.} 
 \label{fig:KiK}\end{figure}

The starting point of this paper was the study of the distribution (rather than the total mass) of species masses at small times in the nested Kingman coalescent, for arbitrary initial conditions. This led us to a journey through \Sm coagulation-transport PDEs and McKean-Vlasov equations in which we develop new techniques that are interesting \emph{per se} and could presumably be applied to more general nested coalescents than the nested Kingman coalescent. We now describe informally the main results of this work.

For a random variable (rv) $X$, $\cL(X)$ denotes the law of $X$.  We let $M_F(\R^+)$ (resp., $M_P(\R^+)$)
denote the set of finite measures (resp. probability measures) on $\R^+$.

\begin{defin}
We denote by $\mathscr{H}$ the set of increasing homeomorphisms $\psi:\R^+\rightarrow \R^+$. In particular, for any $\psi \in \Hc$, $\psi$ is continuous, $\psi(0)=0$, $\psi(x)>0$ for all $x>0$ and $\lim_{x\to\infty}\psi (x) = +\infty$.
The following  (optional) condition
$$
\int_1^\infty\frac{dx}{\psi(x)}<\infty,
$$
will be called \emph{Grey's condition} and sometimes abbreviated as $\int^\infty1/\psi<\infty$. 
\end{defin}

\subsection{Convergence to the \Sm equation} 
\label{sec:convergence}

We first consider a nested coalescent with large but finite number of initial species. More formally, 
we consider a sequence of nested Kingman coalescents indexed by $n$.  Let $s_t^n$ be the number of species at time $t$ (in the model indexed by $n$).
Let $\Pi_t^n$ be the vector of size $s_t^n$ recording the number of gene lineages in each species. We call this vector the genetic composition vector.
We wish to investigate the dynamics of the distribution of species masses on a time scale $O(1/n)$ and rescaling the number of gene lineages by $1/n$. Namely, we define
\be\label{eq:scaling-quantities}
g_t^n = \frac{1}{s_{t}} \sum_{i=1}^{s_{t}} \delta_{\Pi_{t}(i)/n}
 \quad \mbox{ and } \quad \tmu_t \ = \ \ g_{t/n}^n
\ee
that is, $g^n$ is the empirical distribution of the number of gene lineages per species  renormalized by $n$ and $\tmu$ is obtained from $g^n$ by rescaling time by $n$.

\begin{result}[Theorem \ref{teo-local-1}] \label{teo:conv-1-measures}
Assume that there exist two deterministic quantities $r\in(0,\infty)$ and $\nu\in M_P(\R^+)$ such that
\begin{itemize}
\item[(i)] $\frac{s_0^n}{n} \to r$ in $L^{2+\eps}$
\item[(ii)] $ \tmu_0 \Longrightarrow \nu$ as $n\to \infty$  in the weak topology for finite measures.
\end{itemize}
Then the sequence of rescaled empirical measures $\left(\tmu_t; t\geq0\right)$ converges to the unique solution 
of the following IPDE (Integro Partial Differential Equation) 
\begin{eqnarray} 
\ \ \partial_t d(t,x) \ = \  \partial_x ( \psi d )(t,x) \ + \ \frac{1}{t+\delta} \left(d\star d(t,x) \ - \   d(t,x) \right)\quad t, x\geq0  \label{eq:Sch}
\end{eqnarray}
with initial condition $d(0,x)\,dx=\nu(dx)$. Here, $d\star d(t,x) = \int_{0}^x  d(t,x-y) d(t,y)\, dy$ denotes the convolution product, $\psi(x) = \frac{c}{2}x^2$ and $\delta =2/r$. 
 \end{result}
 \begin{rmk}
The solution displayed in Result \ref{teo:conv-1-measures} is the unique ``weak" solution of (\ref{eq:Sch}). The notion of weak solution will be made precise  in forthcoming Definition \ref{def:weak-eq}.
\end{rmk}

The latter result provides a natural interpretation of the two terms on the RHS of (\ref{eq:Sch}). The transport term is interpreted 
as the number of gene lineages inside each species  obeying the asymptotical  dynamics (\ref{Kingman:descent-from-infinity})
 whereas the coagulation term is due to the coalescence of species lineages. Finally, $2 \delta=1/r$ is  the inverse population size.
 By a slight abuse of language, the parameter $\delta$ will be referred to as the inverse population size in the rest of this manuscript.

  \begin{rmk}
\label{rmk:Smol}
Consider the more usual coagulation-transport equation 
\be
\ \ \partial_t n\ = \ \partial_x (\psi n ) \ + \ \frac{1}{2} n\star n  \ - \    \hat n (t) n, \nonumber
\ee
where $\hat n(t) =\int_0^\infty n(t,x)\, dx$. Here, pairs of clusters coalesce at rate $1$ and $n(t,x)$ is the \emph{amount} (rather than the density) of clusters of mass $x$ at time $t$. 
Then informal computations yield that $\partial_t \hat n=-\frac{1}{2} \hat n^2$, so that 
\[ \hat n(t) \ = \ \frac{2}{t+\delta}  \ \ \mbox{ with $\delta := 2/\hat n(0)$.}  \]
Motivated by the previous heuristics, define $d(t,x) = n(t,x) \frac{(t+\delta)}{2}$ so that the function $d$ is interpreted as the \emph{density} of clusters of mass $x$ at time $t$. Straightforward computations then show that $d$  satisfies \eqref{eq:Sch}. This gives an additional motivation for studying \eqref{eq:Sch} and an additional justification for why $\delta$ is called the inverse population size. 
\end{rmk}

\subsection{Coming down from infinity} Let us now motivate Equation (\ref{eq:Sch}) when $\delta=0$. Since $\delta$ is interpreted as the inverse population size, this equation will be referred to as the infinite population ($\infty$-pop.) \Sm equation. In the previous section, we started with a finite but large population. Not surprisingly, the $\infty$-pop. \Sm equation arises when the initial number of species is infinite.

 \begin{result}[Theorem \ref{prop:sequential}, Theorem \ref{teo:speed}] \label{res:conv:infinite-pop}
Consider a nested coalescent with the following two properties.
\begin{itemize}
\item[(i)] $s_0=\infty$
\item[(ii)] Each species contains at least one gene lineage.
\end{itemize}
Then the sequence of rescaled empirical measures $\left(\tmu_t; t\geq0\right)$ converges to the unique weak $\infty$-pop. solution 
of (\ref{eq:Sch}). 

As an application of this result, we can derive the speed of coming down from infinity 
in the nested coalescent.
If $\rho_{t}$ denotes the number of gene lineages at time $t$, 
\be\label{eq:speed-cdi}\frac{1}{n^2} \rho_{t/n} \ \Longrightarrow \ \frac{2}{t} \int_{0}^\infty x \mu_t^{(0)}(dx)  = \frac{2\E(\Upsilon)}{t^2}<\infty, \ \ \mbox{as $n\to\infty$}.\ee
where $\Upsilon$ is a random r.v. which is characterized in Result \ref{result:self-similar} below.
%
 \end{result}

\begin{rmk}
The $\infty$-pop. solution displayed in Result \ref{res:conv:infinite-pop} is the unique weak solution of (\ref{eq:Sch}) with $\delta =0$ ($\infty$-population), which is ``proper" , in a sense that it has a  ``non-degenerate'' initial condition, which will be made precise  in forthcoming Definition \ref{def:weak-eq}.
\end{rmk}

\begin{rmk}
Result \ref{teo:conv-1-measures} required to ``tune'' the initial number of species and the number of gene lineages per species in order to get a nondegenerate  scaling limit at time $t/n$ as $n\to\infty$. Indeed, according to Result \ref{teo:conv-1-measures}(i)(ii), both quantities must be of order $n$. In contrast, a fact that stands out in Result \ref{res:conv:infinite-pop} is that
 the $\infty$-pop. equation arises without any delicate scaling. (Compare conditions (i)(ii) in Result \ref{teo:conv-1-measures} and in Result \ref{res:conv:infinite-pop}.)
\end{rmk}

\subsection{General coagulation-transport equation and solution classes}

Motivated by the previous convergence results, we will consider (\ref{eq:Sch})
with a general depletion term $\psi\in \Hc$. As already discussed, Equation \eqref{eq:Sch} describes the density of clusters of mass $x$, where pairs of clusters coalesce at rate $1$ (coagulation) and each cluster's mass evolves like $\dot x = -\psi(x)$ (transport).  
  It will be referred to as the \emph{\Sm equation} with (initial) \emph{inverse population size} $\delta$ (and depletion term $\psi$). The case $\delta =0$ corresponds to an infinite initial number of species. Note that this specific case raises some important issues since  \eqref{eq:Sch} becomes degenerate at $t=0$. (One of the main contributions of the present work is to make sense of such a degenerescence -- see  below.)

In the next two sections, we will define two types of solution of the IPDE (\ref{eq:Sch}).
(with a general transport term.) Namely, 
\begin{enumerate} 
\item[(1)] The notion of weak solution which is the usual framework in the PDE literature and which arose  in our previous convergence results. See Definition \ref{def:weak-eq};
\item[(2)] A more restrictive class of solutions that we call McKean-Vlasov solutions, and that relates (\ref{eq:Sch}) to a natural McKean-Vlasov process describing the evolution of a `typical cluster' in the population. See Definition \ref{def:mc-kean-sense}.
\end{enumerate}

\subsection{Weak solutions}\label{sect:weak}

In this section, we assume that $\psi$ is the \emph{Laplace exponent of a spectrally positive and (sub)critical L\'evy process} $Y$. Note that in particular $\psi \in \Hc$. This choice of $\psi $  should encompass cases of interest regarding the descent from infinity of nested coalescents where the intra-species coalescence mechanism is more general than the Kingman coalescent. 
See Section \ref{sect:conj}(3) for a discussion on a natural conjecture extending the convergence results of the previous two sections
to general nested $\Lambda$-coalescents.

Let us now proceed with the definition of weak solutions in the sense of measures. We are interested in solutions to \eqref{eq:Sch} which have total mass 1 at all times, which implies that $\int_0^\infty \partial_x( \psi d )(t,x)\,dx=0$ or equivalently that $\lim_{x\to\infty} \psi(x) \,d(t,x)=0$ (since $\psi(0) = 0$). 
To be more specific, we will say that $f$ is a test-function iff $f\in\cC^1(\R^+)$, and  further $f$ and $\psi f'$ are bounded.
(Think of $f(x)=\exp(-\lambda x)$ for $\lambda\geq0$.)
Integrating both sides of \eqref{eq:Sch} with respect to such a test-function $f$
and performing an integration by parts yields the following definition in the spirit of \cite{N99}, 
which also follows the usual framework of the PDE literature.
\begin{defin}\label{def:weak-eq} 
\label{def:weak1} Let $\delta>0$ and $\nu\in M_P(\R^+)$. We say that a probability-valued process $(\mu_t; t\geq0)$ is a weak solution of \eqref{eq:Sch} with initial condition $\nu$ if for every test-function $f$ and every $t\geq0$
\be
\left<\mu_t,f\right> = \left<\nu, f\right>-\int_{0}^t \left<\mu_s, \psi f'\right> \, ds+ \ \int_0^t \left(\frac{1}{s+\delta} \left<\mu_s\star \mu_s, f\right> - \left<\mu_s,f\right>\right)\, ds,
\label{eq:weak}
\ee 
where we used the notation $\left<\mu, f\right>=\int_{\R^+}f(x) \,\mu(dx)$ for any finite measure $\mu$.

Let $\delta=0$. We say that a probability-valued process $(\mu_t; t>0)$ is a weak solution of \eqref{eq:Sch}   if for every test-function $f$ and every $s,t>0$:
\be
\left<\mu_t,f\right>= \left<\mu_s,f\right> -\int_{s}^t \left<\mu_u,\psi f'\right> \, du \ + \  \ \int_s^t \frac{1}{u} (\left<\mu_u\star \mu_u,f\right> - \left<\mu_u,f\right>) \, du,
\label{eq:weak}
\ee
\begin{itemize}
\item We say that the solution is a \emph{dust solution} iff $\mu_t\to \delta_0$ in the weak topology as $t\downarrow 0$.
\item We say that the solution is \emph{proper} otherwise.
\end{itemize}
\end{defin}

\begin{rmk}
The previous terminology is borrowed from fragmentation theory \cite{Ber06}.
\end{rmk}

Let $(\mu_t;t\ge 0)$ be a weak solution to \eqref{eq:Sch}. In view of the convolution product in \eqref{eq:Sch} and \eqref{eq:weak}, it is natural to consider the Laplace transform $u(t,\lambda)$ of $\mu_t$, namely
$$
u(t,\lambda) \ = \ \int_{\R^+} e^{-\lambda x}\, \mu_t(dx) \qquad \lambda, t\ge 0.
$$
Taking $f(x)=\exp(-\lambda x)$ in \eqref{eq:weak} yields that the Laplace transform $u(t,\lambda)$ satisfies the non-linear IPDE
\begin{equation}
\label{eq:Laplace-pde}
\partial_t u \ =  \ \lambda A^\psi u +a(t) (u^2-u), 
\end{equation}
with initial condition $u(0,\lambda) = \int_{\R^+}e^{-\lambda x}\, \nu(dx)$, 
where $A^\psi$ is the generator of $Y$ and $a(t)=  1/(t+\delta)$. In particular, it is crucial that $\lambda A^\psi$ is the generator of the Continuous-State Branching Process (CSBP) $Z$ with branching mechanism $\psi$ \cite{L67}. When $\gamma=2$, $A^\psi$ acting on $\cC^2(\R^+)$ functions coincide with the differential operator $\partial^2/\partial^2\lambda$, $Y$ is Brownian motion and $Z$ is the Feller diffusion \cite{F51}.

By a martingale approach, we can prove the uniqueness (under mild, natural conditions) of the solution of \eqref{eq:Laplace-pde} when $\lambda A^\psi$ is replaced with any generator $A$ of a non-negative Feller process and $a$ is a general continuous function. 


%

\begin{result}[Theorem \ref{thm:uniqueness-weak}, Theorem \ref{thm:uniqueness-weak-infinite}, Theorem \ref{teo:dust}]
\label{ref:u-sol-u}
Assume that $\psi$ is the \emph{Laplace exponent of a spectrally positive and (sub)critical L\'evy process}.
\begin{enumerate}
\item[($\delta>0$)] There exists  a unique weak solution solution to (\ref{eq:Sch}) with initial condition $\nu$. 
\item[($\delta=0$)] Assume that $\psi$ satisfies Grey's condition, i.e.., that $\int^\infty 1/\psi<\infty$. Then 
\begin{itemize}
\item There exists a unique
proper solution. See Theorem \ref{thm:uniqueness-weak-infinite}(ii) for a probabilistic expression of the solution.
\item In the stable case (when $\psi(x)=c x^\gamma$), there exist infinitely many
dust solutions.
\end{itemize}
\end{enumerate}
\end{result}

\begin{rmk} The existence of infinitely many dust solutions is reminiscent of a similar behavior for 
the Boltzmann equation \cite{W99}.
\end{rmk}

\begin{result}[Theorem \ref{thm:self-similar-weak}]\label{result:self-similar}
Let  $\psi(x)=c x^\gamma$ with $\gamma\in(1,2]$.  (The case $\gamma=2$ corresponds to the nested Kingman coalescent.) 
The proper solution $(\mu_t; t>0)$
is self-similar in the sense that there exists a r.v. $\Upsilon$ such that
\[\forall t>0, \ \mu_t = \cL\left( t^{-\beta} \Upsilon\right). \]
Further, 
\begin{enumerate}
\item  With $h(x) = \E(\exp(-x \Upsilon))$ then $h$ is the unique solution of the ODE described in (\ref{eq:ODE-branching}). 
\item  $-h'(0)=\E(\Upsilon) <\infty$ can be expressed as the measure of mass extinction under the $0$-entrance measure of a branching CSBP with branching mechanism $\psi$. See also Theorem \ref{thm:uniqueness-weak-infinite}(ii) for a detailed description of $\Upsilon$
in terms of a Branching CSBP.
\end{enumerate}
\end{result}

\begin{rmk}
The variable $\Upsilon$ also arises in the study  \cite{BRSS18} of the speed of coming down from infinity of the nested Kingman coalescent. However,
$\Upsilon$ is not characterized in terms of an IDE or an excursion measure. (as in items (1) and (2) of Result \ref{result:self-similar}.) 
Instead, it is characterized as the fixed point of a distributional transformation. (see Theorem 1 and Eq (1) in \cite{BRSS18}.)
\end{rmk}

\subsection{McKean-Vlasov (MK-V) equation}
Recall that in Section \ref{sect:weak}, we have restricted our attention to the case where $\psi$
is the Laplace exponent of a L\'evy process. In this section, we only assume 
that $\psi\in \Hc$. (Additional assumptions will be needed when considering uniqueness in $\infty$-pop. regime.) 

Let us now consider the McKean--Vlasov  equation 
\be\label{eq:mckean} dx_t \ = \ - \psi(x_t) \,dt \ + \ v_t\,\Delta J^{(\delta)}_t, \ \mathcal{L}(x_0) = \nu \ee
where $J^{(\delta)}$ is an inhomogeneous Poisson process with rate $1/(t+\delta)$ at time $t$, and $(v_t; t\geq0)$ is a sequence 
of independent random variables (indexed by $\R^+$) with the property that $\cL(v_t) = \cL(x_t)$,
and $\nu\in M_P(\R^+)$.

\begin{result}[Theorem \ref{teo:mckean}, Corollary \ref{cor:existence-uniqueness}] 
\label{result5}
For any $\psi\in\Hc$ and $\delta>0$, there exists a unique  solution to MK-V (\ref{eq:mckean}) with initial condition $\nu\in M_P(\R^+)$. 
\label{result:dinit:mc-v}
\end{result}

Informally, one can think of $(x_t; t>0)$ as the evolution of the mass of a focal cluster in  the population: mass is depleted at rate $\psi(x_t)$ and upon coalescence, occurring at rate $1/(t+\delta)$, the cluster gains a mass with law ${\mathcal L}(x_t)$, i.e., the cluster (or species) gains a random mass whose law is the one of another `typical' and independent cluster at time $t$.  

It is straightforward to check 
from It\^o's formula that $(\mu_t:=\cL(x_t); t\geq0)$
is also a weak solution of (\ref{eq:Sch}) 
with inverse population size $\delta$ and initial measure $\nu$ (in the sense of Definition \ref{def:weak1}).
See Lemma \ref{lem:relation} for more details.
This motivates the following definition.
%

\begin{defin}\label{def:mc-kean-sense}
Let $\delta>0$ and $\nu\in M_P(\R^+)$. 
We say that $\left(\mu_t; t>0\right)$ is a MK-V solution of the \Sm equation (\ref{eq:Sch}) iff $(\mu_t; t\geq0)$ is the law of a solution to the MK-V equation \eqref{eq:mckean} (with the same parameters). Note that any MK-V solution is also a weak solution of the IPDE, and as a consequence, the class  of  MK-V solutions is smaller than its weak counterpart.
\end{defin}
 
In Section \ref{sect:MK-V}, we study the solutions to the MK-V equation when $\delta>0$ under minimal assumptions on $\psi$. In particular, we show that the solution can be constructed naturally from the Brownian Coalescent Point Process (CPP) of Popovic \cite{P04}. 
This construction is reminiscent of the construction of the solution to the classical \Sm equation when the coagulation kernel $K$ is equal to $1$ \cite{A99}. 
This explicit representation will allow us to develop some coupling techniques to investigate the $\infty$-pop. regime ($\delta=0$) and the long-time behavior of the solution to \eqref{eq:Sch} when $\delta>0$. 
(We also note that at the intuitive level, our representation of MK-V solutions in terms of the CPP
can be understood in the light of \cite{LS16}
where it is shown that the Kingman coalescent at small scales can be 
described in terms of the CPP. In our setting the CPP describes the limiting species coalescent.)

Let us now consider the $\infty$-pop. MK-V equation   in more detail.
More precisely, we consider (\ref{eq:mckean}) when $\delta=0$
and {\it with no prescription of the initial condition}.
Analogously to Definition \ref{def:weak-eq}, we can define proper and dust solutions for the $\infty$-pop. MK-V equation.
By arguing as before, if $(x_t, t>0)$ is a proper (resp., dust) solution to MK-V, then $(\mu_t:= \cL(x_t), t>0)$
is a proper (resp., dust) solution  to the $\Sm$ equation (\ref{eq:Sch}). In the same vein as Definition \ref{def:mc-kean-sense}, 
the law of such processes will be referred to as solutions of the IPDE in the MK-V sense.

%

\begin{result}[Proposition \ref{lem:marking}, Theorem \ref{teo:existence:global:solution}, Theorem \ref{teo:dust}] \label{result:existence-infinite}
Assume that $\delta=0$ and assume that $\psi\in \Hc$ 
satisfies Grey's condition and is convex. Then
\begin{enumerate}
\item There exists at least one proper solution to the $\infty$-pop. MK-V equation. 
\item In the stable case
$\psi(x) = c x^\gamma$ with $\gamma>1$, there is a unique proper solution $(x^{(0)}_t; t\geq0)$. 
Further, this process 
is self-similar in the sense that 
$$
\cL\left(x_t^{(0)}\right) \ = \ \cL\left( t^{-\beta} \Upsilon\right), \ \ \ \ \mbox{where} \ \  \beta :=\frac{1}{\gamma -1}, 
$$ 
where $\Upsilon$ is a positive rv (depending implicitly on $c$ and $\gamma$).
\item In the stable case, there exists infinitely many dust solutions.
\end{enumerate}
\end{result}

\begin{rmk}
Note that in the finite population case ($\delta >0$) the uniqueness of the MK-V solution advertized in Result \ref{result5} holds for any $\psi\in\Hc$ while in the $\infty$-pop. case ($\delta=0$), we have only been able to show uniqueness of the proper solution for $\psi(x) = c x^\gamma$ with $\gamma>1$ (Result \ref{result:existence-infinite}). We nevertheless conjecture that it holds in the general case.
\end{rmk}

\begin{rmk}
When $\gamma\in(1,2]$, the definition of $\Upsilon$ in Results \ref{result:self-similar} and \ref{result:existence-infinite} must  coincide. 
This follows from the fact that any solution in the MK-V sense must be a solution in the weak sense, and
the uniqueness of a weak proper solution when $\gamma\in(1,2]$.  See Result \ref{ref:u-sol-u}.
\end{rmk}

\begin{result}[Theorem \ref{teo:solution1}]\label{res:long-time}
Let $\gamma>1$ and recall $\beta =1/(\gamma -1)$.
Assume that $\psi(x) = c x^\gamma$.
Let $\nu\in M_P(\R^+)$ such that $\nu \neq\delta_0$ and let $x^{(\delta)}$
be the MK-V solution with inverse population size $\delta>0$ and initial probability measure $\nu$. Then
\[\lim_{t\to\infty}  t^{\beta} \ x_t^{(\delta)} \  = \ \Upsilon, \ \ \mbox{in law} \]
where $\Upsilon$ is defined in Result \ref{result:existence-infinite}.
In particular, this shows that as $t\to\infty$, the typical mass of a cluster goes to $0$ as $O(t^{-\beta})$.
\end{result}



\subsection{Discussion and conjectures}\label{sect:conj}

We have called `infinite population regime' the case where the coagulation term becomes degenerate at $t=0$. The uniqueness of solutions to the \Sm equation that we obtain in these cases in the apparent absence of initial condition is actually due to the fact that this initial condition can be seen as a Dirac mass at $\infty$, which `comes down from infinity', in the sense that $\mu_t$ is a probability measure on $[0,\infty)$ at any positive time $t$.

In this regard, Grey's condition  $\int^\infty 1/\psi<\infty$ is not anecdotal. It ensures that $\dot x=-\psi(x)$ comes down from infinity, so that in the \Sm equation, the antagonism between mass transport towards 0 and the increase of mass by coagulation is dominated by the transport term, in such a way that the mass distribution over species converges to the Dirac mass at 0 as $t\to\infty$. We conjecture that when $1/\psi$ is not integrable at $\infty$, coagulation overwhelms transport, in such a way that the mass distribution densifies around large masses as $t\to\infty$. We further conjecture that in this case, our results concerning uniqueness of solutions in the infinite population regime will not hold any longer. 


In addition to the previous remarks, and in view of our results, it is natural to make the additional following conjectures. 
\begin{enumerate}
\item The notion of weak and MK-V solutions coincide.
(We only showed that
if $(x_t;t\geq0)$ is a solution to MK-V, then $(\cL(x_t); t\geq0)$ is a weak solution.) 
\item There exists a unique proper weak and MK-V solution for any convex $\psi\in\Hc$ satisfying Grey's condition.
\item  Let $\Lambda$ be a finite measure on $[0,1]$.  Results  \ref{teo:conv-1-measures} and \ref{res:conv:infinite-pop} concerning the convergence of the rescaled distribution of species masses at small times to the solution of the \Sm equation extends to the case when the genes undergo a $\Lambda$-coalescent process coming down from infinity (and the species still undergo a Kingman coalescent) to the same \Sm equation where $\psi(\lambda) = c\lambda^2$ is replaced with 
\be\label{eq:laplace-of-levy}
\psi (\lambda)  = \int_{(0,1]} (e^{-\lambda r}-1+\lambda r)\,r^{-2}\Lambda (dr),
\ee
which is the Laplace exponent of a spectrally positive L\'evy process. This is due to the fact that
in the standard $\Lambda$-coalescent coming down from infinity, the number of lineages
obeys the asymptotical dynamics $\dot x = -\psi(x)$ \cite{BBS07, BBL10}.
\item The entrance law at $\infty$ (in the sense that the number of species at time $t$ goes to $\infty$ as $t\downarrow 0$) of the nested Kingman coalescent is unique.
\item Recall the notion of dust solution of Definition \ref{def:weak-eq}. For a given dust solution, 
we conjecture the existence of a scaling such that the nested Kingman coalescent converges to this solution.
\item For the sake of simplicity, we only considered in the manuscript a Kingman species coalescent. For a general $\Lambda$-coalescent, we expect the same type of results to hold. More precisely, the coagulation term in the IPDE should be replaced the coagulation kernel described in \cite{BL06} Proposition 3.
\end{enumerate}

%
%


\section{Weak solutions and branching CSBP}
\label{sect:special-case}

\subsection{Main assumptions}
In this section, we consider the case where $\psi$ is of the form
$$\psi(x) \ = \ ax + b x^2 \ + \ \int_{(0,\infty)} (\exp(-rx)-1+rx) \,\pi(dr)$$
where $b\geq0$, $\pi$ is a $\sigma$-finite measure on $(0,\infty)$
such that $\int_{\R^+} (r \wedge r^2) \pi(dr) <\infty$, and $a=\psi'(0^+)\ge 0$, so that $\psi\in \Hc$ is the Laplace exponent of a spectrally positive (sub)critical L\'evy process $Y$ with 
$$
\EE_0(\exp(-\lambda Y_t)) = e^{t\psi(\lambda)}\qquad t,\lambda \geq 0.
$$ 
In particular, we can recover the case $\psi(x) = cx^2$ by taking $\pi=0$ and we can recover the cases
$\psi(x) = c x^\gamma$ for $\gamma\in(1,2)$ by taking $b=0$ and the jump measure of the form $\pi(dx)=\frac{\bar c}{x^{\gamma+1}}$, where $\bar c$ is some positive constant.
Note that under our assumptions, for any $t_0\in \R$ and  $x_0>0$, there exists a unique solution $(v(t);t\ge t_0)$ to the ODE
\[\dot v \ = \ -\psi(v), \  v(t_0)=x_0,\]
and that $\lim_{t\to\infty} v(t) = 0$.

\subsection{Laplace transform of weak solutions}

We let $A^\psi$ denote the generator of the L\'evy process $Y$. There is a Feller process with generator $A$ given by $Af(\lambda)= \lambda A^\psi f(\lambda)$, also known as the  CSBP (Continuous State Branching Process) with branching mechanism $\psi$. 
For any $\mu\in M_F(\R^+)$,  
$f(\lambda) \ = \ \int_{\R^+}  \exp(-\lambda x) \mu(dx)$
is in the domain of the generator $A$, and further
$$\forall \lambda\ge  0, \ \ A f(\lambda) \ = \ \lambda \int_{\R^+} \psi(x) \exp(-\lambda x) \mu(dx). $$
Let us now consider $(\mu_t; t\geq0)$ a weak solution of the \Sm equation and set
$$
u(t,\lambda) \ = \ \int_{\R^+} e^{-\lambda x}\, \mu_t(dx) \qquad \lambda, t\ge 0.
$$
Since $f(x)=\exp(-\lambda x)$ with $\lambda>0$ is a test-function then plugging this choice of $f$ into  \eqref{eq:weak}
shows, after differentiation with respect to $t$, that $u$ satisfies \eqref{eq:Laplace-pde} for all $\lambda>0$ and $t\geq 0$, that is,
$$
\partial_t u \ =  \ A u +a(t) (u^2-u), 
$$
with initial condition $u(0,\lambda) = \int_{\R^+}e^{-\lambda x}\, \nu(dx)$, where $a(t)=  1/(t+\delta)$. 


In the next subsection, we recall some well-known facts on the  CSBP with branching mechanism $\psi$. Then we introduce a closely related object: the branching CSBP.  In Subsection \ref{sec:uniqueness-weak}, we prove that there exists a unique weak solution to \eqref{eq:Sch} which can be expressed in terms of a branching particle system.
 In Subsection \ref{sec:uniqueness-weak-infinite}, we focus on proper weak solutions in the infinite population of the \Sm equation (\ref{eq:Sch}). 
 In Subsection \ref{sec:uniqueness-weak-self-similar}, we provide additional results in the stable case $\psi(\lambda)= c\lambda^\gamma$ for $\gamma \in (1,2]$. (that we call the stable case.)

\subsection{Continuous-state branching processes (CSBP)}
\label{subsec:csbp}

We collect here known results about CSBP. See e.g., \cite{DLG02} or Section 2.2.3. in \cite{L08}.

A CSBP $Z=(Z_t;t\ge 0)$ is a Feller process with values in $\R^+$ with the branching property, namely if $P_x$ denotes the law of $Z$ started at $x\ge 0$, then $P_x\star P_y =P_{x+y}$. It is well-known \cite{L67, CLUB09}, that CSBPs are in one-to-one correspondence with L\'evy processes with no negative jumps via several different bijections, including a random time-change known as Lamperti's transform. If $\psi$ is the Laplace exponent of such a L\'evy process $Y$ (assumed to be (sub)critical), then the CSBP $Z$ associated to $Y$ is called the \emph{CSBP with branching mechanism $\psi$} and the generator $A$ of $Z$ is given by
$$
Af(\lambda)= \lambda A^\psi f(\lambda) \qquad \lambda\ge 0,
$$
for any $f$ in the domain of the generator $A^\psi$ of $Y$. Further, 
\be\label{eq:laplace-csbp}
\E(\exp(-\lambda Y_t)) = \ \exp(-x u_t), \ \mbox{where}  \ \dot u = -\psi(u), \ \ u_0 = \lambda.
\ee
Note that by the branching property, $0$ is an absorbing state for $Z$. It is accessible iff $1/\psi$ is integrable at $\infty$ (Grey's condition \cite{G74}). We then denote by $T_0$ the first hitting time of 0 by $Z$.

By the branching property, the law of $Z_t$ is infinitely divisible. More specifically, if Grey's condition is fulfilled, then under $P_x$, $Z_t$ is equal to the sum $\sum_i Z_t^{(i)}$, where $(Z_t^{(i)})$ are the atoms of a Poisson point process with intensity measure $x N$, where $N$ is a $\sigma$-finite measure on c\`adl\`ag processes started at 0 and with non-negative values. We will call $N$ the \emph{entrance measure at 0 of the CSBP}.
In particular, for any $\lambda\ge 0$,
\begin{equation}
\label{eq:expo-formula}
E_x(\exp(-\lambda Z_t)) =\exp\left(-x N\left(1-e^{-\lambda Z_t}\right)\right),
\end{equation}
and so for any measurable functional of paths $G$ such that $|G|\le KT_0$ for some $K$,  
\be
\label{eqn:N}
\lim_{x\downarrow 0}x^{-1}E_x(G) = N(G)<\infty.
\ee
Note in particular that $N (T_0>t)<\infty$ for all $t> 0$. 

\medskip

\subsection{Duality between a branching and a coalescing particle system}  
In this section, we fix $\tr$ an ultrametric binary tree with $n$ labelled leaves and depth $T$
(i.e., the distance from the root to each leaf is $T$). We let $N_t(\tr)$ denote the number of points in $\tr$ at time $t$, i.e., at distance $t$ from the root. 
We will now introduce two particle systems, a coalescing particle system initialized at the leaves of $\tr$ and a branching particle system initialized at the root of $\tr$.

Let us first introduce the coalescing particle system. We start by assigning a mark $\lambda_i\ge 0$ to the leaf of the tree $\tr$ labelled by $i$. Then we let the marks propagate from the leaves to the root according to the following rules:
(i) along each branch, the marking evolves according to the deterministic dynamics $\dot x=-\psi(x)$, and (ii) when two branches merge, we add
the corresponding two marks. We call $F(\tr,\lambda)$ the resulting mark at the root.

Now, we consider a system of branching particles with random mass running along the branches of the tree, from the root to the leaves, according
to the following rules: (i) we start with one particle at the root, (ii) at each branching point the incoming particle, carrying say mass $x$, duplicates into two copies of itself, one copy for each branch, each with mass $x$,
and (iii) along each branch, the mass of the particle on that branch evolves independently according to a CSBP with branching mechanism $\psi$.
See Fig. \ref{fig:csbp}. 

 \begin{figure}[h] \includegraphics[width=13cm]{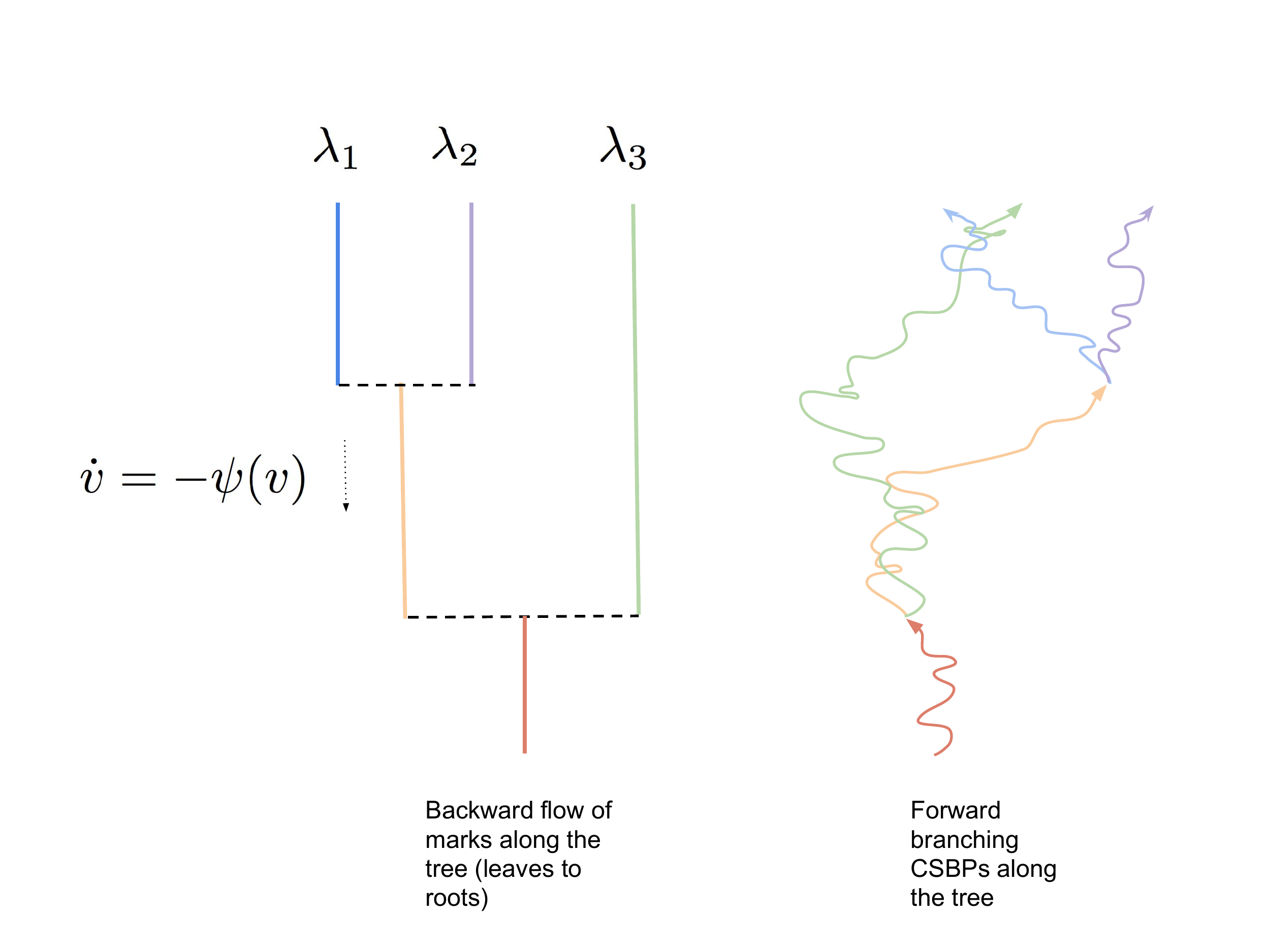} 
 \caption{Duality between branching CSBPs and the coalescing particle system} 
 \label{fig:csbp}\end{figure}

Let $Q_x^\tr$ denote the law of the branching particle system started with one particle with mass $x$ at time 0 and let $\mathcal Z^{\tr}_t = (Z_t^i)_{1\le i\le N_t(\tr)}$ denote the masses carried by the particles at time $t$. Finally, as a direct consequence of the branching property, it is not hard to see that $\mathcal Z^{\tr}$ is infinitely divisible, and as for a simple CSBP, under Grey's condition, $\mathcal Z^{\tr}$ under $Q_x^\tr$
can be decomposed into a Poisson sum of elementary processes starting from $0$ with intensity measure $xM^\tr$, where $M^\tr$ is the \emph{entrance measure at 0 of ${\mathcal Z}^\tr$}. In view of \eqref{eqn:N}, we have
 $$
 M^\tr(G) = \lim_{x\downarrow 0} x^{-1} Q_x^\tr(G)
 $$  
 for any measurable functional of paths $G$ such that $|G|\le KT_0$ for some $K$ (where here $T_0$ is the extinction time of the particle on the root edge of $\tr$, set to $+\infty$ if the particle splits before going extinct).

Now analogously to (\ref{eq:laplace-csbp}), there exists a nice characterization of the Laplace transform of $\mathcal Z^{\tr}_t $.
\begin{prop}  
\label{prop:reloumanu}
For any non-negative numbers $x$ and $(\lambda_i)_{1\le i\le n}$,
\be \E_x\left(\exp( -\lambda\cdot{\mathcal Z}^\tr_T  )\right) = \exp (-x F(\tr, \lambda)), \label{eq:laplace-general}\ee
with the notation ${\lambda\cdot\mathcal Z}^\tr_T  = \sum_{i=1}^{n} \lambda_i Z_t^i$.
Under Grey's condition, we additionally have
\be M^{\tr}(1 - \exp(-\lambda\cdot{\mathcal Z}_T^\tr )) \ = \ F(\tr, \lambda).
 \label{eq:laplace-general2}
\ee
\end{prop}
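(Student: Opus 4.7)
My plan is to prove \eqref{eq:laplace-general} by induction on the number of leaves $n$ of $\tr$, and then deduce \eqref{eq:laplace-general2} as a corollary via the Poisson representation of ${\mathcal Z}^\tr$.

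For the base case $n=1$, the tree $\tr$ is a single edge of length $T$. The branching system is then just a CSBP with branching mechanism $\psi$ started at $x$, and its mass at the leaf is $Z_T$. The coalescing system just evolves the single leaf mark $\lambda_1=\lambda$ along the edge according to $\dot{v}=-\psi(v)$ for time $T$. By \eqref{eq:laplace-csbp}, $\E_x(\exp(-\lambda Z_T))=\exp(-x u_T)$ where $\dot u=-\psi(u)$, $u_0=\lambda$, and $u_T$ is exactly $F(\tr,\lambda)$ by definition of the coalescing propagation, so \eqref{eq:laplace-general} holds. For the inductive step, write $\tr$ as a root edge of length $s_0>0$ sitting below two ultrametric subtrees $\tr_1,\tr_2$ of depth $T-s_0$, carrying the subvectors $\lambda^{(1)},\lambda^{(2)}$ of $\lambda$ at their leaves. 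Conditionally on the CSBP mass $Z_{s_0}$ on the root edge, the branching property plus the duplication rule at the branching point makes the two subsystems on $\tr_1$ and $\tr_2$ conditionally independent, each distributed as a branching system on its subtree started from mass $Z_{s_0}$. Applying the induction hypothesis to each subtree and then \eqref{eq:laplace-csbp} to the root edge gives
\[
\E_x\!\left(\exp(-\lambda\cdot{\mathcal Z}^\tr_T)\right)
= \E_x\!\left[\exp\!\big(-Z_{s_0}(F(\tr_1,\lambda^{(1)})+F(\tr_2,\lambda^{(2)}))\big)\right]
= \exp\!\big(-x\,u_{s_0}(F(\tr_1,\lambda^{(1)})+F(\tr_2,\lambda^{(2)}))\big).
\]
On the coalescing side, the marks arriving from $\tr_1$ and $\tr_2$ at the top of the root edge are $F(\tr_i,\lambda^{(i)})$ by definition, they add at the branching point, and the sum is propagated down the root edge by the same ODE for time $s_0$, yielding exactly $u_{s_0}(F(\tr_1,\lambda^{(1)})+F(\tr_2,\lambda^{(2)}))=F(\tr,\lambda)$. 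This closes the induction.

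For \eqref{eq:laplace-general2}, I invoke the Poisson decomposition of ${\mathcal Z}^\tr$ under $Q^\tr_x$ recalled just before the statement: ${\mathcal Z}^\tr$ is a Poisson sum of i.i.d.\ elementary processes with intensity $xM^\tr$. The exponential formula for Poisson point processes, applied to the nonnegative bounded functional $1-\exp(-\lambda\cdot{\mathcal Z}^\tr_T)$, yields
\[
\E_x\!\left(\exp(-\lambda\cdot{\mathcal Z}^\tr_T)\right)=\exp\!\left(-xM^\tr\!\left(1-\exp(-\lambda\cdot{\mathcal Z}^\tr_T)\right)\right).
\]
Comparing with \eqref{eq:laplace-general} and using that this equality of exponentials holds for every $x\ge 0$, one may differentiate in $x$ at $0$ (or simply take logarithms and divide by $x$) to identify $F(\tr,\lambda)=M^\tr(1-\exp(-\lambda\cdot{\mathcal Z}^\tr_T))$, which is \eqref{eq:laplace-general2}.

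The main obstacle I foresee is a technical one rather than a conceptual one: making sure the Poisson representation of ${\mathcal Z}^\tr$ under $Q^\tr_x$ is valid jointly at all leaves (not just marginally for a single CSBP), so that the exponential formula can be applied to the multivariate functional $\lambda\cdot{\mathcal Z}^\tr_T$. This follows from the same branching-property argument used in the inductive step, by iterating the decomposition over the internal edges of $\tr$ so as to express the whole vectorial process ${\mathcal Z}^\tr$ as a Poisson sum driven by the single mass at the root edge; under Grey's condition the intensity $M^\tr$ is $\sigma$-finite by the bound \eqref{eqn:N} combined with finiteness of $N(T_0>t)$, so the functional representation is well defined.
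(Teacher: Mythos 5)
Your proof is correct and follows essentially the same route as the paper's (terse) proof: an induction using the CSBP Laplace transform \eqref{eq:laplace-csbp} and the branching property for \eqref{eq:laplace-general}, and then the Poisson/exponential formula for the entrance measure (as in \eqref{eq:expo-formula}) to deduce \eqref{eq:laplace-general2}; you have merely parameterized the induction by leaves rather than nodes, which is equivalent.
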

\begin{proof}
Using (\ref{eq:laplace-csbp}), it is straightforward to get \eqref{eq:laplace-general}  by induction on the number of nodes of the tree. Under Grey's condition, analogously to (\ref{eq:expo-formula}),
we have
\[
\E_x\left(\exp( -\lambda\cdot{\mathcal Z}^\tr_T  )\right) = \exp \left\{-x
M^{\tr}(1 - \exp(-\lambda\cdot{\mathcal Z}_t )) \right\},\]
which yields \eqref{eq:laplace-general2}.
\end{proof}

 
 \subsection{\Sm equation in finite population}
 \label{sec:uniqueness-weak}
  Let $P_t$ denote the semigroup and $A$ the infinitesimal generator of a time-homogeneous Feller process $Z$ with values in $[0,\infty)$. We are interested in the case when $Af(\lambda) = \lambda A^\psi (\lambda)$ but do not immediately restrict our study to this case. 
 Let $\mathcal E_A$ be the space of continuous functions $f:[0,\infty)\to [0,1]$ such that $Af$ is a well-defined continuous, bounded function on $(0,\infty)$ and $f(Z_t) -\int_0^tAf(Z_s) \,ds$ is a martingale (or equivalently $t\mapsto P_tf(x)$ is differentiable at 0 with derivative $Af(x)$ for all $x\ge 0$). Further let $\mathcal E_A'$ be the space of two-variable continuous functions $f:[0,\infty)\times [0,\infty)\to [0,1]$ such that $f(t,\cdot)\in \mathcal E_A$ and $f(\cdot, x)\in \cC^1$.  

Let $a:[0,\infty)\to [0,\infty)$ be a continuous map and fix $T>0$. Let us consider a time-inhomogeneous system of branching particles, where each particle carries an individual mass evolving like the process $Z$ and each particle independently gives birth at rate $\tilde a(t)=a(T-t)$ (at time $t$) to a copy of itself (i.e., a branching particle with mass $x$ splits into two particles, each with mass $x$). Let $\mathcal Z_t$ denote the state of this system (e.g., its empirical measure) at time $t$, assumed to be c\`adl\`ag. We assume that the system starts at time 0 with one particle carrying mass $x$, and we then let $Q_x^{T}$ denote the law of $\mathcal Z$ up to time $T$. We also let $N_t$ denote the number of particles at time $t$ and by $(Z_t^i)_{1\le i\le N_t}$ the masses carried by these particles. 

\begin{rmk}
Let $\tr$ be an ultrametric tree with depth $T$. When $Af(\lambda) = \lambda A^\psi (\lambda)$, and if we condition on the genealogy of the process 
up to $T$ to be $\tr$, the process $\mathcal Z$ coincides with $\mathcal Z^\tr$
as defined in the previous section. 
\end{rmk}

 \begin{lem} 
 \label{lem:uniqueness-pde}
For any $g\in \mathcal E_A$ there is at most one solution $u\in \mathcal E_A'$ to the PDE (or IPDE) 
\begin{eqnarray}
 \partial_t u \ = \ A u \ + \ a(t) (u^2-u) \label{eq:pde},
\end{eqnarray}
with initial condition $u(0,x) =g(x)$. If $v$ is defined by 
$$
v(T, x)=Q_x^{T}\left(\prod_{i=1}^{N_T} g(Z_T^i)\right)\qquad T>0, x\ge 0
$$
is in $\mathcal E_A'$ then $u=v$.
 \end{lem}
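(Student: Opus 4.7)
The plan is a Feynman--Kac duality between the IPDE and the branching particle system. Fix a solution $u\in\mathcal{E}_A'$ and $T>0$, and define on $(\mathcal{Z}_t)_{0\le t\le T}$ under $Q_x^T$ the process
$$
M_t \ := \ \prod_{i=1}^{N_t} u(T-t,\, Z_t^i), \qquad 0\le t\le T.
$$
Note that $M_0 = u(T,x)$ and $M_T = \prod_{i=1}^{N_T} g(Z_T^i)$. My goal is to show that $M$ is a bounded $Q_x^T$-martingale; taking expectations then gives
$$
u(T,x) \ = \ Q_x^T[M_0] \ = \ Q_x^T[M_T] \ = \ v(T,x),
$$
which simultaneously yields uniqueness in $\mathcal{E}_A'$ and the announced probabilistic representation.

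Boundedness is immediate: since $u\in[0,1]$ we have $M_t\in[0,1]$, and $N_t$ is dominated by a Yule process because $\tilde{a}(t)=a(T-t)$ is bounded on $[0,T]$ by continuity of $a$. To identify the compensator of $M$ I would compute two contributions. Between branch events, each $Z_t^i$ evolves independently as the Feller process with generator $A$, so, using the defining martingale property of $\mathcal{E}_A$ on each factor and the product rule, the absolutely continuous drift is
$$
\sum_{i=1}^{N_t} \Bigl(-\partial_s u(T-t, Z_t^i) + A u(T-t, Z_t^i)\Bigr) \prod_{j\ne i} u(T-t, Z_t^j),
$$
where $\partial_s$ denotes differentiation in the first argument of $u$. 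When particle $i$ branches (at rate $\tilde{a}(t)$), the factor $u(T-t,Z_t^i)$ gets squared, so the jump-compensator contribution is
$$
\tilde{a}(t) \sum_{i=1}^{N_t} \bigl(u(T-t,Z_t^i)^2 - u(T-t, Z_t^i)\bigr) \prod_{j\ne i} u(T-t, Z_t^j).
$$
Summing the two and invoking the PDE $\partial_s u = A u + a(s)(u^2-u)$ at $s=T-t$, every summand vanishes. Hence $M$ is a bounded local martingale, and boundedness promotes it to a true martingale.

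The main technical subtlety lies in making the compensator computation rigorous, since the particle system has a fluctuating number of components and each individual trajectory is only prescribed through the weak-martingale characterization built into $\mathcal{E}_A$. The cleanest route I would take is to first condition on the random ultrametric genealogy $\tr$ up to time $T$ (which has finitely many leaves $Q_x^T$-a.s.~by the Yule domination), reducing the problem to a fixed finite tree. Along each edge the factor $u(T-t,Z_t^i)$ is a time-space functional of a Feller process whose semimartingale decomposition is furnished by the $\mathcal{E}_A$-martingale together with the regularity $u(\cdot,x)\in\cC^1$; the product over edges is then handled by the finite-dimensional It\^o product rule, while the (almost surely finite) sequence of branching times contributes the jump compensator above. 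Integrating back over the genealogy via Fubini then completes the argument.
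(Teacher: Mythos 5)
Your overall strategy is the same as the paper's: introduce the time-reversed product process $M_t = \prod_{i=1}^{N_t} u(T-t, Z_t^i)$, show its $Q_x^T$-expectation is constant in $t$ by computing a compensator (deterministic drift from mass evolution plus jump compensator from branching) and cancelling it against the PDE, then evaluate at $t=0$ and $t=T$. The informal compensator identity you write out is exactly the cancellation the paper exploits.

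However, the route you propose for making this rigorous has a gap. You say you would first condition on the full ultrametric genealogy $\tr$ up to time $T$ and then recover the jump compensator from ``the (almost surely finite) sequence of branching times.'' But once you condition on $\tr$, the branching times are fixed, so there is no predictable compensator for the jumps --- conditional on $\tr$, $M_t$ is a semimartingale with a \emph{nonzero} finite-variation drift between branch events (the PDE only cancels this drift against the jump compensator, which is not present conditionally). So ``condition on $\tr$, apply It\^o per edge, then Fubini'' does not directly exhibit the jump compensator and does not show that $Q_x^T[M_t]$ is constant; the cancellation you need happens precisely in the averaging over the random branching times, which your conditioning throws away. What is also missing is any domination argument justifying the interchange of $\lim_{\varepsilon\downarrow 0}$ with $Q_x^T$; ``boundedness promotes it to a true martingale'' presumes you already know $M$ is a local martingale, which is the thing in question. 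The paper handles this head-on: it computes $Q^T(\mathcal Z^u_{t+\varepsilon}\mid\mathcal F_t)$ directly (splitting on whether $0$, $1$, or $\ge 2$ particles branch in $[t,t+\varepsilon]$), shows the increment quotient tends to $0$ pointwise, and then dominates it via a mean-value-theorem bound $\bigl|\tfrac1\varepsilon(\prod_i \bar P_\varepsilon \tilde u - \prod_i \tilde u)\bigr|\le N_t M(1+\varepsilon M)^{N_t-1}$, using the fact that $N_t$ has exponential moments under the Yule domination. That is the technical content your proposal does not supply.
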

   Before proving this lemma, we wish to state the relevant corollary regarding weak solutions to \eqref{eq:Sch}, taking $A$ in the previous lemma equal to the generator of the CSBP $Z$ with branching mechanism $\psi$. 
 \begin{teo} 
 \label{thm:uniqueness-weak}
 Assume that $\psi$ is the Laplace exponent of a (sub)critical spectrally positive L\'evy process.
  \begin{enumerate}
 \item[(i)] There exists a unique weak solution $(\mu_t;t\ge 0)$ to the \Sm equation \eqref{eq:Sch} with initial distribution $\nu$
 and inverse population $\delta>0$.
 \item[(ii)] For any $T\ge 0$, $\mu_T$ is equal to the law of $F({\bf T}, (W_i); 1\le i \le N_T (\bf T))$, where $\bf T$ is the time-inhomogeneous tree started at 0 with one particle, stopped at time $T$ and with birth rate $\tilde a(t)=1/(T-t+\delta)$, the $(W_i)$ are iid with law $\nu$.
 \end{enumerate}
 \end{teo}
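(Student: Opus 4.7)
The plan is to reduce the measure-valued problem to the scalar non-linear IPDE (\ref{eq:Laplace-pde}) via Laplace transforms, and then to invoke Lemma \ref{lem:uniqueness-pde} with the Feller generator taken to be that of the CSBP with branching mechanism $\psi$, namely $Af(\lambda)=\lambda A^\psi f(\lambda)$, and with $a(t)=1/(t+\delta)$. The duality with the branching particle system provided by Proposition \ref{prop:reloumanu} will then produce both the explicit probabilistic candidate in (ii) and existence.

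For uniqueness in (i), as already observed after Definition \ref{def:weak-eq}, taking $f(x)=e^{-\lambda x}$ in the weak identity (\ref{eq:weak}) and differentiating in $t$ shows that the Laplace transform $u(t,\lambda)=\langle\mu_t,e^{-\lambda\cdot}\rangle$ of any weak solution solves (\ref{eq:Laplace-pde}) with initial condition $g(\lambda)=\int e^{-\lambda x}\,\nu(dx)$. Once the regularity $u\in\mathcal E_A'$ is verified, the uniqueness half of Lemma \ref{lem:uniqueness-pde} pins down $u$, and Laplace inversion identifies $\mu_t$ for every $t\ge 0$.

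For existence and (ii), I would take as candidate the random variable $F({\bf T},W)$ appearing in the statement, where ${\bf T}$ is the inhomogeneous binary tree stopped at time $T$ with birth rate $\tilde a(t)=1/(T-t+\delta)$, and $W=(W_i)_{1\le i\le N_T}$ are iid with law $\nu$, independent of ${\bf T}$. Its Laplace transform at parameter $\lambda$ reads, after conditioning on ${\bf T}$ and applying Proposition \ref{prop:reloumanu} to the branching CSBP started from mass $\lambda$ at the root,
\begin{equation*}
v(T,\lambda) \;=\; \E\left[\exp(-\lambda F({\bf T},W))\right] \;=\; Q_\lambda^T\left[\prod_{i=1}^{N_T}\hat\nu(Z_T^i)\right],
\end{equation*}
where $\hat\nu(y):=\int e^{-yw}\,\nu(dw)$. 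This is precisely the candidate $v$ of Lemma \ref{lem:uniqueness-pde} with $g=\hat\nu$, so once $v\in\mathcal E_A'$ is established the lemma gives that $v$ solves (\ref{eq:Laplace-pde}) with initial condition $v(0,\lambda)=g(\lambda)$. Combined with the uniqueness argument above, this identifies $\mu_T$ as the law of $F({\bf T},W)$.

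A final step is to check that the family $(\mu_t)$ thus constructed actually satisfies the full weak formulation (\ref{eq:weak}) for every test-function $f$, not only exponentials; this is a routine density argument, since exponentials are separating for $M_P(\R^+)$, every test-function satisfies $f,\psi f'\in L^\infty$, and $t\mapsto\mu_t$ inherits weak continuity from $v$. The main obstacle I anticipate is the verification $v\in\mathcal E_A'$, and in particular $v(T,\cdot)\in\mathcal E_A$: one must compute the action of $\lambda A^\psi$ on the expectation defining $v$, control its behaviour at $\lambda=0$ and $\lambda\to\infty$, and establish the associated martingale identity under the CSBP semigroup. Everything else then follows from the duality in Proposition \ref{prop:reloumanu} and the abstract Lemma \ref{lem:uniqueness-pde}.
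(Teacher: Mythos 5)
Your uniqueness argument is exactly the paper's: pass to Laplace transforms, check $g\in\mathcal E_A$ and $u\in\mathcal E_A'$ via the explicit form of $Ag$ and the CSBP martingale $e^{-xZ_t}-\psi(x)\int_0^t Z_s e^{-xZ_s}\,ds$, and invoke Lemma~\ref{lem:uniqueness-pde} to get $u=v$, then Laplace-invert to identify $\mu_T$ as the law of $F(\mathbf{T},(W_i))$ via Proposition~\ref{prop:reloumanu}. That part is fine.

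The gap is in existence. You write that ``once $v\in\mathcal E_A'$ is established the lemma gives that $v$ solves (\ref{eq:Laplace-pde})''; that is not what Lemma~\ref{lem:uniqueness-pde} asserts --- the lemma is a pure uniqueness statement (any solution $u\in\mathcal E_A'$ coincides with $v$), so it produces a solution only if you already know one exists. The fact that $v$ itself solves the PDE comes, rather, from the computation in Remark~\ref{rmk:domain}, which uses the branching property to show $\lim_{\varepsilon\downarrow 0}\varepsilon^{-1}\bigl[(v(t+\varepsilon,x)-v(t,x))-(P_\varepsilon v(t,x)-v(t,x))\bigr]=a(t)(v^2-v)(t,x)$; to turn this into $\partial_t v - Av = a(t)(v^2-v)$ one still needs precisely the regularity $v(\cdot,x)\in\cC^1$ or $v(t,\cdot)\in\mathcal E_A$ that you flag as ``the main obstacle.'' The paper does not overcome that obstacle either: Remark~\ref{rmk:domain} leaves it conditional, and existence is instead established by a different route --- either Appendix~\ref{Appendix1}, which checks directly that $\mu_T=\cL(F(\mathbf{T},(W_i)))$ satisfies the weak formulation (\ref{eq:weak}) for arbitrary test functions by computing $\partial_t\mu_t(f)$ from the two structural identities $F(\tr+\tr')=F(\tr)+F(\tr')$ and $\lim_\varepsilon\varepsilon^{-1}(F(\tr+\varepsilon)-F(\tr))=-\psi(F(\tr))$, or, more simply, by quoting the MK-V existence result (Theorem~\ref{teo:mckean}) from Section~\ref{sect:MK-V}. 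So if you want to complete your proof along the lines you sketch, you must either actually prove $v\in\mathcal E_A'$ (nontrivial, not done in the paper), or replace the last step by the paper's direct verification in the Appendix (which bypasses the Laplace-transform regularity issue entirely and also removes the need for the density step you invoke to pass from exponentials to general test functions).
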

\begin{proof}
Let us apply Lemma \ref{lem:uniqueness-pde} with $A$ defined by $Af(\lambda) = \lambda A^\psi (\lambda)$ the generator of the CSBP $Z$ with branching mechanism $\psi$ and $a(t) = \frac{1}{t+\delta}$. Then Equation \eqref{eq:pde} is the same equation as \eqref{eq:Laplace-pde}, with initial condition $g(\lambda) = \int_{\R^+}e^{-\lambda x}\, \nu(dx)$, which we can write 
$$
g(\lambda) = \E(\exp(-\lambda W)),
$$
where $W$ denotes a rv with law $\nu$. Note that $g$ takes values in $[0,1]$. Let us check that $g\in \mathcal E_A$. Recall that for any $f$ of the form $f(\lambda) \ = \ \int_{\R^+}  \exp(-\lambda x) \mu(dx)$, $f$ is in the domain of $A$, and further $A f(\lambda) =  \lambda \int_{\R^+} \psi(x) \exp(-\lambda x) \mu(dx)$. This shows that 
$$
Ag(\lambda)=\lambda \int_{\R^+} \psi(x) \exp(-\lambda x) \nu(dx),  
$$
so that $Ag$ is a well-defined continuous function on $(0,\infty)$  (since $\psi$ increases at most polynomially at $\infty$). In addition, it is well-known (see e.g. \cite{CLUB09}) that 
$$
e^{-x Z_t}- \psi(x) \int_0^t Z_s\,e^{-x Z_s} \,ds
$$
is a martingale for any $x\ge 0$, so by integrating $x$ wrt the probability measure $\nu$, we get that $g(Z_t) - \int_0^t Ag(Z_s) \,ds$ is also a martingale. By dominated convergence, $Ag$ vanishes at $\infty$ and so is bounded and we conclude that $g\in \mathcal E_A$. So by Lemma \ref{lem:uniqueness-pde} there is at most one solution $v\in\mathcal E_A'$ to \eqref{eq:Laplace-pde} given by
$$
v(T, \lambda)=Q_\lambda^{T}\left(\prod_{i=1}^{N_T} g(Z_T^i)\right). 
$$
Now let $(\mu_t;t\ge 0)$ be a weak solution to the \Sm equation \eqref{eq:Sch} and set 
$$
u(t,\lambda)\ = \  \int_{\R^+} e^{-\lambda x}\, \mu_t(dx) \qquad \lambda, t\ge 0.
$$
Recall that $u$ satisfies \eqref{eq:Laplace-pde} with initial condition $u(0,\lambda) = g(\lambda)$. The exact same reasoning used to prove that $g\in \mathcal E_A$ shows that $u(t, \cdot)\in \mathcal E_A$ for all $t$, and because $u$ satisfies \eqref{eq:Laplace-pde}, 
$$
\partial_t u (t,\lambda) - a(t) (u^2-u)(t,\lambda) = \lambda A^\psi u (t,\lambda) = \lambda \int_{\R^+} \psi(x) \exp(-\lambda x) \mu_t(dx),
$$
which is continuous in $t$. Since $a$ is continuous, we get that $u(\cdot, \lambda)$ is of class $\cC^1$ and so $u\in\mathcal E_A'$. This shows that $u=v$, so that
$$
u(T,\lambda)=Q_\lambda^{T}\left(\prod_{i=1}^{N_T} \E(\exp(-Z_T^iW_i)|Z_T^i)\right)=\E\left[Q_\lambda^{T}\left(\exp\left(-\sum_{i=1}^{N_T}W_iZ_T^i\right)\right)\right],
$$
where $\E$ is the expectation taken wrt the $(W_i)$, which are independent copies of $W$ (and we have applied Fubini--Tonelli Theorem). 
From (\ref{eq:laplace-general}), we get
$$
u(T, \lambda)= \E\left[Q_\lambda^{T}\left(\exp\left(-\sum_{i=1}^{N_T}W_iZ_T^i\right)\right)\right]= \EE_T\left[\exp\left(-\lambda F({\bf T}, (W_i); 1\le i \le N_T (\bf T))\right)\right],
$$
where now $\E_T$ is the expectation taken wrt the Yule tree $\bf T$ with branching parameter $\tilde a(t)=1/(T-t+
\delta)$ stopped at time $T$ and the iid rvs $(W_i)$.
It follows that
$$
\int_{\R^+}e^{-\lambda x}\mu_T(dx) = u(T,\lambda) = \EE\left[\exp\left(-\lambda F({\bf T}, (W_i); 1\le i \le N_T (\bf T))\right)\right],
$$
and by the injectivity of the Laplace transform, $\mu_T$ is the law of $F({\bf T}, (W_i); 1\le i \le N_T (\bf T))$. 

For the sake of completeness, in Appendix \ref{Appendix1} we check that 
$\mu_T$ \emph{defined as} $F({\bf T}, (W_i); 1\le i \le N_T (\bf T))$ indeed is solution to \eqref{eq:Sch}. 
(As a matter of fact, the existence of a weak solution will be proved in the MK-V section (see Theorem \ref{teo:mckean}) and thus checking 
that $\mu$ is indeed a weak solution is not formally needed.)
\end{proof}
 
\begin{proof}[Proof of Lemma \ref{lem:uniqueness-pde}]
Recall that $P_t$ denotes the semigroup of $Z$. We will use the notation $E_x$ to denote the expectation associated with its law when started from $x$. 
We extend the semigroup and generators by defining for any $f\in \mathcal E_A'$,
$$
\bar P_{s} f(t,x) =  E_x(f(t+s, Z_{s}))\quad  \mbox{ and }\quad \bar Af(t,x)= Af(t,x)+ \partial_t f(t, x),
$$
so that in particular, 
$$
\lim_{\varepsilon\downarrow 0} \frac1\varepsilon\left(\bar P_{\varepsilon} f(t,x) - f(t, x)\right) = \bar Af(t,x).
$$
Let $u\in \mathcal E_A'$ be a solution to \eqref{eq:pde} with initial condition $g$. Fix $T>0$ and recall the system of branching particles defined before the statement of the lemma. Because the dynamics of the system are time-inhomogeneous, we will need to denote by $Q^T_{t,x}$ the law of $\mathcal Z$ when started with one single particle with mass $x$ at time $t$. In particular, $Q^T_x=Q^T_{0,x}$.
 Let $\mathcal F_t$ denote the $\sigma$-field generated by $\mathcal Z_t$. Fix $T>0$ and 
 set $\mathcal Z^u$ the $(\mathcal F_t)$-adapted process given by 
$$
\forall t\in [0,T], \ \ \ \mathcal Z^u_t =\prod_{i=1}^{N_t} \tilde u(t, Z_t^i), \ \ 
\mbox{where   }
\tilde u(t,x):= u(T-t, x),
$$
$N_t$ is the number of particles present at time $t$ and $Z_t^i$ is the mass of particle $i$ (note from the definition of $\mathcal Z_t^u$ that it does not depend on the labelling chosen). 

We aim at proving that $Q^T_{x}(\mathcal Z^u_{t})$ is constant as a function of $t$. 
Let $t, \varepsilon$ such that $0\le t \le t+\varepsilon\le T$.
Conditional on $\mathcal F_t$, denote by $\tau_i$ time when the $i$-th particle splits, $i=1,\ldots, N_t$. Then by the branching property,
\begin{multline*}
Q^T(\mathcal Z^u_{t+\varepsilon}\mid \mathcal F_t) = \PP(\tau_i >t+\varepsilon, \forall i)\prod_{i=1}^{N_t}E_{Z_t^i}(\tilde u(t+\varepsilon,Z_{\vareps}))\, \\+ \sum_{j=1}^{N_t}\PP(\tau_i >t+\varepsilon, \forall i\not=j)\int_{t}^{t+\varepsilon}\PP(\tau_j\in dv)\, \,E_{Z_t^j}(Q^T_{v,Z_{v}}(\mathcal Z^u_{t+\varepsilon})^2)\,\prod_{i\not =j}E_{Z_t^i}(\tilde u(t+\varepsilon, Z_{\varepsilon})) + C_\varepsilon,
\end{multline*}
where $C_\varepsilon \le \PP(B_\varepsilon\ge 2)$, with 
$$
B_\varepsilon :=\#\{i\le N_t: t\le \tau_i \le t+\varepsilon\}.
$$
Now $B_\varepsilon$ is a binomial rv with parameters $N_t$ and $u_\varepsilon:=1-e^{-\int_{t}^{t+\varepsilon}\tilde a(u)\,du}$, so 
$$
\PP(B_\varepsilon \ge 2) \le \frac{N_t(N_t-1)}{2}\,u_\varepsilon^2 \le N_t^2 M^2 \varepsilon^2,
$$
where $M:=\sup_{t\in [0,T]} a(t)$.
Re-arranging, we get
\begin{multline*}
\frac 1\varepsilon(Q^T(\mathcal Z^u_{t+\varepsilon}\mid \mathcal F_t) -  \mathcal Z^u_{t}) =  \frac1\varepsilon\left(\prod_{i=1}^{N_t}E_{Z_t^i}(\tilde h(t+\varepsilon, Z_\varepsilon))-\mathcal Z^u_{t}\right)\\
+ \frac1\varepsilon(1-e^{-N_t\int_t^{t+\varepsilon} \tilde a(u)\, du})\prod_{i=1}^{N_t}E_{Z_t^i}(\tilde  u(t+\varepsilon, Z_\varepsilon))\\
+  \frac{e^{-(N_t-1)\int_t^{t+\varepsilon} \tilde a(u)\, du}}\varepsilon\sum_{j=1}^{N_t}\int_{t}^{t+\varepsilon}\tilde a(v) dv\,e^{-\int_t^v\tilde a(u)\,du} \,E_{Z_t^j}(Q^T_{v,Z_{v}}(\mathcal Z^u_{t+\varepsilon})^2)\,\prod_{i\not =j}E_{Z_t^i}(\tilde u(t+\varepsilon,Z_{\varepsilon}))\\
 + \frac{1}\varepsilon C_\varepsilon.
\end{multline*}
Because $u\in \mathcal E_A'$ and $a$ is continuous, the right-hand side of the last equality converges as $\varepsilon \downarrow 0$ to 
\begin{align*}
\sum_{i=1}^{N_t}(A\tilde u(t, Z_t^i))\prod_{j\not = i}\tilde u(t, Z_t^j)- \tilde a(t)\,N_t\mathcal Z_t^u+\tilde a(t)\,\sum_{j=1}^{N_t}\tilde u(t, Z_t^j)\,\mathcal Z_t^u \\ = \sum_{i=1}^{N_t}\left(\prod_{j\not = i}\tilde u(t, Z_t^j)\right)\,\left[A\tilde u(t, Z_t^i))-\tilde a(t)\,\tilde u(t, Z_t^i)(1-\tilde u(t, Z_t^i)) \right].
\end{align*}
Now the last quantity is zero because for any $x$
\begin{multline*}
A\tilde u(t, x)-\tilde a(t)\,\tilde u(t, x)(1-\tilde u(t, x)) \\= A u(T-t, x)- \partial_t u(T-t, x)- a(T-t)\,u(T-t, x)(1- u(T-t, x)),
\end{multline*}
which is zero by \eqref{eq:pde}.
So we have proved
$$
\lim_{\varepsilon \downarrow 0} \frac 1\varepsilon(Q^T(\mathcal Z^u_{t+\varepsilon}\mid \mathcal F_t) -  \mathcal Z^u_{t})=0.
$$
We would now like to take expectations inside the limit. 
Since $u$ and so $\mathcal Z^u$ take values in $[0,1]$, we first have
\begin{eqnarray*}
\left|\frac 1\varepsilon(Q^T(\mathcal Z^u_{t+\varepsilon}\mid \mathcal F_t) -  \mathcal Z^u_{t})\right| &\le&  \left|\frac1\varepsilon\left(\prod_{i=1}^{N_t}\bar P_{\varepsilon} \tilde u(t,Z_t^i)-\mathcal Z^u_{t}\right)\right| +2MN_t+ N_t^2 M^2 \varepsilon.
\end{eqnarray*}
Now because $\bar A\tilde u = a(t)( \tilde u-\tilde u^2)$, $\bar A\tilde u$ takes values in $[0,M]$, and since
$$
\frac{\bar P_{\varepsilon} \tilde u(t,x)-\tilde u(t,x)}\varepsilon =\frac{1}{\varepsilon}\int_0^{\varepsilon} \bar P_s\bar A\tilde u(t,x)\, ds, 
$$ 
we get 
$$
0\le \frac{\bar P_{\varepsilon} \tilde u(t,x)-\tilde u(t,x)}\varepsilon \le M. 
$$ 
So we can write
$$
\frac1\varepsilon\left(\prod_{i=1}^{N_t}\bar P_{\varepsilon} \tilde u(t,Z_t^i)-\mathcal Z^u_{t}\right)=\frac{H(\varepsilon)-H(0)}\varepsilon
$$
where 
$$
H(\varepsilon) = \prod_{i=1}^{N_t}(x_i+\varepsilon y_i),
$$
with $x_i = \tilde u(t,Z_t^i)$ and $y_i= \frac{\bar P_{\varepsilon} \tilde u(t,Z_t^i)-\tilde u(t,Z_t^i)}\varepsilon$, so that $0\le x_i\le 1$ and $0\le y_i\le M$. This shows that for any $z\in[0,\varepsilon]$
$$
0\le H'(z) \le H'(\varepsilon) = \sum_{i=1}^{N_t} y_i \prod_{j\not=i}(x_j+\varepsilon y_j)\le N_t M (1+\varepsilon M)^{N_t-1}.
$$
Then by the Mean Value Theorem
$$
\left|
\frac1\varepsilon\left(\prod_{i=1}^{N_t}\bar P_\varepsilon \tilde u(t,Z_t^i)-\mathcal Z^u_{t}\right)\right|=\left|\frac{H(\varepsilon)-H(0)}\varepsilon\right|\le N_t M (1+\varepsilon M)^{N_t-1}.
$$
Finally we get
\begin{eqnarray*}
\left|\frac 1\varepsilon(Q^T(\mathcal Z^u_{t+\varepsilon}\mid \mathcal F_t) -  \mathcal Z^u_{t})\right| &\le&  N_t M (1+\varepsilon M)^{N_t-1}+2MN_t+ N_t^2 M^2 \varepsilon=:S_t(\varepsilon).
\end{eqnarray*}
Since under $Q^T_x$, $N_t$ is dominated by the number of lineages at time $t$ in a Yule process with birth rate $M$ started at 1, it is geometrically distributed and so there is $\varepsilon_0$ such that for any $\varepsilon \in [0, \varepsilon_0]$, $S_t(\varepsilon)\le S_t(\varepsilon_0)$ and $\EE(S_t(\varepsilon_0))<\infty$. Then the Dominated Convergence Theorem ensures that 
$$
\lim_{\varepsilon \downarrow 0} \frac 1\varepsilon(Q^T_x(\mathcal Z^u_{t+\varepsilon}) -  Q^T_x(\mathcal Z^u_{t}))=0.
$$
This proves that $Q^T_x(\mathcal Z^u_{t})$ is constant as a function of $t$, so that 
$$
u(T, x)=Q^T_x(\mathcal Z^u_{0}) =Q^T_x(\mathcal Z^u_{T}) = Q^T_x\left(\prod_{i=1}^{N_T} u(0, Z_T^i)\right)=Q^T_x\left(\prod_{i=1}^{N_T} g(Z_T^i)\right)= v(T,x),
$$
which yields the announced result. 
\end{proof}
\begin{rmk} 
\label{rmk:domain}
By the branching property,
\begin{eqnarray*}
v(T+\varepsilon, x) -v(T,x) &=& -v(T,x)+ Q^{T+\varepsilon}_x\left(\prod_{i=1}^{N_{T+\varepsilon}} g(Z_{T+\varepsilon}^i), N_\varepsilon =1\right)+ a(T)\varepsilon\,v(T,x)^2+o(\varepsilon)\\
	&=& -v(T,x)+(1-a(T)\varepsilon) \,E_x (v(T, Z_\varepsilon))+ a(T)\varepsilon\,v(T,x)^2+o(\varepsilon)\\
	&=& E_x (v(T, Z_\varepsilon)) - v(T,x) + a(T)\varepsilon\, (v(T,x)^2-v(T,x))+o(\varepsilon).
\end{eqnarray*}
So for any $t,x\ge 0$,
$$
\lim_{\varepsilon \downarrow 0}\frac{(v(t+\varepsilon,x) - v(t,x))-(P_\varepsilon v(t,x) - v(t,x))}{\varepsilon} = a(t)\, (v(t,x)^2-v(t,x)).
$$
If we could prove that $v(\cdot, x)$ is of class $\cC^1$ or that $v(t,\cdot)\in \mathcal E_A$, then the RHS would equal $\partial_t v - Av$ and $v$ would indeed be solution to \eqref{eq:pde}.
   \end{rmk}

\subsection{Proper solutions for the $\infty$-pop. \Sm equation}
\label{sec:uniqueness-weak-infinite}

In addition to the assumptions of Theorem \ref{thm:uniqueness-weak}, we now assume Grey's condition. 
Under this assumption, $0$ is accessible and since $Y$
is assumed to be (sub)critical
$$
 P_x (T_0<\infty) =1 \qquad x\ge 0,
$$
where we recall that $T_0=\inf\{t\ge 0: Z_t=0\}$.

As in the proof of Theorem \ref{thm:uniqueness-weak}, we start with a general lemma, which is the $\infty$-pop. analog of Lemma \ref{lem:uniqueness-pde}. We make the same general assumptions with the  notable difference that
we only assume that $a$ is only defined on $(0,T)$. 
We denote by $T_{\mathscr M}$ the time of mass extinction of the branching particle system with birth rate $\tilde a$ defined on $[0,T)$, i.e., the first time when all particles carry zero mass. 
Under $Q_x^T$, we denote by ${\mathscr M}$ the event $\{T_{\mathscr M}<T\}$.
 \begin{lem} 
 \label{lem:uniqueness-pde-2}
 Assume that $\int_{(0,T)}a(t) \,dt=\infty$.
Then there exists at most unique solution $u\in \mathcal E_A'$ to the PDE (or IPDE) 
\begin{eqnarray}
 \partial_t u \ = \ A u \ + \ a(t) (u^2-u) \label{eq:pde}
\end{eqnarray}
defined for $t>0$, such that $\limsup_{t\downarrow 0}\sup_{y\in [x,\infty)}u(t,y)<1$ for all $x>0$ and $u(t,0)=1$ for all $t> 0$. In addition this solution is given for any $T>0$ by 
$$
u(T, x)=Q^T_x({\mathscr M}).
$$
 \end{lem}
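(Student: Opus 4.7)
The plan is to adapt the argument of Lemma \ref{lem:uniqueness-pde} to the setting where no initial condition is prescribed at $t=0$. Given a candidate solution $u\in\mathcal{E}_A'$, set $\tilde u(t,x):=u(T-t,x)$ and introduce the $(\mathcal{F}_t)$-adapted process
$$
\mathcal{Z}^u_t \ := \ \prod_{i=1}^{N_t}\tilde u(t,Z_t^i), \qquad 0<t<T.
$$
I will prove $u(T,x)=Q^T_x(\mathscr{M})$ by combining three ingredients: (a) constancy of $t\mapsto Q^T_x(\mathcal{Z}^u_t)$ on $(0,T)$; (b) the limit $\lim_{t\downarrow 0}Q^T_x(\mathcal{Z}^u_t)=u(T,x)$; (c) the limit $\lim_{t\uparrow T}Q^T_x(\mathcal{Z}^u_t)=Q^T_x(\mathscr{M})$.

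For (a), the pointwise derivative computation in Lemma \ref{lem:uniqueness-pde}, together with the domination bound built from $S_t(\varepsilon_0)$, is local in time and applies verbatim to any subinterval $[s_1,s_2]\subset(0,T)$ where $\tilde a$ is bounded; since $\tilde a$ is continuous on $[0,T)$ in the case of interest (e.g. $a(t)=1/t$ giving $\tilde a(s)=1/(T-s)$), constancy on all of $(0,T)$ follows. For (b), the first branching time is almost surely positive and $u$ is continuous, so $N_t\to 1$, $Z_t^1\to x$ and $\mathcal{Z}^u_t\to\tilde u(0,x)=u(T,x)$ almost surely; bounded convergence gives the claim.

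Step (c) is where the hypothesis $\int_{(0,T)}a(t)\,dt=\infty$ and the two boundary assumptions on $u$ enter jointly. On the extinction event $\mathscr{M}=\{T_{\mathscr M}<T\}$, for $t>T_{\mathscr M}$ every particle carries mass $0$ and the condition $u(s,0)=1$ for $s>0$ makes every factor of $\mathcal{Z}^u_t$ equal to $1$, hence $\mathcal{Z}^u_t=1$. On $\mathscr{M}^c$, the assertion is $\mathcal{Z}^u_t\to 0$. The divergence $\int_0^T \tilde a(s)\,ds=\infty$ ensures that along any lineage surviving with positive mass up to $T$, infinitely many splittings accumulate as $t\uparrow T$; each such splitting creates a sister lineage starting from the (positive) mass of the ancestor, which by right-continuity stays above a positive threshold $x_0$ on a short interval with uniformly positive probability. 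The assumption $\limsup_{s\downarrow 0}\sup_{y\ge x_0}u(s,y)<1$ then guarantees that each corresponding factor in $\mathcal{Z}^u_t$ is bounded by $1-\eta$ for some $\eta>0$, and a Borel--Cantelli / spine argument produces an unbounded number of such factors, driving $\mathcal{Z}^u_t$ to $0$. Bounded convergence yields $\lim_{t\uparrow T}Q^T_x(\mathcal{Z}^u_t)=Q^T_x(\mathscr{M})$, and combining (a)--(c) gives $u(T,x)=Q^T_x(\mathscr{M})$.

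The principal obstacle is the quantitative version of this last argument on $\mathscr{M}^c$: the heuristic \emph{``infinitely many branchings along a surviving lineage produce infinitely many factors bounded away from $1$''} must be turned into a rigorous statement. The delicate point is that masses of sister lineages may decay to $0$, so the bound $u\le 1-\eta$ does not apply uniformly to all factors; one must select, among the accelerating splittings, a genealogically rich subfamily whose offspring remain above a common positive threshold long enough for the boundary assumption on $u$ to act. A secondary technical issue is ruling out the explosion $N_t=\infty$ for $t<T$, which should follow from local boundedness of $a$ away from the singular endpoint.
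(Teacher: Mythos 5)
Your three-step scaffold matches the paper's proof exactly, and you have correctly isolated the one step with real content: showing that $X_\varepsilon := \prod_{i=1}^{N_{T-\varepsilon}} u(\varepsilon, Z_{T-\varepsilon}^i) \to 0$ a.s.\ on $\mathscr M^c$. As you note, the mere accumulation of infinitely many factors is not enough, since sister masses may decay to $0$ and the boundary bound $\limsup_{\varepsilon\downarrow 0}\sup_{y\ge x}u(\varepsilon,y)<1$ only bites above a fixed positive threshold. You sketch the right idea (a spine argument) but do not carry it out, and without it the proof does not close.

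Here is how the paper closes it. On $\mathscr M^c$ some particle carries positive mass all the way to $T$; since its mass vanishing exactly at time $T$ has probability zero, a.s.\ there exist $\eta,\epsilon>0$ such that this focal lineage has mass at least $\eta$ throughout $[T-\epsilon,T]$. Because $\tilde a$ is not integrable near $T$, the focal lineage branches at times $t_n\uparrow T$, and the sister born at $t_n$ starts from mass $\eta_n\ge\eta$. Conditionally on $(t_n,\eta_n)$, the probability $\alpha_n$ that the $n$-th sister's mass stays above $\eta/2$ throughout $[t_n,T]$ is bounded away from $0$ uniformly in $n$, by the Feller property of $Z$ (initial mass at least $\eta$, time horizon at most $\epsilon$). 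Conditional independence of the sisters plus the second Borel--Cantelli lemma then yields infinitely many sisters above $\eta/2$ on $[t_n,T]$. Hence the number $N^\eta_{T-\varepsilon}$ of particles whose mass exceeds $\eta/2$ on $[T-\varepsilon,T]$ tends to $+\infty$, and
$$
X_\varepsilon \le \prod_{i=1}^{N^\eta_{T-\varepsilon}} u(\varepsilon, Z^{\eta,i}_{T-\varepsilon}) \le \Big(\sup_{y\ge\eta/2}u(\varepsilon,y)\Big)^{N^\eta_{T-\varepsilon}} \longrightarrow 0 \quad\text{a.s.\ on }\mathscr M^c,
$$
which together with dominated convergence finishes your step (c). One further omission: the paper also checks that $u(T,x)=Q^T_x(\mathscr M)$ actually solves \eqref{eq:pde}, by the branching-property computation of Remark~\ref{rmk:domain} and the observation that $T_{\mathscr M}$ has a continuous density, so $u(\cdot,x)\in\cC^1$; your sketch proves only the uniqueness/characterization direction.
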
 
 Before proving this lemma, we wish to state the relevant corollary regarding proper (weak) solutions to \eqref{eq:Sch} when $A = \lambda A^\psi$.
 Recall $M^{\tr}$ is the entrance measure at 0 of the branching particle system conditional on the genealogy $\tr$, where particle masses evolve like the CSBP $Z$ with branching mechanism $\psi$ (and at each branching event in $\tr$, the particle with mass $x$ undergoing division, splits into two particles, each with mass $x$). 

  \begin{teo}
 \label{thm:uniqueness-weak-infinite}
Let $\psi$ be the Laplace exponent of a spectrally positive and (sub)critical L\'evy process such that $1/\psi$ is integrable at $\infty$. Then 
\begin{enumerate}
\item[(i)] There exists a unique proper weak solution $(\mu_t;t>0)$ to the \Sm equation \eqref{eq:Sch}. 
\item[(ii)] For any $T> 0$, $\mu_T$ is the law of $M^{\bf T}(\mathscr{M}^c)$, where $\bf T$ is the time-inhomogeneous binary tree started at 0 with one particle, stopped at time $T$ and with birth rate $\tilde a(t)=1/(T-t)$.
\end{enumerate}
 \end{teo}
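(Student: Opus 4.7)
The plan is to reduce the proof to Lemma \ref{lem:uniqueness-pde-2} by passing to the Laplace transform, exactly as in the proof of Theorem \ref{thm:uniqueness-weak}. Given a proper weak solution $(\mu_t; t>0)$, set $u(t,\lambda)=\int_{\R^+}e^{-\lambda x}\,\mu_t(dx)$. Plugging $f(x)=e^{-\lambda x}$ into the weak formulation \eqref{eq:weak} for $\delta=0$ and differentiating in $t$ yields that $u$ solves \eqref{eq:Laplace-pde} with $a(t)=1/t$ and $A=\lambda A^\psi$ on $(0,\infty)\times\R^+$. Exactly the argument given in the proof of Theorem \ref{thm:uniqueness-weak} (polynomial growth of $\psi$, the standard CSBP martingale, and continuity of $a$) shows $u\in\mathcal{E}_A'$.

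Second, I verify the hypotheses of Lemma \ref{lem:uniqueness-pde-2}: the integral $\int_{0}^{T}\frac{dt}{t}=\infty$ is immediate, $u(t,0)=\mu_t(\R^+)=1$, and I need $\limsup_{t\downarrow 0}\sup_{y\in[x,\infty)}u(t,y)<1$ for all $x>0$. By monotonicity of the Laplace transform, $\sup_{y\in[x,\infty)}u(t,y)=u(t,x)$. The solution being \emph{proper} means $\mu_t\not\Rightarrow\delta_0$, which by L\'evy's continuity theorem provides some $\lambda_0>0$ and a constant $a_0<1$ with $\limsup_{t\downarrow 0} u(t,\lambda_0)\le a_0$. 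Convexity of $u(t,\cdot)$ combined with $u(t,0)=1$ then forces $u(t,\lambda)\le 1-\tfrac{\lambda}{\lambda_0}(1-u(t,\lambda_0))$ on $(0,\lambda_0)$, while monotonicity handles $\lambda\ge\lambda_0$; together this gives $\limsup_{t\downarrow 0}u(t,x)<1$ for every $x>0$.

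Third, Lemma \ref{lem:uniqueness-pde-2} applies and identifies $u$ uniquely as $u(T,\lambda)=Q_\lambda^T(\mathscr{M})$, where $\mathscr{M}$ is the event of total mass extinction before time $T$ in the time-inhomogeneous branching particle system started with one particle of mass $\lambda$ at time $0$ and with birth rate $\tilde a(t)=1/(T-t)$. To translate this into the claim (ii), I condition on the genealogy $\mathbf{T}$: under $Q_\lambda^{\mathbf{T}}$, infinite divisibility and Grey's condition decompose $\mathcal{Z}^{\mathbf{T}}$ as a Poisson sum of elementary systems with intensity measure $\lambda M^{\mathbf{T}}$, so
\[
Q_\lambda^{\mathbf{T}}(\mathscr{M})=\exp\bigl(-\lambda\,M^{\mathbf{T}}(\mathscr{M}^c)\bigr).
\]
Averaging over $\mathbf{T}$ yields $u(T,\lambda)=\EE\bigl[\exp(-\lambda\,M^{\mathbf{T}}(\mathscr{M}^c))\bigr]$, and injectivity of the Laplace transform identifies $\mu_T$ with the law of $M^{\mathbf{T}}(\mathscr{M}^c)$, proving both uniqueness and the probabilistic representation in (ii).

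There remains the matter of existence. One route is to defer it to the MK-V construction of Section \ref{sect:MK-V}, exactly as done for the $\delta>0$ case in Theorem \ref{thm:uniqueness-weak}. A self-contained approach is to take the candidate $\mu_T:=\mathcal{L}(M^{\mathbf{T}}(\mathscr{M}^c))$, check finiteness using $N(T_0>t)<\infty$ together with the a.s.\ finiteness of $N_T(\mathbf{T})$ at any $T>0$ (finiteness of the inhomogeneous Yule process since $\int_0^{T-\varepsilon}\tilde a<\infty$), and then verify the PDE by differentiating the explicit Poissonian Laplace formula using the branching/time-shift identity for $\mathbf{T}$. The main obstacle is this existence step: the uniqueness/identification side is essentially a re-run of the $\delta>0$ argument, but proving that the constructed object is genuinely proper and genuinely a probability measure requires quantitative control of the $0$-entrance measure $M^{\mathbf{T}}$ near the root edge, which is where Grey's condition enters in an essential way.
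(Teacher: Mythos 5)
Your proof follows exactly the same route as the paper's: pass to Laplace transforms, verify membership in $\mathcal E_A'$, invoke Lemma \ref{lem:uniqueness-pde-2}, and use the Poissonian decomposition of the branching CSBP to translate $Q_\lambda^T(\mathscr M)$ into the law of $M^{\bf T}(\mathscr M^c)$, deferring existence to the MK-V construction. The only place you add detail beyond the paper is the convexity-of-the-Laplace-transform step propagating $\limsup_{t\downarrow 0}u(t,\lambda_0)<1$ at a single $\lambda_0$ to all $x>0$ — a useful explicitation, though note that both your write-up and the paper silently upgrade ``proper'' ($\mu_t\not\Rightarrow\delta_0$, i.e.\ $\liminf u(t,\lambda_0)<1$) to the $\limsup$ condition required by the lemma, a passage that deserves a word of justification.
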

\begin{proof}
The proof follows the same lines as the proof of Theorem \ref{thm:uniqueness-weak}, but here in the absence of initial condition. If $(\mu_t;t>0)$ is a weak solution to the \Sm equation \eqref{eq:Sch}, then its Laplace transform $u(t,\lambda)$ is solution of \eqref{eq:pde}.
Further, the fact that the solution is proper implies that
\[\limsup_{t\downarrow 0}\sup_{y\in [x,\infty)}u(t,y) = \limsup_{t\downarrow 0} u(t,x)  \ < \ 1\] 
and as a consequence of the previous lemma $u(T,x) = Q^T_x({\mathscr M})$.
The same application of the branching property as in the proof of Theorem \ref{thm:uniqueness-weak} shows that
$$
 Q^T_x({\mathscr M}) = \EE\left[\exp\left(-x M^{\bf T}(\mathscr{M}^c)\right)\right], 
$$ 
where the expectation is taken wrt to the binary tree $\bf T$ with branching rate $\tilde a(t)=1/(T-t)$. More specifically, we can write conditionally on ${\bf T} = \tr$ 
$$
\mathbbm{1}_{\mathscr M}=\exp(-\sum_i \chi_i),
$$
where the sum is taken over the atoms of the Poisson process of branching particles with intensity $xM^\tr$ and $\chi_i=0$ if the $i$-th particle has zero mass in its descendance at time $T$ and $+\infty$ otherwise. The result is obtained by an application of the exponential formula and taking expectation wrt $\bf T$.

As a consequence, we get that $\mu_T$ is the law of $M^{\bf T}(\mathscr{M}^c)$.
Finally, it remains to show the existence of a proper solution. One option consists in checking that 
$\cL(M^{\bf T}(\mathscr{M}^c))$ is solution. Alternatively, we will provide a construction 
through the MK-V approach in the next section. 
\end{proof}

\begin{proof}[Proof of Lemma \ref{lem:uniqueness-pde-2}]
Let us first prove the uniqueness part of the statement. Let $u$ be a solution to \eqref{eq:pde} with the requested properties. Following the proof of Lemma \ref{lem:uniqueness-pde}, $Q^T_x(\mathcal Z^u_{t})$ is constant as a function of $t\in [0, T-\varepsilon]$ for any $\varepsilon\in (0,T)$, so that
\be\label{eq:desintegration}
u(T, x)=Q^T_x(\mathcal Z^u_{0}) =Q^T_x(\mathcal Z^u_{T-\varepsilon}) = Q^T_x\left(\prod_{i=1}^{N_{T-\varepsilon}} u(\varepsilon, Z_{T-\varepsilon}^i)\right).
\ee
If we set $X_\varepsilon:=\prod_{i=1}^{N_{T-\varepsilon}} u(\varepsilon, Z_{T-\varepsilon}^i)$, we can write
$$
u(T, x)=Q^T_x (X_\varepsilon \mathbbm{1}_{T_{\mathscr M}< T}) + Q^T_x (X_\varepsilon\mathbbm{1}_{T_{\mathscr M}\ge T}).
$$
On $\mathscr M$, because $u(\varepsilon ,0)=1$, $X_\varepsilon =1$ for any $\varepsilon$ such that $T_{\mathscr M}<T-\varepsilon$. By dominated convergence, 
$$
\lim_{\varepsilon \downarrow 0} Q^T_x (X_\varepsilon \mathbbm{1}_{\mathscr M}) =Q^T_x (\mathscr M^c).
$$
Then it only remains to show that 
$$
\lim_{\varepsilon \downarrow 0} Q^T_x (X_\varepsilon \mathbbm{1}_{\mathscr M^c}) =0.
$$
First observe that because $a$ is not integrable in the neighborhood of $0+$, the birth rate $\tilde a$ is not integrable in the neighborhood of $T-$, so that a.s. $\lim_{\varepsilon \downarrow 0} N_{T-\varepsilon}=+\infty$. On $\{T_{\mathscr M}\ge T\}$, there is at least one particle born before $T$ which carries positive mass up until time $T$. The probability that the mass of this particle vanishes exactly at time $T$ is zero. As a consequence, there is $\eta,\epsilon>0$ such that this focal particle has mass larger than $\eta$ on $[T-\epsilon, T]$. Let $t_n$ the times at which the focal particle gives birth, where $(t_n)$ is an increasing sequence converging to $T$. Let $\eta_n\ge \eta$ be the mass carried by the focal particle at time $t_n$. Then conditional on $(t_n)$ and $(\eta_n)$, let $\alpha_n$ denote the probability that the particle born at time $t_n$ with mass $\eta_n$ carries mass always larger than $\eta/2$ between $t_n$ and $T$. Since $Z$ is Feller, the sequence $(\alpha_n)$ is bounded away from 0 and by independence of these particles conditional on $(\eta_n)$, infinitely many of them carry mass larger than $\eta/2$ on $[t_n, T]$. As a consequence, a.s. on $\{T_{\mathscr M}\ge T\}$, there is $\eta>0$ such that 
$$
X_\varepsilon \le \prod_{i=1}^{N^\eta_{T-\varepsilon}} u(\varepsilon, Z^{\eta,i}_{T-\varepsilon})
$$
where $N^\eta_{t}$ is the number of particles at time $t$ which carry more than $\eta/2$ on $[t, T]$ and  $(Z^{\eta,i}_{t})$ are their masses, which satisfy
$$
\lim_{\varepsilon \downarrow 0} N^\eta_{T-\varepsilon}=+\infty \quad \mbox{ and } \quad  Z^{\eta,i}_{T-\varepsilon}\ge \eta/2.
$$ 
Since $\limsup_{\varepsilon\downarrow 0}\sup_{[\eta/2,\infty)}u(\varepsilon,x)<1$, $\lim_{\varepsilon\downarrow 0} X_\varepsilon=0$ a.s. on ${\mathscr M}^c$. The result follows from dominated convergence.

The fact that such defined $u$ satisfies \eqref{eq:pde} is due to the same reasoning as in Remark \ref{rmk:domain}, except that here $u(\cdot, x)$ is of class $\cC^1$ (since $T_0$, and so $T_{\mathscr M}$, has a continuous density), so we can conclude that $u(t,\cdot)\in \mathcal E_A$ and that indeed $\partial_t u = Au +a(t)\,(u^2-u)$. 
\end{proof}

\subsection{The self-similar case}
\label{sec:uniqueness-weak-self-similar}

Here,  we assume that $A$ is the generator of the stable CSBP $Z$ with Laplace exponent $\psi(x)=cx^\gamma$, for $\gamma\in (1,2]$
and $c>0$. 
For any real number $r$, we denote by $Z^{(r)}$ the CSBP with branching mechanism $\psi(\lambda)-r\lambda$, which is the Feller process with generator $A_r$ defined by
$$
A_r f(x)=Af(x) +rx f'(x)\qquad x\ge0.
$$ 
Set 
$$
\beta = \frac{1}{\gamma -1}.
$$
Here we denote by $Q_x^{(\beta)}$
the law of a branching particle system started with a single particle with mass $x$, where particles branch \emph{at rate 1} and masses follow independent copies, \emph{not of the original CSBP (with Laplace exponent $c x^\gamma$), but of the CSBP $Z^{(\beta)}$}.
Similarly, for any \emph{infinite} binary tree $\tr$ embedded in continuous time, $Q_x^{(\beta),\tr}$ now denotes the law of the particle system started with one particle with mass $x$ at time 0, where particle masses evolve like independent copies of $Z^{(\beta)}$ and where the genealogy of particles is given by $\tr$. Consistently, the entrance measure at 0 of the branching particle system with genealogy $\tr$ is 
 $$
 M^{(\beta),\tr} = \lim_{x\downarrow 0} x^{-1} Q_x^{(\beta),\tr}.
 $$  
Here $\mathscr M$ denotes the event of total mass extinction, i.e., the event $\mathscr{M} =\{T_{\mathscr{M}}<\infty\}$ that \emph{after some finite time} all particles have mass $0$. 
\begin{teo}
\label{thm:self-similar-weak}
If $(\mu_t;t\ge 0)$ is the unique proper solution of the \Sm equation \eqref{eq:Sch} advertized in Theorem \ref{thm:uniqueness-weak-infinite}, then 
\begin{enumerate}
\item[(i)] $\mu_t$ is the law of $t^{-\beta}\Upsilon$, where
\be \label{eq:def-upsilon}
\Upsilon =  M^{\bf T, (\beta)}(\mathscr{M}^c),
\ee
where $\bf T$ denotes the Yule tree, i.e., the pure-birth tree with unit birth rate. 
\item[(ii)] Let $h(x) = \exp(-x \Upsilon)$. Then 
$h(x) \leq \exp(-x(\beta/c)^\beta)$ and $h$ is the unique solution in $\mathcal E_{A_\beta}$ to
\begin{eqnarray}
A_\beta h \ + h^2 - h  \ = \ 0 \nonumber \\
h(0)  = 1, \ \    \limsup_{x\to\infty} h(x) <1\label{eq:ODE-branching}.
\end{eqnarray}
\end{enumerate}

\end{teo}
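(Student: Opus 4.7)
My plan is to prove the self-similarity in (i) by exploiting the scaling invariance of the \Sm IPDE together with the uniqueness result of Theorem \ref{thm:uniqueness-weak-infinite}, to derive the ODE \eqref{eq:ODE-branching} in (ii) by pushing this scaling through \eqref{eq:Laplace-pde}, to obtain the bound via a path-domination argument on the Yule tree, and finally to identify $\Upsilon = M^{\mathbf T,(\beta)}(\mathscr M^c)$ by recognizing that the Laplace transform of this random variable satisfies the same ODE with the same boundary conditions.

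First I would establish the self-similarity. Fix $r>0$ and let $\tilde\mu_t$ be the pushforward of $\mu_{rt}$ by $x\mapsto r^\beta x$. Using $\psi(x)=cx^\gamma$ and the identity $\beta(\gamma-1)=1$, the substitution $y=r^\beta x$ in \eqref{eq:weak} transforms the transport integral $\langle\mu_{rt},\psi f'\rangle$ (with $f(x)=g(r^\beta x)$) into $r^{-1}\langle\tilde\mu_t,\psi g'\rangle$, preserves the convolution, and the factor $1/(rt)$ in the coagulation coefficient combines with the $r$ coming from $\partial_t=r\partial_{rt}$ to produce exactly $1/t$. Hence $(\tilde\mu_t)_{t>0}$ is again a proper weak solution of \eqref{eq:Sch}, and Theorem \ref{thm:uniqueness-weak-infinite}(i) forces $\tilde\mu_t=\mu_t$. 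Fixing $t=1$ yields $\mu_s=\mathcal L(s^{-\beta}\Upsilon)$ with $\Upsilon\sim\mu_1$.

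Next, setting $h(x)=\E[e^{-x\Upsilon}]$, the Laplace transform $u(t,\lambda)=h(t^{-\beta}\lambda)$ of $\mu_t$ satisfies \eqref{eq:Laplace-pde}. For the stable generator one has $A^\psi(f(a\,\cdot))(\lambda)=a^\gamma(A^\psi f)(a\lambda)$ (inherited from the self-similarity of the $\gamma$-stable Lévy process), and combined with $\beta\gamma=\beta+1$ this collapses $\lambda A^\psi u(\lambda)$ into $t^{-1}y(A^\psi h)(y)$ with $y=t^{-\beta}\lambda$. After matching with the chain-rule factor $-\beta t^{-1}yh'(y)$ from $\partial_t u$ and the explicit $t^{-1}(u^2-u)$, the factor $t^{-1}$ cancels and one obtains the autonomous equation $A_\beta h+h^2-h=0$. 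The boundary conditions $h(0)=1$ and $\limsup_{x\to\infty}h(x)<1$ come respectively from $u(t,0)=1$ and the proper condition on $u$. Uniqueness of the ODE is then inherited by the inverse correspondence: any $\mathcal E_{A_\beta}$-solution $h$ of \eqref{eq:ODE-branching} lifts back to a proper $\mathcal E_A'$-solution $u(t,\lambda):=h(t^{-\beta}\lambda)$ of \eqref{eq:Laplace-pde}, which is unique by Lemma \ref{lem:uniqueness-pde-2}.

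For the bound, set $b=(\beta/c)^\beta$ and $v(x)=e^{-bx}$. The eigenfunction identity $A^\psi e^{-bx}=\psi(b)e^{-bx}$ combined with the defining relation $\psi(b)=cb^\gamma=\beta b$ yields $A_\beta v(x)=x(\psi(b)-\beta b)v(x)=0$. To get $h\le v$, I would use the probabilistic representation $h(x)=Q_x^{(\beta)}(\mathscr M)$ combined with a path-domination argument: along the leftmost infinite geodesic of the Yule tree $\mathbf T$, the carried mass evolves as a single CSBP $Z^{(\beta)}$ started at $x$, since at each branching the child simply inherits the parent's mass. The event $\mathscr M$ of total extinction in particular forces the mass along this distinguished path to reach $0$, and because $\tilde\psi(\lambda)=c\lambda^\gamma-\beta\lambda$ has $b$ as its unique non-zero root, the extinction probability of $Z^{(\beta)}$ starting at $x$ equals exactly $e^{-bx}=v(x)$. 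Thus $h(x)\le v(x)$.

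Finally, to identify $\Upsilon$, set $\tilde h(x):=\E[e^{-xM^{\mathbf T,(\beta)}(\mathscr M^c)}]$; the Poisson decomposition of the entrance measure gives $\tilde h(x)=Q_x^{(\beta)}(\mathscr M)$. A first-step decomposition on an infinitesimal time interval $[0,\varepsilon]$ of the Yule-branching system (no branching with probability $e^{-\varepsilon}$ followed by $Z^{(\beta)}$-evolution, or a single branching at some time $s\in[0,\varepsilon]$ followed by two independent copies) yields
\[
\tilde h(x)=e^{-\varepsilon}E_x[\tilde h(Z^{(\beta)}_\varepsilon)]+\int_0^\varepsilon e^{-s}E_x[\tilde h(Z^{(\beta)}_s)^2]\,ds,
\]
and differentiation at $\varepsilon=0$ gives $A_\beta\tilde h+\tilde h^2-\tilde h=0$ with $\tilde h(0)=1$ and $\tilde h\le v$ (by the same path argument), so $\limsup_{x\to\infty}\tilde h(x)=0<1$. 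ODE-uniqueness from the second paragraph then gives $\tilde h=h$, whence $\Upsilon\stackrel{d}{=}M^{\mathbf T,(\beta)}(\mathscr M^c)$. The main obstacle I anticipate is the regularity bookkeeping underlying the ODE-uniqueness step: one must verify that $h\in\mathcal E_{A_\beta}$ really implies $u(t,\cdot)\in\mathcal E_A$ and $u(\cdot,x)\in\mathcal C^1$, and that the "proper" decay of $u$ as $t\downarrow 0$ corresponds correctly to $\limsup_{x\to\infty}h(x)<1$; both translations are natural since $t\mapsto t^{-\beta}$ is smooth and surjective onto $(0,\infty)$, but they require care.
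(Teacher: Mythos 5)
Your proposal is correct and takes a genuinely different route for the self-similarity and for the identification of $\Upsilon$. The paper proves (i) by an explicit Laplace-transform coupling between two branching particle systems: the time-inhomogeneous $(0,T)$-system (branch rate $1/(T-t)$, masses $\sim Z^{(0)}$) and the time-homogeneous $(\beta,\infty)$-system (Yule tree, masses $\sim Z^{(\beta)}$). Using the formula $\varphi_r(t+\phi_r(\lambda))$ the paper checks directly that the first branching time and mass of the $(0,T)$-system started at $T^\beta x$ match, after rescaling, those of the $(\beta,\infty)$-system started at $x$, and then propagates this by induction to get $u(T,T^\beta x)=h(x)$; this coupling \emph{simultaneously} yields self-similarity and $\Upsilon\stackrel{d}{=}M^{{\bf T},(\beta)}(\mathscr M^c)$. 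You instead deduce self-similarity purely from the scaling invariance of the weak IPDE \eqref{eq:weak} and the uniqueness in Theorem \ref{thm:uniqueness-weak-infinite}, which is a cleaner and more conceptual argument; you then recover the identity $\Upsilon\stackrel{d}{=}M^{{\bf T},(\beta)}(\mathscr M^c)$ by showing the Laplace transform $\tilde h(x)=Q^{(\beta)}_x(\mathscr M)$ satisfies the same ODE with the same boundary data and invoking ODE uniqueness. What the paper's explicit coupling buys you is an exact pathwise match of the two branching systems (useful elsewhere in the paper); what your route buys is a shorter and more structural proof of (i) that avoids the $\varphi_r$ computations. Your derivation of \eqref{eq:ODE-branching} through the scaling of $\lambda A^\psi$ and your bound $h\le e^{-x(\beta/c)^\beta}$ via extinction along a fixed spine are essentially what the paper does (it uses first-step analysis for the ODE and the inclusion $\mathscr M\subset\mathscr E$ for the bound, but these are the same ideas). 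The ODE-uniqueness step — lifting $h$ to $u(t,\lambda)=h(t^{-\beta}\lambda)$ and applying Lemma \ref{lem:uniqueness-pde-2} — is exactly the paper's argument. The only open bookkeeping you flag (membership in $\mathcal E_{A_\beta}$ and $\mathcal E_A'$, and translating "proper" into $\limsup_{x\to\infty}h(x)<1$) is indeed routine: the paper handles it by noting that $T_{\mathscr M}$ has a continuous density, so $u(\cdot,x)\in\cC^1$, and that properness of $u$ gives $\limsup_{t\downarrow 0}\sup_{y\ge x}u(t,y)<1$ since $u(t,\cdot)$ is decreasing in $y$. Filling this in would complete your argument.
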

\begin{proof}
We first recall some known facts about $Z^{(r)}$ (see for example \cite{L08}). For any $x\ge 0$, the law of $Z^{(r)}$ started at $x$ is denoted $P^{(r)}_x$. It is well-known that $0$ is absorbing for $Z^{(r)}$ and that if $T_0$ denotes the absorbing time of $Z^{(r)}$ at 0, then
$$
P^{(r)}_x(T_0<t) = e^{-x\varphi_r(t)}\qquad x,t\ge 0,
$$
where $\varphi_r$ is the inverse of 
$$
\phi_r(\lambda) = \int_\lambda^\infty \frac{dx}{\psi(x)-rx} = -\frac\beta r \ln\left( 1-\frac{r}{c\lambda^{1/\beta}}\right)
$$
when $r\not=0$, and if $r=0$,
$$
\phi_0(\lambda) =\frac{\beta/c}{\lambda^{1/\beta}}.
$$
This yields
$$
\varphi_r(t) = \left( \frac{r/c}{1-e^{-rt/\beta}}\right)^{\beta}
$$
when $r\not=0$, and if $r=0$,
$$
\varphi_0(t) = \left(\frac{\beta/c}{t}\right)^{\beta}.
$$
In particular, the probability of extinction (of a non-branching particle) is
$$
P^{(r)}_x(T_0<\infty) =  \exp(-x(r/c)^\beta) 
$$
when $r>0$ and is 1 if $r\le 0$.
More specifically, 
$$
E^{(r)}_x(e^{-\lambda Z^{(r)}_t} )= e^{-x\varphi_r(t+\phi_r(\lambda))},
$$
where
\begin{equation}
\label{eqn:heavy}
\varphi_r(t+\phi_r(\lambda) )= \left( \frac{r/c}{1-e^{-rt/\beta}\left(1-\frac{r}{c\lambda^{1/\beta}}\right)}\right)^{\beta}
\end{equation}
when $r\not=0$.

Now we wish to compare the two branching particle systems. We will refer to the $(0,T)$-system as the time-inhomogeneous branching particle system with inhomogeneous branching rate $\tilde a(t) = a(T-t) = 1/(T-t)$ blowing up at time $T$ and masses evolving as independent copies of $Z^{(0)}$. 
We will refer to the $(\beta,\infty)$-system as the time-homogeneous particle system where particles branch at rate 1 and masses evolve as independent copies of $Z^{(\beta)}$. For either system, the genealogy of particles can be represented by the infinite binary tree, and for each node $v$ of the infinite binary tree, we record the corresponding branching time $U_0 (v)$ (resp. $U_\beta (v)$) and the mass of the corresponding particle just before it splits $M_0 (v)$ (resp. $M_\beta (v)$) in the $(0,T)$-system (resp. the $(\beta,\infty)$-system). 
Recall from Lemma \ref{lem:uniqueness-pde-2} that the probability that all masses go extinct in the $(0,T)$-system is $u (T,x)$. We denote by $h(x)$ this probability in the $(\beta,\infty)$-system. We claim that $u(T,x) = h(x/T^\beta) $.

First, it is straightforward to check that the first branching time $U_0$ of the first particle in the $(0,T)$-system is uniformly distributed in $(0,T)$, so that using \eqref{eqn:heavy} with $r=\beta$, we get
\begin{eqnarray*}
E_{T^\beta x} (e^{-\lambda Z^{(0)}_{U_0} /(T-U_0)^\beta} )&=& \frac 1 T \int_0^T ds\, \exp\left\{-T^\beta x\, \varphi_0(s+ \phi_0(\lambda/(T-s)^\beta))\right\}\\
	&=& \int_0^\infty du\, e^{-u}\,\exp\left\{-T^\beta x\, \varphi_0(T(1-e^{-u})+ \phi_0(\lambda/T^\beta e^{-\beta u}))\right\}\\
	&=& \int_0^\infty du\, e^{-u}\,\exp\left\{-T^\beta x\left(
	\frac{\beta/c}{
	T(1-e^{-u})+ \frac{\beta/c}{\lambda^{1/\beta}/T e^{- u}})
	}
	\right)^\beta
	\right\}\\
	&=&  \int_0^\infty du\, e^{-u}\,\exp\left\{-x\left(
	\frac{\beta/c}{
	1-e^{-u}+ e^{- u}\frac{\beta/c}{\lambda^{1/\beta} })
	}
	\right)^\beta
	\right\}\\
	&=& \int_0^\infty du\, e^{-u}\,e^{-x\varphi_\beta (u+\phi_\beta(\lambda))}\\
	&=& E_x(e^{-\lambda Z^{(\beta)}_{U_\beta}}),
\end{eqnarray*}
where $U_\beta$ is an independent exponential variable with parameter 1. By an immediate induction, we see that if the $(0,T)$-system starts with mass $T^\beta x$ and the $(\beta,\infty)$-system starts with mass $x$, then the sequence of rescaled masses $(w^{(0)}(v)/(T-U^{(0)}(v))^\beta)_v$ indexed by the binary tree is equally distributed as the sequence $(w^{(\beta)}(v))_v$. Now by a similar argument as the one used in the proof of Lemma \ref{lem:uniqueness-pde-2}, it can be seen that in both cases, there is extinction of mass iff all masses are zero except in a finite number of nodes $v$ of the genealogy. This shows that $u(T,T^\beta x) = h(x)$, so that $u(T,x)=h(x/T^\beta)$, as claimed earlier.
Finally, 
the same application of the branching property as in the proof of Theorem \ref{thm:uniqueness-weak-infinite} shows (\ref{eq:def-upsilon}).
\medskip

Now let us show the properties of $h$ stated in (ii) of the theorem. Let $\mathcal Z_t$ denote the state (e.g., the empirical measure) at time $t$ of the $(\beta,\infty)$-system. Let $(P_t)$ denote the semigroup of $Z^{(\beta)}$, that we now simply denote $Z$ (more generally, we will omit the $\beta$
superscript until the end of the proof). Since the first branching time $\tau$ of the initial particle is independent of the dynamics of its mass, by the branching property
$$
h(x)=  E_x(h(Z_t))\,\PP(\tau >t) + \int_0^t ds\, \PP(\tau \in ds) \, E_x(h(Z_s)^2).
$$ 
Re-arranging, we get for any $t>0$
$$
\frac{1}t(P_th(x) - h(x)) =  \frac{1-e^{-t}}{t}\,P_th(x) - \frac 1 t \int_0^t ds\, e^{-s} \, P_s(h^2)(x)
$$
Since $h$ is bounded continuous and $Z$ is a Feller process, the right-hand side converges as $t\downarrow 0$ to $h(x)-h(x)^2$. Then $h$ is in the domain of $A_\beta$ and we have
$$
A_\beta h(x) = h(x)-h(x)^2\qquad x\ge 0.
$$
Note that $h(x)\in [0,1]$ and $h(0) =1$. Also notice that $\mathscr M \subset \mathscr E$, where $\mathscr E$ is the event that the mass of a single (randomly chosen, say) lineage goes to 0. Now $Q_x(\mathscr E)= P^{(\beta)}_x(T_0<\infty) =  \exp(-x(\beta/c)^\beta) 
$, which yields 
 $$
 h(x) = Q_x( \mathscr{M})\le Q_x( \mathscr{E}) =  \exp(-x(\beta/c)^\beta).
 $$
As a consequence $\lim_{x\to\infty} h(x) =0$ and so $h$ is a solution to \eqref{eq:ODE-branching}, which yields the existence part of the statement.

For the uniqueness part, it is sufficient to note that if $h$ is solution of the ODE, then $u(t,x)=h(x/t^{\beta})$ is solution of the IPDE (\ref{eq:pde}). Since the solution of this equation is unique, the result follows.
\end{proof}

\section{Finite population McKean--Vlasov equation }
\label{sect:MK-V}
In this section, we only assume that $\psi\in\Hc$, which notably encompasses the case studied in the previous section, i.e., when $\psi$
is the Laplace transform of a spectrally positive (sub)critical L\'evy process.

For any $u>0$, $\theta_u$ will be the time shift operator by $u$ so that
$\theta_u\circ f(t) \ =  f(t+u)$ for any generic function $f$ of time.

We fix $\delta>0$ and $\nu\in M_P(\R^+)$. 
(Recall that we think of $\delta$ as the inverse population size.) We consider the McKean--Vlasov equation \eqref{eq:mckean}.
 As already mentioned in the introduction, one may think of $(x_t; t\geq0)$ as the evolution of the mass of a typical cluster in the population described by the \Sm equation (\ref{eq:Sch}). We start by giving a more formal definition in terms of a fixed point problem (see Proposition \ref{lem:unicity}).

\medskip

Let us consider the Skorohod space $D(\R^+,\R^+)$ (i.e., the space of c\`adl\`ag functions equipped with the Skorohod topology on every finite interval $[0,T]$). 
For every probability measure  $m$ on $D(\R^+,\R^+)$, define $\phi(m)$ the law of the process
$$ d y_t \ = \ -\psi(y_t) dt \ + \ \Delta J_t^{(\delta)} v_t, \  \ \cL(y_0)=\nu$$
where $(v_t; t\geq0)$ is a family of independent random variables with $v_t$ being distributed as $z_t$ -- the process with law $m$ evaluated at time $t$ -- and $J^{(\delta)}$
is an inhomogeneous Poisson process with rate $1/(t+\delta)$.
(More precisely, conditioned on the jump times $\{s_i\}$ of $J^{(\delta)}$, $\{v_{s_i}\}_i$ is a sequence of independent rv's with respective law $\cL(z_{t_i})$.)

We will say that $\left(x_t; t\geq0\right)$ is solution of the McKean--Vlasov equation 
(\ref{eq:mckean}) iff the law of
the process $x$ is a fixed point for the map $\phi$.

\begin{prop}[Uniqueness to MK-V]\label{lem:unicity}
The operator 
$\phi$ has a unique fixed point. As a consequence, there is at most one solution 
to the MK-V equation (\ref{eq:mckean}).
\end{prop}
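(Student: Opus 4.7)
The plan is to use a coupling argument based on the Wasserstein distance associated with the truncated metric $d(x,y)=|x-y|\wedge 1$. A preliminary reduction is helpful: the operator $\phi$ constructs $\phi(m)$ from the deterministic flow of $-\psi$, an independent Poisson process $J^{(\delta)}$, and, independently at each jump, a sample from the marginal $\cL(z_t)$ of $m$ at the corresponding time. Hence $\phi(m)$ depends on $m$ only through its one-dimensional marginals, so if two fixed points share the same family of marginals they coincide on path space. It therefore suffices to prove that the family of one-dimensional marginals of any fixed point is uniquely determined.

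Let $m^1, m^2$ be two fixed points and denote $\mu_t^i:=\cL(z_t)$ under $m^i$. Build $x^1, x^2$ simultaneously on one probability space using: the common initial sample $x_0\sim\nu$, a common Poisson process $J^{(\delta)}$ of rate $1/(s+\delta)$, and, independently across jumps and of the rest, an optimal $W_d$-coupling $(v_s^1, v_s^2)$ of $(\mu_s^1,\mu_s^2)$ at every jump time $s$. By the fixed-point property, $\cL(x_t^i)=\mu_t^i$, so $D(t):=W_d(\mu_t^1,\mu_t^2)\le \E[d(x_t^1,x_t^2)]$.

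Between jumps, $\dot x^i=-\psi(x^i)$; since $\psi$ is non-decreasing, the flow is order-preserving and $t\mapsto|x_t^1-x_t^2|$ is non-increasing, hence $t\mapsto d(x_t^1,x_t^2)$ is non-increasing between jumps. At a Poisson jump at time $s$, the elementary inequality $(A+B)\wedge 1\le A\wedge 1+B\wedge 1$ for $A,B\ge 0$ gives the translation subadditivity
\begin{equation*}
d(x_s^1+v_s^1,\, x_s^2+v_s^2) \;\le\; d(x_s^1,x_s^2)+d(v_s^1,v_s^2).
\end{equation*}
Taking expectations and applying Campbell's formula to the marked Poisson process, together with optimality $\E[d(v_s^1,v_s^2)]=D(s)$, yields
\begin{equation*}
D(t) \;\le\; \E[d(x_t^1,x_t^2)] \;\le\; \int_0^t \frac{D(s)}{s+\delta}\,ds.
\end{equation*}
Since $D(0)=W_d(\nu,\nu)=0$ and the kernel $1/(s+\delta)$ is locally bounded on $[0,\infty)$ (here $\delta>0$), Gr\"onwall's lemma forces $D\equiv 0$, so $\mu_t^1=\mu_t^2$ for every $t$, and the preliminary reduction concludes the proof.

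The main obstacle is the jump term: one has to make rigorous the step from the pathwise increment bound at each Poisson jump to the integral inequality for $D$, via Campbell's formula applied to a marked Poisson process whose marks are coupled in a time-inhomogeneous way (the optimal $W_d$-coupling depends on the current time $s$). The choice of the truncated metric $d$ is the technical key: it removes any moment assumption on $\nu$ or on the solutions (none is provided in the setting $\psi\in\Hc$), and it is subadditive under translation, a property the usual Wasserstein-$1$ distance shares only on the finite-moment subspace.
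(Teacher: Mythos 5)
Your proof is a valid coupling argument that shares the paper's core mechanics — same initial sample, same Poisson clock, a minimal coupling of the marks at each jump, monotonicity of $\psi$ so the transport term can only contract the gap, and then Campbell's formula feeding a Gr\"onwall/contraction step — but you package it differently in two respects. First, you reduce to one-dimensional marginals by observing that $\phi(m)$ depends on $m$ only through $(\cL(z_t))_{t\ge 0}$, and then control $D(t)=W_d(\mu_t^1,\mu_t^2)$ directly by Gr\"onwall, rather than controlling the path-space sup-distance and running a Picard iteration $\phi^k$ as the paper does; for fixed points the two routes are equivalent, but your version is slightly more direct. Second, and more substantively, you replace the paper's $L^1$ Wasserstein distance $D_T(m_1,m_2)=\inf\E(\sup_{s\le T}|y_s^1-y_s^2|)$ by the Wasserstein distance built from the truncated metric $d(x,y)=|x-y|\wedge 1$. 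This is a genuine improvement: the statement allows arbitrary $\nu\in M_P(\R^+)$ and any $\psi\in\Hc$, with no moment hypothesis, so the paper's $D_T$ could in principle be infinite and the inequality $D_t(\phi^k m_1,\phi^k m_2)\le \tfrac{t^k}{\delta^k k!}D_t(m_1,m_2)$ would then be vacuous; the truncated metric keeps $D$ bounded by $1$, making the Gr\"onwall step unconditional. The verification of translation subadditivity $(A+B)\wedge 1\le A\wedge 1 + B\wedge 1$, the use of the order-preserving flow between jumps, and the marked-Poisson Campbell formula are all correct as stated. The only routine technical point you flag but do not fully spell out — measurable selection of the time-$s$ optimal couplings — is of the same nature as the paper's own parenthetical remark on the existence of the minimizer, so this does not represent a gap relative to the paper's level of rigor.
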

\begin{proof}
We give a contraction argument analogous to Theorem 1.1. in \cite{Sn99}.
 For every  pair of measures $m^1,m^2$ on $D(\R^+,\R^+)$, and every $T\geq0$, we define the Wasserstein distance
$$ D_T\left( m_1,m_2 \right)\ = \ \inf\left\{ \E(\sup_{s\in[0,T]} |y_s^1 - y_s^2|) \ : \ {\mathcal L}(y^1) = m^1, \ {\mathcal L}(y^2)=m^2   \right\} $$
where the infimum is taken over every possible coupling between $y^1,y^2$ under the constraint  ${\mathcal L}(y^1) = m^1$ and ${\mathcal L}(y^2)=m^2$. 
Consider $z^1, z^2$ be two processes in $D(\R^+,\R^+)$ with respective laws $m^1$ and $m^2$ and define
\begin{eqnarray}
d x^i_t \ = \ - \psi(x^i_t) dt \ + \ \Delta J^{(\delta)}_t v^i_{t}, \ \  \cL(x_0^i) \ = \ \nu,
\end{eqnarray}
where $x_0^1= x_0^2$ and $(v^1_{s},v^2_s)$ are independent random variables with
${\mathcal L}(v^i_s) \ = \ {\mathcal L}(z_s^i), i=1,2$ and $(v^1_s,v^2_s)$ are coupled in a minimal way, i.e., 
for every $s\geq0$
$$
\E(|v^1_s - v^2_s |) \ = \ \inf\left\{\ \E(|a-b|) \ : \ {\mathcal L}(a) \ = \ z_s^1,     {\mathcal L}(b) = z_s^2 \right\}.
$$
(One can show that the minimum is attained by considering an approximating subsequence and using a standard tightness argument.)
Write $\Delta x_t = x_t^2 - x_t^1$ and note that $\Delta x_0 =0$. We have 
$$ \Delta x_t \ = \ \Delta x_0 \ - \ \int_{0}^t \left(\psi(x^2_s)-\psi(x^1_s) \right)ds \ + \ \sum_{s_i\leq t \ : \   \Delta J^{(\delta)}_{s_i}=1 } (v_{s_i}^2 - v_{s_i}^1 ).   $$
Since $\psi$ is positive and non-decreasing in $x$, the part of the dynamics $\left(\psi(x^2_s)-\psi(x^1_s) \right)ds$ can only reduce the distance between $x^1$ and $x^2$,  it is not hard to see
that
$$\sup_{s\leq t} |\Delta x_s| \leq \sum_{s_i\leq t : \Delta J^{(\delta)}_{s_i}=1} |v^2_{s_i} - v^1_{s_i}| $$
and thus
\begin{eqnarray*}
\E(\sup_{s\leq t} |\Delta x_s|) 
& \leq & \int_0^t \frac{1}{\delta+s}  \E\left( |v_s^2 - v_s^1  | \right) ds \\
& \leq & \frac{1}{\delta}\int_0^t   D_s(m^1,m^2) ds 
\end{eqnarray*}
where the last inequality follows from the choice of our coupling $(v_s^1,v_s^2)$.
This implies that
\begin{eqnarray*}
D_t(\phi(m_1),\phi(m_2)) 
& \leq & \frac{1}{\delta} \ \int_0^t D_s(m_1,m_2) ds
\end{eqnarray*}
By a simple induction, this implies that
\begin{eqnarray*}
D_t(\phi^{k}(m_1),\phi^{k}(m_2)) 
& \leq &  \frac{t^k}{\delta^k k !} D_t(m_1,m_2). 
\end{eqnarray*}
Thus if $m_1$ and $m_2$ are two fixed points for $\phi$, letting $k\to\infty$, yields that $m_1=m_2$.
\end{proof}

\begin{lem}\label{lem:relation}
Let $(x_t; t\geq0)$ be a solution of (\ref{eq:mckean}), so that that $(\mu_t:= \cL(x_t); t\geq 0)$ be is a (MK--V) solution of 
the \Sm equation (\ref{eq:Sch}).
Then  $\left(\mu_t; t\geq 0\right)$ is also a weak solution
to the \Sm equation (\ref{eq:Sch}) with  initial condition $\nu$ and inverse population size $\delta$. 
\end{lem}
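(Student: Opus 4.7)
The plan is to apply the change-of-variables formula for jump processes to $f(x_t)$ where $f$ is a test-function, take expectations, and match the resulting terms one-by-one with the right-hand side of the weak formulation \eqref{eq:weak}.

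More precisely, let $f \in \mathcal{C}^1(\R^+)$ with $f$ and $\psi f'$ bounded. The process $(x_t; t\ge 0)$ is a pure jump-and-drift process: between jumps it obeys $\dot{x} = -\psi(x)$, and at each jump time $s$ of $J^{(\delta)}$ it makes a jump of size $v_s$. The elementary change-of-variables formula then yields
\be
f(x_t) \ = \ f(x_0) \ - \ \int_0^t f'(x_s)\,\psi(x_s)\,ds \ + \ \sum_{s\le t\,:\,\Delta J^{(\delta)}_s = 1}\bigl[f(x_{s-}+v_s) - f(x_{s-})\bigr].
\ee
Taking expectations, the LHS is $\langle\mu_t,f\rangle$, the initial term is $\langle\nu,f\rangle$, and by Fubini the drift term becomes $\int_0^t \langle\mu_s,\psi f'\rangle\,ds$, matching the first two terms of \eqref{eq:weak}.

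For the jump term, compensate the inhomogeneous Poisson process $J^{(\delta)}$, which has intensity $1/(s+\delta)\,ds$. By construction of the MK-V dynamics, $v_s$ is drawn independently of $\mathcal{F}_{s-}$ (and in particular of $x_{s-}$) with law $\mathcal{L}(v_s)=\mathcal{L}(x_s)=\mu_s$; since $x$ has only countably many jumps we may also replace $x_{s-}$ by $x_s$ inside the integral. Thus
\be
\EE\!\left[\sum_{s\le t\,:\,\Delta J^{(\delta)}_s = 1}\!\!\!\bigl[f(x_{s-}+v_s) - f(x_{s-})\bigr]\right] \ = \ \int_0^t \frac{1}{s+\delta}\,\EE\!\left[f(x_s+v_s)-f(x_s)\right]ds.
\ee
Using the independence of $v_s$ from $x_s$ and the identity $\mathcal{L}(v_s)=\mu_s$, we obtain $\EE[f(x_s+v_s)] = \iint f(x+y)\,\mu_s(dx)\mu_s(dy) = \langle\mu_s\star\mu_s,f\rangle$ and $\EE[f(x_s)]=\langle\mu_s,f\rangle$, which delivers exactly the coagulation term in \eqref{eq:weak}.

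The only subtle point is the justification of the exchange of expectation and summation and of the use of the compensator: one must check integrability so as to apply the optional stopping/compensation theorem. This follows from the boundedness of $f$ (hence of $f(x_{s-}+v_s)-f(x_{s-})$) together with the fact that $J^{(\delta)}$ has finite expected number of jumps on $[0,t]$ for each $t>0$ since $\delta>0$, so $\EE[J^{(\delta)}_t]=\int_0^t(s+\delta)^{-1}ds<\infty$; this makes the compensated sum a true martingale, and Fubini applies to the expected total variation. The careful specification of the independence structure of $(v_s)$, already built into the definition of a solution to \eqref{eq:mckean} given just before Proposition \ref{lem:unicity}, is what allows the expected jump increment to reduce precisely to the convolution term; this is the only step that requires genuine attention, the rest being routine.
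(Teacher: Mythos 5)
Your proof takes the same route as the paper's, which simply invokes It\^o's formula for jump--drift processes, takes expectations, and identifies the resulting terms with \eqref{eq:weak}; you have spelled out the details (the explicit change-of-variables identity, the compensation of the inhomogeneous Poisson process, the role of the independence of $v_s$ from $x_{s-}$ in producing the convolution, and the integrability needed to justify these steps) that the paper leaves implicit in a one-line argument. Both are correct.
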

\begin{proof}
By definition of the process $x$, for any test function $f$, a direct application of It\^o's formula yields
\[\E\left(f\left(x_t\right)\right) \ = \ \E\left(f\left(x_0\right)\right)  -  \int_{0}^t \E(\psi(x_s) f'(x_s) ) ds \ + \int_0^t \frac{1}{\delta+s} \int \E\left(f(x_s+u) - f(x_s)\right) \mu_s(du) \]
which can be rewritten as (\ref{eq:weak}).
\end{proof}

\subsection{The Brownian CPP}

Let us denote by $\cP$ a Poisson point process with intensity measure 
$dl\times \frac{dt}{t^2}$ on $\R^+_*\times\R^+_*$. 
We call Brownian Coalescent Point Process the ultrametric tree $\cT$ associated with $\cP$
$$\cT \ = \ \{(l,s) \in \R^+\times \R^+ \ : \  \exists  t \geq s \ \mbox{s.t. } (l,t) \in \cP  \} \cup \{(0,t); t\in\R^+\}  $$ 
equipped with the distance
\[ d_{\cT}\left((l',s'), (l,s)\right) \ =
\left\{ \begin{array}{cc} \ 2\max\{\bar t \ : \ (\bar l,\bar t)\in\cP, \ \mbox{s.t.} \ \bar l\in[l\wedge l,l'\vee l] \}  - ({s+s'})& \mbox{if $l\neq l'$},  \\ 
|s-s'| & \mbox{otherwise.} \end{array}\right.\] 
$\{(0,t); t\in\R^+\}\subset \cT$ will be referred to as the eternal branch of the tree.
See Fig. \ref{cpp} for a pictorial representation of the tree $\cT$  above level $\delta>0$.

 \begin{figure}[h] \includegraphics[width=18cm]{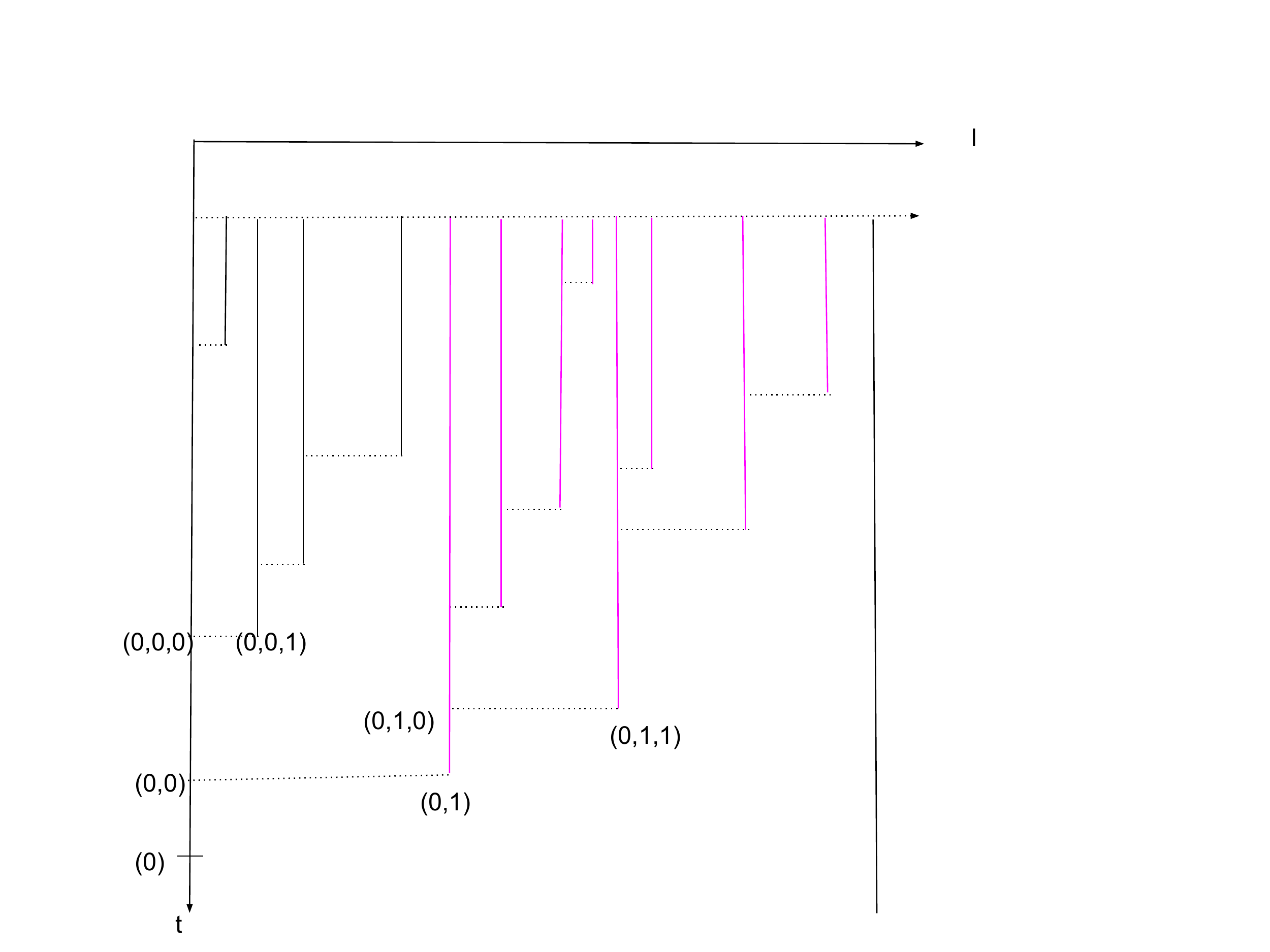} 
 \caption{Brownian CPP above a fixed level $\delta$. Points of $\cP$
 are related to the axis $\{t=0\}$ by a vertical branch (plain lines).
 Dotted lines correspond to the merging of two branches of the tree. {\it In the proof of Theorem \ref{teo:mckean}.}
 The left-most branch ``alive'' at time $T$ (the time coordinate of the point labelled $(0)$) is the black branch on the right hand side of the purple subtree. Upon coalescence of 
 the subtree rooted at $(0,0)$ and $(0,1)$, the two subtrees (black and purple) are equally distributed, and as a consequence, the mark at $(0,0)$
 and $(0,1)$ are equally distributed. {\it In the proof of Lemma \ref{cor:cv-to-self-similar}}. If $(0,T)$
 corresponds to the point labelled $(0)$, then $\{(l,\delta)\in D^{(\delta)}_{\cT}(0,T)\}$ 
 corresponds to the point of the black and purple subtrees with time coordinate $\delta$.} 
 \label{cpp}\end{figure}

\subsection{Marking the CPP and construction of a solution to MK-V}
\label{sect:cpp-construction}


In this section, we describe a marking of the tree which will provide a solution to MK-V (Theorem \ref{teo:mckean}).

Fix $\delta>0$ and let $\{\zeta^{(\delta)}_i\}_i$ be a (possibly random) sequence in $\R^+$. 
Let $\{l_i,\delta\}_i$ be the elements in $\cT$ with time coordinate $\delta$, and assume 
that the $l_i$'s are listed in increasing order.
For each point $(l,s)$ of the tree $\cT$ with time coordinate $s\geq \delta$, we assign a mark $m_{l}(s)$ such that
(1) $m_{l_i}(\delta)=\zeta_i^{(\delta)}$ for every $i\in\N$, and (2) the marks with higher time coordinates are deduced (deterministically conditioned on $\cT$
and the initial marking) from the differential relation
\begin{eqnarray}
\forall t\geq \delta, \ d m_l^{(\delta)} (t) \ = \  -\psi(m_l^{(\delta)} (t)) dt \ + \ \sum_{(l',t)\in\cP: l'>l  } \sigma_{(l,l',t) } m^{(\delta)}_{l'}(t), \     \label{eq:sim}\\ 
\ \mbox{where   } \   \sigma_{(l,l',t) } \ = \ 1 \mbox{ if $\sup\{\bar t \  : \ \bar l \in( l,l'], (\bar l, \bar t) \in \cP \} \ = \ t$}, \  0 \ \mbox{otherwise.} \nonumber 
\end{eqnarray}
In words, we start by marking each point at level $\delta$ from left to right with the sequence $\zeta_i^{(\delta)}$; then the marks evolve according to the ODE $\dot x \ = \ -\psi(x) $ along each branch, and when two branches merge, we simply add up the values of the marks. The previous procedure defines a marking of the tree $\cT$
above time horizon $\delta$. 

\begin{rmk}
Let us assume here that $\psi$
is again the Laplace exponent 
of (sub)critical spectrally positive L\'evy process.

The previous marking is completely identical to the one 
involved in the definition of the function $F$ in (\ref{eq:laplace-general}).
This is obviously not coincidental. 

Let $(\mu_t; t\geq0)$ be the weak solution of the \Sm equation. Recall from Theorem \ref{thm:uniqueness-weak} that $\mu_T$ is the law of $F({\bf T}, (W_i); 1\le i \le N_T (\bf T))$, where $\bf T$ is the time-inhomogeneous tree started at 0 with one particle, stopped at time $T$ and with birth rate $\tilde a(t)=1/(T-t+\delta)$, the $(W_i)$ are iid with law $\nu$.

Now, it is not hard to see that the tree originated from $(0,T)$
and stopped at time horizon $\delta$ in the CPP is identical in law with ${\bf T}$.
From Theorem \ref{thm:uniqueness-weak}, this implies that $\left(\cL(\theta_\delta\circ m_0^{(\delta)}(t)); t\geq0\right)$ 
is a weak solution of (\ref{eq:Sch}). We will actually show more, namely that 
$\left(\theta_\delta\circ m_0^{(\delta)}(t); t\geq0\right)$ is solution to MK-V (and with no restriction on $\psi$).
See Theorem \ref{teo:mckean}.

This provides a natural interpretation for the expression of $\mu_t$
in terms of the MK-V equation.
\end{rmk}

The marks $(m_l^{(\delta)}(t); t\geq\delta, (l,t)\in{\cT})$ will be referred to as the {\it partial} marking of $\cT$ above level $\delta$ with initial condition 
$\{\zeta_i^{(\delta)}\}_i$.  When the initial marks $\{\zeta_i^{(\delta)}\}_i$ are distributed as independent copies with law $\nu$,
 the partial marking will be referred to as the partial marking above level $\delta$ with initial condition $\nu$.

Finally, a {\it full} marking of $\cT$ will refer to marks $(m_l(t);  (l,t)\in{\cT})$ defined on the whole tree $\cT$ such that for every $\delta>0$,
$(m_l(t); t\geq\delta, (l,t)\in{\cT})$ is a partial marking above level $\delta$ (with initial marking $\{m_{l_i}(\delta),  (l_i,\delta)\in\cT\}$).

\begin{teo}\label{teo:mckean}
Let us consider $m^{(\delta)}$ the partial marking above level $\delta$
with initial law $\nu$. Then
($\theta_\delta\circ m_0^{(\delta)}(t); t\geq 0)$ is solution to the MK-V equation (\ref{eq:mckean}) with initial condition $\nu$
and inverse population $\delta$.
\end{teo}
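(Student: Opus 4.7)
The strategy is to verify that the process $X_t := m_0^{(\delta)}(\delta + t)$, $t \geq 0$, satisfies the fixed-point formulation of the MK-V equation recalled in Proposition \ref{lem:unicity}. Concretely, we must exhibit an inhomogeneous Poisson process $J^{(\delta)}$ on $\R^+$ with rate $1/(t+\delta)$, and a family $(v_t; t \geq 0)$ which, conditionally on $J^{(\delta)}$, consists of mutually independent random variables with $\cL(v_t) = \cL(X_t)$, such that $dX_t = -\psi(X_t)\,dt + v_t\,\Delta J_t^{(\delta)}$ with $X_0 \sim \nu$. From the defining relation (\ref{eq:sim}) applied at $l = 0$, between coalescence events on the eternal branch the process $X$ obeys $\dot X = -\psi(X)$, and at each coalescence $(l', t) \in \mathcal{P}$ with $\sigma_{(0, l', t)} = 1$ it jumps by $m_{l'}^{(\delta)}(t-)$; the initial condition $X_0 = \zeta_1^{(\delta)} \sim \nu$ holds by construction. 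The task thus reduces to identifying the law of the coalescence times and of their associated jump sizes.

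First, identify the coalescence times. The atoms $(l', t) \in \mathcal{P}$ with $\sigma_{(0,l',t)} = 1$ are exactly the \emph{left-records} of $\mathcal{P}$ above level $\delta$: atoms $(l', t)$, $l' > 0$, such that no $(\bar l, \bar t) \in \mathcal{P}$ satisfies $\bar l \in (0, l']$ and $\bar t > t$. Enumerate them as $(L_i, T_i)_{i \geq 1}$ with $\delta < T_1 < T_2 < \cdots$. A direct computation from the Poisson intensity $dl\, dt/t^2$ shows that at any level $u > \delta$, the leftmost atom above level $u$ has $l$-coordinate exponentially distributed with mean $u$ and, conditionally, time-coordinate with density $u/t^2$ on $[u, \infty)$; this gives $\P(T_1 > t) = \delta/t$. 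Iterating via the strong Markov property of the PPP yields that $(T_i)_{i \geq 1}$ form an inhomogeneous Poisson point process on $[\delta, \infty)$ with intensity $dt/t$. Translating time by $-\delta$ identifies this with the desired $J^{(\delta)}$ of rate $1/(t+\delta)$ on $\R^+$.

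The central step is to identify the joint law of the jumps $V_i := m_{L_i}^{(\delta)}(T_i-)$. Here we exploit the spatial independence and self-similarity of the Brownian CPP. The subtree $\mathcal{S}_i \subset \mathcal{T}$ descending from $(L_i, T_i)$ and truncated to the time-band $[\delta, T_i]$ is a measurable function of the restriction of $\mathcal{P}$ to a sub-region contained in $[L_i, L_{i+1}) \times [\delta, T_i]$ (using the left-record property $L_i^- = 0$ together with $L_i^+ = L_{i+1}$). These sub-regions are pairwise disjoint across $i$ and disjoint from the strip $\R^+ \times (T_i, \infty)$ governing the eternal-branch dynamics and the future left-records after $T_i$. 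By the independence of Poisson processes on disjoint regions and the scale invariance $(l, t) \mapsto (cl, ct)$ of the intensity $dl\, dt/t^2$, conditionally on $(T_i)_{i \geq 1}$ the subtrees $(\mathcal{S}_i)$ are mutually independent, and each $\mathcal{S}_i$ is distributed as the subtree of an independent Brownian CPP attached to its eternal branch between levels $\delta$ and $T_i$. Since the initial marks $\{\zeta_j^{(\delta)}\}$ are iid $\nu$ at every level-$\delta$ point of $\mathcal{T}$, the apex mark $V_i$ of $\mathcal{S}_i$ has the same law as $m_0^{(\delta)}(T_i-) = X_{T_i - \delta}$, and the $V_i$'s are mutually independent given $J^{(\delta)}$.

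Combining these ingredients, $X$ solves (\ref{eq:mckean}) with initial law $\nu$ and inverse population $\delta$, so $\cL(X)$ is a fixed point of $\phi$. The main obstacle is the central step: the exchangeability argument identifying the $V_i$'s. It requires a careful parameterization of each $\mathcal{S}_i$ in terms of disjoint Poisson sub-regions, together with an application of the scale invariance of $dl\, dt/t^2$ to identify the law of $\mathcal{S}_i$ with that of an independent Brownian CPP subtree. Once this is secured, uniqueness from Proposition \ref{lem:unicity} is not needed for existence but confirms that $\cL(X)$ is indeed \emph{the} MK-V solution associated with $\nu$ and $\delta$.
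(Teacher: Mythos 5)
Your proof is correct and follows essentially the paper's strategy: identify the coalescence rate from the Poisson intensity of the CPP and argue, via the independence and invariance structure of the Poisson process, that conditionally on the jump times the jump sizes are independent with the required marginal laws. One small slip: reducing the subtree rooted at $(L_i, T_i)$ to an independent CPP subtree uses the \emph{translation} invariance of $dl\,dt/t^2$ under $l\mapsto l+a$ (via the strong Markov property in the $l$-direction, which makes $\cP$ to the right of $L_i$, conditionally on the configuration to its left, an unconditioned shifted copy of $\cP$), not the scaling $(l,t)\mapsto(cl,ct)$ you cite, which is a separate symmetry used only for the self-similarity results later in the paper.
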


\begin{proof} 

It is enough to show that $\left(m_0^{(\delta)}(t); t\geq\delta\right)$ solves the McKean--Vlasov
\be \forall t\geq \delta, \ \ \bar x_t - \bar x_\delta \  = \ \underbrace{-\int_\delta^t \psi( \bar x_s) ds}_{\mbox{term I}} \ + \ \underbrace{\sum_{\delta \leq s_i\leq t \ : \  \Delta \bar J^{(0)}_{s_i}=1}  \bar v_{s_i}}_{\mbox{term II}}, \  \ \mbox{and}  \  \ \cL(\bar x_{\delta}) \ =  \ \nu\label{e:dynamics}\ee
where $\bar J^{(0)}$ is identical in law to a Poisson Point process with rate $1/t$ and 
where conditional on $\bar J^{(0)}$,
$\{\bar v_{s_i}\}$ is a collection of independent random variables with $\cL(v_s)\ = \ \cL(\bar x_s)$.

First, the initial condition is obviously satisfied.

Secondly, we note that in the absence of a coalescence event, $m^{(\delta)}_0(t)$
decreases at rate $-\psi(m_0(t))$ which exactly corresponds to term I in the dynamics (\ref{e:dynamics}).
For term II, we note that $m_0^{(\delta)}(t)$ experiences a jump upon a coalescence event (see the second term on the RHS of (\ref{eq:sim})). Recall that the Brownian CPP is defined as a Poisson Point process with 
intensity rate $dl\times dt/t^2$, and by definition of $m^{(\delta)}_0$ such a coalescence event occurs whenever the left-most branch ``alive'' at time $t$ with  a strictly positive $l$--coordinate   dies out (see Fig. \ref{cpp}). This occurs at a rate
$$ \frac{1}{t^2} /\int_{t}^\infty \frac{ds}{s^2} \ = \ \frac{1}{t} \ \  \mbox{at time $t$} $$
which exactly corresponds to the rate of $\bar J^{(0)}$ in (\ref{e:dynamics}).
Finally, by translation invariance and the independence structure in the Brownian CPP, the branch coalescing with the eternal branch $\{0\}\times\R^+_*$ carries a mark that is identical in law 
to $m^{(\delta)}_0(t)$, and independent of $m^{(\delta)}_0(t)$. See again Fig. \ref{cpp}.
\end{proof}

As a corollary of Proposition \ref{lem:unicity} and Theorem \ref{teo:mckean},  we get the following existence and uniqueness result.

\begin{cor}\label{cor:existence-uniqueness}
For $\delta>0$ and $\nu\in M_P(\R^+)$,
there exists a unique solution to the MK-V  equation (\ref{eq:mckean}).
\end{cor}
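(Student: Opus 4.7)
The proof is essentially a one-liner combining the two results that immediately precede the corollary, so my plan is simply to articulate how each ingredient supplies one half of the statement.

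For existence, I will invoke Theorem \ref{teo:mckean} directly. Given $\delta>0$ and $\nu\in M_P(\R^+)$, construct the Brownian CPP $\cT$ together with an i.i.d.\ $\nu$-distributed sequence $\{\zeta_i^{(\delta)}\}_i$ attached to the points of $\cT$ at time level $\delta$, and propagate these marks upward according to the ODE $\dot x=-\psi(x)$ between coalescences and additively at coalescences, as in \eqref{eq:sim}. Theorem \ref{teo:mckean} then guarantees that the time-shifted trajectory $(\theta_\delta\circ m_0^{(\delta)}(t);\,t\geq 0)$ is a bona fide solution of the MK-V equation \eqref{eq:mckean} with initial distribution $\nu$ and inverse population size $\delta$; in particular the law of this process is a fixed point of the operator $\phi$.

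For uniqueness, Proposition \ref{lem:unicity} asserts that $\phi$ has at most one fixed point, and by definition any solution to MK-V \eqref{eq:mckean} has law which is such a fixed point. Hence the solution supplied by the CPP construction is the unique one.

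There is no genuine obstacle here: the content is entirely in Theorem \ref{teo:mckean} (construction via the marked CPP) and Proposition \ref{lem:unicity} (contraction on the Wasserstein distance $D_T$ giving uniqueness of the fixed point). The corollary merely records that both results hold under the same assumptions $\delta>0$ and $\nu\in M_P(\R^+)$, so that existence and uniqueness can be combined into a single statement. I would write the proof as a single short paragraph, cross-referencing Theorem \ref{teo:mckean} for existence and Proposition \ref{lem:unicity} for uniqueness, with no additional computation.
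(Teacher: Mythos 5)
Your proposal matches the paper's intended argument exactly: the paper introduces the corollary with ``As a corollary of Proposition \ref{lem:unicity} and Theorem \ref{teo:mckean}, we get the following existence and uniqueness result,'' which is precisely the decomposition you describe (marked CPP construction for existence, Wasserstein contraction for uniqueness). One small but accurate observation in your write-up: you correctly note that what Proposition \ref{lem:unicity} actually \emph{proves} is that $\phi$ has \emph{at most} one fixed point, so existence genuinely needs Theorem \ref{teo:mckean}; this is the right reading of how the two pieces fit together.
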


\subsection{Scaling} 

 For every $\tau>0$, define
the scaling map
$$F_\tau : (l,t) \to (\tau l, \tau t)$$ 
Fix $\gamma>1$. We set 
$$
\beta :=\frac{1}{\gamma -1}
$$
and we define for any $\nu\in M_F(\R^+)$ ${\mathcal S}^{\tau,\gamma}(\nu)$ as the push-forward of the measure $\nu$ by the map
$$ x\mapsto \tau^{-\beta}x.
$$
\begin{prop}[scaling]\label{cor:scaling}
For every $\tau>0$
\begin{enumerate}
\item[(i)] $\tilde \cT = F_\tau(\cT)$ is identical in law with $\cT$.
\item[(ii)] Assume that $\psi(x) = c x^\gamma$ for $c>0$ and $\gamma>0$.
Let $m^{(\delta)}$ be a partial marking above level $\delta$
with initial measure $\nu$. Define
$$(l,t)\in\tilde \cT, \ t\geq\delta \tau, \ \ \  \ \tilde m^{(\delta \tau)}_{l}(t) : \ = \ \frac{1}{\tau^{\beta}} m^{(\delta)}_{l/\tau}(t/\tau) $$ 
Then $\tilde m^{(\delta \tau)}$ is a partial marling of $\tilde \cT$ above level $\tau \delta$ with initial measure $\cS^{\tau,\gamma}(\nu)$.
\end{enumerate}
\end{prop}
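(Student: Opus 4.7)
For part (i), the plan is to verify that the pushforward of the Poisson point process $\cP$ under $F_\tau$ has the same intensity measure as $\cP$ itself, from which the distributional identity of the trees follows. If $(l',t') = F_\tau(l,t) = (\tau l, \tau t)$, then $dl = dl'/\tau$ and $dt = dt'/\tau$, so the intensity $dl\,dt/t^2$ is transported to $(dl'/\tau)(dt'/\tau)/(t'/\tau)^2 = dl'\,dt'/t'^2$. Since the tree $\cT$ is a deterministic functional of $\cP$ (its atoms plus the eternal branch, equipped with the ultrametric induced by the heights in $\cP$), and $F_\tau$ commutes with this construction in the obvious way — it maps eternal branch to eternal branch and preserves order of $l$-coordinates and the $\sup$ operation defining distances — the equality in law follows immediately.

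For part (ii), I would check the three defining properties of a partial marking of $\tilde\cT$ above level $\delta\tau$: the initial condition at the horizon level, the transport ODE along branches, and additivity at coalescences. Initial condition: the points of $\tilde\cT$ at time $\delta\tau$ are exactly the images $(\tau l_i, \delta\tau)$ of the points $(l_i,\delta)$ of $\cT$, and the marks are $\tilde m^{(\delta\tau)}_{\tau l_i}(\delta\tau) = \tau^{-\beta}\zeta_i^{(\delta)}$ where $\zeta_i^{(\delta)}$ are iid with law $\nu$; their joint law is therefore iid with law $\cS^{\tau,\gamma}(\nu)$. Additivity: at a coalescence event the mark transforms as $\tilde m \to \tau^{-\beta}m$, which is linear, so additivity on $\cT$ transfers to additivity on $\tilde\cT$.

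The one step requiring a short calculation — and really the only nontrivial thing — is checking that $\tilde m^{(\delta\tau)}$ obeys $\dot x = -\psi(x)$ along branches of $\tilde\cT$. Differentiating $\tilde m_l(t) = \tau^{-\beta} m_{l/\tau}(t/\tau)$ in $t$ along a branch gives
\[
\frac{d}{dt}\tilde m_l(t) = \tau^{-\beta-1}\,\dot m_{l/\tau}(t/\tau) = -c\,\tau^{-\beta-1}\,m_{l/\tau}(t/\tau)^\gamma,
\]
while the target value is $-\psi(\tilde m_l(t)) = -c\,\tau^{-\beta\gamma}\,m_{l/\tau}(t/\tau)^\gamma$. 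The two expressions match precisely when $\beta+1 = \beta\gamma$, i.e.\ $\beta = 1/(\gamma-1)$, which is exactly our definition of $\beta$. This is the scaling exponent dictated by the homogeneity of $\psi$, and the only place where the particular form $\psi(x)=cx^\gamma$ is used.

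I do not expect any real obstacle here; the statement is essentially a bookkeeping exercise reflecting the scaling invariance of the Poisson intensity $dl\,dt/t^2$ combined with the homogeneity of the drift. Part (i) is a change of variables in a Poisson intensity, and part (ii) reduces to three mechanical verifications, the only algebraic check being the matching of powers of $\tau$ in the ODE step, which pins down the exponent $\beta=1/(\gamma-1)$.
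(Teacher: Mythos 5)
Your proof is correct and follows essentially the same route as the paper's, which simply states the Poisson-intensity invariance for (i) and the three-point check (initial law, ODE along branches with the $\tau^{-\beta}$ prefactor, additivity at coalescences) for (ii) without writing out the algebra. Your version just makes the change of variables in the intensity $dl\,dt/t^2$ and the exponent matching $\beta+1=\beta\gamma$ explicit, which is exactly what is behind the paper's parenthetical remark "(because of the pre-factor $1/\tau^\beta$)".
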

\begin{proof}
(i) is a direct consequence of the definition of the Brownian CPP. (ii) is a consequence of the observation that
(a) for every $(l,\delta\tau)\in F_\tau(\cT)$, we have $\cL\left(\tilde m_l^{(\tau \delta)}(\tau \delta)\right) = \cS^{\tau,\gamma}(\nu)$, and
(b) along each branch of the tree $\tilde \cT$, the marking evolve according to the dynamics 
$$dx_t  \ = \ -c x_t^\gamma  dt$$ 
(because of the pre-factor $\frac{1}{\tau^{\beta}}$ in the definition of $\tilde m$) and at a coalescence point, marks add up.
(So that $\tilde m$ defines a partial marking on $\tilde \cT$ with initial marking $\cS^{\tau,\gamma}(\nu)$.)
\end{proof}

\section{$\infty$-population McKean-Vlasov equation}
\label{sect:infty-pop}
In this section, we assume that $\psi \in \Hc$ and that Grey's condition holds. Then we can define the homeomorphism $q:(0,\infty)\to (0, V)$ as
$$
q:x\mapsto \int^\infty_x \frac{1}{\psi(s)} ds,
$$
with $V=q(0+)\in (0,+\infty]$. Define $\phi:(0,\infty)\to (0,\infty)$ as
$$
\phi(t)=\left\{
\begin{array}{cl}
q^{-1}(t) &\mbox{ if } t<V\\
0 &\mbox{ if } t\ge V.
\end{array}
\right.
$$
Then for any $x_0\in (0,+\infty]$, the ODE
 \[ \dot u = -\psi(u), \ \ u(0) = x_0 \]
has a unique solution on $\R^+$ given by 
$$
u(t) = \phi(t+q(x_0)),
$$
with $q(x_0)=0$ if $x_0=\infty$. Notice that the flow $x_0\mapsto \phi(t+q(x_0))$ is continuous and keep in mind that $\phi$ is the unique solution to 
\[ \dot u = -\psi(u), \ \ u_0 = \infty.  \]
In this section, we will also make the extra assumption that $\psi$ is also convex. (so that $V=\infty$.) Note that if $\psi$
is the Laplace exponent of a spectrally positive L\'evy process the latter condition holds.

We will say that $(x_t; t>0)$ is an $\infty$--pop. solution of (\ref{eq:mckean})
if it is a solution of (\ref{eq:mckean}) for $\delta=0$ (without prescribing the initial condition at $t=0$).
More precisely, $x$ will be an $\infty$-pop solution iff for every $\tau>0$,
conditional on $x_\tau$, the process $(x_t; t \geq\tau)$
is identical in law to the solution of
\be d\bar x_t \ = \ -\psi(\bar x_t) dt \ + \ \Delta J^{(0)}_t \bar v(t); t\geq \tau;  \ \ \bar x_\tau = x_\tau \label{eq:mckeqn22}\ee
where $(\bar v_t)_{t\geq0}$ is a family of independent rv's with $\cL(\bar v_t) = \cL(\bar x_t)$.

Recall the definition of dust and proper solutions for the $\infty$-pop. \Sm equation. (See Definition \ref{def:weak-eq}.)
There is a natural extension of this definition to the MK-V equation. We will say that 
$(x_t; t>0)$ is a dust solution if it satisfies the $\infty$-pop. MK-V equation 
with $\delta=0$ and $\lim_{t\to0} x_t = 0$ in probability.
Solutions are said to be proper otherwise.
By a direct application of It\^o's formula (analogous to Lemma \ref{lem:relation}), 
if $x$ is a dust (resp., proper) solution then
$(\cL(x_t); t>0)$ is a dust (resp., proper) solution of the \Sm equation. 
We state the two main results of this section.

\begin{teo}[Proper solutions]\label{teo:existence:global:solution}
\begin{itemize}
\item[(i)] There exists at least one proper solution to the $\infty$-pop. MK-V equation
and any such solution satisfies
\be
 \forall t>0, \ x_t \geq \phi(t) \ \ \mbox{a.s.} \label{growth-marking-0}\phantom{kzfgy ouregviquerzltv} \mbox{\rm(Growth condition)} 
 \ee
\item[(ii)] In the stable case  
\[\psi(x)=c x^\gamma \ \ \ \mbox{with $\gamma>1$},\]
there exists a unique proper solution $(x_t^{(0)}; t>0)$. 
Further,
\begin{enumerate}
\item[(Self-similarity)] For every $t>0$, $\cL(x_t^{(0)}) \ = \  \cL(\Upsilon/t^{\beta})$ where $\Upsilon:= x_1^{(0)}$.
\item[(Integrability)] For every $t>0$, \ $\E(x^{(0)}_t)<\infty$.
\item[(Measurability)] $\left(x^{(0)}_t; t>0\right)$ can be constructed on the same space as the Brownian CPP, and under this coupling,  it is measurable with respect to the $\sigma$-field
generated by the CPP.
\end{enumerate}
\end{itemize}
\end{teo}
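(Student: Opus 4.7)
The plan is to construct a proper solution by a monotone limit procedure on the Brownian CPP of Section \ref{sect:cpp-construction}, which will yield (i) directly; part (ii) will then follow from combining this construction with the scaling Proposition \ref{cor:scaling} and the uniqueness results of Section \ref{sect:special-case}.

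For part (i), I fix a realization of $\cT$ and, for each $\delta>0$, consider the partial marking $m^{(\delta)}$ above level $\delta$ with the \emph{deterministic} initial condition $\zeta_i^{(\delta)}=\phi(\delta)$ for every point at level $\delta$, where $\phi$ is the flow from $+\infty$ defined above. Along any branch the mark evolves by $\dot x=-\psi(x)$ between coalescence events and gets a nonnegative increment at each coalescence, so the eternal-branch mark $m^{(\delta)}_0(t)$ is bounded below by the trajectory of $\dot u=-\psi(u)$ started at $\phi(\delta)$ at time $\delta$, which is exactly $\phi(t)$: this gives the growth condition \eqref{growth-marking-0}. A standard comparison argument using this growth at level $\delta_1$ (namely $m^{(\delta_2)}_l(\delta_1)\geq\phi(\delta_1)=m^{(\delta_1)}_l(\delta_1)$ for $\delta_2<\delta_1$), together with monotonicity in initial condition of $\dot x=-\psi(x)$ and additivity at coalescences, shows that $\delta\mapsto m^{(\delta)}_0(t)$ is non-increasing. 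The a.s.\ monotone limit
\[
 x_t:=\lim_{\delta\downarrow 0} m^{(\delta)}_0(t)\;\in\;[\phi(t),+\infty]
\]
therefore exists for each $t>0$ and is measurable with respect to the $\sigma$-field generated by $\cT$.

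The main obstacle is to show $x_t<+\infty$ almost surely. By Theorem \ref{teo:mckean}, the process $t\mapsto m^{(\delta)}_0(t)$ (for $t\ge\delta$) satisfies a MK-V equation with jump rate $1/t$, so its mean $f^{(\delta)}(t):=\E(m^{(\delta)}_0(t))$ obeys
\[
 \dot f^{(\delta)}(t)\;=\;-\E\bigl[\psi(m^{(\delta)}_0(t))\bigr]+\frac{f^{(\delta)}(t)}{t}\;\leq\;-\psi\bigl(f^{(\delta)}(t)\bigr)+\frac{f^{(\delta)}(t)}{t},
\]
the inequality by Jensen and the convexity of $\psi$. Classical differential-inequality comparison gives $f^{(\delta)}(t)\le g^{(\delta)}(t)$ where $g^{(\delta)}$ solves the ODE $\dot g=-\psi(g)+g/t$ with $g^{(\delta)}(\delta)=\phi(\delta)$; under Grey's condition and convexity this ODE has a globally attracting profile of order $t^{-\beta}$. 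This is transparent in the stable case $\psi(x)=cx^\gamma$, where the ansatz $g(t)=h(t)t^{-\beta}$ reduces the problem to $h'=-ch^\gamma+(1+\beta)h$ on the logarithmic time scale, whose initial datum $h(\delta)=\delta^\beta\phi(\delta)$ is a constant independent of $\delta$, hence $g^{(\delta)}(t)$ is bounded on any compact of $(0,\infty)$ uniformly in $\delta$. In all cases $\sup_{\delta\in(0,t)}g^{(\delta)}(t)<\infty$ for each fixed $t>0$. Monotone convergence then yields $\E(x_t)\le\lim_\delta f^{(\delta)}(t)<\infty$, so $x_t<\infty$ a.s. Passing to the limit $\delta\downarrow 0$ in \eqref{eq:sim} along the eternal branch and invoking the translation-invariance argument of the proof of Theorem \ref{teo:mckean} for each starting time $\tau>0$, one checks that, conditionally on $x_\tau$, the process $(x_{\tau+s})_{s\ge 0}$ is the unique finite-$\delta$ MK-V solution with $\delta=\tau$ and initial law $\cL(x_\tau)$, i.e.\ $(x_t)_{t>0}$ solves \eqref{eq:mckeqn22}. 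Properness is automatic because $x_t\ge\phi(t)\to+\infty$ as $t\downarrow0$, which rules out the dust alternative.

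For part (ii), assume $\psi(x)=cx^\gamma$ with $\gamma>1$. If $x$ and $y$ are two proper solutions, their marginals $(\cL(x_t))_{t>0}$ and $(\cL(y_t))_{t>0}$ are both proper weak solutions of \eqref{eq:Sch}, hence coincide by Theorem \ref{thm:uniqueness-weak-infinite}. Restarting at any $\tau>0$, the finite-$\delta$ MK-V equation with $\delta=\tau$ and initial condition $\cL(x_\tau)=\cL(y_\tau)$ has at most one solution (Proposition \ref{lem:unicity}), so $(x_{\tau+s})_{s\ge 0}$ and $(y_{\tau+s})_{s\ge 0}$ agree in law; letting $\tau\downarrow 0$ yields $x\stackrel{\mathrm{law}}{=}y$. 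Self-similarity then follows from Proposition \ref{cor:scaling}: for any $\tau>0$, the scaled process $\tilde x_t:=\tau^\beta x_{t/\tau}$ is again a proper solution, so by uniqueness $\tilde x\stackrel{\mathrm{law}}{=}x$ and hence $\cL(x^{(0)}_t)=\cL(t^{-\beta}\Upsilon)$ with $\Upsilon:=x^{(0)}_1$. Integrability $\E(\Upsilon)<\infty$ is a direct consequence of the uniform moment bound established in the previous paragraph, and measurability with respect to the CPP $\sigma$-field is built into the construction since the initial marks $\phi(\delta)$ are deterministic. The hardest point throughout is the uniform-in-$\delta$ moment control; once it is secured the rest reduces to comparison, scaling, and invocation of previously proved uniqueness results.
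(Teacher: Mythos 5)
Your construction of a candidate proper solution is a mild variant of the paper's Proposition \ref{lem:marking}: you initialize the partial marking at level $\delta$ with the deterministic value $\phi(\delta)$ rather than $+\infty$, so your sequence of markings \emph{increases} as $\delta\downarrow0$, whereas the paper's decreases from above. The price of approaching from below is that finiteness of the limit is no longer automatic; you obtain it through a first-moment differential inequality. That estimate is sound in the stable case, but the extension ``under Grey's condition and convexity this ODE has a globally attracting profile of order $t^{-\beta}$'' is not substantiated for general $\psi$, and indeed $\beta$ has no meaning outside the stable case. So part (i) has a gap for non-stable $\psi$ (easily patched by observing that your marking is dominated pointwise by the paper's $m^{(\delta),+}$, whose limit is shown finite in Proposition \ref{lem:marking}).

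Separately, the growth condition \eqref{growth-marking-0} asserts $x_t\geq\phi(t)$ for \emph{every} proper solution, not just for the one you construct. Your argument only bounds the constructed marking from below; you do not address an arbitrary proper solution. The paper's Lemma \ref{lem:grwoth-condition} handles this by first building, from any proper solution, an associated full marking of the CPP (Proposition \ref{prop:marking-associated}), then deriving the lower bound from Step 1 of that lemma. Without uniqueness established first, this cannot be omitted.

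The most serious gap is in part (ii). You reduce uniqueness of the proper MK-V solution to Theorem \ref{thm:uniqueness-weak-infinite}, i.e.\ uniqueness of proper \emph{weak} solutions. But that theorem and the entire weak-solution theory of Section \ref{sect:special-case} are developed only for $\psi$ that is the Laplace exponent of a spectrally positive (sub)critical L\'evy process; for $\psi(x)=cx^\gamma$ this means $\gamma\in(1,2]$. The statement you are proving covers all $\gamma>1$, and for $\gamma>2$ the function $cx^\gamma$ grows too fast to be such an exponent, so Theorem \ref{thm:uniqueness-weak-infinite} simply does not apply. This is precisely the reason the paper supplies an independent and substantially more involved uniqueness argument for MK-V in Section \ref{sect:uniqueness-sol} (Proposition \ref{lem:m_-=m_+} together with Lemmas \ref{lem:subadd}, \ref{lem:subadd2}, \ref{lem:scaling-in-CPP}, and \ref{lem:finite-first-moment}): it couples the maximal marking $m^+$ with the marking $m^-$ associated to an arbitrary proper solution and proves $m^+_0(T)=m^-_0(T)$ a.s.\ via a contraction estimate (the ``passage formula'' \eqref{eq:passage-formula}) along the nodes of the underlying binary tree. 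That is the technical heart of the theorem and cannot be bypassed by citing weak-solution uniqueness.

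One small slip: in the self-similarity step, the scaled process should be $\tilde x_t=\tau^{-\beta}x_{t/\tau}$ (equivalently $\tau^{\beta}x_{\tau t}$), not $\tau^{\beta}x_{t/\tau}$; with your scaling the conclusion would read $\cL(x_t)=\cL(t^{\beta}\Upsilon)$, the wrong sign of the exponent.
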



\begin{teo}[Dust solutions]\label{teo:dust}
In the stable case $\psi(x) = c x^\gamma$ ($c>1$ and $\gamma>1$), there exist infinitely many dust solutions to the $\infty$-pop. MK-V
equation. Since the law of a dust solution to MK-V is also a weak dust solution, there exist infinitely many dust solutions to 
the \Sm equation.
\end{teo}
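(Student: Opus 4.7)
The plan is to establish a scaling invariance of the $\infty$-pop. MK-V equation in the stable case, construct a single non-self-similar dust solution, then apply the scaling to produce the required infinite family.

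For the scaling, I verify that for $\psi(x)=cx^\gamma$ and any $\lambda>0$, if $(x_t;t>0)$ solves the $\infty$-pop. MK-V equation then so does $y_t:=\lambda^\beta x_{\lambda t}$, where $\beta=1/(\gamma-1)$. Between jumps
\[
\dot y_t=\lambda^{\beta+1}\dot x_{\lambda t}=-\lambda^{\beta+1}\psi(x_{\lambda t})=-c\lambda^{\beta+1-\beta\gamma}y_t^\gamma=-\psi(y_t),
\]
using the identity $\beta+1=\beta\gamma$. The time change $s=\lambda t$ sends the rate-$1/s$ Poisson process driving $x$ to a rate-$1/t$ Poisson process driving $y$, and the self-consistency $\cL(v_s)=\cL(x_s)$ is preserved: $\cL(\lambda^\beta v_{\lambda t})=\cL(\lambda^\beta x_{\lambda t})=\cL(y_t)$. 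Moreover $y$ is dust iff $x$ is. Both the null solution and the proper self-similar solution $x^{(0)}$ from Theorem \ref{teo:existence:global:solution} are fixed by this action (up to law), so exhibiting even one dust solution \emph{not} invariant under the scaling will, via its orbit $\{(\lambda^\beta x_{\lambda t})_{t>0}:\lambda>0\}$, automatically yield an uncountable family of pairwise distinct dust MK-V solutions.

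To build such a non-self-similar dust solution, I would use the CPP marking of Section \ref{sect:cpp-construction}. For each $\delta>0$ and each $\nu_\delta\in M_P(\R^+)$, marking the level-$\delta$ points of the CPP with iid samples from $\nu_\delta$ and propagating upward gives, by Theorem \ref{teo:mckean}, a finite-pop.\ MK-V solution $(X^{(\delta)}_t)_{t\geq\delta}$. Taking expectations in \eqref{eq:mckean} yields $dE[x_t]/dt=-E[\psi(x_t)]+E[x_t]/t$, so a dust solution should satisfy the balance $E[x_t]\sim Ct$ as $t\downarrow 0$ for some $C>0$ (since the jump-gain $E[x_t]/t$ dominates the higher-order transport loss $E[\psi(x_t)]$ of order $E[x_t]^\gamma$). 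This motivates the choice $\nu_\delta=\delta_{C\delta}$. Using the explicit coupling of all $X^{(\delta)}$ on a single realization of the CPP together with monotonicity of mark propagation in the initial marks, I would show that as $\delta\downarrow 0$ the processes $X^{(\delta)}$ converge in distribution on every compact subinterval of $(0,\infty)$ to a limit $X^{(C)}$, which inherits the $\infty$-pop.\ MK-V equation and satisfies $E[X^{(C)}_t]\to 0$ as $t\to 0$.

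The principal obstacle lies in this limiting argument: one must prove tightness of the finite-pop.\ markings, identify any subsequential limit as an $\infty$-pop.\ MK-V solution with the prescribed leading behavior $E[x_t]\sim Ct$, and establish uniqueness within the class of dust solutions having fixed constant $C$ so as to rule out distinct limits; this last step should follow from a contraction argument in the spirit of Proposition \ref{lem:unicity}, now carried out on intervals $[t_0,T]$ with $t_0>0$. Distinctness from the proper $x^{(0)}$ is clear because $E[x^{(0)}_t]=O(t^{-\beta})\to\infty$ as $t\to 0$, whereas $E[X^{(C)}_t]\to 0$. Once $X^{(C)}$ is constructed for one $C>0$, scaling by $\lambda$ sends $C$ to $C\lambda^{1+\beta}$, yielding the sought infinite family; the concluding statement about weak dust solutions of \eqref{eq:Sch} is then immediate from Lemma \ref{lem:relation}.
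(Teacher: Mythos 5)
Your scaling-invariance argument is a genuinely nice observation, and it would indeed upgrade a single well-behaved dust solution to an uncountable family — this is a different (and arguably cleaner) route than the paper's, which instead builds a separate dust solution for each positive rv $X$ with finite mean (marking level $\delta_k$ with iid copies of $\delta_k X$) and then chooses a sequence $X^1,X^2,\dots$ with means spaced out so that the resulting solutions are pairwise distinct. The scaling you verify ($y_t=\lambda^\beta x_{\lambda t}$ again solves the $\infty$-pop.\ MK-V equation, and sends $E[x_t]\sim Ct$ to $E[y_t]\sim C\lambda^{1+\beta}t$) is correct, as is the identity $\beta+1=\beta\gamma$; this part of the proposal would let one skip the paper's final bookkeeping about choosing the $X^j$'s.

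The genuine gap is that you never actually construct a single non-trivial dust solution, and you flag this yourself (``the principal obstacle lies in this limiting argument''). The paper's proof does precisely the work you outsource: it uses the two-sided bound
\[
\Bigl(c(\gamma-1)t+\bigl(\textstyle\sum_{i\le|D^{(\delta_k)}(0,t)|}\delta_k X_i\bigr)^{1-\gamma}\Bigr)^{-\beta}
\;\le\; m_0^{(\delta_k)}(t)\;\le\;\sum_{i\le|D^{(\delta_k)}(0,t)|}\bigl(c(\gamma-1)t+(\delta_k X_i)^{1-\gamma}\bigr)^{-\beta}
\]
coming from the star-tree / collapsed-tree comparison \eqref{eq:degnerate-trees}, together with $\delta_k|D^{(\delta_k)}(0,t)|\Rightarrow \mathcal E(t)$ and the LLN, to sandwich the limit ${\bf m}_0(t)$ between $\bigl(c(\gamma-1)t+(\E(X)\mathcal E(t))^{1-\gamma}\bigr)^{-\beta}$ and $\E(X)\mathcal E(t)$. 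The upper bound gives the dust property; the lower bound gives $\P({\bf m}_0(t)>0)=1$, ruling out the trivial zero solution (which is also dust, so non-triviality really must be argued and your sketch omits this). Without these bounds the claim $E[X^{(C)}_t]\to 0$ is only a heuristic, and the existence of the limit along the full sequence $\delta\downarrow 0$ is not needed for the theorem anyway: the paper just extracts a subsequence as in Step~1 of Lemma \ref{cor:cv-to-self-similar}, which already produces an $\infty$-pop.\ MK-V solution. Your proposal's extra layer of uniqueness (a contraction on $[t_0,T]$) would only pin down the law on $[t_0,\infty)$ given the law at $t_0$; it does not decide the behavior near $0$ and so cannot, by itself, show the limit is independent of the subsequence. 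In short: keep the scaling trick for the multiplicity, but you must supply the quantitative sandwich (or an equivalent) to get off the ground.
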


The rest of the section is mainly dedicated to the proof of those two theorems.
In Section \ref{ex:infinite}, we prove the existence of a proper solution and derive some of its properties (growth condition, measurability, self-similarity...). In Section \ref{sect:growth-condition} we show that 
any proper solution satisfies the growth condition (\ref{growth-marking-0}). 
This is shown by introducing what we call the marking associated to an $\infty$-pop. solution. 
In Section \ref{sect:uniqueness-sol}, we use this full marking to
prove uniqueness of a proper solution in the stable case, and we show all the required properties
of the solution. (This will show part (ii) of Theorem \ref{teo:existence:global:solution}).

Finally, in Section \ref{sect:applications}, 
we show that the long-term behavior of finite pop. solutions 
can be described in terms of the $\infty$-pop. Finally, we close 
this section with the proof of Theorem \ref{teo:dust}.

\subsection{Existence and construction of an $\infty$-pop. proper solution}\label{set:existence-of-infinite-pop}
\label{ex:infinite}
We start with an $\infty$-pop. analog of Theorem \ref{teo:mckean}
that will be exploited repeatedly throughout this section. 
\begin{teo}\label{teo:mckean2}
Let $m$ be a full marking of the CPP. Assume that for every $t>0$,  $\{m_{l}(t) : (l,t)\in\cT\}$
(assuming that the points are ranked according to the values of $l$ in increasing order) is a sequence of i.i.d. random variables distributed as $m_0(t)$. Then $(m_0(t); t>0)$
is an infinite solution to MK-V.
\end{teo}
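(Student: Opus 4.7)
My plan is to mirror the proof of Theorem~\ref{teo:mckean}, replacing the i.i.d.\ initial marking at a fixed level $\delta$ by the standing hypothesis that the i.i.d.\ property is maintained at every positive time. Fix $\tau>0$; the goal is to show that, conditionally on $m_0(\tau)$, the process $x_t := m_0(t)$ for $t\geq\tau$ satisfies
$$
d x_t = -\psi(x_t)\,dt + \Delta J^{(0)}_t\, v_t, \qquad x_\tau = m_0(\tau),
$$
where $J^{(0)}$ is an inhomogeneous Poisson process with rate $1/t$ and $(v_t)_{t\geq\tau}$ is a family of independent random variables with $\cL(v_t)=\cL(m_0(t))$, jointly independent of $m_0(\tau)$. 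Uniqueness from Proposition~\ref{lem:unicity} will then identify $(m_0(t); t\geq \tau)$ with the unique MK-V solution started from $m_0(\tau)$, which is exactly what is needed.

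The transport term is immediate from the definition of a full marking: equation~\eqref{eq:sim}, applied on $[\delta,\infty)$ for every $\delta>0$, asserts that along the eternal branch $m_0$ evolves as $\dot m_0 = -\psi(m_0)$ between coalescence events on that branch. For the jump-time statistics I would recycle verbatim the intensity computation from Theorem~\ref{teo:mckean}: by the Poisson intensity $dl\,dt/t^2$, conditioning on the left-most positive-$l$ branch at time $t$ being still alive yields an instantaneous hazard of coalescence with the eternal branch equal to $(1/t^2)/\int_t^\infty ds/s^2 = 1/t$. Moreover this jump-time process is a functional of atoms with time coordinate above level $\tau$, hence independent of the Brownian CPP structure below level $\tau$ and, in particular, of $m_0(\tau)$.

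The heart of the argument is the identification of the jump-size law. At a coalescence event at time $t$, equation~\eqref{eq:sim} gives that the increment of $m_0$ is precisely the mark carried by the left-most branch with strictly positive $l$-coordinate at time $t$, i.e.\ the second element of the sequence $\{m_l(t) : (l,t) \in \cT\}$ ordered by increasing $l$. The hypothesis of the theorem states that this entire sequence is i.i.d.\ with common law $\cL(m_0(t))$; consequently the incoming mark has law $\cL(m_0(t))$ and is independent of $m_0(t^-)=m_0(t)$ (the first term of the same sequence). Mutual independence of the incoming marks at distinct coalescence times $t_1<t_2$ follows along the same lines, because the incoming mark at $t_2$ is by construction the mark of a branch disjoint from both the eternal branch and from the branch that has coalesced at $t_1$; the hypothesis applied at time $t_2$ then yields the required independence.

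The main conceptual obstacle is to make the i.i.d.\ hypothesis usable across uncountably many times $t$ and to propagate independence through the coupling of marks on a given branch across time. The clean way to handle this is to work conditionally on the CPP $\cT$: under the hypothesis, for every $t>0$ the marks $\{m_l(t)\}_l$ are conditionally i.i.d.\ with law $\cL(m_0(t))$ given $\cT$, and the above analysis becomes a deterministic identification of the dynamics on each realization of $\cT$, with the marks at the successive coalescence levels providing the only additional randomness. This replaces the translation-invariance / independence-of-subtrees argument used in the proof of Theorem~\ref{teo:mckean}.
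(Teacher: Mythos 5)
Your high-level plan is sound, but you take a substantially longer route than the paper does, and in that longer route you leave a real gap. The paper's proof is a one-liner: by the i.i.d.\ hypothesis at level $t$, the restriction of the full marking $m$ to the region $\{(l,s)\in\cT: s\geq t\}$ is \emph{exactly} a partial marking above level $t$ with initial law $\cL(m_0(t))$. Theorem~\ref{teo:mckean} (with $\delta=t$) then says directly that $(\theta_t\circ m_0(u);u\geq 0)$ solves the MK-V equation with inverse population $t$ and initial measure $\cL(m_0(t))$; unwinding the time shift, this is precisely the statement required by the definition of an $\infty$-pop.\ solution at level $\tau=t$, and since $t>0$ was arbitrary you are done. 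No fresh analysis of jump rates or jump sizes is needed: it is inherited wholesale from Theorem~\ref{teo:mckean}.

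By re-deriving the dynamics from scratch instead of invoking Theorem~\ref{teo:mckean}, you not only duplicate work but also underspecify the key independence claim. You assert that mutual independence of the incoming marks at coalescence times $t_1<t_2$ ``follows along the same lines \ldots the hypothesis applied at time $t_2$ then yields the required independence.'' This does not go through as stated: the i.i.d.\ hypothesis at level $t_2$ concerns the marks of branches alive at time $t_2$, but the branch that coalesced at $t_1$ is no longer among them, and the jump input at $t_1$ is the value $m_{l_1}(t_1^-)$ of a branch that no longer exists at level $t_2$. So the hypothesis at $t_2$ says nothing about the joint law of the two increments. What actually drives the independence is the conditional independence of the subtrees of the CPP rooted at distinct points of a given level, combined with the i.i.d.\ marks at that single level --- and this is exactly the content of the proof of Theorem~\ref{teo:mckean}. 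Either cite that argument explicitly, or, better, apply Theorem~\ref{teo:mckean} as a black box to the restriction of $m$ above each level $t>0$ as the paper does; in that case the i.i.d.\ hypothesis is used only once per level, as an initial condition, and the ``uncountably many times'' concern you raise at the end dissolves.
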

\begin{proof}
By Theorem \ref{teo:mckean}, for every $t>0$, $(\theta_t\circ m_0(u); u \geq 0)$ is solution of 
\[\forall u\geq 0, \  d y_u \ = \ -\psi(y_u) du \ + \ \Delta J^t_u v_{u},  \ \ \mbox{with} \ \cL(y_0) = m_0(t), \]
where $\{v_u\}_{u>t}$ is an infinite collection of r.v. with $\cL(y_u)= \cL(v_u)$ for every $u\geq t$.  Equivalently, $(m_0(u); u \geq t)$ is identical in law
to
\[\forall u\geq t, \  d z_u \ = \ -\psi(z_u) du \ + \ \Delta J^0_u v_{u},  \ \ \mbox{with} \ \cL(z_t) = m_0(t), \]
where $\{v_u\}_{u>t}$ is an infinite collection of r.v. distributed with $\cL(z_u)= \cL(v_u)$ for every $u\geq 0$. Since this holds for any $t>0$, this $m_0$
is an $\infty$-pop. solution to MK-V.
\end{proof}

In order to construct a non-trivial $\infty$-pop. solution out of the CPP, we 
will consider $(m_l^{(\delta),+}(t); (l,t)\in\cT, \ t\geq\delta)$ the partial marking starting from level $\delta>0$ 
with initial measure  $\nu^{+}(dx) \ = \ \delta_{\infty}(dx)$,
i.e, we start with the initial condition $+\infty$ at level $\{t=\delta\}$.

\begin{prop}\label{lem:marking}
Let us consider a positive non-increasing sequence $(\delta_n)$ going to $0$. 
\begin{enumerate}
\item[(i)] For almost every realization of the CPP $\cT$,  for every $(l,t)\in\cT$, the sequence $(m_l^{(\delta_n),+}(t))$ is non-increasing
and if we define  $m_l^{+}(t)$ its limit, then 
\[0< \phi(t) \le m_l^{+}(t) <\infty.\]

\item[(ii)]
$m^+$ is a full marking of $\cT$ which is measurable with respect to the $\sigma$-field
generated by $\cT$ and does not depend on the sequence $(\delta_n)$.
\item[(iii)] This marking is maximal in the sense that for every full marking $m^-$  defined on $\cT$, for every $(l,t)\in\cT, \ m_l^+(t)\geq m_l^-(t)$.
\item[(iv)] $(m^+_0(t); t\geq0)$
is an $\infty$-pop. proper solution to MK-V.
\item[(v)] In the stable case, $\psi(x) = c x^\gamma$ (for $c>0$, $\gamma>1$),
for every $u>0$, 
$$
\cL\left( u^{\beta} m_0^+(u) \right)\ = \ \cL\left( m_0^{+}(1) \right).$$
\end{enumerate}
\end{prop}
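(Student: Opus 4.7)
The plan is to establish all five claims by propagating a single comparison principle through the marking dynamics \eqref{eq:sim}, and then invoking Theorem \ref{teo:mckean2} for (iv) and the scaling Proposition \ref{cor:scaling} for (v). The comparison principle reads: on a common tree $\cT$, if initial marks satisfy $\zeta_\cdot \le \zeta'_\cdot$ at some common level $\delta$, then the resulting markings satisfy $m_l(t) \le m'_l(t)$ for every $t \ge \delta$. It is proved by observing that $\dot x = -\psi(x)$ preserves ordering ($\psi$ is nondecreasing) and that coalescence is addition of non-negative quantities. Applied with $\delta_{n+1}<\delta_n$ and the trivial bound $m^{(\delta_{n+1}),+}_l(\delta_n)\le +\infty=m^{(\delta_n),+}_l(\delta_n)$, it yields the monotonicity of claim (i). The lower bound follows by dropping the non-negative coalescence terms on the right-hand side of \eqref{eq:sim}: along a single branch from $+\infty$ at $\delta_n$, one obtains $\phi(t-\delta_n)$ at level $t$, so $m^{(\delta_n),+}_l(t)\ge\phi(t-\delta_n)$, and letting $\delta_n\downarrow 0$ gives $m^+_l(t)\ge\phi(t)>0$ (the convexity assumption yields $V=\infty$, so $\phi$ is positive and finite on $(0,\infty)$). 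For finiteness, by monotonicity it suffices to show $m^{(\delta_1),+}_l(t)<\infty$ almost surely; but the subtree of $\cT$ rooted at $(l,t)$ has a.s. finite horizontal extent (the first level-$t$ branch to the right of $l$ lies at distance exponentially distributed with parameter $1/t$), and hence contains only finitely many atoms of $\cP$ above level $\delta_1$. Grey's condition converts each of the finitely many $+\infty$ initial conditions into a finite value via $\phi$ before the first coalescence occurs, and a trivial induction on the number of coalescences in the subtree above $\delta_1$ delivers finiteness at $(l,t)$.

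Claim (ii) is then essentially a monotone-convergence statement. Measurability of $m^+$ with respect to $\sigma(\cT)$ is inherited from the deterministic dependence of each $m^{(\delta_n),+}$ on $\cT$ together with measurability of monotone limits. To check that $m^+$ is a full marking, fix $\delta>0$ and choose $n$ with $\delta_n<\delta$: the restriction of $m^{(\delta_n),+}$ to $\{t\ge\delta\}$ is a partial marking above $\delta$ with initial marks $\{m^{(\delta_n),+}_{l_i}(\delta)\}_i$, and passing to the monotone limit (using continuity of $\psi$ and the monotone stability of the flow of $\dot x=-\psi(x)$) shows that $m^+$ restricted to $\{t\ge\delta\}$ is a partial marking above $\delta$ with initial marks $\{m^+_{l_i}(\delta)\}_i$. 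Independence from the sequence $(\delta_n)$ follows by interleaving two decreasing null sequences into a common refinement, since the monotonicity of (i) forces all three limits to coincide. Claim (iii) is then an immediate consequence of the comparison principle: for any full marking $m^-$ and every $n$, $m^-_l(\delta_n)\le+\infty=m^{(\delta_n),+}_l(\delta_n)$, so $m^-_l(t)\le m^{(\delta_n),+}_l(t)$ for $t\ge\delta_n$, and letting $n\to\infty$ yields $m^-_l\le m^+_l$.

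For (iv), I invoke Theorem \ref{teo:mckean2}. Thanks to (ii), it remains to verify that at each level $t>0$ the marks $\{m^+_l(t):(l,t)\in\cT\}$, listed from left to right, form an i.i.d. sequence. This rests on two observations. First, the coalescence sum in \eqref{eq:sim} is one-sided (only indices $l'>l$ contribute), so $m^+_l(t)$ is a deterministic function of the subtree rooted at $(l,t)$ alone. Second, by the Poisson structure of the Brownian CPP, the subtrees rooted at distinct level-$t$ branches are i.i.d. (each corresponds to the CPP restricted to a strip of exponential width with rate $1/t$); combined with the identical initial condition $+\infty$ at every level-$\delta_n$ point, this forces the marks to be i.i.d.\ with common law $\cL(m^+_0(t))$. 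Properness holds because $m^+_0(t)\ge\phi(t)\to+\infty$ as $t\downarrow 0$, which rules out any convergence to $0$ in probability. For (v), apply Proposition \ref{cor:scaling}(ii) with $\nu=\delta_\infty$: in the stable case $\cS^{\tau,\gamma}(\delta_\infty)=\delta_\infty$, so the rescaled marking $\tilde m^{(\tau\delta_n),+}_l(t):=\tau^{-\beta}m^{(\delta_n),+}_{l/\tau}(t/\tau)$ is a partial marking of $F_\tau(\cT)$ above level $\tau\delta_n$ with initial condition $\delta_\infty$. Since $F_\tau(\cT)\stackrel{d}{=}\cT$, letting $n\to\infty$ (using (ii) to identify limits across sequences) yields $m^+_0(t)\stackrel{d}{=}\tau^{-\beta}m^+_0(t/\tau)$, and specializing $t=\tau$ gives $\cL(\tau^\beta m^+_0(\tau))=\cL(m^+_0(1))$. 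The main obstacle in this plan is the i.i.d.\ check in (iv): one must carefully justify that $m^+_l(t)$ depends only on the subtree rooted at $(l,t)$ and that these subtrees are genuinely i.i.d.\ across distinct level-$t$ branches; both rely on the one-sided form of \eqref{eq:sim} and the Poisson independence-across-strips decomposition of the CPP at level $t$, and need to be articulated more carefully than the comparison arguments that drive (i)--(iii).
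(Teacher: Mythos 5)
Your proof is correct and follows the same route as the paper: a comparison principle for markings drives (i)--(iii), Theorem \ref{teo:mckean2} yields (iv), and Proposition \ref{cor:scaling} combined with the invariance of $\delta_\infty$ under rescaling yields (v). You spell out two steps the paper leaves terse --- a.s.\ finiteness of $m^+_l(t)$ via the finite width of the subtree rooted at $(l,t)$ together with induction on the finitely many coalescences above $\delta_1$, and the i.i.d.\ structure of the level-$t$ marks via the one-sided dependence in \eqref{eq:sim} and the Poisson decomposition into disjoint strips --- but these are elaborations rather than a different approach.
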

\begin{proof}
We start with a monotonicity property of our marking of the Brownian CPP that we will use repeatedly 
throughout this proof.
Let us first consider two partial markings $m,\bar m$ of the CPP above a given level $\delta>0$. It is clear 
that if for every $(l,\delta)\in\cT$ we have $m_l(\delta)\geq \bar m_l(\delta)$, then for every $(l,t)\in\cT$
with $t\geq \delta$, we must have
$$ m_l(t) \geq \bar m_l(t). $$
Now, since $(\delta_n)$ is non-increasing, it follows that for every $(l_n,\delta_n)\in\cT$
$$ m_{l_n}^{(\delta_{n+1}),+}(\delta_n) < m_{l_n}^{(\delta_{n}),+}(\delta_n)=\infty $$
which implies that the sequence of marks $(m^{(\delta_n),+}_l(t))$ (for any $(l,t)\in\cT$) is non-increasing
and converges to a mark $m_l(t)<\infty$. (Note that since there are only finitely many coalescences in each compact time interval of $(0,\infty)$, Grey's condition ensures that the limiting marking is finite.)

Next, in the absence of coalescence along the vertical branch 
$[(0,t), (0,\delta_n)]$ in $\cT$ (for $\delta_n<t$), we have
$$ m_{0}^{(\delta_n),+}(t) \ = \  \phi( t-\delta_n).$$
Thus, ignoring all the coalescence events along the vertical branch  $[(0,t), (0,\delta_n)]$ ensures that  
$m_{0}^{(\delta_{n}),+}(t)$ is bounded from below by the RHS of the latter identity. Since coalescence
events can only add extra mass to the eternal branch (and by uniqueness of the solution to the ODE), the inequality in (i) follows after taking the limit $n\to\infty$.

\medskip

Let us now show (ii) and (iii). By continuity of the flow, it is not hard to check that $m^+$ defines a full marking of the tree $\cT$ in the sense prescribed in the beginning of Section \ref{sect:cpp-construction}.  (In other words, the property ``marks evolve according to $\dot x_t = -\psi(x_t)$ along branches and marks add up upon coalescence'' passes to the limit.) For details, we refer the reader 
to the proof of Lemma \ref{cor:cv-to-self-similar} below where we develop a similar argument in more detail.

Let us show that $m^+$ is independent of the choice of the sequence $(\delta_n)$.
Let $\bar \delta_n$ be another non-increasing sequence going to $0$, and let $\bar m^{+}$
be the limit of $m^{(\bar \delta_n),+}$ as $n\to\infty$. Going to a subsequence of $(\bar \delta_n)$
if necessary, one can always assume that $\bar \delta_n \leq  \delta_n$ for every $n$. Under this assumption, 
one gets $\bar  m^{(\bar \delta_n),+}_{l_n}(\delta_n) \leq m^{(\delta_n),+}_{l_n}(\delta_n)$
for every $(l_n,\delta_n)\in\cT$
and by similar monotonicity arguments as above this ensures that  $\bar m^+ \leq m^+$. Finally, assuming that  $\bar \delta_n \geq  \delta_n$ 
yields the reverse inequality. This shows that the limiting marking $m^{+}$
does not depend on the choice of the sequence $(\delta_n)$.

Further, by construction, for any $\delta_n\leq t$ and any $(l,t)\in\cT$, we must have $m^{-}_l(t) \leq m_{l}^{(\delta_n),+}(t)$ since 
at time $\delta_n$, the initial condition of the the marking $m^{(\delta_n),+}$ dominates the one of $m^-$. This completes the proof of the first part of (ii) and (iii), i.e., that $m^+$ is a maximal full marking of the CPP.  Finally, since
the initial condition of $m_0^{(\delta_n),+}$ is deterministic and 
the marking  $m^{(\delta_n),+}$ below level $\delta_n$
is determined deterministically from the tree $\cT$ above level $\delta$,
we deduce that the  $(m_0^+(t); t>0)$ is measurable with respect to $\sigma(\cT)$. 

\medskip

Let us now proceed with the proof of (iv). By the branching structure of the CPP,
at every $t>0$, all the marks at level $t$ are i.i.d. with law $\cL(m^+_0(t))$. 
By Theorem \ref{teo:mckean2}, $m_0^+$
is an $\infty$-pop. solution to MK-V. By the inequality in (i), the solution is proper (and goes to $\infty$
as $t\to 0$).

\medskip

Let us now show 
the scaling identity (v).

From Proposition \ref{cor:scaling} and the invariance of the initial condition $\delta_{\infty}(dx)$ under rescaling (i.e., $\cS^{\tau,\gamma}(\delta_\infty)= \delta_\infty$ for every $\tau>0$), 
we get for every $t\geq \delta_n \tau$
\be m_0^{(\delta_n\tau),+}(t) \ =_{\cL} \ \frac{1}{\tau^{\beta}} m^{(\delta_n),+}_{0}(t/\tau)\label{id:scaling}.\ee
Let $\tilde m^+$ be the limit of $m^{(\delta_n\tau),+}$. Since 
$m^{+}$ does not depend on the choice of the sequence $(\delta_n)$ (in particular if we replace $(\delta_n)$ by $(\tau \delta_n)$),
$$ \  m_0^+(t)  \ =  \tilde m^+_0(t) \  \ =_{\cL} \frac{1}{\tau^{\beta}}m^+_0(t/\tau), $$
where the latter identity follows from (\ref{id:scaling}).
This completes the proof of the scaling identity after taking $\tau=t$. 
\end{proof}

\subsection{Full marking associated to an $\infty$-pop. solution}
\label{sect:growth-condition}
In this subsection, we construct a full marking of the CPP
using an $\infty$-pop. solution to MK-V.

\begin{prop}\label{prop:marking-associated}
Let $(x_t; t>0)$ be an $\infty$-pop. solution to MK-V. There exists a unique full marking $(\cT, m)$ such that 
\be\cL(\{m_0(t);t>0\}) \ = \ \cL(\{x_t;t>0\}). \label{eq:rel-mkv}\ee
and at every level $s$, $\{m_{l}(s) \ : \ (l,s)\in\cT\}$ is a sequence 
with i.i.d. rv's with law $\cL(x_s)$. This marking $m$ will be referred to as 
the marking associated to the solution $x$.

\end{prop}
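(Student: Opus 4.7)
My plan is to realize $m$ as a projective limit of the partial markings $m^{(\delta)}$ from Section~\ref{sect:cpp-construction}, with the initial marks at level $\delta$ drawn iid from $\cL(x_\delta)$ independently of $\cT$. The pivotal identification is that along the eternal branch $m^{(\delta)}_0$ coincides in law with $x$ above level $\delta$, a fact which reduces to the uniqueness of the finite-population MK-V equation (Proposition~\ref{lem:unicity}).

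To prove this identification, fix $\delta>0$. Theorem~\ref{teo:mckean} gives that $(\theta_\delta\circ m^{(\delta)}_0(t);\, t\geq 0)$ is a solution of the finite-pop MK-V with parameter $\delta$ and initial law $\cL(x_\delta)$. On the other hand, the $\infty$-pop assumption on $x$ combined with the time-shift identity (restricting $J^{(0)}$ to $[\delta,\infty)$ produces an inhomogeneous Poisson process with rate $1/(s+\delta)$, i.e.\ $J^{(\delta)}$ in law, while the jump-law constraint $\cL(v_{s+\delta})=\cL(x_{s+\delta})$ is preserved) shows that $(x_{s+\delta};\, s\geq 0)$ solves the same finite-pop MK-V equation. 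Proposition~\ref{lem:unicity} then yields $(m^{(\delta)}_0(t);\, t\geq\delta)\stackrel{d}{=}(x_t;\, t\geq\delta)$, and combining with translation invariance of the CPP and independence of disjoint subtrees, the marks of $m^{(\delta)}$ at every level $s\geq\delta$ are iid with law $\cL(x_s)$.

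For existence I would verify the consistency relation: for $\delta_1<\delta_2$, the level-$\delta_2$ marks of $m^{(\delta_1)}$ are iid $\cL(x_{\delta_2})$ and are independent of $\cT|_{\{t\geq\delta_2\}}$, being measurable with respect to the level-$\delta_1$ initial marks (independent of $\cT$) and $\cT|_{\{\delta_1\leq t\leq\delta_2\}}$ (independent of $\cT|_{\{t\geq\delta_2\}}$ by the Poisson structure). Hence $(\cT,m^{(\delta_1)}|_{\{t\geq\delta_2\}})\stackrel{d}{=}(\cT,m^{(\delta_2)})$. Taking a sequence $\delta_n\downarrow 0$ and passing to a projective limit (or inductively extending the coupling on a single realization of $\cT$) one obtains a full marking $m$ with $(\cT,m|_{\{t\geq\delta\}})\stackrel{d}{=}(\cT,m^{(\delta)})$ for every $\delta>0$; the two required properties are immediate from this.

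For uniqueness, any candidate $\tilde m$ must satisfy, at every level $\delta$, that conditionally on $\cT$ the marks are iid $\cL(x_\delta)$; this forces them to be independent of $\cT|_{\{t\geq\delta\}}$ with law $\cL(x_\delta)^{\otimes\N}$. Since the marking above $\delta$ is a deterministic function of those marks together with $\cT|_{\{t\geq\delta\}}$, one concludes $(\cT,\tilde m|_{\{t\geq\delta\}})\stackrel{d}{=}(\cT,m^{(\delta)})$ for every $\delta>0$ and hence $(\cT,\tilde m)\stackrel{d}{=}(\cT,m)$. The step I expect to be the main obstacle is the projective-limit construction in the existence argument, i.e.\ realizing the consistent family $\{m^{(\delta)}\}$ on one probability space carrying $\cT$; condition (i) on the eternal branch, while not strictly needed once (ii) is read conditionally on $\cT$, is what ties the marginal choice $\cL(x_\delta)$ at level $\delta$ back to the prescribed solution $x$ of MK-V.
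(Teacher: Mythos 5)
Your proposal is correct and takes essentially the same route as the paper: partial markings $m^{(\delta)}$ initialized iid $\cL(x_\delta)$ at level $\delta$, identification of $m^{(\delta)}_0$ above level $\delta$ with $x$ via Theorem~\ref{teo:mckean} and the uniqueness in Proposition~\ref{lem:unicity}, consistency of the family, and Kolmogorov extension (your ``projective limit''), with uniqueness falling out of the iid-at-every-level property. Your remark that the iid condition should really be read conditionally on $\cT$ (equivalently, the level-$s$ marks are independent of $\cT|_{\{t\geq s\}}$) is a fair observation about what the uniqueness argument actually uses, though it does not change the substance of the proof.
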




\begin{proof}
We start by proving existence. The proof goes along similar lines as Theorem \ref{teo:mckean2}. Let $(\delta_n)$ a  sequence decreasing to $0$. For every $n$, we can consider the partial marking $m^{(\delta_n)}$ above level $\delta_n$
with initial marking $\cL(x_{\delta_n})$.  By Theorem \ref{teo:mckean},
$(\theta_{\delta_n}\circ m^{(\delta_n)}_0(t); t\geq0)$ is identical in law with the solution
of the MK-V equation with inverse population $\delta_n$ and initial measure $\cL(x_{\delta_n})$.
Equivalently, this amounts to saying that $(m^{(\delta_n)}_0(t); t\geq\delta_n)$ is identical in law to the solution of
\[ t\geq \delta_n, \ dy_t = -\psi(y_t) dt \ + \ \Delta J^0_t v_t, \ y_{\delta_n} = x_{\delta_n}, \ \ \mbox{with $\cL(v_t) = \cL(y_t)$},  \]
and by uniqueness of the solution to the latter equation, $(m^{(\delta_n)}_0(t); t\geq\delta_n)$ is identical in law to $(x_t; t\geq\delta_n)$.
Further, the independence of the marks at level $\delta_n$ easily implies (by the branching structure in the CPP) that for every $s\geq \delta_n$, the set of marks $\{m^{(\delta_n)}_{l}(s) \ : \ (l,s)\in\cT\}$ (labelled by increasing values of $l$)
is a sequence of i.i.d. marks with law $\cL(x_s)$.

Let $\cT^{\delta_n}$ be the  subset of the tree $\cT$ consisting of all the points above time level $\delta_n$. By the previous argument, the sequence
$\{(\cT^{\delta_n}, m^{(\delta_n)})\}_n$ is consistent in the sense that if we consider the marked tree $(\cT^{\delta_{n+1}}, m^{(\delta_{n+1})})$
below level $\delta_n$, the resulting object is distributed as $(\cT^{\delta_{n}}, m^{(\delta_{n})})$.

By the Kolmogorov extension theorem, we can then construct a unique full marking $(\cT,m)$ such that the restriction above level $\delta_n$
coincides with $(\cT^{\delta_n}, m^{(\delta_n)})$, and thus $m$ is such that  
\[\cL(\{m_0(t);t>0\}) \ = \ \cL(\{x_t;t>0\}). \label{eq:rel-mkv}\]
and at every level $s$, $\{m_{l}(s) \ : \ (l,s)\in\cT\}$ is a sequence 
with i.i.d. marks with law $\cL(x_s)$.

For uniqueness, it is enough to note that the second property (i.e., the distribution of the marks at level $s$) implies that any two full markings satisfying this property must be identical in law above level $s$ for every $s>0$. 
\end{proof}


\begin{lem}\label{lem:grwoth-condition}
Let $(x_t; t>0)$ be a proper solution and let $m$ be the associated full marking. Then $(m_0(t); t>0)$ (and thus $(x_t,; t>0)$) satisfies the growth condition
(\ref{growth-marking-0}).
\end{lem}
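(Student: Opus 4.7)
The plan is to introduce the change of variable $q(x):=\int_x^{\infty}dy/\psi(y)$, which is the inverse of $\phi$ on $(0,V)$, so that the growth condition $m_0(t)\ge\phi(t)$ becomes $q(m_0(t))\le t$.

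First, I would apply the change-of-variable formula for c\`adl\`ag jump processes to $Z_t:=q(m_0(t))-t$. By Proposition~\ref{prop:marking-associated}, along the eternal branch $(m_0(t);t>0)$ is c\`adl\`ag with continuous part of drift $-\psi(m_0(t))\,dt$ and strictly positive jumps $v_{s_i}>0$ at the Poisson coalescence times. Since $q\in\mathcal{C}^1$ with $q'=-1/\psi$, the continuous part of $dq(m_0(t))$ equals $dt$ (cancelling the $-dt$ in $dZ_t$), while each jump of $m_0$ produces a non-positive increment of $q(m_0)$ because $q$ is strictly decreasing and $v_{s_i}>0$. Consequently, $Z$ is almost surely non-increasing on $(0,\infty)$.

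Next, since $Z_t\ge -t$, the monotone limit $Z_{0+}:=\lim_{s\downarrow 0}Z_s=\sup_{s>0}Z_s$ lies a.s.\ in $[0,+\infty]$ and dominates $Z_t$, giving $q(m_0(t))\le t+Z_{0+}$ for every $t>0$. Since $q$ is a continuous bijection with $q(+\infty)=0$, the equality $Z_{0+}=0$ is equivalent to $m_0(s)\to+\infty$ a.s.\ as $s\downarrow 0$, and it yields the desired growth bound. The problem thus reduces to showing $Z_{0+}=0$ a.s.

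This last reduction is the crux of the argument. I would proceed by contradiction: if $\Pro(Z_{0+}>\eta)>0$ for some $\eta>0$, then monotonicity of $Z$ forces $m_0(s)<\phi(\eta+s)\le\phi(\eta)=:M$ uniformly on $(0,\infty)$ on this event, so every coalescence-induced jump of the eternal branch is bounded by $M$ there. By Proposition~\ref{prop:marking-associated}, conditional on the Poisson coalescence times the jump sizes $v_{s_i}$ are independent with law $\mu_{s_i}=\cL(x_{s_i})$, coming from marks on merging branches that are independent of the eternal one; a Poisson-thinning computation then shows that the event $\{Z_{0+}>\eta\}$ having positive probability forces $\int_0^{t_0}\mu_s((M,\infty))\,ds/s<\infty$. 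The remaining step, which I expect to be the main technical hurdle, is to use the non-integrability of $1/s$ at $0$ together with the proper assumption (which prevents $\mu_s$ from collapsing weakly to $\delta_0$) to contradict this summability -- essentially, properness forces $\mu_s$ to retain non-vanishing mass on intervals away from $0$ densely enough in $s$, so the infinite rate $1/s$ of coalescences should generate a jump exceeding $M$ with probability one, contradicting the uniform bound on the event $\{Z_{0+}>\eta\}$.
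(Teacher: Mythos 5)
Your change of variable $Z_t:=q(m_0(t))-t$, together with the observation that $Z$ is non-increasing, is a neat reformulation and genuinely different from the paper's route. The paper works with the \emph{entire subtree} below $(0,t)$: Step~1 of its proof establishes the deterministic sandwich \eqref{eq:degnerate-trees} for marked ultrametric trees (this is where convexity of $\psi$ enters), and Step~2 shows that the sum $M^k$ of \emph{all} marks at level $\delta_k$ below $(0,t)$ tends to $\infty$, by combining the a.s.\ divergence of the number of descendants with the properness-derived fact that each mark exceeds some fixed $\varepsilon$ with probability $\ge\varepsilon$ along a subsequence $\delta_k'$; the left inequality of \eqref{eq:degnerate-trees} then yields $m_0(t)\ge\phi(t-\delta_k+q(M^k))\to\phi(t)$. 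You instead follow only the eternal branch. Your reduction to showing $Z_{0+}=0$ a.s.\ is correct, and arguably more transparent from the McKean--Vlasov point of view.

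The gap is in the final step, where properness is supposed to enter. A minor inaccuracy first: on $\{Z_{0+}>\eta\}$ one has $Z_s\le Z_{0+}$ (not $\ge$), so $Z_s>\eta$ and $m_0(s)<\phi(\eta)$ hold only for $s$ below a \emph{random} threshold, not uniformly; you must localize to $\{Z_{1/k}>\eta\}$. The substantive problem: to obtain $Z_{0+}=0$ a.s.\ you need $\P(Z_{0+}>\eta)=0$ for \emph{every} $\eta>0$, and your Poisson-thinning estimate needs $\int_0^{\cdot}\mu_s((\phi(\eta),\infty))\,ds/s=\infty$, i.e.\ mass above $M=\phi(\eta)$, which becomes arbitrarily large as $\eta\downarrow 0$. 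Properness only gives $\mu_{t_k}([\varepsilon,\infty))\ge\varepsilon$ along some subsequence for one fixed $\varepsilon$; it says nothing about mass above arbitrarily large levels, so the summability you derive is not in tension with it. The strategy can be rescued, but by a genuine bootstrap absent from your sketch: note $\mu_s((\phi(\eta),\infty))=\P(Z_s<\eta-s)\ge\P(Z_{0+}<\eta')$ for $s<\eta-\eta'$, so that $\P(Z_{0+}<\eta')>0\Rightarrow\P(Z_{0+}>\eta)=0$ and hence $Z_{0+}$ is a.s.\ a constant $a_0\in[0,\infty]$; rule out $a_0\in(0,\infty)$ because then $m_0(s)\to\phi(a_0)\in(0,\infty)$, forcing infinitely many decrements of $Z$ near $0$ of size bounded away from zero, which contradicts the finiteness of $Z_{0+}-Z_t$; finally invoke properness only to exclude $a_0=\infty$. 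The paper's tree-sum argument avoids all of this because it never needs lower bounds on individual jump sizes or two-sided control on the eternal branch.
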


\begin{proof}
We will first need the following preliminary step that will be also used later in this manuscript.

{\bf Step 1.} Let us first consider a general rooted ultra-metric tree ${\bf t}$
whose leaves are denoted by $l_1,\cdots,l_n$
and such that the distance between the root and the leaves is given by $\tau$.
Consider a marking of the leaves $M_{l_1},\cdots,M_{l_n}\in\R^+$,
and let us consider the marking $M_0$ of the root obtained by propagating the marks 
according to the dynamics $\dot x  = -\psi(x)$ along the branches, and by adding the marks upon coalescence
(as in the Brownian CPP). Recall that $t\mapsto \phi(t+q(x_0))$ is the unique solution to $\dot u = -\psi(u)$ with initial condition $x_0$ at time 0. 
We claim that 
\be   \phi(\tau + q(\sum_{i=1}^n M_i)) \  < \ \tau   \leq M_0  \leq \sum_{i=1}^{n} \phi(\tau + q(M_i)). \label{eq:degnerate-trees} \ee
On the one hand, the RHS of the inequality corresponds to the extreme case of the star tree 
i.e, when all the branches coalesce simultaneously at the root of the tree (in this case the marks evolve independently along each branch,
and then add up at the root). On the other hand,
the LHS corresponds to the degenerate situation where all the leaves coalesce instantaneously (in which case, the marks add up to 
$\sum_{i=1}^n M_i$ and then evolve along a single branch).

Since $\psi\in \Hc$ and is convex, it is also
super-linear in the sense 
that for $x,y>0$
\[\psi(x+y)\geq \psi(x)+\psi(y).\]
Recall that marks evolve according to the dynamics $\dot x = -\psi(x)$ along each branch, so that the latter super-linearity assumption implies that
the marking decreases faster if we collapse two branches into a single branch, i.e.,
if we consider
\[\dot  r_t \ = \ -\psi(r_t), \ r_0=a_0+a_1, \ \ \mbox{and } \forall i=1,2, \ \ \dot  r^i_t \ = \ -\psi(r_t^i), \ r_0=a_i.\]
then $r_t \leq r_t^1 + r_t^2$. 
(\ref{eq:degnerate-trees})
can easily be deduced from there and a simple induction on the number of nodes of the ultrametric tree.

\medskip

{\bf Step 2.} 
Let $D^{(\delta_k)}_{\cT}(l,t)$ be the set of descendants of $(l,t)$ in the tree $\cT$ 
with time coordinate $\delta_k$. See Fig \ref{cpp}.  $|D^{(\delta_k)}_{\cT}(l,t)|$ denotes the cardinality of the set $D^{(\delta_k)}_{\cT}(l,t)$.

Let $m$ be the full marking constructed from the solution $x$. By construction, at a given level $\delta_k<t$, the set of marks 
$\{m_{l}(\delta_k)\}_{(l,\delta_k)\in D^{(\delta_k)}_{\cT}(0,t)}$
is identical in law 
with $\{y^k_n\}_{n \leq |D^{(\delta_k)}_{\cT}(0,t)|}$
where $\{y_n^k\}_{n}$ is an sequence of independent random variables with law $\cL(x_{\delta_k})$,
and independent of $|D^{(\delta_k)}_{\cT}({0,t})|$. Since $|D^{(\delta_k)}_{\cT}(0,t)|\to\infty$ a.s. as $t\to 0$, we have
\[ M^k(0,t) \ := \  \sum_{(l,\delta_k)\in D^{(\delta_k)}_{\cT}(0,t)} m_l(\delta_k) \ \to \infty \ \ \mbox{in probability.} \]
Here, we used the fact that $\cL(m_l(\delta_k))=\cL(x_{\delta_k})$
and that the solution is proper. Indeed, since the solution is proper, there exist $\varepsilon >0$ and a sequence $\delta_k'>0$ converging to 0 such that $x_{\delta_k'}\ge \varepsilon$ with probability larger than $\varepsilon$. Substituting $\delta_k$ with $\delta_k'$ yields the result.

Finally, by Step 1, we have
\be m_l(t) \geq \phi(t-\delta_k + q(M^k(0,t))), \label{eq:mlt}\ee
and the result follows by letting $k\to\infty$.
\end{proof}

\begin{rmk}
Note that we only used the LHS of (\ref{eq:degnerate-trees}) in the proof of the lemma.
The RHS will be useful later.
\end{rmk}

%
%
%

\subsection{Uniqueness of a proper solution in the stable case}
\label{sect:uniqueness-sol}
In this section, we restrict our study  to the stable case $\psi(x) = c x^\gamma$ with 
$\gamma>1$. Then $q(x) = \beta x^{1-\gamma}/c$ and $\phi(t) = (ct/\beta)^{-\beta}$. As a consequence, the ODE
$\dot z_t = \ -\psi(z_t)$ with initial condition $z_0$ at time 0 has the solution
\be \forall t\geq0, \ \  z_t  \ = \ \left( c(\gamma-1)t + z_0^{1-\gamma}\right)^{-\beta}
\label{eq:descent-from-infty}.
\ee

%
%

\begin{prop}\label{lem:m_-=m_+}
Let $(x_t; t>0)$ be a proper solution to MK-V. Let $m^-$
be the full marking of the CPP $\cT$ associated to $x$ (as defined in Proposition \ref{prop:marking-associated}).
Let $m^+$ be maximal full marking of $\cT$ (as defined in  Proposition \ref{lem:marking}).
Then $(m_0^+(t); t>0) = (m_0^-(t); t>0)$ a.s.. As a consequence,
$x$ is identical in law to $m_0^+$.
\end{prop}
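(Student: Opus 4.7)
From Proposition \ref{lem:marking}(iii) we already have $m^-_l(t) \leq m^+_l(t)$ a.s.\ on $\cT$, so the task is to upgrade this pointwise inequality to equality. Since both are non-negative random variables, it suffices to show $\cL(m^-_0(t)) = \cL(m^+_0(t))$ for every $t>0$: the a.s.\ inequality combined with equality in law forces pathwise equality at the eternal branch, and the corresponding identity for every $(l,t)\in\cT$ then follows from the branching property of the Brownian CPP (each $(l_0,t_0)$ plays the role of a new root for the subtree of its descendants, which is itself distributed like $\cT$). The consequence that $x$ is identical in law to $m^+_0$ is then immediate from Proposition \ref{prop:marking-associated}.

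The plan is a sandwich on the CPP. For each $\delta > 0$ I would introduce the auxiliary partial marking $Q^{(\delta)}$ of $\cT$ above level $\delta$ with deterministic initial value $\phi(\delta)$ on every branch. The growth condition (Lemma \ref{lem:grwoth-condition}) gives $m^-_l(\delta) \geq \phi(\delta)$ a.s.\ for every $l$, so the monotonicity of the propagation used throughout the proof of Proposition \ref{lem:marking} yields
\[
Q^{(\delta)}_0(t) \;\leq\; m^-_0(t) \;\leq\; m^{+,(\delta),+}_0(t) \qquad \text{a.s.}
\]
By Proposition \ref{lem:marking}(i), the right-hand side decreases a.s.\ to $m^+_0(t)$ as $\delta \downarrow 0$. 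If one can show that $Q^{(\delta)}_0(t)$ also converges in law to $m^+_0(t)$, then $m^-_0(t)$ is squeezed in distribution to $m^+_0(t)$ and the proof is complete.

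The hardest step is to establish $Q^{(\delta)}_0(t) \Rightarrow m^+_0(t)$. To attack it I would invoke the scaling of Proposition \ref{cor:scaling}(ii) with $\tau = 1/\delta$: since $\cS^{1/\delta,\gamma}(\delta_{\phi(\delta)}) = \delta_{\phi(1)}$, one obtains the distributional identity
\[
Q^{(\delta)}_0(t) \;\stackrel{d}{=}\; t^{-\beta}\,(t/\delta)^{\beta}\, R_0(t/\delta),
\]
where $R$ denotes the partial marking of $\cT$ above level $1$ with deterministic initial value $\phi(1)$. Combined with the self-similarity $\cL(m^+_0(t)) = \cL(t^{-\beta}\Upsilon)$ of Proposition \ref{lem:marking}(v), the desired convergence reduces to the long-time statement $s^{\beta}\, R_0(s) \Rightarrow \Upsilon$ as $s\to\infty$. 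The one-sided bound $s^{\beta}R_0(s) \leq_{\text{st}} \Upsilon$ is essentially free: couple $R$ to $m^+$ on $\cT$ using $\phi(1) \leq m^+_l(1)$ at every branch and apply propagation monotonicity to conclude $R_0(s) \leq m^+_0(s)$ a.s.\ for every $s\geq 1$. The matching lower bound in the long-time limit is the technical heart of the argument; I expect to prove it by exploiting the explicit contraction of the stable flow $\dot x = -\psi(x)$, whose solutions $x(t) = (c(\gamma-1)t + x_0^{1-\gamma})^{-\beta}$ forget their initial value $x_0$ at rate $t^{-\beta}$, together with the divergence of the number of coalescences above the root of the CPP, which floods the eternal branch with accumulated mass that asymptotically matches the $+\infty$-initialised marking and erases the memory of the finite starting value $\phi(1)$.
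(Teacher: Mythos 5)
Your reduction is clean up to the point where you need the long-time statement $s^{\beta}R_0(s)\Rightarrow\Upsilon$ as $s\to\infty$. The sandwich set-up, the choice of $Q^{(\delta)}$, the scaling identity $Q^{(\delta)}_0(t)\stackrel{d}{=}t^{-\beta}(t/\delta)^{\beta}R_0(t/\delta)$ (using $\cS^{1/\delta,\gamma}(\delta_{\phi(\delta)})=\delta_{\phi(1)}$), and the observation that a.s.\ domination plus equality in law forces a.s.\ equality are all correct.

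The problem is that you have not proved $s^{\beta}R_0(s)\Rightarrow\Upsilon$; you have only described why one might expect it. And this is not a gap that can be waved away: the one-dimensional contraction of $\dot x=-\psi(x)$ does \emph{not} by itself control the marking at the root, because the error is being re-injected and summed over an exponentially growing number of branches. Whether those contributions vanish in aggregate is precisely what makes the problem hard. Moreover, the statement you need is exactly the second item of Theorem \ref{teo:solution1}, whose proof in the paper goes through Lemma \ref{cor:cv-to-self-similar}, which in Step 2 invokes the uniqueness of the proper solution (Theorem \ref{teo:existence:global:solution}(ii)), and that uniqueness is the content of Section \ref{sect:uniqueness-sol} — i.e.\ of Proposition \ref{lem:m_-=m_+} itself. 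As written, your argument is therefore circular: you are trying to derive the uniqueness from a long-time convergence result that the paper only obtains as a consequence of the uniqueness.

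To make your route work you would need an independent proof of the long-time forgetting of the deterministic initial value $\phi(1)$, and that would inevitably reproduce the quantitative content of the paper's proof. The paper attacks the uniqueness directly on the genealogy: it indexes the descendants of $(0,T)$ in the CPP by the infinite binary tree $\{0\}\otimes\{0,1\}^n$, shows via Lemma \ref{lem:subadd} that the difference $\Delta g_\kappa=g^+_\kappa-g^-_\kappa$ is subadditive over children, sharpens this in Lemma \ref{lem:subadd2} to the passage formula \eqref{eq:passage-formula} with a multiplicative gain factor, and then propagates down $2n$ generations. The key quantitative input is that the product of these factors, after taking expectations and using the self-similarity of the CPP (Lemma \ref{lem:scaling-in-CPP}), is $\E(\eps(U_1,U_1U_2))<1/4$, which beats the $4^n$ branching and, together with $\E(m^+_0(T))<\infty$ (Lemma \ref{lem:finite-first-moment}), forces $\E(T^\beta\Delta g_0)=0$. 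That recursive contraction over the binary tree is the step your proposal is missing, and it is the substance of the proof.
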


Let $T>0$. By right continuity of $m_0^\pm$
it is enough to show that  
\be
m^+_0(T)=m_0^-(T) \ \  \mbox{a.s..} \label{eq:cqfd}
\ee
 The rest of the proof is dedicated to this result. 
We note that in the following, we will use repeatedly the fact that $m^\pm$ satisfy the growth condition
\be\label{growth-condition}
\forall t>0,  \ \ m_{0}^{\pm}(t) \ \geq \   (c(\gamma-1)t)^{-\beta},
\ee
(as a consequence of Lemma \ref{lem:grwoth-condition} and Proposition \ref{lem:marking}).

Let us consider the set of descendants of $(0,T)$ belonging to the PPP $\cP \subset\cT$ (i.e. the branching points in the descendence of $(0,T)$) and let us denote this set by $\cD_{0,T}$.
$\D_{0,T}$ is a set of points endowed with a natural (a.s.) binary tree structure -- See Fig. \ref{cpp}. We can index every point in $\cD_{0,T}$ by a point in ${\bf t} = \cup_{n\in\N^*} \{0\} \otimes \{0,1\}^{n}$, i.e., we construct a bijection $G$
from ${\bf t}$ to $\cD_{0,T}$ in such a way that 
\begin{itemize}
\item $G(0) \ = \ (0,T)$
\item If $\kappa \in{\bf t}$, $G(\kappa,0)$ (resp.,  $G(\kappa,1)$) is 
the left-child (resp., right-child) of $G(\kappa)$ in $\D_{0,T}$. 
\end{itemize}
The binary (planar) tree ${\bf t}$ is naturally equipped with a triplet $(g_{\kappa}^+, g^-_{\kappa}, d_{\kappa})_{\kappa\in{\bf t}}$
where $g_{\kappa}^\pm$ is the mark  $m^\pm_{\bar l}(\bar t-)$ where $(\bar l, \bar t)=G(\kappa)$  and $d_{\kappa}$ (the depth of the point $\kappa$)
is the time coordinate of the point $G(\kappa)$ in $\cP$.  Note that for $\kappa\neq 0 $, the point $G(\kappa)$ is a branching point of the CPP, and corresponds to a discontinuity point in the marking. In our notation,
the marks $g_{\kappa}^\pm$ are considered right before the occurrence of the discontinuity (i.e., before we add up the marks at the branching
point). 

We now fix $\kappa \in{\bf t}$. Our first goal is to show the ``passage formula'' (\ref{eq:passage-formula}) below, which will be achieved through Lemmas \ref{lem:subadd} and
\ref{lem:subadd2}. This formula will allow us to go from an arbitrary mark $\kappa$
to the marks of its children. The desired formula (\ref{eq:cqfd})
will be achieved from there by an induction on the nodes of the tree.

\medskip

First, from the definition of the marking $m^\pm$, we can deduce the marking
at $\kappa$ from the marking of its two children $\left((\kappa,0),(\kappa,1)\right)$, namely, if we consider the dynamics
\be \dot z^\pm_u \ = \ -\psi(z^\pm_u) ,  \ \ \ z^\pm_0 = \sum_{i=0}^1 g _{(\kappa,i)}^\pm  \label{eq:dyn-z} \ee
then $g^\pm_\kappa \ = \ z^\pm_{d_\kappa - d_{\kappa,0}} \ = \ z^\pm_{d_\kappa - d_{\kappa,1}}$ (since $d_{\kappa,0}=d_{\kappa,1}$).
In other words, we sum up the marks carried by the two children of $G(\kappa)$
and let the marking evolve according to the dynamics $\dot x =- \psi(x)$ along the branch connecting $\kappa$
to its children $(\kappa,0)$ and $(\kappa,1)$.

Alternatively, we will consider the dynamics
\be \label{eq:dynamics-node-to-node}  \mbox{for $i=0,1$ } \  \ \ \dot z^{\pm,i}_u \ = \ -\psi(z^{\pm,i}_u), \ \ z^{\pm,i}_0 \ = \ g^\pm_{(\kappa,i)}\ee
In words, instead of merging the two branches at $G(\kappa,0)=G(\kappa,1)$, we treat the two branches 
as if the merging had not occurred. 
\begin{lem}\label{lem:subadd}
Let $z^\pm$ and $z^{\pm,i}$ be defined as above, then for every $u\geq0$,
\be \Delta z_u \ \leq \ \sum_{i=0}^1 \ \Delta z_u^i, \ \ \mbox{where  } \ \Delta z_u \ := \ z^{+}_u - z^{-}_u, \ \ \mbox{and for $i=0,1$, } \ \Delta z^i_u \ := \ z^{+,i}_u - z^{-,i}_u. \ee
\end{lem}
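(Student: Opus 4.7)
My approach is to work with the flow $f_u : \R^+ \to \R^+$ of the ODE $\dot x = -\psi(x)$, so that, writing $a^\pm := g^\pm_{(\kappa,0)}$ and $b^\pm := g^\pm_{(\kappa,1)}$,
\[
z^{+}_u = f_u(a^+ + b^+), \quad z^{-}_u = f_u(a^- + b^-), \quad z^{\pm, 0}_u = f_u(a^\pm), \quad z^{\pm, 1}_u = f_u(b^\pm).
\]
By the maximality of $m^+$ (Proposition \ref{lem:marking}(iii)), $a^+ \geq a^-$ and $b^+ \geq b^-$. A direct algebraic rearrangement shows that the claimed inequality $\Delta z_u \leq \Delta z^0_u + \Delta z^1_u$ is equivalent to
\[
G(a^+, b^+) \ \geq \ G(a^-, b^-), \qquad \text{where } G(x,y) \ :=\  f_u(x) + f_u(y) - f_u(x+y).
\]
Note that the two sides of the desired inequality coincide at $u = 0$; the point is to show the gap opens in the right direction under the flow.

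It therefore suffices to establish that $G$ is componentwise non-decreasing on $(\R^+)^2$. Since $\partial_x G(x,y) = f_u'(x) - f_u'(x+y)$, this reduces to the fact that $x_0 \mapsto f_u'(x_0)$ is non-increasing on $\R^+$, i.e.\ that $f_u$ is concave in its initial condition. The classical sensitivity identity
\[
f_u'(x_0) \ = \ \exp\!\left( -\int_0^u \psi'(f_s(x_0))\, ds \right),
\]
combined with the monotonicity of the flow ($s \mapsto f_s(x_0)$ is non-decreasing in $x_0$) and the convexity of $\psi$ (which makes $\psi'$ non-decreasing), yields this monotonicity immediately: at every fixed $s$ the integrand $\psi'(f_s(x_0))$ is non-decreasing in $x_0$, hence so is the exponent, and hence $f_u'(x_0)$ is non-increasing.

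The only delicate point is that Section \ref{sect:infty-pop} assumes $\psi \in \Hc$ merely convex, not necessarily $C^1$. This is harmless: a convex function is locally Lipschitz on $(0,\infty)$ and admits a non-decreasing right-derivative $\psi'_+$ everywhere, which is enough to make sense of the sensitivity formula; alternatively, one may mollify $\psi$ into a sequence of smooth convex approximations $\psi_n \to \psi$ and pass to the limit by continuous dependence of the flow on $\psi$. Once the monotonicity of $f_u'$ is in hand, the componentwise domination $a^+ \geq a^-$, $b^+ \geq b^-$ closes the proof.
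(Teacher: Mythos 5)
Your proof is correct, and while it shares the same global structure as the paper's argument, the way you verify the key monotonicity is genuinely different and worth noting. Both proofs rewrite everything in terms of the flow $f_u$ of $\dot x = -\psi(x)$, invoke the componentwise domination $g^+_{(\kappa,i)}\geq g^-_{(\kappa,i)}$ coming from maximality of $m^+$ (Proposition \ref{lem:marking}(iii)), and reduce the claim to showing that the two-variable map $(x,y)\mapsto f_u(x+y)-f_u(x)-f_u(y)$ is componentwise non-increasing. The paper, which works exclusively in the stable case $\psi(x)=cx^\gamma$, then substitutes the closed-form flow \eqref{eq:descent-from-infty}, computes $\partial_x$ of the resulting expression explicitly, and checks its sign by an elementary monotonicity argument on an auxiliary function $g_t$. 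You instead avoid the explicit formula entirely and derive the sign from the first-variation identity $\partial_{x_0} f_u(x_0)=\exp(-\int_0^u \psi'(f_s(x_0))\,ds)$: since the flow is order-preserving in its initial condition and $\psi$ is convex (so $\psi'$ is non-decreasing), the exponent is non-decreasing in $x_0$ and hence $f_u'(\cdot)$ is non-increasing, i.e., $f_u$ is concave. This buys you generality: your argument works for any convex $\psi\in\Hc$ satisfying Grey's condition, not just $\psi(x)=cx^\gamma$, and it exposes the structural reason behind the inequality (coagulation followed by transport dominates transport followed by coagulation because the flow is concave). Your remarks about handling non-$C^1$ convex $\psi$ by mollification or right-derivatives are standard and adequate; they are superfluous in the stable case where the lemma actually sits in the paper, but they correctly indicate the argument extends beyond it.
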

\begin{proof}
Define
$$ f_t(x,y) \ := \ \left(c(\gamma-1)t + (x+y)^{1-\gamma}\right)^{-\beta} 
\ - \ \left(c(\gamma-1)t + x^{1-\gamma}\right)^{-\beta} 
\ - \ \left(c(\gamma-1)t + y^{1-\gamma}\right)^{-\beta} 
$$
so that according to (\ref{eq:descent-from-infty})
$$ z_t^\pm - \sum_{i=0}^1 z_{t}^{\pm,i} \ =\  f_t(z_{0}^{\pm,0}, z_{0}^{\pm,1}).$$
We need to prove that $f_t(z_{0}^{+,0}, z_{0}^{+,1})\leq f_t(z_{0}^{-,0}, z_{0}^{-,1})$.
Since $z^{+,i}_0\geq z^{-,i}_0$ for $i=0,1$, 
the problem boils down to proving that the coordinates of the gradient of $f_t$
are non-positive for $x,y>0$. We have
$$\partial_x f_t(x,y) \ = \ g_t(1/(x+y)) - g_t(1/x), \ \mbox{where }  
 \ g_t(u) = \left(c(\gamma-1)t + u^{\gamma-1}\right)^{-\frac{\gamma}{\gamma-1}} u^{\gamma},
 $$
 and one can easily check that $g$ is increasing in $u$ (for $\gamma>1$), thus showing that $\partial_x f_t(x,y)\leq 0$. An analogous argument shows that 
 the gradient is non-positive along the $y$ coordinate. This completes the proof of the lemma.
%
\end{proof}

\begin{lem}\label{lem:subadd2} For any $\kappa\in{\bf t}$, and $i=0,1$
\be\label{eq:b1-b2}\Delta z^{i}_{d_\kappa - d_{(\kappa,i)}} \leq \Delta g_{(\kappa,i)} \left( \frac{d_{(\kappa,i)}}{d_{\kappa}} \right)^{\frac{\gamma}{\gamma-1}
\left(1+F(\frac{d_{\kappa,i,*}}{d_{\kappa}} )\right)}\ee
where $*=0,1$ (the value of $d_{\kappa,i,*}$does not change the value of  $*$) and 
$$\forall x\in[0,1],  \ \ F(x) \ =  x \frac{(1 - 2^{1-\gamma})  x}{1-(1 - 2^{1-\gamma})  x} \ \geq \ 0 $$
\end{lem}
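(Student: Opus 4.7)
The plan is to combine the explicit integration of the ODE (\ref{eq:dynamics-node-to-node}) in the stable case with a sharpening by one generation of the growth bound of Lemma \ref{lem:grwoth-condition}.

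First, using (\ref{eq:descent-from-infty}) with $\alpha := c(\gamma-1)$ and $\tau := d_\kappa - d_{(\kappa,i)}$, the flow of (\ref{eq:dynamics-node-to-node}) reads $z^{\pm,i}_\tau = T(g^\pm_{(\kappa,i)})$, where $T(y) := (\alpha\tau + y^{1-\gamma})^{-\beta}$. A direct computation, using $\beta(\gamma-1)=1$ and $\beta+1=\beta\gamma$, yields
\[
T'(y) \;=\; \left(\frac{y^{1-\gamma}}{\alpha\tau + y^{1-\gamma}}\right)^{\beta\gamma},
\]
which is increasing in $s := y^{1-\gamma}$ and hence, since $\gamma>1$, decreasing in $y$. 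By the mean value theorem,
\[
\Delta z^i_\tau \;\le\; T'(g^-_{(\kappa,i)}) \cdot \Delta g_{(\kappa,i)}.
\]

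Next, I would improve the trivial bound $g^-_{(\kappa,i)} \ge \phi(d_{(\kappa,i)})$ coming from Lemma \ref{lem:grwoth-condition}. Applying that growth condition at the two children $(\kappa,i,0), (\kappa,i,1)$ of $(\kappa,i)$ (the growth estimate transfers from $m_0$ to arbitrary nodes thanks to the equality in law along each level from Proposition \ref{prop:marking-associated}), we get $g^-_{(\kappa,i,j)} \ge \phi(d_{\kappa,i,*})$ for $j=0,1$, so $g^-_{(\kappa,i,0)} + g^-_{(\kappa,i,1)} \ge 2\phi(d_{\kappa,i,*})$. Since $g^-_{(\kappa,i)}$ is obtained by running (\ref{eq:dyn-z}) over the time interval $d_{(\kappa,i)} - d_{\kappa,i,*}$ starting from this sum, monotonicity of the flow together with the explicit formula (\ref{eq:descent-from-infty}) gives
\[
g^-_{(\kappa,i)} \;\ge\; \phi\bigl(d_{(\kappa,i)} - \eta\, d_{\kappa,i,*}\bigr), \qquad \eta := 1 - 2^{1-\gamma} \in (0,1).
\]
Substituting into $T'$ (so that $s = \phi(\cdot)^{1-\gamma} = \alpha(d_{(\kappa,i)} - \eta d_{\kappa,i,*})$) and using $\tau + d_{(\kappa,i)} = d_\kappa$, the monotonicity of $s\mapsto s/(\alpha\tau+s)$ produces
\[
T'(g^-_{(\kappa,i)}) \;\le\; \left(\frac{d_{(\kappa,i)} - \eta\, d_{\kappa,i,*}}{d_\kappa - \eta\, d_{\kappa,i,*}}\right)^{\beta\gamma}.
\]

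To conclude, setting $a := d_{(\kappa,i)}/d_\kappa$ and $b := d_{\kappa,i,*}/d_\kappa$ (so $0 < b \le a < 1$) and recalling $\beta\gamma = \gamma/(\gamma-1)$, the claim reduces to the elementary inequality
\[
\frac{a - \eta b}{1 - \eta b} \;\le\; a^{1+F(b)}, \qquad F(b) = \frac{\eta b^2}{1-\eta b}.
\]
I would prove this by considering $f(t) := \ln\bigl((t-\eta b)/(1-\eta b)\bigr) - (1+F(b))\ln t$ on $(\eta b, 1]$, noting $f(1)=0$, and showing $f'(t) \ge 0$. The inequality $f'(t) = \tfrac{1}{t-\eta b} - \tfrac{1+F(b)}{t} \ge 0$, after plugging in the value of $F(b)$ and clearing denominators, is equivalent to the algebraic estimate
\[
bt + \eta b (1-b) \;\le\; 1,
\]
which follows from a short concavity argument: the left-hand side, viewed as a concave quadratic in $b$, is maximized either at $b=1$ (value $t \le 1$, when $t\ge\eta$) or at $b=(t+\eta)/(2\eta)$ (value $(t+\eta)^2/(4\eta)\le \eta \le 1$, when $t<\eta$, using $\eta\le 1$). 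I expect this last algebraic step to be the main technical obstacle: the specific form of $F$ is precisely what is needed to absorb the gap between the improved growth bound and the target ratio $(d_{(\kappa,i)}/d_\kappa)^{\beta\gamma}$, and matching it correctly requires handling the algebra with some care.
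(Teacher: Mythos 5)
Your proof is correct, and it takes a genuinely different technical route to the same conclusion. The paper works with the time-derivative: it uses the convexity of $\psi$ to get $\psi(z^{+,i}_u)-\psi(z^{-,i}_u)\ge \psi'(z^{-,i}_u)\Delta z^i_u$, parametrises the initial data by a scalar $v\ge 1$, bounds $v$ from below using the grandchild growth bound, and then integrates the resulting differential inequality via Gronwall. You instead exploit the explicit flow map $T(y)=(\alpha\tau+y^{1-\gamma})^{-\beta}$ directly: you compute $T'$, observe it is decreasing, apply the mean value theorem to get $\Delta z^i_\tau\le T'(g^-_{(\kappa,i)})\Delta g_{(\kappa,i)}$, and feed the improved grandchild bound $g^-_{(\kappa,i)}\ge\phi(d_{(\kappa,i)}-\eta\,d_{\kappa,i,*})$ into the monotone function $T'$. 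This trades Gronwall for a single clean algebraic inequality, $\frac{a-\eta b}{1-\eta b}\le a^{1+F(b)}$, which you verify by a log-derivative computation reducing to $bt+\eta b(1-b)\le 1$ on $(0,1]^2$; your concavity case split is correct, though one can also note that $b\mapsto b(1+\eta-\eta b)$ has derivative $1+\eta-2\eta b\ge 1-\eta>0$ on $[0,1]$ and equals $1$ at $b=1$, which is marginally shorter. Both approaches produce exactly the exponent $\frac{\gamma}{\gamma-1}\bigl(1+F(d_{\kappa,i,*}/d_\kappa)\bigr)$; yours is arguably tidier in the stable case since it never leaves the level of the flow, while the paper's differential-inequality formulation would adapt more readily to non-stable $\psi$. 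One small point of rigour: you invoke the growth condition at the grandchildren by appealing to equality in law from Proposition \ref{prop:marking-associated}; strictly speaking, the almost-sure bound at every node of $\cT$ follows because the proof of Lemma \ref{lem:grwoth-condition} (see \eqref{eq:mlt}) applies verbatim to any $(l,t)\in\cT$, and there are only countably many branching points, so this is a harmless gloss that matches the paper's own usage.
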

\begin{proof}

{\bf Step 1.} 
Recall from (\ref{eq:dynamics-node-to-node}) that $\Delta z_0^i = \Delta g_{\kappa,i}$ where $\Delta g_{\kappa,i}= g_{\kappa,i}^{+}-  g_{\kappa,i}^{-}$ and 
\be d \Delta z_u^i \ = \ -(\psi(z_u^{+,i}) - \psi(z_u^{-,i}) )du. \label{eq:solve-gamma}\ee
 The strategy will consists in bounding from above the RHS of the differential relation and then use 
Gronwald's lemma. First, by convexity of $\psi$ and since $z^{+,i}_u \geq z^{-,i}_u$
\begin{eqnarray*}
\psi(z_t^{+,i}) - \psi(z_t^{-,i})  
& \geq & \psi'(z_u^{-,i}) \Delta z^{i}_t \ = \ c\gamma (z_u^{-,i})^{\gamma-1} \Delta z^{i}_t. 
\end{eqnarray*}
We will now bound $z_u^{-,i}$ on the interval $[0, d_{\kappa} - d_{(\kappa,i)}]$. 
Let $v$ be such that 
$$ z_0^{-,i} \ = \ g^{-}_{\kappa,i} \ = \  (\frac{v}{c(\gamma-1) d_{\kappa,i}})^{\beta} $$
Note that since the depth of $g_{\kappa,i}$ is given by $d_{\kappa,i}$ and since
we have 
$m^\pm_l(u)\geq (\frac{1}{c(\gamma-1)u)})^{\beta}$ (see (\ref{growth-condition})), we have $v\geq1$. Solving
the equation (\ref{eq:dynamics-node-to-node}), we find that for any $u\in[0,d_{\kappa}-d_{\kappa,i}]$
\begin{eqnarray*} z_u^{-,i} & = & \left( c(\gamma-1)u + c(\gamma-1) \frac{d_{\kappa,i}}{v}\right)^{-\beta} \\
& = & \left(c(\gamma-1)(u+d_{\kappa,i})\right)^{-\beta}\left( 1+ \frac{d_{\kappa,i}(1-1/v)}{(u+d_{\kappa,i})-d_{\kappa,i}(1-\frac{1}{v})} \right)^{\beta} \\
& \geq & \left(c(\gamma-1)(u+d_{\kappa,i})\right)^{-\beta} \left(1+ \frac{1-\frac{1}{v}}{\frac{d_{\kappa}}{d_{\kappa,i}} - (1-\frac{1}{v})}\right)^{\beta}
\end{eqnarray*}
where the LHS and RHS of the inequality are equal  when $u=d_{\kappa}-d_{\kappa,i}$. Putting everything together, we get
\be \forall u\in[0, d_{\kappa}- d_{\kappa,i}], \ \  \psi(z_u^{+,i}) - \psi(z_u^{-,i}) \geq  \frac{\Delta z_t^i}{u+ d_{\kappa,i}}  \ \frac{\gamma}{\gamma-1}
\left(1+ \frac{1-\frac{1}{v}}{\frac{d_{\kappa}}{d_{\kappa,i}} - (1-\frac{1}{v})}\right)
\ee 

\medskip

{\bf Step 2.} Since the RHS of the latter inequality increases in $v$, we produce a lower bound for $v$.
By construction, the mark $g^{-}_{\kappa,i}$ is obtained by considering the dynamics
\be dy_t \ = \ -\psi(y_t) dt, \ \ y_0 \ = \ \sum_{j=0}^1 g_{\kappa,i,j}^{-} \label{eq:ytu}\ee
evaluated at time $d_{\kappa,i} - d_{\kappa,i,*}$ and where $*=0$ or $1$.
Since (\ref{growth-condition}) implies that $g_{\kappa,i,j}^{-} \geq (c(\gamma-1)d_{\kappa,i,*})^{-\beta}$, we get 
\begin{eqnarray*} 
g^{-}_{\kappa,i} & \geq  & 
\left(c(\gamma-1)(d_{\kappa,i} - d_{\kappa,i,*} ) + \frac{1}{2^{\gamma-1}}c(\gamma-1)d_{\kappa,i,*} \right)^{-\beta}  \\
& = & \left(c(\gamma-1) d_{\kappa,i}\right)^{-\beta}\left( 1-(1-\frac{1}{2^{\gamma-1}}) \frac{d_{\kappa.i,*}}{d_{\kappa,i}}\right)^{-\beta}
\end{eqnarray*}
where the RHS and LHS of the inequality are equal when the initial condition of $(y_t;t\geq0)$ is taken to be $2 (c(\gamma-1)d_{\kappa,i,*})^{-\beta}$
(where the factor $2$ comes from the sum in the initial condition of (\ref{eq:ytu})). This implies that 
\be v\geq  \frac{1}{1-(1-\frac{1}{2^{\gamma-1}}) \frac{d_{\kappa.i,*}}{d_{\kappa,i}}} \ee

\medskip

{\bf Step 3.} Combining the two previous steps yield that 
\be \nonumber \forall u\in[0, d_{\kappa}- d_{\kappa,i}], \ \  \psi(z_u^{+,i}) - \psi(z_u^{-,i}) \geq  \frac{\Delta z_t^i}{u+ d_{\kappa,i}}  \ \frac{\gamma}{\gamma-1}
\left(1+ G( \frac{d_{\kappa,i}}{d_{\kappa}}, \frac{d_{\kappa,i,*}}{d_{\kappa,i}})\right).
\ee 
where $G(x,y) \ = \ x \frac{(1 - 2^{1-\gamma})  y}{1-(1 - 2^{1-\gamma})  y}$. Further since $G$ is increasing in $x$ and $y$ (since $\gamma>1$), 
and $\frac{d_{\kappa,i}}{d_{\kappa}}, \frac{d_{\kappa,i,*}}{d_{\kappa,i}} \leq \frac{d_{\kappa,i,*}}{d_{\kappa}}$
we have
\be \nonumber \forall u\in[0, d_{\kappa}- d_{\kappa,i}], \ \  \psi(z_u^{+,i}) - \psi(z_u^{-,i}) \geq  \frac{\Delta z_t^i}{u+ d_{\kappa,i}}  \ \frac{\gamma}{\gamma-1}
\left(1+ F( \frac{d_{\kappa,i,*}}{d_{\kappa}})\right).
\ee 
Next, if we solve 
\be \nonumber d\bar y_u \ = \   \frac{\bar y_u}{u+ d_{\kappa,i}}  \ \frac{\gamma}{\gamma-1}
\left(1+ F( \frac{d_{\kappa,i,*}}{d_{\kappa}})\right) du,  \  \bar y_0 = g_{\kappa,i} 
\ee
one finds the RHS of (\ref{eq:b1-b2}). The proof of the lemma is achieved by a direct application of Gronwald's lemma.
\end{proof}
Recall that $\Delta g_\kappa = \Delta z_{d_{\kappa}- d_{\kappa,*}}$ and $\Delta z = z^+ - z^-$ with $z^+$ and $z^-$ follow the dynamics defined in (\ref{eq:dyn-z}).
From Lemma \ref{lem:subadd}, we have $\Delta g_\kappa  \leq \sum_{i=0}^1  \Delta z^{i}_{d_\kappa - d_{(\kappa,i)}}$. By the previous lemma, this implies that 
$$ \Delta g_{\kappa} \leq \sum_{i=0}^1 \Delta g_{(\kappa,i)} \left( \frac{d_{(\kappa,i)}}{d_{\kappa}} \right)^{\frac{\gamma}{\gamma-1}
\left(1+F(\frac{d_{\kappa,i,*}}{d_{\kappa}})\right)} $$
or equivalently that 
\be d_\kappa^{\beta}\Delta g_{\kappa} \leq \sum_{i=0}^1  d_{(\kappa,i)}^{\beta} \Delta g_{(\kappa,i)}   \left( \frac{d_{(\kappa,i)}}{d_{\kappa}} \right)^{
\left(1+ \frac{\gamma}{\gamma-1} F(\frac{d_{\kappa,i,*}}{d_{\kappa}})\right)} \label{eq:passage-formula} \ee
where $F$ is defined in the previous lemma.

Let us now proceed with the rest of the proof. For $n\geq1$,
let ${\bf t}_n = \{0\}\otimes\{0,1\}^n$ that can be thought of 
as the vertices in ${\bf t}$ which are at a distance $n$ from the root. For any $v\in{\bf t}_n$,
let $v(i)\in{\bf t}_i$ the vertex obtained by only considering the first $i$ coordinates of $v$
(i.e., $v(i)$ is the ancestor of $v$ at a distance $i$ from the root).
After a simple induction, we can generalize the previous inequality to the following one
$$\forall n\geq1, \ \ T^{\beta} \Delta g_{0} \leq \sum_{v\in{\bf t}_{2n}} d_v^{\beta}\Delta g_{v} \ \Pi_{i=0}^{2n-1} \left( \frac{d_{v(i+1)}}{d_{v(i)}} \right)^{1+ \frac{\gamma}{\gamma-1} F(\frac{d_{v(i+2)}}{d_{v(i)}} )} $$
(using the fact that $d_{v(0)}=d_0=T$) which implies that (using the fact that $F(x)\geq0$ on $[0,1]^2$)
\beqn
\forall n\geq1, \ \ T^{\beta}  \Delta g_{0}  & \leq & \sum_{v\in{\bf t}_{2n}} 
\ d_v^{\beta} \Delta g_{v}  
\Pi_{i=0}^{n-1}  \left( \frac{d_{v(2i+1)}}{d_{v(2i)}} \right)^{
\left(1+\frac{\gamma}{\gamma-1} F(\frac{d_{v(2i+2)}}{d_{v(2i)}} )\right)}  \frac{d_{v(2i+2)}}{d_{v(2i+1)}} \nonumber \\
& = & \sum_{v\in{\bf t}_{2n}} 
d_v^{\beta} \Delta g_{v} \
\Pi_{i=0}^{n-1}  \ \eps( \frac{d_{v(2i+1)}}{d_{v(2i)}},  \frac{d_{v(2i+2)}}{d_{v(2i)}}  ) \nonumber \\
& \leq  & 2 \sum_{v\in{\bf t}_{2n}} 
d_v^{\beta} g^+_{v} \
\Pi_{i=0}^{n-1}  \ \eps( \frac{d_{v(2i+1)}}{d_{v(2i)}},  \frac{d_{v(2i+2)}}{d_{v(2i)}}  ) \label{ineq:full-tree}
\eeqn
where $\eps(x,y) = y x^{\gamma/(\gamma-1)F(y)}$. We will now take advantage of the self-similarity 
in the Brownian CPP. 

\begin{lem} \label{lem:scaling-in-CPP}Let $v\in{\bf t}_n$. Then 
\begin{enumerate}
\item $\{d_{v(i+1)}/d_{v(i)}\}_{i=0}^{n-1}$ is a sequence of i.i.d. random variables uniformly
distributed on $[0,1]$. 
\item $d_v^{\beta} g^+_{v} = T^{\beta} g^+_{0} = T^{\beta} m^+_0(T)$ in law.
\item $\{d_{v(i+1)}/d_{v(i)}\}_{i=0}^{n-1}$ and $d_{v}^{\beta}  g^+_{v}$ are independent.
\end{enumerate}
\end{lem}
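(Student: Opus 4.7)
The plan is to deduce the three statements from two structural properties of the Brownian CPP: self-similarity under the scaling $F_\tau$ (Proposition \ref{cor:scaling}(i)) and the independence of restrictions of $\cP$ to disjoint regions of $\R^+\times\R^+$.

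For statement (1), I will proceed by induction along the path $v(0),\ldots,v(n)$, keeping track of the normalized width $A_n:=w_n/d_n$, where $w_n$ is the width of the sub-rectangle of $\cP$ in which $G(v(n))$ sits as the top PPP point and $d_n=d_{v(n)}$ is its depth. The inductive hypothesis is that $A_n\sim\mathrm{Gamma}(2,1)$ independently of the previously computed ratios $R_1,\ldots,R_n$. Combined with a fresh $U_n\sim\mathrm{Uniform}(0,1)$ giving the position of $G(v(n))$ within $w_n$ (which is uniform conditional on its $t$-coordinate by the PPP structure), the next-level width $w_{n+1}=U_nw_n$ satisfies $w_{n+1}/d_n=U_nA_n\sim\mathrm{Exp}(1)$ by the Jacobian-based identity that $UA$ and $(1-U)A$ are i.i.d.\ $\mathrm{Exp}(1)$ when $U\sim\mathrm{Uniform}(0,1)$ and $A\sim\mathrm{Gamma}(2,1)$ are independent. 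Integrating the conditional formula $\P(R_{n+1}\leq r\mid w_{n+1}/d_n)=\exp(-(w_{n+1}/d_n)(1-r)/r)$ against this $\mathrm{Exp}(1)$ law then yields $R_{n+1}\sim\mathrm{Uniform}(0,1)$ independently of the past, and a short CDF computation shows $A_{n+1}=w_{n+1}/d_{n+1}$ is again $\mathrm{Gamma}(2,1)$ independently of the past, closing the induction.

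For statement (2), I exploit the scaling identity from Proposition \ref{lem:marking}(v). Applying $F_\tau$ maps $G(v)$ at depth $d_v$ to a corresponding vertex in $\tilde\cT:=F_\tau(\cT)$ at depth $\tau d_v$, and transforms the mark $g_v^+$ into $\tau^{-\beta}g_v^+$. Since $\tilde\cT\stackrel{d}{=}\cT$ and the construction of $m^+$ from $\cT$ is scale-equivariant, the joint law of $(d_v,g_v^+)$ is invariant under the map $(x,y)\mapsto(\tau x,\tau^{-\beta}y)$ for every $\tau>0$. Consequently $d_v^\beta g_v^+$ has a distribution independent of $d_v$; evaluating at the root, where $d_0=T$ is deterministic and $g_0^+=m_0^+(T)$, yields $d_v^\beta g_v^+\stackrel{d}{=}T^\beta g_0^+$.

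For statement (3), I combine the previous arguments with the restriction property of Poisson processes. The mark $g_v^+$ is measurable with respect to the portion of $\cP$ lying strictly below level $d_v$ within the subtree of $G(v)$, while the ratios $\{d_{v(i+1)}/d_{v(i)}\}_{i=0}^{n-1}$ are measurable with respect to the portion of $\cP$ above that subtree; by the restriction property these two portions are independent conditionally on the boundary data at $G(v)$. Since statement (2) shows $d_v^\beta g_v^+$ has a law not depending on $d_v$, this renormalized mark is unconditionally independent of the entire collection of ratios. The main obstacle will be executing the induction step in (1) with care: despite the widths $w_n$ being inherited from parent rectangles (and hence strongly dependent on previous variables), the combination $A_n=w_n/d_n$ remains $\mathrm{Gamma}(2,1)$ and independent of the past ratios at every level, a fact which must be verified by a direct CDF computation using the joint density of the relevant parent variables derived from the PPP intensity $dl\,dt/t^2$.
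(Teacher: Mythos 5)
Your proof of item (1) takes a genuinely different and more explicit route than the paper, which merely states ``from the scale invariance properties of the CPP, it is not hard to show'' that the ratios are uniform. Your Gamma$(2,1)$/Exp$(1)$ bookkeeping is the right mechanism: if $p\sim\mathrm{Exp}(1)$ and $\P(r\le\rho\mid p)=\exp(-p(1-\rho)/\rho)$, then with $a=p/r$ one computes the joint density $f_{a,r}(a,\rho)=ae^{-a}$, so $(a,r)\sim\mathrm{Gamma}(2,1)\otimes\mathrm{Unif}(0,1)$ independently; and $U\cdot\mathrm{Gamma}(2,1)\sim\mathrm{Exp}(1)$, as you say. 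This is essentially a size-biasing observation about the PPP. One base-case wrinkle: $G(v(0))=(0,T)$ sits on the eternal branch, not in $\cP$, so the level-$0$ width-to-depth ratio is $l_T/T\sim\mathrm{Exp}(1)$, not $\mathrm{Gamma}(2,1)$. The $\mathrm{Gamma}(2,1)$ invariant is acquired after the first descent, so the induction must be launched at level $1$ with a separate first step; this is a minor fix but should be made explicit.

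For item (2) there is a genuine gap. The claim that ``the joint law of $(d_v,g_v^+)$ is invariant under the map $(x,y)\mapsto(\tau x,\tau^{-\beta}y)$ for every $\tau>0$'' cannot hold: $d_v$ is supported on $(0,T)$, while the image of that law under the map with $\tau>1$ is supported on $(0,\tau T)$, so the two laws differ. What you are allowed to deduce from Proposition \ref{cor:scaling} together with $F_\tau(\cT)\stackrel{d}{=}\cT$ is an \emph{equivariance} relating the law of $(d_v,g_v^+)$ built from root depth $T$ to the law built from root depth $\tau T$. To turn this into the statement that $d_v^\beta g_v^+$ has a conditional law given $d_v$ that does not depend on $d_v$, one must first \emph{disintegrate} on $d_v$ and then use the Markov/restriction property of the Poisson process at the random level $d_v$: conditionally on $d_v=s$, the rescaled sub-CPP $F_{1/s}(\cP_s)$ is distributed as $\cP_1$, and $g_v^+$ is a scale-equivariant functional of that sub-CPP (this is exactly what the paper does via $\cP_\tau$ with $\tau=d_v$ and Proposition \ref{lem:marking}(v)). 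As written, your one-step ``consequently'' does not follow from the stated invariance.

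Item (3) is then the right idea (restriction property plus the conditional indifference from (2)), but inherits the gap from (2); once (2) is repaired as above, your argument for (3) is essentially the paper's. Also note that ``the portion of $\cP$ lying below level $d_v$ within the subtree'' is a random region determined by $\cP$ above $d_v$, so the conditional independence you invoke is precisely the paper's claim that $F_{1/d_v}(\cP_{d_v})$ is independent of the ratios, and needs to be stated with care.
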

\begin{proof}
W.l.o.g. we take  $v=0_n$ and $T=1$, where $0_n$ is the vector of length $n$ filled up with $0$. 
Let $\tau > 0$ be random or deterministic.
Define the scaling operator $F_{1/\tau}(l,t) \ =\ (l/\tau, t/\tau)$. Finally,
$$ l_{\tau} \ := \ \inf\{l>0 \ : \ (l,t)\in\cP, \ \  t\geq \tau\}, \ \ \cP_{\tau}:= \{(l,t)\in\cP \ : \  l\leq l_{\tau}\},$$
$\{l_\tau\}\times\R_*^+$which is the left-most branch in the tree $\cT$ alive at time $\tau$.
In particular, we note that $m_0(\tau)$ is measurable with respect to $\cP_\tau$ since the vertical 
branches of the CPP $l$-coordinates such 
that $l\geq l_\tau$ will not coalesce with the branch $\{0\}\times\R^+$ before time $\tau$.

From the scale invariance properties 
of the CPP, it is not hard to show that  (i)
$\{d_{0_n(i+1)}/d_{0_n(i)}\}_{i=0}^{n-1}$ is a sequence of uniform random variables on $[0,1]$, and that if we take $\tau=d_{0_n}$
then (ii)
$F_{1/d_\tau}(\cP_{d_{\tau}})$ is identical in law with $\cP_1$, and (iii) that  $\{d_{0_n(i+1)}/d_{0_n(i)}\}_{i=0}^{n-1}$
and $F_{1/d_\tau}(\cP_{d_{\tau}})$ are independent. 
Further, by reasoning along the exact same lines of Proposition \ref{lem:marking}(v),  one can show that 
$ d_{\tau}^{\beta} m^+_{\cdot}(\cdot d_{\tau})$ coincides with the marking $m^+$ on the rescaled CPP $F_{1/d_\tau}(\cP_{d_\tau})$.
This completes the proof of the lemma. 
\end{proof}
Passing to the expectation on both side of (\ref{ineq:full-tree}) and using the previous lemma, we get that  
\beqn
\forall n\geq1, \ \ \E\left(T^{\beta}  \Delta g_{0} \right) & \leq & 
2 \sum_{v\in{\bf t}_{2n}} 
\E\left(d_v^{\beta} g^+_{v} \right) 
\Pi_{i=0}^{n-1}  \E\left( \ \eps( \frac{d_{v(2i+1)}}{d_{v(2i)}},  \frac{d_{v(2i+2)}}{d_{v(2i)}}  ) \right) \nonumber \\
& = & 2 \times 4^n \times T^{\gamma-1}  \E\left(g^+_0\right)  \ \E\left(\eps(U_1, U_1 U_2)\right))^{n} \label{ineq:almost-there}
\eeqn
where $U_1$ and $U_2$ are two uniform random variables on $[0,1]$. From the definition
of $\eps$, we have
$$\eps(U_1,U_1 U_2) < U_1 U_2 \ \mbox{a.s.}$$ 
Since $\E(U_1 U_2)=1/4$, we have
$\E(\eps(U_1, U_1 U_2))<1/4$ and thus that the $\limsup$
of the RHS of (\ref{ineq:almost-there}) goes to $0$ as $n\to\infty$.
Finally, the proof of (\ref{eq:cqfd}) (and thus of Proposition \ref{lem:m_-=m_+}) is completed by the following lemma.

\begin{lem}\label{lem:finite-first-moment}
$\E(g_0^+) \ = \ \E\left(m^+_0(T) \right)<\infty.$
\end{lem}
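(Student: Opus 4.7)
The plan is to combine the star-tree upper bound from Step 1 of the proof of Lemma \ref{lem:grwoth-condition} (the RHS of (\ref{eq:degnerate-trees})) with a direct expectation computation using the Poisson structure of the Brownian CPP. By the scaling identity in Proposition \ref{lem:marking}(v) one has $\E(m_0^+(T))=T^{-\beta}\E(m_0^+(1))$, so proving finiteness for one value of $T$ will suffice; I therefore fix $T>0$ and choose an auxiliary intermediate level $\delta\in(0,T)$ (with $\delta=T/2$ in mind).

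Since $m^+$ is a full marking (Proposition \ref{lem:marking}(ii)), its restriction to the subtree of descendants of $(0,T)$ at depth $T-\delta$ evolves according to the marking rules with initial condition $\{m_l^+(\delta):(l,\delta)\in D^{(\delta)}_\cT(0,T)\}$. The star-tree inequality applied with $\tau=T-\delta$ gives the bound
$$
m_0^+(T)\ \leq\ \sum_{(l,\delta)\in D^{(\delta)}_\cT(0,T)}\phi\bigl(T-\delta+q(m_l^+(\delta))\bigr)\ \leq\ N_\delta\,\phi(T-\delta),
$$
where $N_\delta:=|D^{(\delta)}_\cT(0,T)|$ and the second inequality is obtained by discarding the (a priori large and random) marks at level $\delta$ using the monotonicity of $\phi$ and the positivity of $q$. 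This is a deterministic pathwise upper bound, so the problem reduces to showing that $\E(N_\delta)<\infty$ for some $\delta\in(0,T)$.

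Finally, I will compute $\E(N_\delta)$ directly from the Poisson structure of $\cP$. The descendants of $(0,T)$ at level $\delta$ are the eternal-branch point $(0,\delta)$ together with the points $(l,\delta)\in\cT$ with $0<l<L_T$, where $L_T:=\inf\{l>0:\exists\,t>T,\,(l,t)\in\cP\}$. Since the points of $\cP$ with time coordinate $>T$ form a PPP on $\R_+$ of intensity $\int_T^\infty dt/t^2=1/T$, $L_T\sim\mathrm{Exp}(1/T)$; conditionally on $L_T$, the remaining descendants are in bijection with the Poisson points of $\cP$ in $(0,L_T)\times[\delta,T]$, whose count is Poisson of mean $L_T(1/\delta-1/T)$. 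Hence $\E(N_\delta)=1+T(1/\delta-1/T)=T/\delta<\infty$, and with $\delta=T/2$ one obtains $\E(m_0^+(T))\leq 2\phi(T/2)=2(c(\gamma-1)T/2)^{-\beta}<\infty$, which is the desired bound. I do not anticipate any genuine obstacle beyond the observation that dropping the $q$-terms via monotonicity is exactly what permits the reduction to the finite quantity $\E(N_\delta)$; the remainder is routine CPP bookkeeping.
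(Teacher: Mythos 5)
Your proof is correct, but it takes a genuinely different route from the paper's. The paper truncates the genealogy of the CPP rooted at $(0,T)$ at a fixed \emph{generation} $n$ (the $2^n$ vertices of ${\bf t}_n$), applies the star-tree bound (RHS of \eqref{eq:degnerate-trees}), passes to the bound $\bar r$ started from $+\infty$, and uses the exchangeability of the branching depths to reduce everything to the scalar integral $I_n' = \int_{[0,1]^n}(1-\prod_i u_i)^{-\beta}\,du_1\cdots du_n$; a change of variables then shows this is finite once $n>\beta$. Your argument instead truncates at a fixed \emph{time level} $\delta\in(0,T)$, drops the $q$-terms in the star-tree bound using $q\ge 0$ and the monotonicity of $\phi$ to get the pathwise deterministic bound $m_0^+(T)\le N_\delta\,\phi(T-\delta)$, and then computes $\E(N_\delta)$ directly from the Poisson structure of $\cP$ (with the decomposition via $L_T\sim\mathrm{Exp}(1/T)$ and the conditionally Poisson count in $(0,L_T)\times[\delta,T]$, yielding $\E(N_\delta)=T/\delta$). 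Your route is more economical: it avoids having to choose $n>\beta$ and the accompanying integral estimate, replacing it with one explicit first-moment computation; the mild price is that you invoke the Poisson intensity of the CPP at a fixed level, rather than only the discrete branching-ratio structure (Lemma \ref{lem:scaling-in-CPP}). Both proofs hinge on the same key inequality -- the RHS of \eqref{eq:degnerate-trees} -- and differ only in where they cut the recursion.
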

\begin{proof} 

Let $n\geq1$. For every $v\in{\bf t}_n$
we consider the dynamics
\be  \dot r_t^v \ = \ - \psi(r_t^v), \ \ \ r_0^v = g_v. \label{dynamics:r}\ee
From the RHS of (\ref{eq:degnerate-trees}), we get that 
$$\E\left(m_0^+(T)\right) \ \leq 2^n \E\left( r^{0_n}_{d_0-d_{0_n}} \right) \ \leq  2^n \E(\bar r_{T-d_{0_n}}) $$
where $\bar r$ follows the dynamics $d \bar r  = -\psi(\bar r) dt$ with initial condition $+\infty$. Solving for $\bar r$
we get
$$\bar r_{T-d_{0_n}} = \left( c(\gamma-1) (d_0-d_{0_n}) \right)^{-\beta} \ = \   \left( c(\gamma-1) T (1-\Pi_{i=0}^{n-1} \frac{d_{0_n(i+1)}}{d_{0_n(i)}}) \right)^{-\beta} $$
and from Lemma \ref{lem:scaling-in-CPP}, it remains to show that the following integral is finite for $n$ large enough
$$I_n' \ = \ \int_{[0,1]^n} \left(1- \Pi_{i=1}^n u_i \right)^{-\beta} d u_1\cdots d u_n. $$
Since the singularity of the integral is at $(1,\cdots,1)$ we can consider the integral
$$I_n  = \ \int_{[\frac{1}{2},1]^n} \left(1- \Pi_{i=1}^n u_i \right)^{-\beta} d u_1\cdots d u_n. $$
Let us now make the change of variable $\forall i\in[n],  \ w_i \ = \ \Pi_{j=i}^n u_j$ so that
\beqnn
I_n & = & \int_{w_1 \leq \cdots \leq w_n, \forall i\in[n], w_i \in[\frac{1}{2^i},1]} \left(1- w_1 \right)^{-\beta} \frac{1}{\Pi_{i=2}^n w_i}d w_1\cdots d w_n \\
& \leq & \frac{1}{2^{n-1}}  \int_{{w_1 \leq \cdots \leq w_n, \forall i\in[n], w_i \in[\frac{1}{2^i},1]}} \left(1- w_1 \right)^{-\beta} d w_1\cdots d w_n \\
& = & \frac{1}{2^{n-1}}  \frac{1}{\beta-1} 
\int_{w_2 \leq \cdots \leq w_n, \forall i\in\{2,\cdots,n\}, w_i \in[\frac{1}{2^i},1]} 
\left(1- w_2 \right)^{1-\beta} d w_2\cdots d w_n + K_{n,\gamma}
 \eeqnn
 where $K_{n,\gamma}$ is a finite constant. Iterating the same calculation, we get
$$I_n \leq C_{n,\gamma} \int_{w_n\in[\frac{1}{2},1]}  \left(1- w_n \right)^{n-1-\beta} d w_n + \bar C_{n,\gamma}$$
where $C_{\gamma,n}, \bar C_{\gamma,n}<\infty$. Taking $n > \beta$ makes the integral $I_n$ finite. This completes the proof of the lemma.
\end{proof}

\begin{proof}[Proof of Theorem \ref{teo:existence:global:solution}]
The existence of a proper MK-V solution is provided by 
Proposition \ref{lem:marking}. We proved the uniqueness of the solution in the stable case
in Section \ref{sect:uniqueness-sol}. The scaling and measurability properties follow directly from 
Proposition \ref{lem:marking}. The integrability property follows from Lemma \ref{lem:finite-first-moment}.
\end{proof}

\subsection{Asymptotic behavior of finite population models}\label{sect:applications}
In this section, we use Theorem \ref{teo:existence:global:solution} to deduce some asymptotical results on the MK-V equation (\ref{eq:Sch}).

\begin{teo}\label{teo:solution1}
Again, we assume that $\psi(x)=c x^\gamma$ with $c>0$ and $\gamma>1$.
Let $\nu\in M_P(\R^+)$, with $\nu(\{0\})<1$. 
For every $\delta>0$, let
$(x^{(\delta)}_t; t\geq0)$ be the MK-V solution with inverse population
$\delta$ and initial measure $\nu$.
Finally, let $(x_t^{(0)}; t>0)$ be the unique proper solution to MK-V. 
\begin{itemize}
\item(Convergence to the $\infty$. pop. solution) For every $t>0$ 
\[  \lim_{\delta\downarrow 0} x^{(\delta)}_t \ = \ x_t^{(0)} \ \ \mbox{in law.}\]
\item(Long time behavior) For every $\delta>0$,
$$ \ \lim_{t\uparrow \infty} x_t^{(\delta)} t^{\beta} \ = \  \Upsilon =  \ x_1^{(0)} \ \ \mbox{in law.}$$ 
\end{itemize}
\end{teo}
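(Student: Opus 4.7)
The plan is to realize both processes on a common Brownian CPP $\cT$. By Theorem \ref{teo:mckean}, the partial marking $m^{(\delta)}$ above level $\delta$ with i.i.d. initial marks of law $\nu$ satisfies $m_0^{(\delta)}(\delta+\cdot) =_{\cL} (x^{(\delta)}_\cdot)$; and by Proposition \ref{lem:marking}(iv) combined with Proposition \ref{lem:m_-=m_+} and Theorem \ref{teo:existence:global:solution}(ii), the full maximal marking $m^+$ of $\cT$ realizes $(x^{(0)}_\cdot) =_{\cL} m_0^+(\cdot)$, which in the stable case is the unique proper $\infty$-pop MK-V solution. The strategy for both parts is a sandwich argument: an upper bound via the maximal partial marking $m^{(\delta),+}$, tightness, identification of any subsequential limit as a proper $\infty$-pop MK-V solution via the growth condition, and conclusion by uniqueness.

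For Part 1, the pathwise inequality $m_0^{(\delta)}(\delta+t) \leq m_0^{(\delta),+}(\delta+t)$, combined with the monotone convergence $m_0^{(\delta),+}(\cdot) \downarrow m_0^+(\cdot)$ of Proposition \ref{lem:marking}(i) and the a.s.\ continuity of $m_0^+$ at a fixed $t > 0$, gives tightness of $\{x^{(\delta)}_t\}_\delta$ together with the stochastic upper bound $x^{(0)}_t$. Along any sequence $\delta_n \downarrow 0$, I would extract a subsubsequence along which $(x^{(\delta_n)}_s)_{s \in (0,T]}$ converges in law to a process $(Y_s)$ in Skorokhod topology, and pass to the limit in the fixed-point formulation from Proposition \ref{lem:unicity}, using that $1/(s+\delta_n) \to 1/s$ uniformly on compacts of $(0,\infty)$ and that $\psi$ is continuous, to conclude that $Y$ solves the $\infty$-pop MK-V equation. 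To show $Y$ is proper I invoke the lower bound from Step 2 of the proof of Lemma \ref{lem:grwoth-condition}: $x^{(\delta_n)}_s \geq \phi(s - \delta_n + q(M^{(\delta_n,s)}))$, where $M^{(\delta_n,s)}$ is the sum of the $\Theta(1/\delta_n)$ i.i.d. $\nu$-marks attached to the descendants of $(0,\delta_n+s)$ at level $\delta_n$. Since $\nu \neq \delta_0$ there is $\varepsilon>0$ with $\nu([\varepsilon,\infty))>0$, hence this sum diverges a.s., the lower bound converges to the ODE flow value $\phi(s)$, and $\phi(s)\to\infty$ as $s\downarrow 0$. So $Y$ is proper, and by uniqueness of the proper solution (Theorem \ref{teo:existence:global:solution}(ii)) $Y =_{\cL} (x^{(0)}_\cdot)$; in particular $x^{(\delta)}_t \to x^{(0)}_t$ in law.

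For Part 2 I would convert the large-time asymptotics into a vanishing-level problem via the scaling of Proposition \ref{cor:scaling} with $\tau = 1/(t+\delta)$: the rescaled quantity $(t+\delta)^\beta m_0^{(\delta)}(t+\delta)$ has the same law as $\tilde m_0^{(\delta/(t+\delta))}(1)$, where $\tilde m$ is a partial marking of the scaled CPP $\tilde \cT =_{\cL} \cT$ above the vanishing level $\delta/(t+\delta)$ with initial measure $\cS^{1/(t+\delta),\gamma}(\nu)$. Since $(t/(t+\delta))^\beta \to 1$, it suffices to show $\tilde m_0^{(\delta/(t+\delta))}(1) \to \Upsilon$ in law, and the upper bound via domination by the rescaled $\tilde m^{(\cdot),+}$ works as in Part 1. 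The main obstacle is the lower bound: because $\cS^{1/(t+\delta),\gamma}(\nu)$ concentrates at $0$, for $\gamma\in(1,2]$ (so $\beta\geq 1$) the total initial mass at the bottom level does not diverge and the naive sum-based growth condition of Part 1 must be refined. I would address this by applying the growth inequality not at the bottom level but at a small positive rescaled time $u \in (0,1)$, where coalescences at rate $1/s$ have injected enough extra mass into the descendants of $(0,1)$ to push the bound to infinity as $u \downarrow 0$; equivalently, one can exploit the self-similarity in Proposition \ref{lem:marking}(v) to transfer the lower bound from $m^+$ to the rescaled marking. Once the subsequential limit is identified as a proper $\infty$-pop MK-V solution, uniqueness identifies it with $(x^{(0)}_\cdot)$, and evaluation at time $1$ gives $\Upsilon$.
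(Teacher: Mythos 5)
Your Part 1 is essentially a re-derivation, in the special case $\nu^{(\delta)}=\nu$, of Lemma \ref{cor:cv-to-self-similar}, which is precisely the tool the paper invokes; the one methodological difference is that you propose to pass to the limit in the fixed-point formulation of Proposition \ref{lem:unicity}, whereas the paper passes to the limit in the partial markings themselves and then applies Theorem \ref{teo:mckean2}. The marking route has the advantage that the i.i.d. structure of the coalescence increments is automatically preserved in the limit, which is the delicate point if you instead pass to the limit in the MK--V fixed-point equation.

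For Part 2, however, you have a sign error that makes your ``main obstacle'' illusory. The scaling map $\cS^{\tau,\gamma}(\nu)$ is the push-forward of $\nu$ by $x\mapsto \tau^{-\beta}x$. Taking $\tau = 1/(t+\delta)$ with $t+\delta \ge 1$, one has $\tau^{-\beta} = (t+\delta)^{\beta}\ge 1$, so $\cS^{1/(t+\delta),\gamma}(\nu)$ \emph{stretches} $\nu$ towards larger values rather than concentrating mass at $0$; in particular $\cS^{1/(t+\delta),\gamma}(\nu) \geq \nu$ in the stochastic sense. (You would get concentration at $0$ if $\tau > 1$, i.e., when going the other direction in time, which is not the regime here.) Consequently the growth/lower-bound step goes through exactly as in Part 1 -- the rescaled initial marks dominate i.i.d. copies of $\nu$, and their partial sums over the $\Theta(1/\delta_n)$ descendants diverge because $\nu\neq\delta_0$ -- and the refinements you sketch (applying the growth inequality at a strictly positive rescaled time, or using self-similarity to transfer the bound from $m^+$) are unnecessary. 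This is exactly the paper's argument: $t^\beta x_t^{(\delta)}$ has the same law as the MK--V solution at time $1$ with inverse population $\delta/t$ and initial measure $\cS^{1/t,\gamma}(\nu) \geq \nu$, so both parts are direct applications of Lemma \ref{cor:cv-to-self-similar} with the appropriate choice of the dominating family $\{\nu^{(\delta)}\}$.
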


\begin{rmk}
Let $\gamma\in(1,2]$ and let
$(\mu_t^{(0)}, t\geq0)$ be the unique proper weak solution
to the \Sm equation  (\ref{eq:Sch}). 
$(\cL(x^{(0)}_t); t>0)$ coincides with the measure valued process $\mu^{(0)}$.
(Using the fact that there is a unique proper \Sm solution, and that $(\cL(x^{(0)}_t); t>0)$ is a proper weak to the \Sm solution.)
As a direct corollary of Theorem \ref{teo:solution1}, we obtain the following PDE result: 
$S^{t,\gamma}(\mu_t^{(\delta)}) \Longrightarrow  \mu_1^{0}$,
where the convergence is meant in the weak topology.\end{rmk}

The proof of the previous theorem relies on the following lemmas, which is a corollary of   
the work carried out in the previous section.

\begin{lem}\label{cor:cv-to-self-similar}
Let $\nu\in M_P(\R^+)$ and consider a sequence 
$\{\nu^{(\delta)}\}_\delta$ in $M_P(\R_+)$ with $\nu^{(\delta)} \geq \nu$, where the domination is meant 
in the stochastic sense.

Let $\left(X_t^{(\delta)}; t\geq0\right)$ be a solution to (\ref{eq:mckean})
with $\cL(X_0^{(\delta)})=\nu^{(\delta)}$. Assume that $\nu(\{0\})<1$. Then
for every fixed $t > 0$, $\{X_t^{(\delta)}\}_{\delta}$ converges in law to the proper solution $x_t^{(0)}$ as $\delta\to 0$.
\end{lem}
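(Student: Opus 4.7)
The plan is to couple every MK-V solution on a single realization of the Brownian CPP and then pinch $X_t^{(\delta)}$ between matching upper and lower bounds that collapse onto $x_t^{(0)}$ as $\delta\downarrow 0$. By Theorem~\ref{teo:mckean}, the MK-V solution with inverse population $\delta$ and initial law $\nu^{(\delta)}$ can be realized as $X_t^{(\delta)} = m^{(\delta)}_0(t+\delta)$, where $m^{(\delta)}$ is the partial marking above level $\delta$ with i.i.d.\ initial marks of law $\nu^{(\delta)}$, and simultaneously $x_t^{(0)} = m^+_0(t)$ for the maximal full marking $m^+$ of Proposition~\ref{lem:marking}. The marking procedure is monotone in the initial data (the flow $\dot x=-\psi(x)$ preserves order along branches, and coalescence adds marks), so that for each $\delta$, $X_t^{(\delta)} \le m^{(\delta),+}_0(t+\delta)$ almost surely. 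Combined with the monotone convergence $m^{(\delta), +}\downarrow m^+$ of Proposition~\ref{lem:marking}(i)--(ii) and the right-continuity of $m^+_0$ at $t$, this gives $\limsup_{\delta\downarrow 0} X_t^{(\delta)} \le x_t^{(0)}$ almost surely.

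For the lower bound, $\nu^{(\delta)} \succeq \nu$ lets me couple so that $X_t^{(\delta)} \ge \tilde m^{(\delta)}_0(t+\delta)$, where $\tilde m^{(\delta)}$ carries i.i.d.\ initial marks of the fixed law $\nu$. Applying the LHS of \eqref{eq:degnerate-trees} on the subtree $D^{(\delta)}_\cT(0, t+\delta)$ yields
\[
\tilde m^{(\delta)}_0(t+\delta) \;\ge\; \phi\bigl(t+\delta - \delta + q(M^{(\delta)})\bigr) \;=\; \phi\bigl(t + q(M^{(\delta)})\bigr),
\]
where $M^{(\delta)}$ is the sum of the i.i.d.\ $\nu$-distributed leaf marks at level $\delta$. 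Since $|D^{(\delta)}_\cT(0,t+\delta)|\uparrow\infty$ a.s.\ as $\delta\downarrow 0$ and $\nu(\{0\})<1$ ensures a strictly positive mean, the law of large numbers forces $M^{(\delta)}\to\infty$, hence $q(M^{(\delta)})\to 0$, so $\liminf_{\delta\downarrow 0} X_t^{(\delta)} \ge \phi(t)$ almost surely. Combined with the upper bound, this yields tightness of $\{\cL((X^{(\delta)}_s)_{s\in [a,b]})\}_\delta$ for every $[a,b]\subset (0,\infty)$, and any process-level subsequential limit $Y=(Y_s; s>0)$ then satisfies $Y_s \ge \phi(s)\to +\infty$ as $s\downarrow 0$, so $Y$ is proper.

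The final step identifies $Y$ via the weak-solution uniqueness of Section~\ref{sect:weak}. By Lemma~\ref{lem:relation}, each $\mu^{(\delta)}_s := \cL(X_s^{(\delta)})$ is a weak solution of \eqref{eq:Sch} with inverse population $\delta$. On every $[a,b]\subset (0,\infty)$ the coefficient $1/(u+\delta)$ is bounded uniformly in $\delta$, so passing to the limit in the defining identity \eqref{eq:weak} (using the process-level tightness to upgrade fixed-time weak convergence of $\mu^{(\delta)}_u$ into convergence of the time-integrated quantities) shows that $\mu_s := \cL(Y_s)$ is a weak solution of the $\infty$-population \Sm equation; the growth bound forces $\mu$ to be proper. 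In the stable regime $\psi(x) = cx^\gamma$, Theorem~\ref{thm:uniqueness-weak-infinite}(i) asserts uniqueness of the proper weak solution, forcing $\mu_s = \cL(x_s^{(0)})$; since every subsequential limit shares this marginal at $t$, the whole family $X_t^{(\delta)}$ converges in law to $x_t^{(0)}$. The main obstacle is this last step: the singular coefficient $1/(u+\delta)$ at $u=0$ is tamed by restricting to $[a,b]\subset (0,\infty)$, but one still needs tightness of the trajectories (not merely of the one-dimensional marginals) to pass the limit through the time integrals in \eqref{eq:weak}; the sandwich $\phi(s)\le X_s^{(\delta)}\le m^{(\delta),+}_0(s+\delta)$ plus the monotonicity of the marking in its initial data should supply the required modulus-of-continuity estimate.
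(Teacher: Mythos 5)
Your sandwiching strategy shares a starting point with the paper (the Brownian CPP coupling of Theorem~\ref{teo:mckean} and the monotonicity of the marking in its initial data), but the identification step diverges in a way that opens two gaps.

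The first gap is a range restriction. You identify the subsequential limit through the \emph{weak PDE} uniqueness of Theorem~\ref{thm:uniqueness-weak-infinite}, but that theorem requires $\psi$ to be the Laplace exponent of a spectrally positive (sub)critical L\'evy process, which for $\psi(x)=cx^\gamma$ means $\gamma\in(1,2]$. The lemma is stated in Section~\ref{sect:applications} under the standing assumption $\gamma>1$ and it feeds into Theorem~\ref{teo:solution1}, which also allows any $\gamma>1$; your argument simply has nothing to say when $\gamma>2$. The paper avoids this by constructing the limiting \emph{full marking} ${\bf m}$, verifying that the marks at each level are i.i.d., and invoking Theorem~\ref{teo:mckean2} so that ${\bf m}_0$ is an $\infty$-pop.\ \emph{MK-V} solution; then the MK-V uniqueness of Theorem~\ref{teo:existence:global:solution}(ii) applies for every $\gamma>1$. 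To port your approach to the MK-V uniqueness statement, you would have to show that your subsequential limit $Y$ is itself an MK-V solution (i.e., that the driving jump process and the independence of the incoming mark survive the limit), and that is precisely the content of the full-marking construction you were trying to avoid.

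The second gap is the one you flag yourself: to pass $\delta\to 0$ in the weak formulation \eqref{eq:weak}, you need the family $\{\cL(X^{(\delta)}_u)\}_\delta$ to converge for (at least a.e.) $u$ in $[a,b]$, not just at the single fixed $t$. Your sandwich $\phi(u)\le X_u^{(\delta)}\le m^{(\delta),+}_0(u+\delta)$ gives pointwise tightness, but does not supply the Aldous/Rebolledo-type modulus needed for tightness in $D([a,b],\R^+)$: the upper and lower envelopes do not collapse ($\phi(u)< m^+_0(u)$ in general, the process jumps upward at coalescence times, and the bounds say nothing about the frequency or size of those jumps). Note also that your a.s.\ sandwich is one-sided — the upper limit is $x_t^{(0)}$ but the lower limit is only $\phi(t)$ — so there is no direct pathwise argument either. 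The paper again sidesteps this by proving tightness of the doubly indexed array $(a^n_{ij})$ of marks at discretized levels, using Skorohod representation to upgrade to a.s.\ convergence of the markings at each point of the CPP, and then assembling the limiting full marking by Kolmogorov extension. That mechanism delivers exactly the process-level information your integration-by-parts passage needs, and does so in a way that simultaneously identifies the limit as an MK-V solution. Until you can reproduce both of these steps (or replace them with a genuine modulus-of-continuity estimate and an argument showing the limit solves the MK-V equation rather than merely the weak PDE), the proposal is incomplete.
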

\begin{proof}

Let $\{\delta_n\}$ be a sequence of positive numbers decreasing to $0$. 
According to Theorem \ref{teo:mckean}, $\cL(X^{(\delta_n)}) = \cL(\theta_{\delta_n}\circ m^{(\delta_n)}_0)$
where $m^{(\delta_n)}$ is the partial marking above level $\delta_n$
with initial condition $\nu^{(\delta_n)}$.
The strategy of the proof will consist in showing that the sequence of partial marking converges (up to a subsequence and in a sense specified below) 
to a full marking ${\bf m}$ (Step 1). Then, we show that ${\bf m}_0$ must be 
the (unique) proper solution of MK-V due to the condition $\nu^{(\delta)}\geq \delta$ (Step 2).
\medskip

{\bf Step 1.} For $j<n$, define $a^n_{ij}$ to be the $i$th mark of $m^{(\delta_n)}$ at level $\delta_j$ (where marks are ranked from left to right).
By the branching structure of the CPP, the marks $\{a_{ij}^n\}_i$ are i.i.d..

We first claim that for every fixed $i,j\in\N$, the sequence $\{a_{ij}^n\}_n$ is tight.
In order to see that, we first consider the case where $\nu^{(\delta_n)}(dx) =\delta_\infty(dx)$ for every $n$. This exactly corresponds to
the marking $m^{(\delta_n),+}$ introduced in Proposition \ref{lem:marking}, for which we showed that 
$\{a_{ij}^n\}_n$ converges. Since in general $\{a_{ij}^n\}_n$ is dominated by the previous case,
it follows that $\{a_{ij}^n\}_n$ is tight. 

Now, $\{(a^n_{ij}; i,j\in\N) \}_n$ seen as a random infinite array 
(equipped with the product topology) is also tight. 
Going to a subsequence
if necessary, there exists a limiting array $(a_{ij}^\infty; i,j\in\N)$ such that
$$\{\left(\cT, (a_{ij}^n; i,j\in\N)\right)\}_n \Longrightarrow \left(\cT, (a_{ij}^\infty; i,j\in\N))\right)  \ \ \mbox{as $n\to\infty$},$$ 
where the convergence is meant in the product topology. Note that since for every fixed $j,n$, $\{a_{ij}^n\}_i$ is a sequence of i.i.d. random variables, the same holds for the sequence $\{a_{ij}^\infty\}_i$
for every $j$ (the marks at level $\delta_j$ are independent).

Using the Skorohod representation theorem, one can assume w.l.o.g. that the convergence holds a.s.. 
Let us now fix $j$ and let us consider ${\bf m}^j$ the marking 
above level $\delta_j$ with initial marks $(a_{ij}^\infty; i\in\N)$ (where the initial marks are assigned from left to right). 
By continuity of the flow, for almost every realization of the CPP and the markings,  
for every $t>\delta_j$ and $(l,t)\in\cT$,  $\{m^{(\delta_n)}_{l}(t)\}_{n>j}$ converges a.s. to  ${\bf m}^{j}_{l}(t)$
under our coupling.
(In other words, the convergence of the initial conditions induce the convergence of the partial marking with the limiting mark.) 

Let us now take $j>j'$ so that $\delta_j<\delta_{j'}$. The previous result at time $t=\delta_{j'}$, together with the fact $\{(a_{ij'}^n; i\in\N) \}_n$
converges to $\{(a_{ij'}^\infty; i\in\N) \}_n$
readily implies that the marks of ${\bf m}^{j}$ at level $\delta_{j'}$ coincides with $(a_{ij'}^\infty; i\in\N)$ -- the initial marking of ${\bf m}^j$ at level $\delta_{j'}$. Equivalently,
this guarantees that
the sequence of markings $\{{\bf m}^j\}_j$  is consistent in the sense that for $j>j'$, the marking ${\bf m}^j$
restricted to $\{t\geq \delta_j'\}$ coincides ${\bf m}^{j'}$. Thus, there exists a full marking ${\bf m}$ of the CPP such that 
$m$ coincides with ${\bf m}^j$ on $\{t\geq\delta_j\}$. 

Gathering the previous results, we showed that  (i) for every $s>0$, 
$\{{\bf m}_l(s) : \ (l,s)\in\cT\}$ is a sequence of i.i.d. rv's; and (ii) for every realization of the CPP and the markings, 
\be \forall t >0,   \ \ \{m^{(\delta_n)}_0(t)\}_n\to{\bf m}_0(t) \ \ \ \mbox{a.s.}. \label{eq;thett}\ee
As a consequence of (i), $({\bf m}_0(t); t > 0)$ is an $\infty$-pop. solution of MK-V  in virtue of Theorem \ref{teo:mckean2}.
\medskip

{\bf Step 2.} Next,  by reasoning along the same lines as Proposition \ref{prop:marking-associated} (see Step 2 therein),
the condition $\nu^{(\delta)} \geq \nu$ implies that this solution satisfies the growth condition (\ref{growth-marking-0}), and $({\bf m}_0(t); t > 0)$ must be proper. By the uniqueness result of  Theorem \ref{teo:existence:global:solution},
we get that $\cL\left({\bf m}_0(t)\right) \ = \ \cL( x_t^{(0)} )$. Finally, (\ref{eq;thett}) together with the fact that
 $\cL(X^{(\delta_n)}) = \cL(\theta_{\delta_n}\circ m^{(\delta_n)}_0)$ (see Theorem \ref{teo:mckean}) completes the proof the lemma.
\end{proof}

\begin{proof}[Proof of Theorem \ref{teo:solution1}]
The first item follows directly from Lemma \ref{cor:cv-to-self-similar} after taking $\nu^{(\delta)}=\nu$.
For the second item, as a direct consequence of
Proposition \ref{cor:scaling}, 
\be t^{\beta} x_{t}^{(\delta)}  \ = \ \bar x_1^{(\delta/t)}   \ \mbox{in law}\ee
where $x^{(\delta)}$ is the solution of (\ref{eq:mckean}) with initial measure $\nu$ and inverse population size $\delta$, and where $\bar x^{\delta/t}$
is the solution of (\ref{eq:mckean}) with initial measure $\cS^{1/t,\gamma}(\nu)$ and inverse population size $\delta/t$. Since for $t\geq1$,
$\cS^{1/t,\gamma}(\nu) \geq \nu$ (in the stochastic sense), the second item follows again by a direct application of Lemma \ref{cor:cv-to-self-similar}. 

\end{proof}

\subsection{Dust solutions}\label{sect:dust}

\begin{proof}[Proof of Theorem \ref{teo:dust}]
The proof is very similar to the one of Lemma \ref{cor:cv-to-self-similar}.
Let $X$ be a positive rv with finite mean and let $\{\delta_k\}$ be a 
positive sequence 
going to $0$. Let $m^{(\delta_k)}$ be the partial marking above level $\delta_k$
with initial law $\delta_k \cL(X)$. 

{\bf Step 1.} Up to a subsequence, one can construct a full marking ${\bf m}$ of the CPP and coupling between the $(\cT, m^{(\delta_k)})$'s
and $(\cT, {\bf m})$ such that 
for every $(l,t)\in \cT$
\[m^{(\delta_k)}_{l}(t) \to {\bf m}_{l}(t) \ \ \ \mbox{a.s.},\]
and such that ${\bf m}_0$ is an $\infty$-pop solution to  MK-V. The proof goes along the exact same lines
as Lemma \ref{cor:cv-to-self-similar} (Step 1). 

{\bf Step 2.} To prove that ${\bf m}_0$ is a dust solution, we show that ${\bf m}_0(t)\Longrightarrow 0$ as $t\to0$.
Recall the definition of $|{\cD}^{(\delta_k)}(0,t)|$, the number of descendants 
of $(0,t)$ at time $\delta_k$ in the CPP.
From (\ref{eq:degnerate-trees})
we get the following stochastic bounds
\[ \left(c(\gamma-1)t +\left(\sum_{i=1}^{|{D}^{(\delta_k)}(0,t)|}\delta_k X_i\right)^{1-\gamma} \right)^{-\beta}  
\leq \  m^{(\delta_k)}_0(t) \ \leq \  
\sum_{i=1}^{|{D}^{(\delta_k)}(0,t)|} \left(c(\gamma-1)t +\left(\delta_k X_i\right)^{1-\gamma} \right)^{-\beta}    \]
where the $\{X_i\}$ is an infinite sequence of rv's distributed as $X$ and independent of the CPP.
From the definition of the CPP, one can show that
\[\delta_k |{D}^{(\delta_k)}(0,t)| \ \Longrightarrow \ \cE(t)\] 
where $\cE(t)$ is an exponential rv with parameter $t$. From here, a direct application of the law of large number 
shows that the LHS (resp., RHS) of the latter inequality converges (in law) to 
\[\left(c(\gamma-1)t + (\E(X) \cE(t))^{1-\gamma}\right)^{-\beta} \ \ (\mbox{resp., } \E(X)\cE(t)).\]
As a consequence, 
\[    \left(c(\gamma-1)t + (\E(X) \cE(t))^{1-\gamma}\right)^{-\beta}  \ \leq \  {\bf m}_0(t) \ \leq\  \E(X) \cE(t). \]
The RHS shows that ${\bf m}_0(t)\Longrightarrow0$ in law as $t\to0$. Finally, the LHS of the inequality ensures that
$\P({\bf m}_0(t)>0)=1$ for $t>0$. This shows that ${\bf m}_0$ is a non-trivial dust solution of the $\infty$-pop. MK-V equation.

Following the same approach, we can construct $m^1,m^2,\cdots,m^k$, $k$ dust MK-V solutions using distinct positive rv's
$X^1,X^2,\cdots, X^k$ to initialize the underlying marking. Next, if we choose those random variables in such a way that
for every $j<k$,
\[ \E\left(m^j_0(t)\right) \ \leq \  \E(X^j) t \  < \ \E\left( \left(c(\gamma-1)t + (\E(X^{j+1}) \cE(t))^{1-\gamma}\right)^{-\beta}   \right)  \leq \E(m^{j+1}_0(t))\]
so that the law of the $m^k$'s must be distinct  at time $t$. This shows that one can construct infinitely many dust solutions to MK-V.
\end{proof}

\section{Main convergence results} 

\subsection{Notation}
\label{sect:notations-discrete}

We will typically consider  a sequence of nested coalescents indexed by $n$, in such way that
the initial number of species increase linearly with $n$ (see Theorem \ref{teo-local-1}).

Let $s_t^n$ be the number of blocks in the species coalescent (for the model indexed by $n$); Further $\tilde s_t^n \ = \ s^n_{t/n}$.
We order the blocks of the species coalescent at any given time by their least element. 
$B_t^n(i)$ will denote the $i^{th}$ block at time $t$, and $V_t^n(i)$ the number of element in the block; $\tilde B_t^n \ = \ B_{t/n}^n$; $\tilde V_t^n \ = \ V_{t/n}^n$.

Define $R^n$ the scaling operator acting on measure-valued process such 
for every $(\nu_t; t\geq0)$ valued in $M_F(\R^+)$, the process $(R^n(\nu)_t; t\geq0)$ is the only measure valued process such that for every 
bounded and continuous function $f$ 
$$ \forall t>0, \ \ \int_{\R^+}  f(x) R^n(\nu)_t(dx) \ = \ \int_{\R^+} f(x/n) \nu_{t/n}(dx).   $$
In words, space and time are both rescaled by $1/n$.

For $i\leq s_t^n$, let us denote by $\Pi_t^n(i)$ the number of gene lineages 
in the species block with index $i$; further $\tilde \Pi_t^n \ = \ \frac{1}{n}\Pi^n_{t/n}$. We define 
$$\mud_t^n \ = \ \sum_{i=1}^{s_t^n} \delta_{\Pi^n_t}, \ \mbox{and   } \tilde \mud_t^n = R^n \circ \mud_t^n \ = \ \frac{1}{\ts_t^n} \sum_{i=1}^{\ts_t^n} \delta_{\tilde \Pi^n_t(i)} $$

\medskip

 $\cC_0(\R^+)$ will denote the set of continuous functions vanishing $+\infty$;
$\cC_b^\infty(\R^+)$ will denote the set of infinitely many differentiable functions 
with bounded derivatives. The Stone-Weierstrass Theorem ensures that 
$\cC_b^\infty(\R^+)$ is dense in  $\cC_0(\R^+)$, and is also dense in the set of test function (i.e.,
the $\cC^1$ functions $f$ so that $f$ and $f'\psi$ remain bounded).

$(M_F(\R^+),v)$ will refer to the set of Radon measure on $\R^+$
endowed with the vague topology (i.e., the smallest topology making the map $\mud \to \left<\mud,f\right>$
continuous for every function $f\in \cC_0(\R^+)$);
whereas $(M_F(\R^+),w)$ will refer to $M_F(\R^+)$
equipped with the weak topology (i.e., the smallest topology making the map $\mud \to \left<\mud,f\right>$
continuous for every function $f\in \cC_b(\R^+)$ -- the set of continuous bounded functions).

\subsection{Statement of the main results}

\begin{teo}\label{teo-local-1} 
Consider a sequence of nested Kingman coalescents $\{\Pi^n, s^n\}_n$.
Let $\{X_i^n\}_{i,n}$ be an infinite array of independent rvs such that 
$$\forall i,j,n, \  \ \cL(X_i^n) = \cL(X_j^n), \ \ \ \limsup_n \E\left( (X_1^n)^2 \right) \  < \ \infty, $$
and independent of the species coalescent $\left(\ts_t^n; t\geq0\right)$. Assume  that 
$$\tPi_0 \ = \ \left( X_1^n,\cdots,X_{\ts_0^n}^n \right) $$
Assume further that
\begin{enumerate}
\item $s_0^n/n$ converges to $r\in(0,\infty)$ in $L^{2+\eps}$ for some $\eps>0$.
\item There exists $\nu\in M_P(\R^+)$ such that
$\tmu_0 \ \Longrightarrow \ \nu$ in $(M_F(\R^+),w)$.
\end{enumerate}
Then  
\be \left((\tmu_t, \ts^n_t); \ t\geq0\right) \ \Longrightarrow \  \left( (\mu_t, \frac{2}{\frac{2}{r}+t} ); \ t\geq0 \right) \label{eq:teo-local-1}\ee
where $\left(\mu_t; t\geq0\right)$ is the unique weak solution of (\ref{eq:Sch}) with initial condition $\nu$ and inverse population $\delta=\frac{2}{r}$. The convergence of $\{\tmu\}_n$ is meant in the Skorohod toplogy on 
$D([0,T], (M_F(\R^+),w))$ for every finite interval $[0,T]$. 
\end{teo}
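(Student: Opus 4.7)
I would follow the classical three-step hydrodynamic-limit strategy: \emph{(i)} write a semimartingale decomposition for $\langle\tilde\mu_t^n,f\rangle$ against a test-function $f$, \emph{(ii)} establish tightness of the pair $(\tilde\mu^n,\tilde s^n/n)$ in the appropriate Skorohod space, and \emph{(iii)} identify any limit point as a weak solution of \eqref{eq:Sch} and invoke the uniqueness result of Theorem \ref{thm:uniqueness-weak}.

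For step \emph{(i)}, I would split the generator of the rescaled Markov dynamics into the intra-species and the species-coalescence parts. In rescaled time, the intra-species Kingman jumps have size $-1/n$ and occur in species $i$ at rate $\tfrac{c}{n}\binom{n\tilde\Pi^n_t(i)}{2}$; Taylor-expanding $f(x-1/n)-f(x)$ yields, to leading order, the transport drift $-\langle\tilde\mu^n_t, \psi f'\rangle$ with $\psi(x)=cx^2/2$. For species coalescences, each of the $\binom{\tilde s^n_t}{2}$ pairs merges at rate $1/n$, and an elementary bookkeeping rewriting the pair sums $\sum_{i<j}f(\tilde\Pi^n_t(i)+\tilde\Pi^n_t(j))$ in terms of $\langle\tilde\mu^n_t\star\tilde\mu^n_t, f\rangle$ and $\langle\tilde\mu^n_t,f\rangle$ gives a drift of the form $\tfrac{\tilde s^n_t-1}{2n}\bigl(\langle\tilde\mu^n_t\star\tilde\mu^n_t, f\rangle - \langle\tilde\mu^n_t, f\rangle\bigr)$ plus negligible $O(1/n)$ corrections. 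The coefficient $\tfrac{\tilde s^n_t-1}{2n}$ is precisely what will converge to $1/(t+\delta)$ with $\delta=2/r$. The associated martingale $M^n_t(f)$ has predictable quadratic variation of order $1/n$ on $[0,T]$ (the species-coalescence jump sizes in $\langle\tilde\mu^n,f\rangle$ are $O(1/n)$ while the rate is $O(n)$, and the intra-species contribution is even smaller), hence vanishes in $L^2$.

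For step \emph{(ii)}, the rescaled species count $\tilde s^n_\cdot/n$ converges in probability uniformly on $[0,T]$ to the deterministic map $t\mapsto 2/(2/r+t)$ --- the solution of $\dot x=-x^2/2$ started at $r$ --- by classical estimates on the coming-down-from-infinity of the Kingman coalescent together with the hypothesis $s_0^n/n\to r$ in $L^{2+\eps}$. For tightness of $\tilde\mu^n$ in $D([0,T],(M_F(\R^+),w))$ I would apply Jakubowski's criterion, reducing to tightness of $(\langle\tilde\mu^n_\cdot, f\rangle)_t$ for $f$ in a countable dense family of test-functions; Aldous's criterion then holds thanks to the bounded drift and vanishing martingale, provided one has uniform-in-$(n,t)$ control of $\langle\tilde\mu^n_t, x^2\rangle$ on $[0,T]$. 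This key a priori estimate propagates from $t=0$ (where it follows from $\limsup_n\EE((X_1^n)^2)<\infty$) by a Gronwall-type argument on the semimartingale decomposition applied to $f(x)=x\wedge K$ and $f(x)=x^2\wedge K$, exploiting the strong dissipativity of the transport term for quadratic $\psi$ together with the fact that coagulation preserves the first moment and bounds the second by the square of the first.

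For step \emph{(iii)}, any subsequential limit point $\mu$ of $\tilde\mu^n$ satisfies the integral identity of Definition \ref{def:weak1} with parameters $(\nu,\delta=2/r)$. The delicate convergence is that of the coagulation drift: continuity of $\mu\mapsto \mu\star\mu$ on $M_P(\R^+)$ for the weak topology, together with the uniform second-moment control, allows one to pass to the limit in $\langle\tilde\mu^n\star\tilde\mu^n, f\rangle$; the same moment bound accommodates the unbounded integrand $\psi f'$. Combining this with the given $\tilde\mu^n_0\Rightarrow\nu$ yields the required integral identity, and Theorem \ref{thm:uniqueness-weak} then forces convergence of the whole sequence to the unique weak solution. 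The main obstacle throughout is precisely the uniform second-moment control: since coagulation produces ever-larger aggregates while neither the test-functions nor the convolution operator are compactly supported, one must exploit the dissipativity of the quadratic depletion $\psi(x)=cx^2/2$ to close the Gronwall estimate. Without it, both the tightness of $\tilde\mu^n$ in the weak (as opposed to vague) topology and the limiting identification of the coagulation drift would fail.
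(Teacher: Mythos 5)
Your overall strategy (semimartingale decomposition of $\langle\tilde\mu^n_t,f\rangle$, Aldous/Jakubowski tightness, identification of limit points, and uniqueness from Theorem~\ref{thm:uniqueness-weak}) coincides with the paper's, which proceeds through the generator approximation of Lemma~\ref{lem:moment11}, the bracket estimate of Lemma~\ref{lem:moment2}, and the Roelly--M\'el\'eard--Roelly criterion packaged as Theorem~\ref{teo:tran}. The one step where your proposal and the paper diverge genuinely --- and where your argument has a gap --- is the a priori moment bound
$$
\limsup_n \ \E\Bigl(\sup_{t\in[0,T]}\langle\tilde\mu^n_t,x^2\rangle\Bigr)<\infty,
$$
which is what condition~(ii) of Theorem~\ref{teo:tran} requires in order to upgrade vague tightness to weak tightness.

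Two problems with your proposed Gronwall argument. First, the claim that ``coagulation preserves the first moment'' is false here: because $\tilde\mu^n_t$ is the \emph{normalized} empirical measure (mass $1$, divided by the current number $\tilde s^n_t$ of species), a species merger leaves the total gene count unchanged but decreases the normalizer, so $\langle\tilde\mu^n_t,x\rangle$ strictly increases at each coalescence; the limiting ODE is $\partial_t\langle\mu_t,x\rangle = \tfrac{1}{t+\delta}\langle\mu_t,x\rangle-\tfrac{c}{2}\langle\mu_t,x^2\rangle$, not a conservation law. This can still be closed on $[0,T]$, but it is not the conservation you invoked. Second, and more seriously, a Gronwall inequality applied to $f(x)=x^2\wedge K$ only yields a pointwise-in-$t$ bound on $\E\langle\tilde\mu^n_t,x^2\rangle$; to get the \emph{supremum} inside the expectation you would further need a maximal inequality for the martingale part and a uniform bound on the total variation of the drift, neither of which your sketch provides and which become delicate precisely because $x\mapsto x^2$ is unbounded.

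The paper avoids Gronwall entirely via a pathwise monotonicity observation. If one drops the gene coalescences (which can only lower each coordinate of $\tilde\Pi^n$), and uses the superadditivity of $x\mapsto x^2$ together with the fact that $\tilde s^n_t$ is nonincreasing, one gets the pathwise inequality~\eqref{ineq:uni}: for $t\le T$,
$$
\frac{1}{\tilde s^n_t}\sum_{i=1}^{\tilde s^n_t}\Bigl(\sum_{k\in\tilde B^n_t(i)}\tilde\Pi^n_0(k)\Bigr)^2 \ \le\ \frac{1}{\tilde s^n_T}\sum_{i=1}^{\tilde s^n_T}\Bigl(\sum_{k\in\tilde B^n_T(i)}\tilde\Pi^n_0(k)\Bigr)^2,
$$
so the supremum over $[0,T]$ is attained at $T$ for the dominating (gene-coalescence-free) process. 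The independence of the initial array $(X^n_i)$ from the species coalescent then factorizes the expectation into $\E((X^n_1)^2)\cdot\E\bigl(\tfrac{1}{\tilde s^n_T}\sum_i \tilde V^n_T(i)^2\bigr)$, and the second factor is controlled by the large-deviation bound of Lemma~\ref{lem:large-dev-2} via Corollary~\ref{cor:second-moment-V}, which is also where the hypothesis $s^n_0/n\to r$ in $L^{2+\varepsilon}$ is actually consumed. In short: your identification and uniqueness steps match the paper; the missing piece is that you need to replace the Gronwall heuristic by a genuine control of the running supremum, which the paper obtains by domination and monotonicity rather than by a differential inequality.
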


In the previous theorem, we considered a sequence of nested Kingman coalescents (indexed by $n)$ with finite initial populations going to $\infty$ with $n$.
Next, we aim at investigating the case of a (single) nested Kingman coalescent where the size of the population at time $t=0$ is infinite.

\begin{defin}\label{def:infty-pop-K}
We say that $\left((\Pi_t, s_t); t>0\right)$ is an $\infty$-pop. nested coalescent iff 
\begin{enumerate}
\item[(i)] For every $t>0$,
conditioned on $(\Pi_t, s_t)$, the shifted process 
$(\theta_t\circ(\Pi_u, s_u); u\geq0)$ is a nested coalescent with 
initial condition $(\Pi_t, s_t)$.
\item[(ii)]  For every $t>0$, $\Pi_t(i)\geq 1$ for every $i\in[s_t]$.
\item[(iii)] $s_t \to \infty$ a.s. as $t\downarrow0$.
\end{enumerate}
\end{defin}
Note that property (i) and (iii) immediately imply that $(s_t; t>0)$
is distributed as the block counting process of a standard Kingman coalescent coming down from $\infty$.
In particular, $\frac{t}{2}s_t \to 1$ a.s. as $t\to 0$.

\begin{lem}\label{lem:existence-of-coalescent}
There exists an $\infty$-pop. nested Kingman coalescent. 
\end{lem}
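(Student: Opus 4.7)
The plan is to construct the process as a monotone limit of nested coalescents started at positive times. First, I would fix a realisation of the standard species Kingman coalescent $(s_t; t>0)$, which is well known to come down from infinity, so that $s_t<\infty$ for every $t>0$ and $s_t\to\infty$ a.s.\ as $t\downarrow 0$. For each $\delta>0$, I would then define $X^{(\delta)}=(\Pi^{(\delta)}_t, s_t)_{t\ge \delta}$ as the nested Kingman coalescent driven by $(s_t;t\ge \delta)$ and started at time $\delta$ from the gene configuration $\Pi^{(\delta)}_\delta(i)=1$ for every $i\in [s_\delta]$; since $s_\delta<\infty$ a.s., $X^{(\delta)}$ is a well-defined finite-state continuous-time Markov chain.

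The key technical step is a monotone coupling of the family $\{X^{(\delta)}\}_{\delta>0}$ driven by a common species tree. For $0<\delta'<\delta$, the coupling of $X^{(\delta)}$ and $X^{(\delta')}$ is built by ``tagging'' at time $\delta$ exactly one gene lineage per species in $X^{(\delta')}$ and then reading off the tagged-count process. Within the full $X^{(\delta')}$ dynamics, a gene coalescence between two tagged lineages decreases the tagged count by one; a coalescence between a tagged and an untagged lineage leaves the tagged count unchanged, the merged block inheriting the tag; an untagged-untagged coalescence is invisible to the tagged count. One checks that the tagged process has the law of $X^{(\delta)}$, since the tagged-tagged merger rate within species $i$ is $c\binom{m_i}{2}$ where $m_i$ is the current tagged count in species $i$, exactly matching the $X^{(\delta)}$ within-species rate. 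This yields the monotonicity $\Pi^{(\delta)}_t(i)\le \Pi^{(\delta')}_t(i)$ for every $t\ge\delta$ and $i\in[s_t]$, and hence the existence of the pointwise increasing limit $\Pi_t(i):=\lim_{\delta\downarrow 0} \Pi^{(\delta)}_t(i)\in \N\cup\{+\infty\}$ for every $t>0$ and $i\in [s_t]$.

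The main obstacle is the finiteness of the limit, i.e.\ $\Pi_t(i)<+\infty$ a.s., which amounts to the coming-down-from-infinity property of the gene count. I would establish a uniform-in-$\delta$ a priori bound on $\EE\bigl[\sum_{i=1}^{s_t} \Pi^{(\delta)}_t(i)\bigr]$; one route is a moment computation on the total gene count via its infinitesimal generator, showing that within-species coalescence dominates pooling at small scales and yielding a bound of order $t^{-2}$, compatible with the speed computed in \cite{BRSS18}. An alternative is a comparison argument conditional on the species tree: within each species branch, the gene count is driven by a rate-$c$ Kingman coalescent entered by pooling at species mergers, and since the Kingman coalescent comes down from infinity in arbitrarily small time, any positive time lapse between consecutive mergers in the ancestry of species $i$ reduces the pooled gene count back to a finite value.

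Finally, the three defining properties of Definition \ref{def:infty-pop-K} are verified as follows. Property (iii) is automatic from the coming down from infinity of $(s_t; t>0)$. Property (ii), that each species carries at least one gene at positive times, follows from $\Pi^{(\delta)}_t(i)\ge 1$ for every $\delta\le t$ (gene coalescence cannot reduce a species' gene count below $1$ and species mergers only add to it) and passes to the monotone limit. Property (i), the Markov consistency, is obtained by applying the strong Markov property of $X^{(\delta)}$ at time $t>\delta$ and taking $\delta\downarrow 0$: conditional on $(\Pi^{(\delta)}_t, s_t)$, the shifted process is distributed as a finite-gene nested Kingman coalescent started from this state, and monotone convergence combined with the finiteness established above transfers this property to the limit.
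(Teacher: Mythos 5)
Your construction is genuinely different from the paper's: you build the process as an \emph{increasing} limit of minimal finite approximations (one gene per species at time $\delta$, then $\delta\downarrow 0$), whereas the paper builds it as a \emph{decreasing} limit of maximal approximations (infinitely many genes per species at time $\delta_n$, each of which becomes finite instantly by within-species coming-down-from-infinity, then $\delta_n\downarrow 0$). Your ``tagging'' coupling is correct and is exactly the sampling-consistency property of the Kingman coalescent; it does yield $\Pi^{(\delta)}_t(i)\le \Pi^{(\delta')}_t(i)$ for $\delta'<\delta$. The verification of properties (ii) and (iii), and the heuristic for (i), are also fine.

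The trade-off between the two constructions is precisely the finiteness of the limit, and this is where your argument has a real gap. In the paper's maximal construction, the decreasing sequence $\Pi^{n,+}_t(i)$ consists of finite integers (each term is finite because the within-species Kingman coalescent has come down from infinity over a positive time), so finiteness of the limit is automatic -- a decreasing sequence of nonnegative integers is eventually constant. In your increasing construction, $\lim_{\delta\downarrow 0}\Pi^{(\delta)}_t(i)$ could a priori be $+\infty$, and you identify this correctly as ``the main obstacle.'' But neither of your two proposed routes is carried out: the moment computation of order $t^{-2}$ is only asserted, and the comparison argument ``any positive time lapse between mergers reduces the pooled count to a finite value'' does not by itself rule out that the infinitely many contributions accumulating as $\delta\downarrow 0$ sum to infinity. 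As written, this is not a proof. Incidentally, the cleanest way to close the gap within your own framework would be to compare with the maximal construction: the same tagged coupling shows $\Pi^{(\delta)}_t(i) \le \Pi^{(\delta),+}_t(i)$, and the right-hand side is monotone decreasing in $\delta$, hence bounded uniformly by $\Pi^{(\delta_0),+}_t(i)<\infty$ for any fixed $\delta_0<t$; this gives finiteness of your increasing limit for free. But of course that amounts to carrying out the paper's construction anyway. (It is also worth noting that what you would then obtain is the ``minimal'' $\infty$-population nested coalescent, which the paper discusses in Remark~\ref{rmk:conj} but does not itself prove to exist; the paper's lemma only asserts existence of \emph{some} $\infty$-population nested coalescent, which the maximal construction delivers more economically.)
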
 
\begin{proof}

The idea is very similar to the existence of a proper solution to the $\infty$-pop. MK-V equation,
as described in Section \ref{set:existence-of-infinite-pop}. We omit the details and only give 
a brief outline of the construction.

Let $\{\delta_n\}$ a sequence of positive number decreasing to $0$ and 
let $(s_t; t>0)$ be a species coalescent (i.e., a standard coalescent with rate $1$) 
coming down from infinity.
For every $n$, define  $\left(\Pi^{(\delta_n),+}_t, \theta_{\delta_n}\circ s_t ; t>0\right)$ be
the nested coalescent 
starting with $s_{\delta_n}$
species
and infinitely many genes per species. 
Finally, for every $t>\delta_n$,
define $\Pi^{n,+}_t \ = \ \theta_{-\delta_n}\circ\Pi^{(\delta_n),+}_{t}$.
It is easy to find a coupling such that for every $t>0$, the sequence $\{\Pi^{n,+}_t\}_n$
is decreasing (i.e, each of the coordinates decreases in $n$) and converges to a limit $\Pi^+_t$ a.s., whereas
$\theta_{\delta_n}\circ s_t$ obviously converges to  $s_t$. 
Finally, one can easily check that $(\Pi^+_t, s_t)$
is an $\infty$-pop. nested Kingman coalescent at $\infty$.
\end{proof}

\begin{teo}\label{prop:sequential}
Let $(\mud_t; t>0)$ be the empirical measure of an $\infty$-pop. nested Kingman coalescent. Then 
$$\{\left((R^{n}\circ \mud_t, s_t); t>0\right)\}_{n} \Longrightarrow \left(\left(\mu^{(0)}_t, \frac{2}{t}\right); t>0\right)$$
where $\mu^{(0)}$
is the unique proper $\infty$-pop. solution 
of the \Sm equation (as described in Theorem \ref{thm:uniqueness-weak-infinite}) for $\psi(x)=\frac{c}{2} x^2$
and the convergence is meant in
$D([\tau,T],(M_F(\R^+),w))$ for any pair such that $0<\tau<T<\infty$.

In particular, for every $t>0$, $\mu_t^{(0)}$ is the law of $\frac{1}{t} \Upsilon$
where $\Upsilon$ is the r.v. defined in Theorem \ref{thm:self-similar-weak}. 
\end{teo}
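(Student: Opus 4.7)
The plan is to apply the finite-population convergence result (Theorem \ref{teo-local-1}) to the process restarted at an arbitrarily small time $\tau/n$, then let $\tau \downarrow 0$ and identify the limit by uniqueness of the proper $\infty$-pop.\ weak solution (Theorem \ref{thm:uniqueness-weak-infinite}).

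I first fix $\tau > 0$. By the Markov property (i) of Definition \ref{def:infty-pop-K}, the shifted process $\theta_{\tau/n}\circ(\Pi_\cdot,s_\cdot)$ is, conditional on $(\Pi_{\tau/n},s_{\tau/n})$, a finite nested Kingman coalescent with $s_{\tau/n}$ species and gene-count vector $\Pi_{\tau/n}$. To apply Theorem \ref{teo-local-1} to this restarted process I need two ingredients. The first is $s_{\tau/n}/n \to 2/\tau$ in $L^{2+\eps}$, which is the classical coming-down-from-infinity of the standard Kingman coalescent together with standard $L^p$ moment bounds (cf.\ Berestycki \cite{B09}), and is essentially automatic. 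The second is $R^n(\mu)_\tau \Longrightarrow \nu_\tau$ in $(M_F(\R^+),w)$ for some $\nu_\tau \in M_P(\R^+)$; this will be the main obstacle.

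Granting both ingredients, Theorem \ref{teo-local-1} yields convergence of $R^n(\mu)_{\tau+\cdot}$ in $D([0,T-\tau],(M_F(\R^+),w))$ to the unique weak solution of (\ref{eq:Sch}) with initial condition $\nu_\tau$ and inverse population $\delta = \tau$. As $\tau$ varies, these limits agree on overlapping time intervals by the restart-consistency that follows from the uniqueness in Theorem \ref{thm:uniqueness-weak}, so gluing along a sequence $\tau_k \downarrow 0$ produces a single process $(\mu_t;t>0)$ solving (\ref{eq:Sch}) weakly on $(0,\infty)$. A growth bound analogous to Lemma \ref{lem:grwoth-condition}, inherited from the coming-down-from-infinity of $s_t$ together with the CSBP-type stochastic domination of individual gene counts, ensures that $\mu_t$ does not collapse to $\delta_0$ as $t \downarrow 0$, so the limit is proper. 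Theorem \ref{thm:uniqueness-weak-infinite} then identifies it with $\mu^{(0)}$, and Theorem \ref{thm:self-similar-weak} yields the explicit form $\mu^{(0)}_t = \cL(\Upsilon/t)$. Joint convergence of the species component to $2/t$ follows directly from the coming-down-from-infinity of the species Kingman coalescent together with Slutsky's lemma.

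The hard part will be establishing $R^n(\mu)_\tau \Longrightarrow \nu_\tau$ in the weak (not merely vague) topology, together with the identification $\nu_\tau = \mu^{(0)}_\tau$. My plan here is a monotone coupling: the construction of the $\infty$-pop.\ coalescent in Lemma \ref{lem:existence-of-coalescent} realizes $(\Pi_t,s_t)$ as the decreasing limit of coalescents started at times $\delta_k/n$ with infinitely many genes per species. Applying Theorem \ref{teo-local-1} to each such dominating process (whose initial empirical measure is a Dirac at $+\infty$) recovers in the scaling limit the partial marking $m^{(\delta_k),+}$ of the Brownian CPP of Proposition \ref{lem:marking}, and hence the maximal marking $m^+$ in the limit $\delta_k \downarrow 0$, identifying $\nu_\tau$ with $\cL(m^+_0(\tau)) = \mu^{(0)}_\tau$. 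Two nontrivial points remain: tightness in the weak topology requires uniform integrability of $\int x\,R^n(\mu)_\tau(dx)$, which follows from stochastic domination of each gene count by a Feller diffusion started from $+\infty$ (finite mean at any positive time by Grey's condition); and the i.i.d.\ hypothesis in Theorem \ref{teo-local-1} must be relaxed to exchangeability, since the gene counts at time $\tau/n$ are only exchangeable conditionally on the species coalescent. The latter is a routine de Finetti-type extension that does not alter the structure of the original proof.
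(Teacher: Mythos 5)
Your high-level architecture---restart at a scaled time, invoke the finite-population result, and identify the limit by uniqueness of the proper solution---is the same as the paper's. But at the crucial step you diverge and take a much harder road, and the detour creates real difficulties.

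Your plan requires pinning down the limiting initial marginal $\nu_\tau$ \emph{before} applying the finite-population convergence, and you correctly flag this as the main obstacle. The paper never does this. Instead of Theorem \ref{teo-local-1} (which both requires i.i.d.\ initial data and produces convergence to a known limit), the paper invokes the underlying tightness result Proposition \ref{prop:tightness}, which only needs the moment conditions (\ref{cond:conv-L1}) and (\ref{cond:second-moment}) and asserts that \emph{any} subsequential limit is a weak solution. Condition (\ref{cond:second-moment}) for the restarted process is established by a dedicated argument (Proposition \ref{prop:general-second-moment}), not by re-importing the i.i.d.\ structure of Theorem \ref{teo-local-1}. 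The paper then diagonalizes over a sequence $\tau_m\downarrow 0$ to extract a single subsequence converging on all of $(0,T]$, observes that the resulting limit satisfies the weak $\infty$-pop.\ equation, shows it is proper via the stochastic domination $\liminf_n R^n\circ\mud_t \ge \delta_{2/(ct)}$ (Proposition \ref{lem:stoch-dom}), and concludes by uniqueness of the proper weak solution (Theorem \ref{thm:uniqueness-weak-infinite}). Convergence of the marginal $R^n(\mud)_\tau$ then comes out as a \emph{corollary}, not a prerequisite.

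Two of your subsidiary steps would need more care than you allow. First, your gluing argument (``these limits agree on overlapping time intervals by restart-consistency'') is circular as stated: uniqueness in Theorem \ref{thm:uniqueness-weak} is uniqueness \emph{given an initial condition}, and without already knowing $\nu_{\tau'}=\mu_{\tau'}$ for $\tau'>\tau$ the restart-consistency is precisely what you need to prove, not an input. The diagonalization in the paper handles this cleanly by producing one subsequence that works for every $\tau_m$. Second, the exchangeability relaxation of Theorem \ref{teo-local-1} is not merely routine: the gene counts at time $\tau/n$ are exchangeable but their joint law carries nontrivial dependence on the species coalescent, and it is not immediate that a de Finetti decomposition lands you back in the setting of that theorem. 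The paper sidesteps this entirely by not using Theorem \ref{teo-local-1} at all in this step.

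In short: your proposal is not wrong in outline, but the step you acknowledge as hard (identifying $\nu_\tau$ via a monotone coupling to $m^+$) is essentially as hard as the theorem itself, and the paper's argument is designed precisely to avoid it. If you want to salvage your route, you would have to flesh out the monotone coupling argument in full and replace the appeal to Theorem \ref{teo-local-1} with an appeal to Proposition \ref{prop:tightness} plus Proposition \ref{prop:general-second-moment}; at that point you would have reconstructed the paper's proof.
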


\begin{rmk}
\label{rmk:conj}
In the proof of Lemma \ref{lem:existence-of-coalescent}, we outlined the construction of 
the ``maximal'' $\infty$-pop. nested coalescent by starting from the $\infty$-initial condition at level $\delta_n$
(and  letting $\delta_n\to0$).
A ``minimal'' $\infty$-pop. nested Kingman coalescent would consist in setting the initial condition to $1$ for every species. 

Our next result suggests that
those two extremal nested coalescent are actually identical since their asymptotical empirical measures are undistinguishable. By a simple coupling argument, this should also imply that all
the nested coalescents coming down from $\infty$ are identical. In other words, we conjecture that there is a single entrance law 
for the nested Kingman coalescent.
\end{rmk}

As a corollary of the previous result, we will deduce the speed of coming down from infinity in the 
nested Kingman coalescent.

\begin{teo}[Speed of coming down from $\infty$]\label{teo:speed}
Let $\rho_t := s_{t} \left<\mud_t, x\right>$ be the number of gene lineages at time $t$. Then
\be \frac{1}{n^2} \rho_{t/n} \ \Longrightarrow_{n} \ \frac{2}{t} \int_0^\infty x \mu^{0}_t(dx) \ =  \frac{2}{t^2} \E\left( \Upsilon \right)<\infty.\ee
\end{teo}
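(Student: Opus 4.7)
The plan is to deduce Theorem \ref{teo:speed} as a corollary of Theorem \ref{prop:sequential} combined with the classical coming-down-from-infinity of the Kingman coalescent. The starting point is the identity
\begin{equation*}
\frac{1}{n^2}\rho_{t/n} \ = \ \frac{s_{t/n}}{n}\,\int_{\R^+}\! x\,\tilde \mud^n_t(dx),
\end{equation*}
which separates the number of species from the average rescaled gene count per species. The first factor $s_{t/n}/n$ converges in probability to $2/t$ since $(s_u; u>0)$ is a standard Kingman coalescent and $u\, s_u/2 \to 1$ a.s.\ as $u\downarrow 0$---this is in fact part of the joint convergence asserted in Theorem \ref{prop:sequential}. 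For the second factor, Theorem \ref{prop:sequential} gives $\tilde \mud^n_t \Rightarrow \mu_t^{(0)}$ in the weak topology on probability measures (jointly with the species count), and $\mu_t^{(0)} = \cL(\Upsilon/t)$, so $\int_{\R^+} x\,\mu_t^{(0)}(dx) = \E(\Upsilon)/t$. The finiteness $\E(\Upsilon)<\infty$ is exactly Lemma \ref{lem:finite-first-moment} applied at $T=1$, using that $\Upsilon = m_0^+(1)$.

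The one non-routine step, and hence the main obstacle, is promoting the weak convergence $\tilde \mud^n_t \Rightarrow \mu_t^{(0)}$ to convergence of the unbounded linear functional $\nu\mapsto \int x\,d\nu$, i.e.\ to showing that $\int x\,\tilde \mud^n_t(dx) \to \E(\Upsilon)/t$ in probability. This requires uniform integrability of the family $\{\int x\,\tilde \mud^n_t(dx)\}_n$. To obtain it I would exploit the coupling used in the construction of the $\infty$-pop.\ nested Kingman coalescent (Lemma \ref{lem:existence-of-coalescent}) together with the maximality of the CPP-marking $m^+$ from Proposition \ref{lem:marking}: item (iii) of that proposition allows one to couple the rescaled nested coalescent at time $t/n$ with the marking $m^+$ at level $t$ in such a way that each rescaled species mass $\Pi^n_{t/n}(i)/n$ is bounded above by an independent copy of $m_0^+(t)$. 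Lemma \ref{lem:finite-first-moment} provides $\E(m_0^+(t))<\infty$, and a mild refinement of its Beta-integral estimate (taking $n$ large in the integral $I_n$) yields $\E(m_0^+(t)^{1+\eps})<\infty$ for some $\eps>0$. Averaging over the $\sim 2n/t$ species via this domination then gives both a uniform first-moment bound and the $L^{1+\eps}$ control required for uniform integrability.

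With uniform integrability in hand, joint convergence of $\bigl(s_{t/n}/n,\ \int x\,\tilde\mud^n_t(dx)\bigr)$ to the deterministic pair $\bigl(2/t,\ \E(\Upsilon)/t\bigr)$ combined with Slutsky's lemma gives
\begin{equation*}
\frac{1}{n^2}\rho_{t/n} \ \Longrightarrow \ \frac{2}{t}\cdot\frac{\E(\Upsilon)}{t} \ = \ \frac{2}{t}\int_{\R^+}\! x\,\mu_t^{(0)}(dx) \ = \ \frac{2\E(\Upsilon)}{t^2},
\end{equation*}
with $\E(\Upsilon)<\infty$, which is exactly the conclusion of the theorem.
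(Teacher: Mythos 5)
You correctly identify the structure of the argument: the decomposition $\frac{1}{n^2}\rho_{t/n} = \frac{s_{t/n}}{n}\cdot\langle\tilde\mud^n_t,x\rangle$, the convergence of the first factor to $2/t$, and, crucially, that the only non-routine step is upgrading the weak convergence $\tilde\mud^n_t \Rightarrow \mu^{(0)}_t$ to convergence of the unbounded mean functional, which needs a uniform-integrability input. This matches the paper's proof, which cites Proposition \ref{prop:tightness}(ii) (the "convergence of the mean") and Theorem \ref{prop:sequential} and uses exactly this product decomposition.

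Where your proposal departs from the paper, and where I think it has a genuine gap, is in the supply of that uniform-integrability bound. You propose to couple each rescaled species mass $\Pi^n_{t/n}(i)/n$ of the finite-$n$ discrete coalescent with an independent copy of $m_0^+(t)$, invoking Proposition \ref{lem:marking}(iii). But item (iii) of that proposition only says that $m^+$ dominates every other \emph{full marking of the Brownian CPP}: it is a statement internal to the limiting CPP world and compares continuous-time, continuous-mass markings of the same underlying $\cT$. It gives no pathwise comparison between the empirical species masses of a nested Kingman coalescent at time $t/n$ (a discrete particle system) and the CPP marking at level $t$. Constructing such a coupling and proving it is monotone is itself a substantial claim, and not one the paper makes or needs. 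The paper instead establishes a uniform \emph{second-moment} bound, $\limsup_n \E\bigl(\sup_{[\tau,T]}\langle R^n\circ\mud_t,x^2\rangle\bigr)<\infty$ (Proposition \ref{prop:general-second-moment}), proved by a domination argument entirely within the nested-coalescent world: replace the initial condition by infinitely many genes per species, then forbid inter-species gene coalescences after time $0$, reducing to the auxiliary quantity $\beta^n_t$ of Lemma \ref{lem:betan}, which is controlled via the large-deviation estimate (Lemma \ref{lem:large-dev-2}) and Corollary \ref{cor:second-moment-V}. This second-moment bound is exactly what feeds the truncation argument of Step 5 in Proposition \ref{prop:tightness} (approximate $x$ by $f^{(k)}$, control the remainder by Cauchy--Schwarz through $\langle\tilde\mud^n_t,x^2\rangle$). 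So your proposal is missing the actual mechanism the paper uses, and the mechanism you substitute --- a discrete-to-CPP pathwise domination combined with an $L^{1+\eps}$ refinement of Lemma \ref{lem:finite-first-moment} --- is not established and would require a nontrivial new construction. Notice also that Lemma \ref{lem:finite-first-moment} concerns only the limiting marking $m_0^+$; by itself it cannot give a bound uniform in $n$ on the prelimit quantities, which is what uniform integrability requires.
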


\section{Some useful estimates} 

In this section, we establish some estimates which will be useful in due time. This section 
can be skipped at first reading.

%
%
%

\begin{lem}[Large deviations]\label{lem:large-dev-2}
Let $z^n$ be the block counting process associated to a Kingman coalescent with rate $c>0$
such that $\{z^n_0\}$ is a deterministic sequence in $\R^+\cup\{+\infty\}$ such that $z_0^n/n \to r\in(0,+\infty]$.

There exists two functions $I_+$ and $I_-$ and two constants $K, \bar K$
\beqnn
\forall \gamma>0, \ \  \P\left( \frac{1}{n} z({t/n}) >   (1+\gamma)\frac{2r}{2+ rct}\right ) \leq K_+ \exp(-n I_+(\gamma)) \\
\forall \gamma\in(0,1),\ \  \P\left( \frac{1}{n} z({t/n}) <   (1-\gamma)\frac{2r}{2+ rct}\right ) \leq \bar K_- \exp(-n I_-(\gamma)) 
\eeqnn
such that $I_{\pm}(\gamma)>0$ for $\gamma>0$ and $\liminf_{\gamma\to\infty} I_+(\gamma)/\gamma >0$.
\end{lem}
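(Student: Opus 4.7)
The plan is to encode the event $\{z^n(t/n) > M\}$ as a tail event for a sum of independent exponential holding times and then apply a Chernoff bound at scale $n^2$. Since $z^n$ is a pure death chain whose holding time at level $j$ is exponential with rate $cj(j-1)/2$, one has $\tau_M := \sum_{j=M+1}^{z^n_0} T_j$ with independent $T_j \sim \mathrm{Exp}(cj(j-1)/2)$, and $\{z^n(t/n) > M\} = \{\tau_M > t/n\}$. Setting $m_\gamma := (1+\gamma)\frac{2r}{2+rct}$ and $M_\gamma := \lfloor m_\gamma n \rfloor$, the upper-tail probability is $\P(\tau_{M_\gamma}>t/n)$.

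For $\sigma \in (0, cm_\gamma^2/2)$, Markov's inequality applied to $\exp(n^2\sigma\tau_{M_\gamma})$ together with $\E[\exp(n^2\sigma T_j)] = (1 - 2\sigma/(cj(j-1)/n^2))^{-1}$ yields
\begin{equation*}
\P(\tau_{M_\gamma} > t/n) \;\leq\; \exp\Bigl(-n\sigma t - \sum_{j=M_\gamma+1}^{z^n_0} \log\bigl(1 - 2\sigma/(cj(j-1)/n^2)\bigr)\Bigr).
\end{equation*}
The sum is a Riemann sum for $n\int_{m_\gamma}^{r}\log(1 - 2\sigma/(cx^2))\,dx$ (up to an $O(1)$ correction which I absorb into the prefactor $K_+$), so optimizing over $\sigma$ defines
\begin{equation*}
I_+(\gamma) \;:=\; \sup_{0 < \sigma < cm_\gamma^2/2}\Bigl\{\sigma t + \int_{m_\gamma}^{r}\log\bigl(1 - 2\sigma/(cx^2)\bigr)\,dx\Bigr\}.
\end{equation*}
Differentiating the bracketed expression at $\sigma=0$ gives $t - \tfrac{2}{c}(\tfrac{1}{m_\gamma} - \tfrac{1}{r})$, which vanishes at $\gamma = 0$ (precisely the fluid-limit relation defining $m_0 = 2r/(2+rct)$) and is strictly positive for $\gamma > 0$; hence $I_+(\gamma) > 0$. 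For the linear growth, I would choose $\sigma = cm_\gamma^2/4$ and rescale the integral by $y = x/m_\gamma$: this gives $\int_{m_\gamma}^r \log(1 - m_\gamma^2/(2x^2))\,dx = O(m_\gamma)$ while $\sigma t \asymp \gamma^2$, hence $\liminf_{\gamma \to \infty} I_+(\gamma)/\gamma > 0$ (in fact quadratic growth when $r = +\infty$).

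The lower tail $\P(\tau_{M'_\gamma} \leq t/n)$ with $M'_\gamma := \lceil(1-\gamma)\tfrac{2rn}{2+rct}\rceil$ is handled identically, by applying Markov to $\exp(-n^2\sigma\tau_{M'_\gamma})$ and using $\E[\exp(-n^2\sigma T_j)] = (1 + 2\sigma/(cj(j-1)/n^2))^{-1}$; the analogous Riemann-sum approximation yields $I_-(\gamma) := \sup_{\sigma>0}\bigl\{\int_{m'_\gamma}^r \log(1 + 2\sigma/(cx^2))\,dx - \sigma t\bigr\}$, and the same derivative calculation gives $I_-(\gamma) > 0$ for $\gamma\in(0,1)$. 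The principal technical obstacle is to control the Riemann-sum approximation uniformly in $\sigma$ as it approaches the critical value $cm_\gamma^2/2$ (where the integrand blows up) and uniformly as $z^n_0/n \to r$, including the case $r = +\infty$; splitting the sum into a bulk part, controlled by the monotonicity and convexity of $-\log(1-\cdot)$, and a few terms near $j=M_\gamma$, bounded directly, will suffice, the residual error being absorbed into the prefactor.
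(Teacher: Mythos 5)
The paper itself does not prove this lemma: it simply cites the case $z_0^n=+\infty$ to Lemma 4.6 of \cite{LS16} and asserts that the case of general $r\in(0,\infty)$ ``can be handled by a straightforward extension.'' Your proposal, by contrast, is a self-contained argument, so any comparison has to be with what one would expect from the cited reference rather than with text in this paper. Your route -- encoding $\{z^n(t/n)>M\}$ as $\{\tau_M>t/n\}$ with $\tau_M=\sum_{j>M}T_j$, $T_j\sim \mathrm{Exp}(cj(j-1)/2)$, and then applying an exponential Markov bound at the natural scale $n^2\sigma$ followed by a Riemann-sum approximation of $\sum_j\log(1\mp 2n^2\sigma/(cj(j-1)))$ -- is the standard large-deviations calculation for the Kingman block count, and is almost certainly close in spirit to the argument in \cite{LS16}. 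The checks you carry out are also correct: the first-order condition at $\sigma=0$ gives $t-\tfrac{2}{c}(1/m_\gamma-1/r)$, which vanishes precisely on the fluid curve $m_0 = 2r/(2+rct)$ and has the right sign for $\gamma>0$ on both tails, so $I_\pm(\gamma)>0$.

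Two points you should tighten. First, the claim that the Riemann-sum error is ``$O(1)$ absorbed into $K_+$'' is delicate when $r<\infty$: the lemma requires constants $K_\pm$ \emph{uniform in $\gamma$}, and a $\gamma$-dependent $O(1)$ error does not a priori fold in. This is not fatal -- for $\gamma$ bounded the error is uniform, and once $(1+\gamma)m_0>r$ the event is eventually empty so $I_+(\gamma)$ may be taken $+\infty$ -- but you should say so explicitly instead of deferring to the prefactor. Second, for the linear growth $\liminf_{\gamma\to\infty}I_+(\gamma)/\gamma>0$ your substitution $y=x/m_\gamma$ in $\int_{m_\gamma}^{r}\log(1-2\sigma/(cx^2))\,dx$ only makes sense when $m_\gamma\le r$; when $r<\infty$ and $\gamma$ is large the correct statement is simply that the upper-tail probability vanishes for $n$ large (the sum over $j$ is empty because $M_\gamma\ge z_0^n$), so the only nontrivial case for the $\liminf$ is $r=+\infty$, which you do handle. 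Apart from these bookkeeping issues, and a harmless index shift on the lower tail ($\{z^n(t/n)<M'\}=\{\tau_{M'-1}\le t/n\}$), the argument is sound and, unlike the paper, explicit.
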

\begin{proof}
The case $z_0^n=+\infty$ was treated in \cite{LS16}; see proof of Lemma 4.6 therein. The general case can be handled by a straightforward extension of our method.
\end{proof}

\begin{cor}\label{cor:second-moment-V}
Let $k\in\N$ and $r\in(0,\infty)$.
Let us assume that $\{s_0^n/n\}$ converges to a deterministic $r\in (0,\infty)$ in $L^{k+2+\eps}$ for some $\eps>0$. Then 
$$\limsup_n \ \E\left( (\frac{\ts_0^n}{n})^k \ \frac{1}{\ts_T^n}  \sum_{i=1}^{\ts_T^n} \  \tV_T^n(i)^2\right) \ < \ \infty.$$
\end{cor}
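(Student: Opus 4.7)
The plan is to combine an exact first-moment computation for the sum of squared block sizes (via a generator/ODE argument) with a decomposition of the expectation based on whether $\ts_T^n$ achieves its typical order-$n$ value.

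\textbf{Step 1 (ODE for $\sum V^2$).} Conditionally on $s_0^n = m$, the species coalescent is a rate-$1$ Kingman coalescent starting from $m$ singletons, so $\sum_i V_0^n(i)^2 = m$. When two blocks of sizes $u,v$ merge, $\sum_i V_t^n(i)^2$ jumps by $2uv$, and the total rate of change of its expectation equals $\E[\sum_{i\ne j} V_i V_j \mid m] = m^2 - \E[\sum V^2 \mid m]$. Solving the ODE yields
\[
\E\Bigl[\textstyle\sum_i V_t^n(i)^2 \,\Big|\, s_0^n = m\Bigr] \;=\; m^2(1-e^{-t}) + m\, e^{-t} \;\le\; t\, m^2 + m,
\]
so at $t = T/n$,
\(
\E[\sum_i \tV_T^n(i)^2 \mid s_0^n] \le \frac{T}{n}(s_0^n)^2 + s_0^n.
\)

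\textbf{Step 2 (Good event).} Pick $\alpha \in (0, r/(2+rT))$ (so that even when $s_0^n/n \geq r/2$ the typical value $\frac{2(s_0^n/n)}{2 + (s_0^n/n)T}$ exceeds $2\alpha$), and set $A_n = \{\ts_T^n \ge \alpha n\}$. On $A_n$, the factor $1/\ts_T^n$ is bounded by $1/(\alpha n)$, so conditioning on $s_0^n$ and using Step~1,
\[
\E\Bigl[(\ts_0^n/n)^k \mathbf{1}_{A_n}\, \tfrac{1}{\ts_T^n}\textstyle\sum \tV_T^n(i)^2\Bigr] \;\le\; \tfrac{1}{\alpha n}\, \E\bigl[(\ts_0^n/n)^k\bigl(\tfrac{T}{n}(s_0^n)^2+s_0^n\bigr)\bigr] \;=\; \tfrac{T}{\alpha}\E[(\ts_0^n/n)^{k+2}] + \tfrac{1}{\alpha}\E[(\ts_0^n/n)^{k+1}],
\]
which is bounded uniformly in $n$ since $s_0^n/n \to r$ in $L^{k+2+\eps}$, hence in $L^{k+1}$ and $L^{k+2}$.

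\textbf{Step 3 (Bad event).} On $A_n^c$, use the trivial bound $\frac{1}{\ts_T^n}\sum \tV_T^n(i)^2 \le (s_0^n)^2$ (from $\ts_T^n \ge 1$ and $\sum V_i^2 \le (\sum V_i)^2$), so the remaining contribution is at most $n^2 \E[(\ts_0^n/n)^{k+2}\mathbf{1}_{A_n^c}]$. Fix $\beta \in (0, r/2)$ large enough that $\frac{2\beta}{2+\beta T} > 2\alpha$, and split $A_n^c = (A_n^c \cap \{s_0^n \ge \beta n\}) \sqcup (A_n^c \cap \{s_0^n < \beta n\})$. On the first piece, Lemma~\ref{lem:large-dev-2} applied conditionally on $s_0^n$ (with rate uniform over $m \ge \beta n$) gives $\P(A_n^c \mid s_0^n = m) \le K e^{-nI}$, and Hölder with exponents $p = (k+2+\eps)/(k+2)$, $q = (k+2+\eps)/\eps$ yields
\[
n^2\, \E[(\ts_0^n/n)^{k+2+\eps}]^{1/p}\, (Ke^{-nI})^{1/q} \;\longrightarrow\; 0.
\]
On the second piece $\{s_0^n < \beta n\}$, the bound $(\ts_0^n/n)^{k+2} \le \beta^{k+2}$ holds, and the contribution is $\le n^2 \beta^{k+2} \P(s_0^n < \beta n)$. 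Using $\mathbf{1}_{\{|X|>c\}} \le (|X|/c)^\eps$ and then Hölder with the same exponents on $|s_0^n/n - r|^{\eps}$ against $(s_0^n/n)^{k+2}$ turns $\P(s_0^n<\beta n)$ into a factor of $\E[|s_0^n/n-r|^{k+2+\eps}]^{\eps/(k+2+\eps)}$, so the full contribution is bounded by a constant times $n^2 \E[|s_0^n/n-r|^{k+2+\eps}]^{\eps/(k+2+\eps)}$.

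\textbf{Main obstacle.} The delicate point is Step~3 on the event $\{s_0^n < \beta n\}$: the $L^{k+2+\eps}$ assumption alone furnishes $\E[|s_0^n/n-r|^{k+2+\eps}] \to 0$ without a rate, whereas one needs this quantity to be $O(n^{-2(k+2+\eps)/\eps})$. In practice this is resolved by choosing $\beta$ to depend on $n$ (for instance $\beta_n \downarrow 0$ slowly enough that $\beta_n^{k+2+\eps} n^2 \to 0$ while $\P(s_0^n < \beta_n n) \to 0$ by convergence in probability), or by noting that the large-deviation estimate of Lemma~\ref{lem:large-dev-2} in fact holds uniformly over initial sizes $m$ in any compact subinterval of $(0,\infty)$, which together with the $L^{k+2+\eps}$ tail control of $s_0^n/n$ around $r$ closes the argument.
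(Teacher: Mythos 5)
Your proposal follows the paper's proof closely. Your Step~1 recovers the paper's exact first moment $\E\bigl[\sum_i \tV_T^n(i)^2/(\ts_0^n)^2 \mid \ts_0^n\bigr] = (1-e^{-T/n})(1-1/\ts_0^n) + 1/\ts_0^n$, obtained there as the probability that two uniformly sampled elements share a block at time $T/n$, which is exactly the expression your ODE yields. Your Steps~2--3 mirror the paper's decomposition of the conditional expectation into a good piece (bounded via $1/\ts_T^n \le 1/(\alpha\ts_0^n)$ plus the explicit moment) and a bad piece (bounded crudely by $(\ts_0^n)^{k+2}/n^k$ times an indicator and then attacked with Lemma~\ref{lem:large-dev-2} and H\"older). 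The only difference is cosmetic: you threshold $\ts_T^n$ in absolute terms $\{\ts_T^n < \alpha n\}$ whereas the paper thresholds the ratio $\{\ts_T^n/\ts_0^n < \alpha\}$.

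The ``main obstacle'' you flag is precisely the delicate step of the paper's own argument, not an artifact of your decomposition. To beat the $n^2$ prefactor, the paper (after fixing a subsequence along which the bad contribution diverges) asserts that ``one can extract a further subsequence such that $\limsup_n n^{2q}\,\P(s_0^n/n \notin [r(1-\gamma),r(1+\gamma)]) < \infty$.'' Such a subsequence exists only if $\liminf_n n^{2q}\,\P(\cdots) < \infty$, i.e.\ a polynomial-rate tail estimate, and $s_0^n/n \to r$ in $L^{k+2+\eps}$ gives convergence of $\P(s_0^n/n\notin[r(1-\gamma),r(1+\gamma)])$ to $0$ but no rate. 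So you have correctly located the weak point. Of your two sketched remedies, the second (a version of Lemma~\ref{lem:large-dev-2} uniform over initial sizes $m/n$ in a compact subinterval of $(0,\infty)$, combined with the $L^{k+2+\eps}$ control to chop off the rare values of $s_0^n/n$) is the one that can plausibly close the gap. Your first remedy, letting $\beta_n \downarrow 0$ slowly, cannot work as stated: once $\beta_n$ drops below $\alpha$, the bad event $\{\ts_T^n < \alpha n\}$ occurs with probability one given $s_0^n < \beta_n n$, so no large-deviation bound is available on that range and the term $n^2\,\P(s_0^n/n < \beta_n)$ stays untamed unless you already have a rate.
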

\begin{proof}
Take $\alpha= \frac{1}{2+rcT}$. Then
\begin{eqnarray*}
\E\left((\frac{\ts_0^n}{n})^k \frac{1}{\ts_T^n}   \sum_{i=1}^{\ts_T^n} \tV_T^n(i)^2  \ | \ \ts_0^n  \right)
& \leq & \frac{(\ts_0^n)^{k+2}}{n^k} \P\left(\frac{\ts_T^n}{\ts_0^n} < \alpha\  \ | \ \ts_0^n  \right) \ + \ \frac{({\ts_0^n})^{k+1}}{\alpha n^{k}} \E\left( \sum_{i=1}^{\ts_T^n} \frac{\tV_T^n(i)^2}{(\ts_0^n)^2} \ | \ \ts_0^n  \right)\\
&   =   &  \frac{(\ts_0^n)^{k+2}}{n^k}  \P\left(\frac{\ts_T^n}{\ts_0^n} < \alpha  \ | \ \ts_0^n  \right)  \ + \ \frac{(\ts_0^n)^{k+1}}{\alpha n^{k}} \left((1-\exp(-\frac{T}{n}))(1-\frac{1}{\ts_0^n}) + \frac{1}{\ts_0^n}\right),
\end{eqnarray*}
where the equality simply comes from the fact that the expectation on the RHS of the inequality is the probability that
two elements sampled in $\{1,\cdots,s_0^n\}$ (with replacement) are in the same block of a standard Kingman coalescent at time $T/n$.
(Note that our choice of $\alpha= \frac{1}{2} \frac{2}{2+rcT} $ is motivated by the previous large deviation estimates, so that 
when $s_0^n/n \sim r$ the first term on the RHS will be negligible.)

First, since $s_0^n/n$ converges in $L^{k+1}$, the second term of the RHS remains bounded in $L^1$.
Let us now deal with the second term and show that it remains bounded in $L^1$. Let us assume by contradiction that 
\be \limsup_n  \ \E\left(\frac{(\ts_0^n)^{k+2}}{n^k} 1_{\frac{\ts_T^n}{\ts_0^n} < \alpha}  \right) \ = \  \infty. \label{eq:contrad-v}\ee
Then, up to a subsequence, the latter expectation goes to $\infty$. Our aim is a extract a further bounded subsequence in $L^1$,
thus yielding a contradiction.
Let us now consider $p,q>1$ such that $1/p+1/q=1$ and $(2+k)p < 2+ k + \eps$. By Lemma \ref{lem:large-dev-2} and our choice of $\alpha$, 
we can take $\gamma$ small enough such that 
\[ \P\left( \ts_T^n < \alpha r(1+\gamma)n \ | \  s_0^n \ =  \ [r(1-\gamma)n] \right)\]  
goes to $0$ exponentially fast in $n$.
For this choice of $\gamma\in(0,1)$, one can extract a further subsequence
such 
\be\limsup_n \ n^{2q}\P(s_0^n/n \notin [r(1-\gamma),r(1+\gamma)]) <\infty.\label{eq:subsequence}\ee
For the rest of the proof, we will work under this subsequence.  Next,
\begin{eqnarray*}
\ \E\left(\frac{(\ts_0^n)^{k+2}}{n^k}  1_{\frac{\ts_T^n}{\ts_0^n} < \alpha}  \right) \ 
& \leq & \E\left(\frac{(\ts_0^n)^{k+2}}{n^k} , \frac{\ts_T^n}{\ts_0^n} < \alpha \ \mbox{and} \ s_0^n/n\in [r(1-\gamma),r(1+\gamma)] \right)  \\
& & \ + \  \E\left( \frac{(\ts_0^n)^{k+2}}{n^k}, s_0^n/n \notin [r(1-\gamma),r(1+\gamma)]  \right). 
\end{eqnarray*}
We first deal with the first term on the RHS of the inequality that we call (i).
\begin{eqnarray*}
 (i)& \leq & 
 r^{2+k}(1+\gamma)^{2+k} n^2 \P\left(  \ts_T^n < \alpha r(1+\gamma)n, \  s_0^n/n\in [r(1-\gamma),r(1+\gamma)] \right)   \\
& \leq & 
r^{2+k} (1+\gamma)^{2+k} n^2  \P\left( \ts_T^n < \alpha r(1+\gamma)n \ | \  s_0^n \ =  \ [r(1-\gamma)n] \right),
\end{eqnarray*}
where the RHS goes to $0$ by our choice of $\alpha$ and $\gamma$.
We now deal with the second term. By H\"older's inequality, we have
$$  \E\left( \frac{(\ts_0^n)^{k+2}}{n^k} , s_0^n \notin [r(1-\gamma),r(1+\gamma)]  \right)  \leq \E\left( (\frac{s_0^n}{n})^{(k+2)p}\right)^{\frac{1}{p}} \left( n^{2q}\P(  s_0^n \notin [r(1-\gamma),r(1+\gamma)]   )\right)^{1/q} $$
Since $\frac{s_0^n}{n}$ remains bounded in $L^{2+k+\eps}$,
and because of (\ref{eq:subsequence}),
this shows that $\limsup \E\left((\ts_0^n)^2 1_{\frac{\ts_T^n}{\ts_0^n} < \alpha}  \right)<\infty$, which is the desired contradiction.
This completes the proof of the lemma.
\end{proof}

\begin{lem}\label{lem:betan}
Assume that $\{s_0^n/n\}$ converges to a deterministic $r\in (0,\infty)$ in $L^{3+\eps}$ for some $\eps>0$.
Let $\left((\xi_{i}; i\in\N\right)$ be  
i.i.d. block counting processes of Kingman coalescent with rate $c>0$ coming down from infinity and independent of the species coalescent. Then
for every $0<\tau<T$:
$$ \limsup_{n\to\infty}\E\left( \sup_{[\tau,T]} \  \beta^n_{t} \right) \ < \ \infty \ \
 \mbox{where} \ \ \beta_t^n \ := \ \frac{1}{\ts_{t}^n} \ \sum_{i=1}^{\ts_{t}^n}  \left(\sum_{j\in \tB_t^n(i)} \frac{1}{n}\xi_{j}({t/n})\right)^2.  $$
and where $\tB_t^n$ was defined in Section \ref{sect:notations-discrete}.
\end{lem}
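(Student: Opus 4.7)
The plan is to decouple the squared block sum into the product of an empirical size-squared statistic and the maximal renormalized gene count, via the pointwise bound
$$
\Big(\sum_{j\in \tB_t^n(i)}\tfrac{\xi_j(t/n)}{n}\Big)^{2} \;\le\; \tV_t^n(i)^{2}\,\Big(\max_{j\le s_0^n}\tfrac{\xi_j(t/n)}{n}\Big)^{2},
$$
which gives
$$
\beta_t^n \;\le\; \Big(\max_{j\le s_0^n}\tfrac{\xi_j(t/n)}{n}\Big)^{2}\cdot\frac{1}{\ts_t^n}\sum_{i=1}^{\ts_t^n}\tV_t^n(i)^{2}.
$$

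Next I would use two monotonicity observations to turn the supremum on $[\tau,T]$ into a single-time quantity. Since each $\xi_j$ is a block-counting process, $t\mapsto \xi_j(t/n)$ is non-increasing, hence so is $t\mapsto \max_j\xi_j(t/n)$. On the other hand, $\sum_i \tV_t^n(i)^{2}$ counts the pairs of initial species currently in the same block of the species coalescent at time $t/n$ and is non-decreasing in $t$, while $\ts_t^n$ is non-increasing; an elementary check at each merger ($(S_-+2V_{i_1}V_{i_2})/(s-1)-S_-/s>0$) shows that the ratio $\frac{1}{\ts_t^n}\sum_i\tV_t^n(i)^{2}$ is itself non-decreasing in $t$. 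Consequently
$$
\sup_{[\tau,T]}\beta_t^n \;\le\; \Big(\max_{j\le s_0^n}\tfrac{\xi_j(\tau/n)}{n}\Big)^{2}\cdot\frac{1}{\ts_T^n}\sum_{i=1}^{\ts_T^n}\tV_T^n(i)^{2}.
$$

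Since the array $(\xi_j)$ is independent of the species coalescent, taking expectations and applying Cauchy--Schwarz (or first conditioning on $s_0^n$), the problem reduces to uniform bounds on the two factors. The second factor is bounded uniformly in $n$ by Corollary~\ref{cor:second-moment-V} (applied with $k=0$, since $L^{3+\eps}$ convergence implies $L^{2+\eps}$ convergence of $s_0^n/n$). For the first factor, since the $\xi_j$'s are Kingman coalescents coming down from infinity, Lemma~\ref{lem:large-dev-2} (with $r=+\infty$, so the typical value is $2/(c\tau)$) yields
$$
\P\Big(\tfrac{\xi_j(\tau/n)}{n}>y\Big)\;\le\; K_+\,e^{-nI_+(yc\tau/2-1)},\qquad y>\tfrac{2}{c\tau},
$$
and a union bound over $j\le s_0^n$ gives $\P(\max_j\xi_j(\tau/n)/n>y)\le s_0^n K_+ e^{-nI_+(yc\tau/2-1)}$. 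The condition $\liminf_{\gamma\to\infty}I_+(\gamma)/\gamma>0$ converts this into an exponential-in-$ny$ tail for large $y$, after which the identity $\E[X^2]=\int_0^\infty 2y\,\P(X>y)\,dy$ yields a bound uniform in $n$.

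The main technical obstacle is handling the interplay between the random factor $s_0^n$ (of order $n$) and the exponential decay in $n$ inside the union bound, without losing uniformity. I would split on the event $\{s_0^n\le 2rn\}$, whose complement has vanishing contribution via the $L^{3+\eps}$-assumption on $s_0^n/n$ (applied together with Markov's inequality to absorb any polynomial-in-$n$ prefactors coming from the union bound), so that on the complement the tail bound $s_0^n K_+ e^{-nI_+(\cdot)}$ remains bounded by $Cn e^{-cn y}$ for $y$ large, whose integral $\int 2y\cdot Cn e^{-cn y}\,dy$ is uniformly $o(1)$. Combining this with the trivial bound $\P\le 1$ on a compact range $y\in[0,A]$ for some $A>2/(c\tau)$ completes the argument.
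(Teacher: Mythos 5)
Your high-level decomposition matches the paper exactly: both proofs start from the pointwise bound $\beta_t^n\le (\max_j \xi_j(t/n)/n)^2\cdot\frac{1}{\ts_t^n}\sum_i\tV_t^n(i)^2$, and both use the same monotonicity observations (non-increase of $\xi_j$, non-decrease of $\frac{1}{\ts_t^n}\sum_i\tV_t^n(i)^2$, which the paper proves by the chain of inequalities around (\ref{eq:inequality1}) -- your ``check at each merger'' is a fine alternative) to replace $\sup_{[\tau,T]}$ by evaluation of the first factor at $\tau$ and of the second factor at $T$. The use of Lemma \ref{lem:large-dev-2} together with $\liminf_\gamma I_+(\gamma)/\gamma>0$ to control the maximum is also the paper's route.

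The gap is in the final decoupling and in which instance of Corollary \ref{cor:second-moment-V} is invoked. After conditioning on $s_0^n$ and applying the union bound, the conditional expectation of the first factor is of the form $C_1+C_2\,\frac{s_0^n}{n}$, so one needs to control $\E\bigl(\frac{s_0^n}{n}\cdot\frac{1}{\ts_T^n}\sum_i\tV_T^n(i)^2\bigr)$ as well. This is precisely why the paper invokes Corollary \ref{cor:second-moment-V} with both $k=0$ and $k=1$, and why the hypothesis is $L^{3+\eps}$ rather than $L^{2+\eps}$. You only cite the $k=0$ case, and instead propose to absorb the $s_0^n$ prefactor by splitting on $\{s_0^n\le 2rn\}$. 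The good event works as you say, but the complement does not: on $\{s_0^n>2rn\}$ you still face $\E\bigl(M^2 B\,\indicbis{s_0^n>2rn}\bigr)$ with $M=\max_{j\le s_0^n}\xi_j(\tau/n)/n$ and $B=\frac{1}{\ts_T^n}\sum_i\tV_T^n(i)^2\le (s_0^n)^2$, and making this vanish requires a quantitative tail estimate such as $\P(s_0^n/n>2r)=o(n^{-2})$. Convergence of $s_0^n/n$ to $r$ in $L^{3+\eps}$ gives no rate for this probability, so ``Markov's inequality to absorb polynomial-in-$n$ prefactors'' does not close the estimate. The fix is to drop the split and do what the paper does: condition on $s_0^n$, bound $\E\bigl(M^2\mid s_0^n\bigr)\le (2k_0/c\tau)^2 + C\,s_0^n/n$, and then apply Corollary \ref{cor:second-moment-V} with $k=0,1$, which is exactly what the corollary (and its subsequence/H\"older argument) was engineered for.
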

\begin{proof}

For every $k\in\N$, we define the event
$\tilde A_t^{k,n} \ \ = \ \{ \frac{2k n}{ct} \leq \max_{i\in[\ts_0^n]} \ \xi_{i}(t/n) \leq \frac{2(1+k)n}{ct}  \}$.
Take $k_0\in \N$ such that $k_0\frac{\tau}{T}>1$
\begin{eqnarray*}
\beta_t^n & \leq & \left( (\frac{2 k_0}{ct})^2  \ + \  \sum_{k=k_0}^\infty (\frac{2(k+1)}{ct})^2    1_{\tilde A_t^{k,n}}\right) \   \frac{1}{\ts_t^n}   \sum_{i=1}^{\ts_t^n} \tV_t^n(i)^2 \\
 & \leq & \left( (\frac{2 k_0}{ct})^2  \ + \  \sum_{k=k_0}^\infty (\frac{2(k+1)}{ct})^2    1_{\bar A_{t,T}^{k,n}}\right) \   \frac{1}{\ts_t^n}   \sum_{i=1}^{\ts_t^n} \tV_t^n(i)^2. 
\end{eqnarray*}
where $\bar A_{t,T}^{k,n} \ \ = \ \{\ \frac{2k n}{c T} \leq \max_{i\in[\ts^n_0]}\xi_{i}(t/n)\}$. Next,
for every $t\leq T$, let us denote by $\tilde C_{t,T}^n(i)$ be the indices of the blocks at time $t/n$ partionning the 
block $i$ at time $T/n$. (In particular, $\tilde C_{0,T}^n(i)=\tilde B_T^n(i)$.) Then
\begin{eqnarray}
\frac{1}{\ts_T^n}   \sum_{i=1}^{\ts_T^n} \tV_T^n(i)^2 
& = & \frac{1}{\ts_T^n}   \sum_{i=1}^{\ts_T^n} (\sum_{j \in \tilde C^n_{t,T}(i)} \tV_t^n(j))^2 \no \\   
& \geq & \frac{1}{\ts_T^n}   \sum_{i=1}^{\ts_T^n} \sum_{j \in \tilde C^n_{t,T}(i)} \tV_t^n(j)^2 \ = \   \frac{1}{\ts_T^n}   \sum_{k=1}^{\ts_t^n}  \tV_t^n(k)^2 \no \\
& \geq & \frac{1}{\ts_t^n}   \sum_{k=1}^{\ts_t^n}  \tV_t^n(k)^2 \label{eq:inequality1}
\end{eqnarray}
which implies that 
$\sup_{[\tau,T]} \frac{1}{\ts_t^n}   \sum_{i=1}^{\ts_t^n} \tV_t^n(i)^2   \ = \ \frac{1}{\ts_T^n}   \sum_{i=1}^{\ts_T^n} \tV_T^n(i)^2$. 
Further, since $\xi_{i}$
is non-increasing
$$\sup_{[\tau,T]} \beta_t^n \leq   \ \left( (\frac{2 k_0}{c\tau})^2  \ + \  \sum_{k=k_0}^\infty (\frac{2(k+1)}{c\tau})^2    1_{\bar A_{\tau,T}^{k,n}}\right) \   \frac{1}{\ts_T^n}   \sum_{i=1}^{\ts_T^n} \tV_T^n(i)^2.  $$
Since the $\xi_{i}$'s are independent of the coalescent, this yields
\begin{eqnarray*}
\E(\sup_{[\tau,T]}\beta_t^n \ |  \ \ts_0^n) & \leq & \left( (\frac{2 k_0}{c\tau})^2  \ + \  \sum_{k=k_0}^\infty (\frac{2(k+1)}{c\tau})^2     \P\left( \bar A_{\tau,T}^{k,n} \ | \ \ts_0^n\right)  \right) \   \E\left(\frac{1}{\ts_T^n}   \sum_{i=1}^{\ts_T^n} \tV_T^n(i)^2 \ | \ \ts_0^n \right)  \\
& \leq & \left( (\frac{2 k_0}{c\tau})^2  \ + \  \sum_{k=k_0}^\infty (\frac{2(k+1)}{c\tau})^2    {\ts_0^n} \ \P\left( \frac{1}{n}\xi(\tau/n) \geq   \frac{2k}{cT}  \right) \right) \   \E\left(\frac{1}{\ts_T^n}   \sum_{i=1}^{\ts_T^n} \tV_T^n(i)^2 \ | \ \ts_0^n\right) 
\end{eqnarray*}

Using Lemma \ref{lem:large-dev-2}
\begin{eqnarray*}
\E(\sup_{[\tau,T]}\beta_t^n \ | \ \ts_0^n ) & \leq & \left( (\frac{2 k_0}{c\tau})^2  \ + \  \frac{\ts_0^n}{n}\sum_{k=k_0}^\infty (\frac{2(k+1)}{c\tau})^2     n  K_+ \exp(-n I_{+}(k\frac{\tau}{T}-1)  \right) \   \E\left(\frac{1}{\ts_T^n}   \sum_{i=1}^{\ts_T^n} \tV_T^n(i)^2  \ | \ \ts_0^n\right)\\
\end{eqnarray*}
Recall that for every $k\geq k_0$, we have $k \frac{\tau}{T}>1$. 
Since $\liminf_{\gamma\to\infty} I_+(\gamma)/\gamma >0$, a straightforward application of  the dominated convergence theorem shows that the sum on the RHS of the inequality goes to $0$
as $n\to\infty$. Thus, there exists a constant $C$ (independent of $n$) such that
\begin{eqnarray*}
\E(\sup_{[\tau,T]}\beta_t^n \ | \ \ts_0^n )
& \leq &  \left( (\frac{2 k_0}{c\tau})^2  \ + \  C \frac{\ts_0^n}{n} \right) \   \E\left(\frac{1}{\ts_T^n}   \sum_{i=1}^{\ts_T^n} \tV_T^n(i)^2  \ | \ \ts_0^n\right)
\end{eqnarray*}
The fact that the RHS  of the inequality  is uniformly bounded in $L^{1}$  is handled by Corollary \ref{cor:second-moment-V}
(with $k=0,1$).
\end{proof}

\section{Convergence of the empirical measure}

In this section, we assume a sequence of nested coalescents $\{(\Pi^n_t, s_t^n); t\geq0\}$  indexed by $n$.
Assume that that there exists $r\in(0,\infty)$ such that 
\be\label{cond:conv-L1} \ts_0^n/n \ \to \ r  \ \mbox{in $L^1$}. \ee
We will also assume that
\be\label{cond:second-moment}\forall T>0, \ \ \ \limsup_n \E\left(\ \max_{[0,T]} \left<\tmu_t,x^2\right> \right)<\infty.\ee
As we shall see later, this condition will be satisfied under the initial conditions specified in Theorem \ref{teo-local-1},
and will also appear naturally in the infinite population nested Kingman coalescent.

\subsection{Generators}

We start with some definition. The process of genetic composition $\left(\tilde \Pi_t^n; t\geq0\right)$ defines a Markov process valued in the space 
$$ E \ := \ \ \cup_{k\in\N_*} \R^k_+. $$
We define $E_n$ to be the subspace of $E$
such that every coordinate of $\Pi\in E$ is such that $n\Pi(i)\in\N_*$.

For every $\Pi\in E$, we define 
$|\Pi|$ as the number of entries in $\Pi$. In particular,
we have $|\tilde \Pi_t^n| \ = \ts_t^n$. Finally,
when $|\Pi|>1$,
for every $i<j\leq |\Pi|$,
$\theta_{ij}(\Pi)$ is the only element $\Pi'\in E$ such that $\Pi'$ is obtained by coagulating coordinates $i$ and $j$. More precisely,
$\theta_{ij}(\Pi)$ is a vector of size $|\Pi|-1$ with coordinates
\begin{eqnarray*}\forall k < |\Pi|, \ \ \theta_{i,j}(\Pi)(k) & = &
\left\{\begin{array}{cc}
\Pi(i) + \Pi(j)& \mbox{if $k = i$}  \\
\Pi(k+1) & \mbox{if $k \geq j$}  \\
\Pi(k) &  \mbox{otherwise}
\end{array}\right.
\end{eqnarray*}
For instance,
$$\mbox{if} \ \ \Pi \ = \  \left(X^1, X^2, X^3, X^4\right), \ \ \mbox{then   } \ \theta_{1,3}(\Pi) \ = \  \left(X^1+X^3, X^2, X^4\right). $$
Finally, for every $\Pi\in E$, we define $\mud_{\Pi}=\frac{1}{|\Pi|} \sum_{i=1}^{|\Pi|} \delta_{\Pi(i)}$, the empirical measure associated 
to the genetic composition $\Pi$.

Let us now describe the generator of $\left( \tPi_{t}; t\geq0 \right)$
describing the evolution of the number of gene lineages per species. 
Define $e_i^k$ to be the vector of size $k$ filled with zeros except for the $i^{th}$ coordinate which is equal to $1$. Then
for every bounded function from $E$ to $\R$, and every $\Pi\in E_n$ we have
\begin{eqnarray}\label{eq:generator}
G^n h(\Pi) \ := \ \underbrace{\frac c n \sum_{i=1}^n \frac{n \Pi(i)(n \Pi(i)-1) }{2}\left(h(\Pi - \frac{1}{n}e_i^{|\Pi|}) - h(\Pi)\right)}_{\mbox{term I}}
+ \underbrace{\frac 1 n  1_{|\Pi|>1} \sum_{i<j\leq |\Pi|} \left(h\circ \theta_{ij}(\Pi) \ - \ h(\Pi) \right)}_{\mbox{term II}} \nonumber \\
\end{eqnarray}
where the first term corresponds to  a coalescence of two gene lineages (belonging to the same species), and the second term corresponds to a coalescence of two species lineages.
Finally, 
\be
\tmu_t \ = \ \frac{1}{|\tPi_{t}|} \sum_{i=1}^{|\tPi_{t}|} \delta_{\tPi_t(i)},
\ee
corresponds to the empirical measure of the block masses, where the mass of a block 
is measured in terms of its (renormalized) number of gene lineages.

Before going to the convergence of the empirical measure $\tmu$, we will need to establish a few technical lemmas
related to the generator of the process $(\tilde \Pi^n_t; t\geq0)$. For every $f\in C_b^\infty(\R^+)$, we define 
\be X^{n,f}_t \ :=  \ \left< \tmu_t, f\right> \label{def:X}\ee
Note that $X^{n,f}_t$ can be regarded as a function of $\tPi_t$. We call $h^f$ this function ($h^f(\Pi) \ = \ \left<g_\Pi, f\right>$), in such a way that
$X^{n,f}_t \ := \ h^f(\tPi_t)$, 

\begin{lem}[Generator approximation]\label{lem:moment11} 
Assume that conditions (\ref{cond:conv-L1}) and (\ref{cond:second-moment}) hold.
For every $f\in \cC_b^\infty(\R^+)$,  $\Pi\in E_n$,
define
$$\bar G h^f(\Pi) \ = \   - \left< \mud_{\Pi} , \frac{c x^2}{2} f' \right>  \ + \  \frac{r}{2+rs}\int_{\R^2} \mud_{\Pi}(dx) \mud_{\Pi}(dy) 
 \left(   f(x+y) -  f(x)   \right).  $$
 Then for every $t\geq 0$
 $$  \E\left( \int_0^t (G^n - \bar G) h^f(\tPi_s) ds \right)  \to 0  \ \ \mbox{as $n\to\infty$.}$$

 \end{lem}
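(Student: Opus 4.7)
The plan is to split $G^n=G_1^n+G_2^n$ according to the two coalescent mechanisms (intra-species gene coalescence and inter-species merging), match each with the corresponding piece of $\bar G$—the transport term $-\langle\mud_\Pi,\tfrac{cx^2}{2}f'\rangle$ and the coagulation term $\frac{r}{2+rs}\int\int\mud_\Pi(dx)\mud_\Pi(dy)[f(x+y)-f(x)]$, with $s$ understood as the time variable of the outer integral—and then show that the integrated $L^1$ error vanishes.

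For term I, the identity $h^f(\Pi-n^{-1}e_i^{|\Pi|})-h^f(\Pi)=|\Pi|^{-1}[f(\Pi(i)-1/n)-f(\Pi(i))]$ combined with a second-order Taylor expansion of $f$ yields, after using $n\Pi(i)(n\Pi(i)-1)/n^2=\Pi(i)^2-\Pi(i)/n$,
\begin{equation*}
G_1^n h^f(\Pi)=-\langle\mud_\Pi,\tfrac{cx^2}{2}f'\rangle+R_1^n(\Pi),\qquad |R_1^n(\Pi)|\le\tfrac{C}{n}\bigl(\langle\mud_\Pi,x^2\rangle+1\bigr),
\end{equation*}
with $C$ depending only on $\|f'\|_\infty$ and $\|f''\|_\infty$; integrating and invoking \eqref{cond:second-moment} gives $\E\int_0^t |R_1^n(\tPi_s)|ds=O(1/n)$. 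For term II, a direct computation gives
\begin{equation*}
h^f(\theta_{ij}(\Pi))-h^f(\Pi)=\frac{\langle\mud_\Pi,f\rangle+f(\Pi(i)+\Pi(j))-f(\Pi(i))-f(\Pi(j))}{|\Pi|-1},
\end{equation*}
and summing over $i<j$, passing from $\sum_{i\ne j}$ to the double sum $\sum_{i,j}=|\Pi|^2\int\int\mud_\Pi\otimes\mud_\Pi$ up to a diagonal of size $|\Pi|$, and collecting terms yields on $\{|\Pi|\ge 2\}$
\begin{equation*}
G_2^n h^f(\Pi)=\frac{|\Pi|}{2n}\bigl(\langle\mud_\Pi\star\mud_\Pi,f\rangle-\langle\mud_\Pi,f\rangle\bigr)+R_2^n(\Pi),\qquad |R_2^n(\Pi)|\le\tfrac{C\|f\|_\infty}{n}.
\end{equation*}
The exceptional event $\{\ts_s^n=1\}$, on which $G_2^n\equiv 0$ but $\bar G$ need not vanish, will be shown negligible on $[0,t]$ via Lemma \ref{lem:large-dev-2}.

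It then remains to match the random prefactor $\ts_s^n/(2n)$ with $r/(2+rs)$: conditioning on $\ts_0^n$, Lemma \ref{lem:large-dev-2} forces $\ts_s^n/n$ to concentrate exponentially around the ODE solution $2\ts_0^n/(2+\ts_0^n s)$ uniformly in $s\in[0,t]$, and the $L^{2+\epsilon}$ convergence $\ts_0^n/n\to r$ then yields $\ts_s^n/n\to 2r/(2+rs)$ in $L^1$ uniformly in $s\in[0,t]$. A Cauchy--Schwarz argument pairs the exponentially small deviation event with the uniform $L^{2+\epsilon}$ moment bound $\ts_s^n/n\le\ts_0^n/n$ to absorb the large-deviation complement, and combined with the crude bound $|\langle\mud\star\mud,f\rangle-\langle\mud,f\rangle|\le 2\|f\|_\infty$ this closes the estimate. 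The main obstacle is the algebraic bookkeeping in term II—cleanly separating the principal piece $\frac{|\Pi|}{2n}(\langle\mud\star\mud,f\rangle-\langle\mud,f\rangle)$ from the several lower-order corrections of size $1/(|\Pi|-1)$—and transferring the pointwise generator expansion into a uniform-in-$s$ $L^1$ statement via the large-deviation control of $\ts_s^n/n$.
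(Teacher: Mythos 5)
Your proof follows essentially the same route as the paper's: Taylor-expand term I and control the error via the second-moment bound (\ref{cond:second-moment}), algebraically reorganize term II around the empirical self-convolution up to an $O(\|f\|_\infty/n)$ diagonal correction, and then swap the random prefactor $\ts_s^n/(2n)$ for $r/(2+rs)$. The one place you deviate is in that last swap: you invoke the $L^{2+\epsilon}$ control of $\ts_0^n/n$ together with a Cauchy--Schwarz and large-deviations pairing, whereas the lemma's standing hypotheses only grant (\ref{cond:conv-L1}), i.e.\ $L^1$ convergence; the paper instead notes that $L^1$ convergence already furnishes uniform integrability of $\{\ts_0^n/n\}$ and combines this with the monotone decrease of $s\mapsto\ts_s^n/n$ and the pointwise LDP of Lemma \ref{lem:large-dev-2} to get $L^1(\Omega\times[0,t])$ convergence directly, which keeps the lemma self-contained under its own (weaker) assumptions.
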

 \begin{proof}
We first note that
$$  \E\left( \int_0^t  |\frac{|\tPi_s|}{2n} - \frac{r}{2+rs}|  \ \int_{\R^2} \mud_{\tPi_s}(dx) \mud_{\tPi_s}(dy) 
 \left(   f(x+y) -  f(x)  \ \right)  ds     \right) \ \leq \ 2 ||f||_\infty \E(\int_0^t |  \frac{|\tPi_s|}{2n} - \frac{r}{2+rs}|    ds ). $$
From Lemma \ref{lem:large-dev-2} and using the fact that $(|\tPi_t|/n; t\geq0)$ is non-increasing
$$ (|\tPi_t|/n; t\geq0) \to (\frac{2r}{2+rt}; t\geq0) \ \ \mbox{in probability} $$
(where the convergence is meant in the Skorohod topology on every interval $[0,T]$)
so that the integrand on the RHS of the latter inequality goes to $0$ in probability. Further,
by assumption $\{|\tPi_0|/n\}$ is uniformly integrable, and  since $|{\tPi_0|}\geq |\tPi_t|$
it easily follows (by uniform integrability in $(\Omega\times[0,t], \P\times dt)$) that 
$$ \E(\int_0^t |  \frac{|\tPi_s|}{2n} - \frac{r}{2+rs}   | ds ) \to 0$$
so that the LHS of the latter inequality vanishes.
From our assumptions, it is then sufficient  to show 
the existence of a constant $K$  such that  for every $\Pi\in E_n$ and $f\in \cC_b^\infty(\R^+)$
\begin{eqnarray} \left| G^n h^f(\Pi) \ - \left(  \  - \left< \mud_{\Pi} , \frac{c x^2}{2} f' \right>  \ + \  \frac{|\Pi|}{2n}\int_{\R^2} \mud_{\Pi}(dx) \mud_{\Pi}(dy) 
 \left(   f(x+y) -  f(x)  \ \right) \right) \right| \nonumber \\
 \leq \frac{K}{n} \left( {||f'||_\infty} \left< \mud_\Pi, x \right> \ + \ {||f''||_\infty} \left< \mud_\Pi, x^2 \right> \right) \ + \  K \times 1_{|\Pi|>1}\frac{1}{|\Pi|-1}{||f||_\infty} . \label{inequality:11}
 \end{eqnarray}
We start by approximating term I  of the generator $G^n$ (as defined in (\ref{eq:generator})). 
For every $\Pi\in E_n$, we have
\begin{eqnarray*}
I & = & \frac {c}{  n} \sum_{i=1}^{|\Pi|} \frac{n \Pi(i)(n \Pi(i)-1) }{2}\left(h^f(\Pi-\frac{1}{n} e_i^{|\Pi|} ) - h^f(\Pi)\right) \\
   & = & \frac {c}{ |\Pi| n} \sum_{i=1}^{|\Pi|} \frac{n \Pi(i)(n \Pi(i)-1) }{2}\left(f(\Pi(i)-\frac{1}{n}) - f(\Pi(i))\right) 
   \end{eqnarray*}
and by a simple Taylor expansion, it follows that there exists a constant $c_1$ such that 
   \begin{eqnarray*}
|\ I \ + \ \left< \mud_{\Pi} , \frac{c x^2}{2} f'  \right> \ |  & \leq & c_1\left(   \frac{||f'||_\infty}{n} \left< \mud_\Pi, x \right> \ + \ \frac{||f''||_\infty}{n} \left< \mud_\Pi, x^2 \right>  \right).
\end{eqnarray*}
Let us now deal with  term $II$ of the generator $G^n$ (again as defined in (\ref{eq:generator})). For $\Pi$ such that $|\Pi|>1$, consider the measure
\begin{equation}\label{eq:approxmu}
\nu_{\Pi}(dx dy) \ = \ \mud_{\Pi}(dx) \frac{|\Pi|}{|\Pi|-1} \left(\mud_{\Pi}(dy) - \frac{1}{|\Pi|} \delta_{x}(dy) \right),
\end{equation}
i.e., $\nu_{\Pi}$ is the measure that consists in sampling two elements with
no replacement according to the measure $\mud_{\Pi}$.
Then 
\begin{eqnarray}
II & = & 1_{|\Pi|>1} \frac{|\Pi|(|\Pi|-1)}{2 n} \int_{(\R^+)^2} \nu_{\Pi}(dx dy) \nonumber \label{eq:II}
 \\ 
 & & \times  \left( \ \frac{|\Pi|}{|\Pi|-1} \left< \mud_{\Pi},f \right> + \frac{1}{|\Pi|-1} f(x+y) - \frac{1}{|\Pi|-1}f(x) - \frac{1}{|\Pi|-1}f(y) \ - \   \left< \mud_{\Pi},f \right>  \right) \nonumber \\
 & = &
  1_{|\Pi|>1}  \frac{|\Pi|(|\Pi|-1)}{2 n} \int_{(\R^+)^2} \nu_{\Pi}(dx dy) \left( \ \frac{1}{|\Pi|-1} \left< \mud_{\Pi},f \right> + \frac{1}{|\Pi|-1} f(x+y) - \frac{1}{|\Pi|-1}f(x) - \frac{1}{|\Pi|-1}f(y)  \right) \nonumber\\
  & = &   1_{|\Pi|>1} \frac{|\Pi|}{2 n} \int_{(\R^+)^2} \nu_{\Pi}(dx dy)
 \left(  \left< \mud_{\Pi},f \right> +  f(x+y) -  f(x) - f(y) \ \right) \nonumber  
 \end{eqnarray}
In words, we coalesce two species lineages at rate $\frac{|\Pi|(|\Pi|-1)}{2 n}$. Conditional on a coalescence event,
we pick two species lineages according to the measure $\nu_{\Pi}(dx dy)$. If we pick two lineages with coordinate 
$x$ and $y$ respectively, then  the change in $\left<\mud_{\Pi},f\right>$
is readily given by the term between parenthesis.  

By using (\ref{eq:approxmu}), one gets the existence of a constant $c_2$ such that 
\begin{eqnarray*}
| \  II - \   1_{|\Pi|>1} \frac{|\Pi|}{2 n} \int_{(\R^+)^2} \mud_{\Pi}(dx) \mud_{\Pi}(dy) \left(  \left< \mud_{\Pi},f \right> +  f(x+y) -  f(x) - f(y) \ \right) 
 \ | & \leq & 
 1_{|\Pi|>1} c_2 \frac{||f||_\infty}{|\Pi|-1} 
\end{eqnarray*}
Using the fact that
$$  \int_{(\R^+)^2} \mud_{\Pi}(dx) \mud_{\Pi}(dy) \left(  \left< \mud_{\Pi},f \right> +  f(x+y) -  f(x) - f(y) \ \right)  \  = \   \int_{(\R^+)^2} \mud_{\Pi}(dx) \mud_{\Pi}(dy)
 \left(   f(x+y) -  f(x)  \ \right)  $$
this yields
\begin{eqnarray*}
|\  II \ - \  1_{|\Pi|>1} \frac{|\Pi|} {2n} \int_{(\R^+)^2} \mud_{\Pi}(dx) \mud_{\Pi}(dy)
 \left(   f(x+y) -  f(x)  \ \right) | & \leq & 
c_2   1_{|\Pi|>1} \frac{||f||_\infty}{|\Pi|-1} 
\end{eqnarray*}
which is the desired inequality (\ref{inequality:11}).
 
 \end{proof}

\begin{lem}\label{lem:moment2}
For every $ 0\leq u\leq t \leq T$ 
$$| \left<X^{n,f}\right>_t -  \left<X^{n,f}\right>_u  | \ \leq \frac{C}{n}  \int_u^t \left( ||f' ||^2_\infty  \left<\tmu_t,x^2\right>  \ + \  || f ||^2_{\infty} \right)ds  $$
\end{lem}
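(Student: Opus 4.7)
The plan is to identify $\langle X^{n,f}\rangle$ with the predictable quadratic variation of the jump martingale associated to the Markov process $(\tilde\Pi^n_t)$. Setting
$$M^{n,f}_t \ := \ X^{n,f}_t - X^{n,f}_0 - \int_0^t G^n h^f(\tilde\Pi^n_s)\,ds,$$
$M^{n,f}$ is a local martingale (by Dynkin's formula, since $h^f$ is bounded), and $X^{n,f}-M^{n,f}$ is of finite variation, so $\langle X^{n,f}\rangle_t = \langle M^{n,f}\rangle_t = \int_0^t \Gamma^n(h^f,h^f)(\tilde\Pi^n_s)\,ds$, where $\Gamma^n(h^f,h^f)=G^n (h^f)^2-2h^f\,G^n h^f$ is the carré du champ. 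Because $G^n$ is a pure-jump generator, this carré du champ equals the sum, over all possible jumps, of (rate)$\times$(jump of $h^f$)$^2$. The entire proof then reduces to the pointwise bound
$$\Gamma^n(h^f,h^f)(\Pi) \ \leq \ \frac{C}{n}\Big(\|f'\|_\infty^2 \langle g_\Pi,x^2\rangle + \|f\|_\infty^2\Big),$$
which, once integrated between $u$ and $t$, delivers the claim.

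To establish this pointwise bound, I would separate the two jump mechanisms contributing to $G^n$. For a within-species gene coalescence in species $i$ (rate $c n\Pi(i)(n\Pi(i)-1)/(2n)$), the increment of $h^f$ is $|\Pi|^{-1}\bigl[f(\Pi(i)-1/n)-f(\Pi(i))\bigr]$, whose square is bounded by $\|f'\|_\infty^2/(n^2|\Pi|^2)$. Summing over $i$,
$$\sum_{i=1}^{|\Pi|} \frac{c n\Pi(i)(n\Pi(i)-1)}{2n}\cdot \frac{\|f'\|_\infty^2}{n^2|\Pi|^2} \ \leq \ \frac{c\|f'\|_\infty^2}{n\,|\Pi|} \langle g_\Pi,x^2\rangle \ \leq \ \frac{c\|f'\|_\infty^2}{n} \langle g_\Pi,x^2\rangle.$$
For a species coalescence of two blocks $i<j$ (per-pair rate $1/n$), the same algebraic computation as in Lemma~\ref{lem:moment11} gives
$$h^f(\theta_{ij}\Pi) - h^f(\Pi) \ = \ \frac{h^f(\Pi) + f(\Pi(i)+\Pi(j)) - f(\Pi(i)) - f(\Pi(j))}{|\Pi|-1},$$
so each jump is in absolute value at most $4\|f\|_\infty/(|\Pi|-1)$ (for $|\Pi|\geq 2$; the contribution vanishes when $|\Pi|=1$). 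With $\binom{|\Pi|}{2}$ pairs, the species coalescence contribution to $\Gamma^n(h^f,h^f)(\Pi)$ is therefore at most
$$\frac{|\Pi|(|\Pi|-1)}{2n}\cdot \frac{16\|f\|_\infty^2}{(|\Pi|-1)^2} \ = \ \frac{8\|f\|_\infty^2\,|\Pi|}{n(|\Pi|-1)} \ \leq \ \frac{16\|f\|_\infty^2}{n}.$$

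Adding the two contributions yields the claimed pointwise bound, and integrating against $ds$ gives the lemma. The only subtle point is the species-coalescence term: naively the $O(|\Pi|^2)$ pairs combined with $O(1)$ jumps would produce an $O(|\Pi|^2/n)$ contribution that could blow up as $|\Pi|\to\infty$. The crucial cancellation is that the replacement of an empirical average of size $|\Pi|$ by one of size $|\Pi|-1$ only changes $h^f$ by $O(1/|\Pi|)$, which squared absorbs exactly the $|\Pi|^2$ coming from the number of pairs, leaving only the harmless $1/n$ coming from the time rescaling. The within-species term is straightforward from a Taylor expansion of $f$.
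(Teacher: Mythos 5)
Your proof is correct and takes essentially the same route as the paper's: both identify the predictable bracket of the pure-jump process $X^{n,f}$ with the time integral of the carr\'e du champ, split it into the gene-coalescence and species-coalescence contributions, bound the former by a Taylor expansion of $f$, and bound the latter via the exact identity $h^f(\theta_{ij}\Pi)-h^f(\Pi)=\bigl(h^f(\Pi)+f(\Pi(i)+\Pi(j))-f(\Pi(i))-f(\Pi(j))\bigr)/(|\Pi|-1)$. The only cosmetic difference is that the paper expresses the species-coalescence term through the sampling measure $\nu_\Pi$ of (\ref{eq:approxmu}) rather than a direct sum over pairs, and the paper's displayed bound for that term inadvertently drops the square on $\|f\|_\infty$ (which you correctly retain).
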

\begin{proof}

$h^f(\tPi)$ is a pure jump process and its bracket term $\left<X^{n,f}\right>_t -  \left<X^{n,f}\right>_u$ 
can be decomposed into two terms, i.e. 
$ \left<X^{n,f}\right>_t -  \left<X^{n,f}\right>_u\ = \  \int_u^t I'_s \ ds  \ + \ \int_u^t II'_s  \ ds$ where 
\begin{eqnarray*}
I_t' 
& = & \frac{c}{n} \sum_{i=1}^{\tilde s_t^n} \frac{n \tPi_t(i)(n \tPi_t(i)-1) }{2}\left(h^f(\tPi_t-\frac{1}{n} e_i^{\tilde s_t^n}) - h^f(\tPi_t)\right)^2  \\
& = & \frac{c}{n |\tPi_t|^2} \sum_{i=1}^{|\tPi_t|} \frac{n \tPi_t(i)(n \tPi_t(i)-1) }{2}\left(f(\tPi_t(i)-1/n) - f(\tPi_t(i))\right)^2 
\end{eqnarray*}
and 
\begin{eqnarray*}
II_t' & = &   1_{|\tPi_t|>1} \\
& \times & \frac{|\tPi_t|(|\tPi_t|-1)}{2 n} \int_{(\R^+)^2} \nu_{\tPi_t}(dx dy)
 \left( \ \frac{1}{|\tPi_t|-1} \left< \mud_{\tPi_t},f \right> + \frac{1}{|\tPi_t|-1} f(x+y) - \frac{1}{|\tPi_t|-1}f(x) - \frac{1}{|\tPi_t|-1}f(y) \ \right)^2 
\end{eqnarray*}
where the sampling measure $\nu_{\Pi}$
is defined as in (\ref{eq:approxmu}), and the expression of $II_t'$
is obtained by an argument analogous to the one for obtaining (\ref{eq:II}).
Straightforward estimates yield that
$$| \ I_t' \ | \ \leq \frac{c ||f' ||^2_\infty }{n} \left<\tmu_t,x^2\right>  $$
and 
\begin{eqnarray*}
|\  II_t' \ | & \leq & 16 \times  1_{|\tPi_t|>1}  || f ||_{\infty} \frac{|\tPi_t|}{n (|\tPi_t|-1)} \leq 32 \frac{|| f ||_{\infty}}{n} 
\end{eqnarray*}
which is the desired result.
\end{proof}

\subsection{Tightness result}


The aim of this section is to show the following tightness result. This will be the key ingredient to the proof of our convergence results.

\begin{prop}\label{prop:tightness}
Assume that conditions (\ref{cond:conv-L1}) and (\ref{cond:second-moment}) hold.
For every $T>0$, the sequence processes $\{\tmu\}_{n\geq0}$ is tight in $D([0,T], (M_F(\R^+),w))$
and any converging subsequence belongs to $\cC([0,T], (M_F(\R^+),w))$, 
the space of continuous functions from  $[0,T]$ to $(M_F(\R^+), w)$. Further
\begin{enumerate}
\item[(i)] Any accumulation point $\mud^\infty$ is a weak solution of the \Sm (\ref{eq:Sch}) with inverse population
$2/r$ in the sense that for every $t\geq0$ and every test function $f$
$$
0 = \  \left<\mud^\infty_t, f  \right> -  \left< \mud^\infty_0 , f  \right>  + \int_0^t \left< \mud^\infty_s , c\frac{x^2}{2} f'  \right>ds  \ - \  \int_{0}^t \frac{1}{s+\frac{2}{r}}\int_{(\R^+)^2} \mud^\infty_{s}(dx) \mud^\infty_s(dy)
 \left(   f(x+y) -  f(x)  \ \right)   ds \    
$$
\item[(ii)] For every $t\in[0,T]$, $\left<g_t^\infty,x\right><\infty$ and 
$$\left<\tmu_t,x\right> \to \left<\mud^\infty_t,x\right>  \ \ \mbox{in probability.}$$
\end{enumerate}

\end{prop}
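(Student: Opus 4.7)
The plan is to combine the standard tightness-then-identification scheme for measure-valued processes with the generator estimates of Lemmas~\ref{lem:moment11} and \ref{lem:moment2}. I would first establish tightness of $\{\tmu\}_n$ in $D([0,T], (M_F(\R^+), v))$ via Jakubowski's criterion, then argue that any accumulation point lives in $\cC([0,T], (M_F(\R^+), w))$ and satisfies the weak \Sm equation. A useful simplification throughout is that $\tmu_t$ is a probability measure, so the compact-containment condition is automatic in the vague topology since $\{\mu\in M_F(\R^+):\mu(\R^+)\le 1\}$ is vaguely compact by Banach--Alaoglu.

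For the second half of Jakubowski's criterion I would show that for each $f\in \cC_b^\infty(\R^+)$, the real-valued process $X^{n,f}_t = \langle \tmu_t, f\rangle$ is tight in $D([0,T],\R)$ via Aldous's criterion applied to the Doob--Meyer decomposition
\[
X^{n,f}_t \ = \ X^{n,f}_0 \ + \ \int_0^t G^n h^f(\tPi_s)\, ds \ + \ M^{n,f}_t,
\]
where $M^{n,f}$ is the compensated martingale. Lemma~\ref{lem:moment11} shows that $G^n h^f$ is well-approximated by $\bar G h^f$ and that $|\bar G h^f(\Pi)|\le C(\|f'\|_\infty \langle \mud_\Pi, x^2\rangle + \|f\|_\infty)$; combined with hypothesis~(\ref{cond:second-moment}) this yields a uniform-in-$n$ $L^1$-bound on the drift integrand, which gives the required stopping-time estimate $\E[|X^{n,f}_{\tau+\eta}-X^{n,f}_\tau|]\le C\eta+o_n(1)$. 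The martingale part is controlled by Lemma~\ref{lem:moment2}, which yields $\E[\langle X^{n,f}\rangle_T]=O(1/n)$, so $M^{n,f}\to 0$ in $L^2$.

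Coordinate-tightness plus compact-containment give tightness in the vague topology. To upgrade to the weak topology I would use the moment bound~(\ref{cond:second-moment}) together with Markov's inequality $\tmu_t(\{x>K\})\le K^{-2}\langle \tmu_t, x^2\rangle$ to rule out mass escaping to infinity, so that vague and weak convergence coincide on the probability-measure-valued accumulation points and total mass is preserved. Continuity of the limit paths follows because each jump of $X^{n,f}$ is of order $1/n$: a gene coalescence shifts a single atom by $1/n$, while a species coalescence modifies the empirical mean over $|\tPi_s|\sim n$ atoms. For the identification statement~(i), I would combine the $L^1$-convergence $\E[\int_0^t (G^n-\bar G)h^f(\tPi_s)\,ds]\to 0$ of Lemma~\ref{lem:moment11} with $M^{n,f}_t\to 0$ in $L^2$ to pass to the limit in the martingale relation, recovering exactly~(\ref{eq:weak}) with $\delta=2/r$; note that the coefficient $1/(s+2/r)$ is already built into $\bar G$ thanks to Lemma~\ref{lem:large-dev-2}, which provides the law-of-large-numbers input $|\tPi_s|/(2n)\to r/(2+rs)$ in probability. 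Finally, for~(ii), uniform square-integrability of $\langle \tmu_t, x\rangle$ follows from~(\ref{cond:second-moment}) via Cauchy--Schwarz ($\int_{x>K} x\,\tmu_t(dx)\le K^{-1}\langle\tmu_t, x^2\rangle$), which upgrades the convergence of truncations $\langle \tmu_t, x\wedge K\rangle$ (a continuous bounded test function) to convergence of $\langle \tmu_t, x\rangle$ in probability.

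The main obstacle in executing this plan is the Aldous time-increment estimate at stopping times, where one must bound $\E\big[|X^{n,f}_{\tau+\eta}-X^{n,f}_\tau|\big]$ uniformly in $n$ and in a bounded stopping time $\tau$. This reduces to a uniform-in-time $L^1$-control of $\langle \tmu_t, x^2\rangle$, which is precisely the content of hypothesis~(\ref{cond:second-moment}) and will have to be verified separately for each of the two initial-condition setups of Theorem~\ref{teo-local-1} (using Corollary~\ref{cor:second-moment-V} and Lemma~\ref{lem:betan} to propagate the second-moment bound through time) and Theorem~\ref{prop:sequential} (using the domination by the maximal $\infty$-pop.\ coalescent constructed in Lemma~\ref{lem:existence-of-coalescent}).
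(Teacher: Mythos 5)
Your proposal is correct and follows essentially the same route as the paper's proof: Aldous--Rebolledo on the Doob--Meyer decomposition of $X^{n,f}$, Lemmas~\ref{lem:moment11} and \ref{lem:moment2} to control drift and bracket, hypothesis~(\ref{cond:second-moment}) to bar mass from escaping and to pass from vague to weak convergence, the $O(1/n)$-size jump bound for continuity of limit paths, and the vanishing of $M^{n,f}$ and of $(G^n-\bar G)h^f$ for identification. The only cosmetic difference is the packaging of the tightness criterion (you cite Jakubowski with the Banach--Alaoglu observation on sub-probability measures, the paper cites Tran's concatenation of the Roelly and M\'el\'eard--Roelly criteria); you also omit the final density step extending~(\ref{eq:weak}) from $\cC_b^\infty$ to the full class of test functions, but that is routine.
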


In order to prove tightness, we follow a standard line of thoughts 
(see e.g., \cite{FM04,T08,T14}. The approach is condensed in the statement of Theorem \ref{teo:tran} 
which is cited from Tran \cite{T14} (Theorem 1.1.8).
This Theorem can be obtained by concatenating the so-called Roelly criterium \cite{R86}
(which states that the tightness of $D([0,T], (M_F(\R^+),v))$   boils down to proving that $\{\left<\mud_n,f\right>\}_{n\geq0}$
for $f\in\cC_b^\infty(\R^+)$
is tight in $D([0,T],\R)$), and a criterium due 
 M\'el\'eard and Roelly [ M\'el\'eard, Roelly], allowing to go from vague to weak convergence by checking that no mass is lost at $\infty$.

\begin{teo}\label{teo:tran}
Let $\{\tmu\}$ be a sequence in $D([0,T], (M_F(\R^+),w)$. Then the three following conditions are sufficient for the tightness
of $\{\tmu\}$ in $D([0,T], (M_F(\R^+),w))$.
\begin{enumerate}
\item[(i)] For every $f\in \cC_b^\infty(\R^+)$, the sequence $\{\left<\mud^n,f\right>\}_n$ is tight in $D([0,T], \R)$.
\item[(ii)] $\limsup_{n} \E(\sup_{[0,T]}\left<\tmu_t,x^2\right>)<\infty$.
\item[(iii)] Any accumulation point $\mud^\infty$ of $\{\tmu\}$ (in $D([0,T], (M_F(\R^+),v)$) belongs to $\cC([0,T], (M_F(\R^+), w))$.
\end{enumerate}
\end{teo}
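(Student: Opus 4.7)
The plan is to invoke this theorem as a two-step upgrade of tightness, following exactly the decomposition flagged just above the statement.

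First, I would apply Roelly's criterion \cite{R86} in the vague topology. This criterion states that a family $\{\tmu\}_n$ of $M_F(\R^+)$-valued c\`adl\`ag processes is tight in $D([0,T],(M_F(\R^+),v))$ as soon as, for each $f$ in a countable dense subset of $\cC_0(\R^+)$, the real-valued process $\{\left<\tmu,f\right>\}_n$ is tight in $D([0,T],\R)$. By the Stone--Weierstrass theorem, $\cC_b^\infty(\R^+)\cap \cC_0(\R^+)$ is dense in $\cC_0(\R^+)$, so condition~(i), applied to test functions in this intersection, immediately delivers vague tightness of $\{\tmu\}_n$ in $D([0,T],(M_F(\R^+),v))$.

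Second, I would use the M\'el\'eard--Roelly criterion to promote vague tightness to weak tightness. The key point is to rule out any escape of mass to $+\infty$, uniformly in time. Markov's inequality together with condition~(ii) yields
\[
\sup_n \E\!\left(\sup_{t\in[0,T]} \tmu_t([R,\infty))\right) \ \le \ \frac{1}{R^2}\,\sup_n\E\!\left(\sup_{t\in[0,T]}\left<\tmu_t,x^2\right>\right) \ \longrightarrow \ 0 \quad \text{as } R\to\infty.
\]
This uniform tail control, combined with the vague tightness from Step~1, implies that no mass can be lost at infinity along any subsequence, so every vague accumulation point is in fact a weak accumulation point and $\{\tmu\}$ is tight in $D([0,T],(M_F(\R^+),w))$. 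Condition~(iii) is then used to identify accumulation points as elements of the continuous-path subspace $\cC([0,T],(M_F(\R^+),w))$ (its role in the original statement is to provide this extra regularity beyond mere c\`adl\`ag behavior).

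The main subtlety, as usual for such results, is that the tail estimate at infinity must hold uniformly over $t\in[0,T]$ rather than merely at fixed times: otherwise one cannot pass between the two Skorohod topologies. Condition~(ii), through its supremum-in-$t$ formulation, supplies exactly this uniform-in-time control and is what makes the vague-to-weak upgrade operative at the level of path space. Apart from this subtlety the argument is essentially a bookkeeping exercise of known tightness criteria, which is why the authors content themselves with quoting Tran's statement.
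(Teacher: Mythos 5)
The paper does not actually prove this theorem: it is cited verbatim from Tran's thesis (Theorem 1.1.8), with a one-sentence sketch of the two ingredients --- Roelly's criterion for vague tightness and a M\'el\'eard--Roelly step to upgrade from vague to weak by ruling out mass escape at infinity. Your proposal reconstructs exactly that sketch (Stone--Weierstrass plus condition (i) giving vague tightness via Roelly, the Markov-inequality tail bound from condition (ii) providing the uniform-in-time control of mass at infinity, and condition (iii) supplying the path regularity of accumulation points needed for the vague-to-weak passage), so it is the same approach at essentially the same level of detail.
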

%

\begin{proof}[Proof of Proposition \ref{prop:tightness}]

{\bf Step 1.} We first show that 
for every $f\in \cC_b^\infty(\R^+)$, the sequence of processes  $\{X^{n,f}\}_{n}$
(as defined in (\ref{def:X}))
is tight. In order to do so, we  use the classical Aldous and Rebolledo criterium \cite{A78,J86}. We first note that for every $t\geq0$ 
$$ |X^{n,f}_t| \leq ||f||_\infty  $$
so that the first requirement of Aldous criterion (i.e., for every deterministic $t$, $\{X^{n,f}_t\}_n$ is tight) is satisfied. Next, let $\gamma>0$ be an arbitrary small number and let us consider two
stopping times $(\tau,\sigma)$ such that
$$ 0 \leq \tau\leq \sigma \leq \tau +\gamma\leq T. $$
First, we decompose the semi-martingale $X^{n,f}$ into its martingale part and its drift part, namely,
\be X^{n,f}_t \ = \ M^{n,f}_t \ + \ B_t^{n,f}, \ \mbox{where $B_t^{n,f} \ := \ \int_0^t G^n h^f(\tPi_s)ds$ and $M_t^{n,f} \ := \  X^{n,f}_t - \int_0^t G^n h^f(\tPi_s)ds$}.\label{eq:def:martingale}\ee
It remains to show that the quantities 
$$ \E(| B^{n,f}_\sigma  -   B^{n,f}_\tau|) \ \ \mbox{and}  \ \  \E(| M^{n,f}_\sigma  -   M^{n,f}_\tau|)  $$ 
are bounded from above by a function of $\gamma$ (uniformly in the choice of the two stopping times $\tau$ and $\sigma$ and $n$)
going to $0$ as $\gamma$ goes to $0$. (This is the second part of Aldous and Robolledo criterium).
In order to prove this result, we now make use of some of the technical results established earlier.

First, from (\ref{eq:generator}), we note that there exists a constant $\bar K$  such that  for every $\Pi\in E_n$ and $f\in C_b^\infty(\R^+)$
$$ | G^n h^f (\Pi) | \ \leq \ \bar K\left( ||f'||_\infty \left< \mud_\Pi, x^2 \right>  \ + \ \frac{|\Pi|}{n}  ||f||_\infty \right).  $$
This implies that 
\begin{eqnarray*}
 \E\left(| B^{n,f}_\sigma  -   B^{n,f}_\tau|\right) & \leq &  \E\left(\int_\tau^\sigma |G^nh^f(\tPi_s)| ds \right) \\
 & \leq & \bar K \E\left(\int_\tau^\sigma \left( ||f'||_\infty\left<\mud_{\tPi_s},x^2\right>  \ + \ \frac{s_0^n}{n}||f||_\infty \right) ds \right) \\
 &\leq &\bar  K \gamma \left( ||f'||_\infty \E\left(\sup_{[0,T]} \left<\mud_{\tPi_s},x^2\right> \right) + \frac{s_0^n}{n} ||f||_\infty\right).
\end{eqnarray*}
Further,
\begin{eqnarray*}
 \E(| M^{n,f}_\sigma  -   M^{n,f}_\tau|)^2 & \leq &   \E(| M^{n,f}_\sigma  -   M^{n,f}_\tau|^2) \\
 &  = & \E\left( \left<X^{n,f}\right>_\sigma - \left<X^{n,f}\right>_\tau  \right) \\
 & \leq &   \frac{C}{n} \E\left(\int_\tau^\sigma \left( ||f'||^2_\infty\left<\mud_{\tPi_s},x^2\right>  \ + \ ||f||^2_\infty \right) ds \right) \\
 & \leq & \frac{C}{n} \gamma \left( ||f'||^2_\infty \E\left( \sup_{[0,T]} \left<\mud_{\tPi_s},x^2\right> \right)\ + \   ||f||^2_\infty  \right) 
\end{eqnarray*}
where the second inequality follows from Lemma  \ref{lem:moment2}.
Combining the two previous inequalities with  (\ref{cond:conv-L1}) and (\ref{cond:second-moment})
shows the tightness of $\{X^{n,f}\}_{n\geq 0}$.

\medskip

{\bf Step 2.} Let $\mud^\infty$ be an accumulation point of the sequence $\{\tmu\}$ in $D([0,T], (M_F(\R^+), v))$. 
Since $\tmu_t = \frac{1}{\ts_t^n} \sum_{i=1}^{\ts_t^n} f(\tPi_t)$ and a transition can only affect two coordinate of $\tPi$
at a time,
it is not hard to show that
$$ \sup_{t\in[0,T]} \sup_{f\in L^\infty([0,T]), || f ||_\infty  \leq 1} | \left<\tmu_{t},f\right>  - \left<\tmu_{t-},f\right> | \leq \frac {4}{\ts_{T}^n} $$
where $L^\infty([0,T])$ is the set of bounded functions from $[0,T]$ to $\R$.
Since $s_T^n$ goes to $\infty$ (in probability) as $n\to \infty$, this  implies that $\mud^\infty$ belongs to $\cC([0,T], (M_F([0,T]), w))$.
The tightness of $\{\tmu\}$ in $D([0,T], (M_F(\R^+),w))$ then
follows by a direct application of Theorem \ref{teo:tran} (using the second moment assumption $\limsup_{n} \E\left( \sup_{[0,T]} \left<\tmu,x^2\right> \right)<\infty$).

\medskip

{\bf Step 3.} 
Next, let $f$ be an arbitrary test function in $\cC_b^\infty(\R^+)$.
For every $m\in D([0,T],( M_F(\R^+),w))$, define
$$\varphi_{f,t}(m) \ = \  \left<m_t, f  \right> -  \left< m_0 , f  \right>  + \int_0^t \left< m_s , c\frac{x^2}{2} f'  \right>ds  \ - \  \int_{0}^t \frac{1}{s+\frac{2}{r}}\int_{(\R^+)^2} m_{s}(dx) m_s(dy)
 \left(   f(x+y) -  f(x)  \ \right)   ds.    $$
In this step, we show that $\varphi_{f,t}(\mud^\infty)=0$, for every $t\in[0,T]$ and  any choice of test function $f$
in $\cC_b^\infty(\R^+)$. We first observe 
\begin{eqnarray*}
\E \left(| \varphi_{f,t}(\tmu)  |\right)  & \leq & \E (| M^{n,f}_t - M^{n,f}_0   |) \ + \  \E\left( \int_0^t \left| (G^n - \bar G)h^f(\tilde \Pi^n_s)  \right| ds  \ \right) 
 \end{eqnarray*}
 where $M^{n,f}$ is the Martingale defined in (\ref{eq:def:martingale}) and $\bar G$ is the generator approximation defined in  Lemma \ref{lem:moment11}.
When we let $n\to\infty$, the second term vanishes by Lemma \ref{lem:moment11}.
For the first term, we have 
\begin{eqnarray*}
\left( \E | M^{n,f}_t - M^{n,f}_0   |\right)^2 & \leq &  \E \left( M^{n,f}_t - M^{n,f}_0   \right)^2 \\
								      & = & \E\left(\left< X^{n,f} \right>_t -  \left< X^{n,f} \right>_0\right)
\end{eqnarray*}
and the RHS can be handled by Lemma \ref{lem:moment2} and  our second moment assumption (\ref{cond:second-moment}). This implies
$$ \lim_{n\to\infty} \E \left(| \varphi_{f,t}(\tmu)  |\right) \ = \ 0. $$
On the other hand, since any accumulation point $\mud^\infty$
must be in $\cC([0,T], (M_F(\R^+),w))$
and since $f$ and its derivative $f'$ are continuous, we must have
$$ \varphi_{f,t}(\tmu) \ \Longrightarrow  \ \varphi_{f,t}(\mud^\infty).  $$
(Here we use the fact that $f$ is a test function so that $f$ and $\psi f'$ remain bounded, and further, if $\{(\tilde m^n_t; t\geq0)\}$ converges to a continuous $(m^\infty_t; t\geq0)$,
then for every continuous and bounded in $u$, the process $\left(\left<m^n_t,u\right>; t\geq0\right)$ converges to 
$\left(\left<m_t,u\right>; t\geq0\right)$  in the uniform norm
on every finite interval)
we get that $\E(|\varphi_{f,t}(\mud^\infty)|)=0$ by a direct application of the bounded convergence theorem.

\medskip

{\bf Step 4.} In the previous step, we showed that $\varphi_{f,t}(\mud^\infty)=0$
for any test function in $\cC_b^\infty(\R^+)$. By a standard density argument, 
the result also holds for any test function, thus showing that $g^\infty$
is a weak solution of the \Sm equation (\ref{eq:Sch}) with inverse population $\delta$.

\medskip

{\bf Step 5.} Let us now show the convergence of the mean. 
The argument is quite standard and goes by approximating the function $x$
by a bounded and continuous function to make use of the weak convergence. Define
\[f^{(k)}(x) \ =  x\  \mbox{if  $x\leq k$}, \ f^{(k)}(x)  =  k  \ \mbox{otherwise},\]
and note that
\be \left<\tmu_t,x\right> \ =  \ \left<\tmu_t, f^{(k)}(x) \right> \ + \ \left<\tmu_t, x-f^{(k)}(x) \right> \label{eq:two-terms}.\ee
We now let $n$ and then $k$ go to $0$ sequentially. By using the Cauchy-Schwarz and Markov inequality, for any $k\geq 1$, we get
\begin{eqnarray}
 \left<\tmu_t, x-f^{(k)}(x) \right>^2 & \leq & \left<\tmu_t, 1_{x\geq k}\right> \left<\tmu_t,(x-k)^+\right> \nonumber \\
 & \leq & \frac{1}{k^2} \left<\tmu_t, x^2\right>^{3/2} \label{eq:cs-2}
\end{eqnarray}
and using (\ref{cond:second-moment}), the RHS of the inequality goes to $0$ (in probability) as $n$ and then $k$, go sequentially to $0$. On the other hand,
since $\{\tmu_t\}_n$ converges to $g^\infty_t$ as $n\to\infty$ in the weak topology, the first term on the RHS of (\ref{eq:two-terms})
converges to  $\left<g^\infty_t, f^{(k)} \right>$. Finally, 
as $k\to \infty$,  $\left<g^\infty_t, f^{(k)} \right>$ goes to to 
$\left<g^\infty_t, x \right>$ by the monotone convergence theorem.
This completes the proof for the convergence of the mean.
\end{proof}

\subsection{Proof of Theorem \ref{teo-local-1}}

We start by showing the convergence of $\{\left(\tmu_t; \ t\in[0,T]\right)\}_n$. By Proposition \ref{prop:tightness} (and the unicity of (\ref{eq:Sch}) with initial condition $\nu$ and $\delta=2/r$), it is enough to show that
$\limsup_n \E\left(\sup_{[0,T]} \left<\tmu_t, x^2\right> \right) \  < \ \infty$.

For every $t\leq T$, denote by $\tilde C_{t,T}^n(i)$ be the indices of the blocks at time $t/n$ partionning the 
block $i$ at time $T/n$. (In particular, $\tilde C_{0,T}^n(i)=\tilde B_T^n(i)$.)  We have
\begin{eqnarray}
\frac{1}{\ts_T^n} \sum_{i=1}^{\ts_T^n} \ \left( \sum_{k\in \tB_T^n(i)}  \tPi_0(k)  \right)^2
& = & 
\frac{1}{\ts_T^n} \sum_{i=1}^{\ts_T^n} \ \left( \sum_{j\in \tC_{t,T}^n(i)} \sum_{k\in \tB_t^n(j)} \tPi_0(k)\right)^2 \nonumber \\
& \geq &
\frac{1}{\ts_T^n} \sum_{i=1}^{\ts_T^n} \  \sum_{j\in \tC_{t,T}^n(i)} \left(\sum_{k\in \tB_t^n(j)} \tPi_0(k)\right)^2  \nonumber\\
& = &
\frac{1}{\ts_T^n} \sum_{i=1}^{\ts_t^n} \ \left(\sum_{k\in \tB_t^n(i)} \tPi_0(k)\right)^2   \nonumber \\
& \geq & \frac{1}{\ts_t^n} \sum_{i=1}^{\ts_t^n} \   \left(\sum_{k\in \tB_t^n(i)} \tPi_0(k)\right)^2. \label{ineq:uni}  
\end{eqnarray}
Thus, for every $t\leq T$, this yields
\begin{eqnarray*}
\left<\tmu_t, x^2\right> & \leq & \ \frac{1}{\ts^n_t} \sum_{i=1}^{\ts_t^n} (\sum_{j\in \tilde B_t^{n}(i)} \tilde \Pi^n_0(j))^2 \leq  \frac{1}{\ts_T^n} \sum_{i=1}^{\ts_T^n} \ \left( \sum_{j\in \tB_T^n(i)}  \tPi_0(j)  \right)^2
\end{eqnarray*}
where the first inequality is obtained by ignoring the coalescence events between gene lineages (in particular, the first inequality becomes an equality when $c=0$). Let $\{{\mathcal G}_t; t\geq0\}$ be the natural filtration generated by the species coalescent. 
From the previous arguments, we get that 
\begin{eqnarray*}
\E\left(\sup_{[0,T]} \left<\tmu_t, x^2\right> \right)  
& \leq & \E\left(\frac{1}{\ts_T^n} \sum_{i=1}^{\ts_T^n} \ \left( \sum_{j\in \tB_T^n(i)}  \tPi_0(j)  \right)^2 \right) \\
& \leq & \E\left(\frac{1}{\ts_T^n}  \sum_{i=1}^{\ts_T^n} \  \tV_T^n(i)\sum_{j\in \tB_T^n(i)} (\tilde \Pi_0^n(i))^2   \right) \\
& = & \E\left(\frac{1}{\ts_T^n}  \sum_{i=1}^{\ts_T^n} \  \tV_T^n(i)\sum_{j\in \tB_T^n(i)} \E\left( (X_i^n)^2 \ | \ {\mathcal G}_t\right)  \right) \\
& = & \E\left( (X_1^n)^2 \right)  \  \E\left(\frac{1}{\ts_T^n}  \sum_{i=1}^{\ts_T^n} \  \tV_T^n(i)^2\right),
\end{eqnarray*}
and the RHS remains bounded by assumptions and Corollary \ref{cor:second-moment-V}.


It remains to show the joint convergence statement (\ref{eq:teo-local-1}). The convergence of $\{\ts^n\}$ was already stated in Lemma \ref{lem:large-dev-2}.  The joint convergence follows from the fact that 
the limit of the marginals are both deterministic.

\section{Coming down from infinity in the nested Kingman coalescent}
In the following $(s_t,\Pi_t)$ will denote an $\infty$-pop. nested Kingman coalescent, and $\mud_t$
will denote the associated empirical measure.

\begin{prop}\label{lem:stoch-dom}
For every $t>0$, we have 
$\liminf_n\  R^n\circ \mud_t \geq \delta_{\frac{2}{ct}}$ in the sense that for 
every continuous, bounded and non-decreasing function $f$ 
$$ \E\left(\ \liminf_{n} \left<R^n \circ \mud_t, f\right> \right) \geq f({2/ct}).$$
\end{prop}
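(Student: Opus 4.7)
The plan is to establish, via a coupling argument, that every species $i$ alive at time $t/n$ in the $\infty$-pop.\ nested Kingman coalescent carries at least $\widetilde K^{(i)}_{t/n}$ gene lineages, where $\widetilde K^{(i)}$ is a standard rate-$c$ Kingman coalescent started from infinitely many lineages at time $0$, and moreover that the family $\{\widetilde K^{(i)}\}_i$ is independent conditionally on the species coalescent. Kingman's coming down from infinity will then give $\widetilde K^{(i)}_{t/n}/n \to 2/(ct)$ in probability, and a law of large numbers across the $s_{t/n} \sim 2n/t$ species will concentrate the empirical mean of a non-decreasing bounded continuous test function at the value $f(2/(ct))$.

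I would set up the coupling as follows. On a probability space carrying the species coalescent $(s_t;t>0)$ together with an independent family of $\mathrm{Exp}(c)$ clocks $\{\tau_{g,g'}\}$ indexed by unordered pairs of initial gene labels $(g,g')$, the nested coalescent is realized by coalescing $g$ and $g'$ at time $\tau_{g,g'}$ exactly when they currently sit in the same species. For each species $i$ alive at time $t/n$, let $A_i \subset \N$ be the set of initial species whose descendants populate $i$ at time $t/n$; the $A_i$'s are pairwise disjoint and determined by the species coalescent. I would then define $\widetilde K^{(i)}$ to be the Kingman coalescent on $\bigcup_{j\in A_i}\{(j,k):k\in\N\}$ driven by the same clocks restricted to this set, but forced to coalesce at every firing regardless of species identity. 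Because the $A_i$'s are disjoint, the $\widetilde K^{(i)}$'s use disjoint clock families and are thus independent given the species coalescent, each distributed as a rate-$c$ Kingman coalescent from infinity evaluated at time $t/n$.

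The key coupling inequality $\Pi_{t/n}(i)\ge \widetilde K^{(i)}_{t/n}$ will rest on two observations: (a) since species only merge (never split) forward in time, no coalescence event in the nested coalescent links a gene of $A_i$ to a gene of $A_{i'}$ for $i\neq i'$, so the nested coalescence classes inside species $i$ coincide with the connected components generated by the nested dynamics restricted to the clocks among $\bigcup_{j\in A_i}\{(j,k)\}_k$; (b) the collapsed coalescent uses the same clocks on the same set but coalesces at every firing, hence produces at least as many merges and at most as many components. For non-decreasing bounded continuous $f$, monotonicity will yield
\[
\langle R^n\circ\mud_t,f\rangle \;\geq\; \frac{1}{s_{t/n}}\sum_{i=1}^{s_{t/n}} f\!\left(\widetilde K^{(i)}_{t/n}/n\right),
\]
and conditional on the species coalescent the right-hand side is an empirical average of $s_{t/n}$ i.i.d.\ bounded variables with common mean tending to $f(2/(ct))$. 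Combining the large-deviation estimate for $s_{t/n}$ from Lemma \ref{lem:large-dev-2} with Hoeffding's inequality and the Borel--Cantelli lemma will produce almost sure convergence of the right-hand side to $f(2/(ct))$; taking $\liminf$ and then expectations closes the proof.

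The hard part will be verifying (a) above: that the nested coalescence classes inside species $i$ at time $t/n$ coincide exactly with the components produced by firing only the clocks among $A_i$-genes. This uses forward-monotonicity of the species coalescent in an essential way and is what rules out any ``leakage'' across the partition $\{A_i\}$. A secondary subtlety is that the $\infty$-pop.\ coalescent is not directly prescribed at $t=0$, so the coupling is most naturally run on the maximal $\infty$-pop.\ coalescent of Lemma \ref{lem:existence-of-coalescent}; for an arbitrary $\infty$-pop.\ coalescent one would apply the Markov property at a small $\delta>0$, run the coupling from level $\delta$ onward (where the collapsed coalescent still starts from infinitely many lineages since $|A_i^{(\delta)}|\to\infty$ as $\delta\downarrow 0$), and let $\delta\downarrow 0$.
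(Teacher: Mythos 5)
Your overall strategy matches the paper's: each species' gene count should dominate a rate-$c$ Kingman coalescent from infinity, and a conditional law of large numbers over the $\sim 2n/t$ species then concentrates the empirical average at $f(2/(ct))$. You spell out the LLN step (conditional independence across species, Hoeffding, Borel--Cantelli) more carefully than the paper does, and you rightly flag the need to approximate from a level $\delta>0$ in the absence of a $t=0$ state.

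The gap is in your graphical construction. You place i.i.d.\ $\mathrm{Exp}(c)$ clocks on unordered pairs of \emph{genes} and merge the blocks of $g$ and $g'$ whenever $\tau_{g,g'}$ fires and they share a species. This produces a process in which two gene \emph{lineages} (blocks) of sizes $m,m'$ in the same species merge at rate $c\,m\,m'$, not at rate $c$: in the nested Kingman coalescent each \emph{pair of lineages} coalesces at rate $c$, irrespective of how many genes each lineage carries. Consequently your collapsed object $\widetilde K^{(i)}$ is not a Kingman coalescent from infinity but a ``random-graph'' coalescent on infinitely many singletons, which collapses to a single block at time $0^+$ (every gene sees the infimum of infinitely many $\mathrm{Exp}(c)$ clocks, which is $0$), so the crucial claim $\widetilde K^{(i)}_{t/n}/n\to 2/(ct)$ fails as stated. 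The repair is standard: either index the Poisson clocks by pairs of \emph{block labels} with a least-element labelling convention, so that exactly one clock governs any pair of current blocks, or couple directly at the block-counting level by noting that, conditionally on the species tree, the lineage count $L_u$ within species $i$'s ancestors dies at rate $c\sum_{j}\binom{\Pi_u(j)}{2}\le c\binom{L_u}{2}$, which permits a pathwise coupling $L_u\ge K_u$ with a Kingman block-counting process. With either fix, the rest of your argument (conditional independence from disjoint gene sets, LLN, the $\delta\downarrow 0$ approximation) goes through and coincides with the paper's proof.
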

\begin{proof}
First, note that  $\mud_t$ stochastically dominates the case 
where each species lineage carries a single gene lineage at time $0$. Hence, we can assume w.l.o.g. 
this particular initial condition. Secondly, since the species constraint  
forbids coalescence events between gene lineages belonging to different species,
$(g_t; t\geq0)$ dominates $(K_t; t\geq0)$, where $K$ is the block counting process
of a Kingman coalescent with rate $c$. (In other words, in $K$, we allow gene lineages to coalesce 
even if they belong to different species.) Finally, since $\frac{ct}{2}K_t\to0$ a.s., the result follows.
\end{proof}

Our next aim is to show the following result.

\begin{prop}\label{prop:general-second-moment}
For every $0<\tau<T$,
$$ \limsup_{n} \ \E\left(\sup_{[\tau,T]} \left<R^n\circ \mud_t, x^2\right> \right) \  < \ \infty$$
\end{prop}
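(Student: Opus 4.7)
The plan is to reduce Proposition \ref{prop:general-second-moment} to the finite-initial-condition estimate already established in Lemma \ref{lem:betan}, by ``starting the clock'' at a positive time $\sigma/n$ with $\sigma \in (0,\tau)$ and using the Markov property of the nested coalescent. More precisely, fix $\sigma \in (0,\tau)$. By condition (i) of Definition \ref{def:infty-pop-K}, conditionally on $(s_{\sigma/n},\Pi_{\sigma/n})$ the shifted process $(s_{u+\sigma/n},\Pi_{u+\sigma/n})_{u\ge 0}$ is a \emph{finite} nested Kingman coalescent, whose initial number of species $s_{\sigma/n}$ satisfies, by the large-deviations estimates of Lemma \ref{lem:large-dev-2} applied to Kingman coming down from infinity,
$$\tfrac{1}{n}s_{\sigma/n}\longrightarrow \tfrac{2}{\sigma}\quad\text{in every }L^p,\ p\ge 1,$$
so in particular in $L^{3+\varepsilon}$, as required by Lemma \ref{lem:betan} with $r=2/\sigma$.

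The core step is a monotone coupling controlling the gene counts from above. Enlarge the probability space to carry an i.i.d.\ family $(\hat\xi_j)_{j\ge 1}$ of Kingman block-counting processes coming down from infinity at rate $c$, independent of the entire nested coalescent. Let $\hat B^n_{t,\sigma}(i)$ denote the set of species alive at time $\sigma/n$ whose descendant in the species coalescent at time $t/n$ is species $i$. For any $t\ge \sigma/n$ and any $i\le \tilde s^n_t$, the species constraint plus the within-species Kingman gene coalescence yield the pointwise (coupling-wise) inequality
$$\Pi_{t/n}(i)\ \le\ \sum_{j\in \hat B^n_{t,\sigma}(i)}\hat\xi_j\bigl((t-\sigma)/n\bigr),$$
because (a) suppressing every species coalescence that occurs after $\sigma/n$ only eliminates opportunities for gene lineages from distinct ancestors to coalesce, so it can only \emph{increase} the count $\Pi_{t/n}(i)$, and (b) by monotonicity of the Kingman coalescent in its initial condition, each within-ancestor Kingman started from $\Pi_{\sigma/n}(j)$ may be coupled below a Kingman started from $+\infty$.

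Dividing by $n$, squaring and averaging over $i$ then gives, pointwise on the coupled space,
$$\bigl\langle R^n\circ\mud_t,x^2\bigr\rangle\ \le\ \hat\beta^{n,\sigma}_t\ :=\ \tfrac{1}{\tilde s^n_t}\sum_{i=1}^{\tilde s^n_t}\Bigl(\sum_{j\in \hat B^n_{t,\sigma}(i)}\tfrac{1}{n}\hat\xi_j((t-\sigma)/n)\Bigr)^{\!2},$$
and $\hat\beta^{n,\sigma}_t$ is exactly the quantity $\beta$ of Lemma \ref{lem:betan} applied to the shifted species coalescent at shifted time $s=t-\sigma\in[\tau-\sigma,T-\sigma]$, with $r=2/\sigma$. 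Since $\tau-\sigma>0$, Lemma \ref{lem:betan} yields $\limsup_n \E\!\left(\sup_{[\tau,T]}\hat\beta^{n,\sigma}_t\right)<\infty$, from which the proposition follows.

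The main obstacle lies in making the dominating coupling rigorous: we need the $(\hat\xi_j)$ to be independent of the (random) species genealogy while still furnishing a pathwise upper bound on $\Pi_{t/n}(i)$. The standard route is to condition on the species tree above level $\sigma/n$ and on the configuration $\Pi_{\sigma/n}$, then on each ancestor branch build a Kingman coalescent from infinity by augmenting the true gene configuration with auxiliary ``ghost'' lineages, running the coupled Kingman dynamics independently on each branch, and finally discarding the ghosts; independence of the $\hat\xi_j$ from the species coalescent follows from the conditional independence of the within-species gene coalescents across distinct species (given the species tree), and the required pathwise inequality is preserved throughout.
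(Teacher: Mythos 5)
Your argument is correct and is essentially the same as the paper's: shift the time origin to a positive time $\sigma/n$ (the paper takes $\sigma=\tau/2$), use Lemma~\ref{lem:large-dev-2} to get $s_{\sigma/n}/n\to 2/\sigma$ in every $L^p$, dominate the gene counts by replacing each ancestral species' gene lineages with an independent Kingman coalescent from infinity and suppressing coalescences between genes from distinct ancestors, and then apply Lemma~\ref{lem:betan}. The only cosmetic differences are that you keep $\sigma$ general and spell out the dominating coupling slightly more explicitly; the logic is identical.
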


\begin{proof}
We have
\be \sup_{t\in[\tau, T]} \ \left<R^n\circ \mud_t, x^2\right>  \ = \ \sup_{t\in[\frac{\tau}{2},T-\frac{\tau}{2}]} \ \left<R^n\circ \hat \mud^n_t, x^2\right> \ \ \mbox{in law,}\label{eq:fjus}\ee
where $\hat \mud^n$ is the empirical measure associated to the nested coalescent with 
the initial number of species being equal to $\hat s_0^n  = s_{\tau/2n}$
and genetic composition vector $\Pi_{\tau/2n}$. Further, using the large deviation estimates of Lemma \ref{lem:large-dev-2},
we get
\be \hat s_0^n/n  = s_{\tau/2n}/n \ \to \frac{4}{\tau} \in(0,\infty)  \ \ \mbox{in $L^p$,} \ \ \forall p>1. \label{eq:cv-in-lp}\ee

The RHS of (\ref{eq:fjus})
 is always bounded from above by the same quantity if we replace $\hat \mud^n_t$ by the empirical
 measure associated to the nested coalescent starting with $\hat s_0^n$ species and
infinitely many gene lineages in each species. In turn, the latter model is bounded by the model starting from the infinite initial condition, but
where gene lineages can only coalesce if they belong to the same species at time $0$, i.e., even if species 1 and 2 coalesce, their respective gene lineages are forbidden to merge afterwards.
The empirical measure associated to the process is identical in law to
$$  m^n_t \ := \  \frac{1}{\hat s_t^{n}} \sum_{i =1 }^{\hat s_t^{n}} \delta_{ \sum_{j\in \hat B_t^{n}(i)}  \xi_{j}(t)},$$
where 
$\hat B_t^{n}(i)$ (w.r.t. to the species coalescent $\hat s^n$)
and $\xi_{j}'s$ are defined analogously to  Lemma \ref{lem:betan}. This yields
$$\forall t\in[\tau/2,T-\tau/2], \ \  \left<  R^n\circ \hat \mud^n_t,x^2\right> \leq  \left<  R^n\circ m^n_t,x^2\right> =_{\cL} \beta^n_t$$
(where the domination is meant in the stochastic sense)
and thus
$$\E\left(\sup_{t\in[\tau,T]} \left<R^n\circ \mud_t, x^2\right>\right) \leq  \E\left(\sup_{t\in[\tau/2,T-\tau/2]} \beta_t^n \right) $$
Proposition \ref{prop:general-second-moment}
then  follows by a direct application of Lemma \ref{lem:betan} (and  (\ref{eq:cv-in-lp})). \
\end{proof}

%
%

\begin{proof}[Proof of Theorem \ref{prop:sequential}]

%
%
%
%

%
%
%

{\bf Step 1.} 
Let us fix $\tau>0$. Define $\hat \mud^{n,(\tau)} = \theta_\tau \circ R^n \circ \mud_t$
and let $\hat s^{n,(\tau)} = \theta_\tau \circ s_t^n$.
Proposition \ref{prop:general-second-moment} impies that 
$$\limsup_n \E\left( \sup_{t\in[0,T]} \left<\hat \mud^{n,(\tau)}_t, x^2\right> \right) \ \ = \ \E\left( \sup_{t\in[\tau,T+\tau]} \left<R^n\circ \mud_t, x^2\right> \right)   < \ \infty$$
Further, by Lemma \ref{lem:large-dev-2},
$$\underbrace{\frac{1}{n}}_{\mbox{time scaling}} \times \underbrace{\hat s^{n,(\tau)}_0 (= \S_{\tau/n})}_{\mbox{number of blocks in the species coalescent at time $0$}} \to r = \frac{2}{\tau} \ \ \mbox{in $L^p$ for every $p>1$}.$$
By Proposition \ref{prop:tightness}, it follows that the sequence 
$\{\hat \mud^{n,(\tau)}\}_n$ is tight and that any sub-sequential limit $\hat \mud^{\infty,(\tau)}$ 
is a weak solution of the \Sm equation with inverse population size $\tau/2$.
The continuous mapping theorem implies that
$\{R^n\circ\mud_t = \theta_{-\tau}\circ\hat \mud^{n,(\tau)}; t\geq\tau\}$ converges to the limit $(\theta_{-\tau}\circ\mud^{\infty,(\tau)}; t\geq\tau)$
where the latter process   
has a Laplace process satisfying 
the equation
\be\label{eq:laplace20}
\forall t\geq \tau,  \  \left<\nu_t, f  \right> -  \left< \nu_\tau , f  \right>  + \int_\tau^t \left< \nu_s , c\frac{x^2}{2} f'  \right>ds  \ - \  \int_{\tau}^t \frac{1}{s}\int_{(\R^+)^2} \nu_{s}(dx)\nu_{s}(dy)
 \left(   f(x+y) -  f(x)  \ \right)   ds = 0   
\ee
Note that  the coefficients of the IPDE do not depend on the value of $\tau$.

\medskip

{\bf Step 2.}  Let us now take a sequence of positive numbers $\{\tau_m\}_m$  going to $0$. 
For every $m$, there exists a subsequence of $\{(R^n\circ g_t; t\geq\tau_m)\}_n$
converging to $\mu^{(0),m}$ satisfying (\ref{eq:laplace20}).
By a standard diagonalization argument, this ensures 
the existence of a subsequence of $\{R^{n}\circ \mud\}$
converging to a process $\mu^{(0)}$ defined on $(0,\infty)$ (in comparaison with step 1 where the process
was defined on $[\tau,\infty)$) and satisfying
\be\label{eq:laplace2}
\forall t,\tau>0,  \  \left<\nu_t, f  \right> -  \left< \nu_\tau , f  \right>  + \int_\tau^t \left< \nu_s , c\frac{x^2}{2} f'  \right>ds  \ - \  \int_{\tau}^t \frac{1}{s}\int_{(\R^+)^2} \nu_{s}(dx) \nu_s(dy)
 \left(   f(x+y) -  f(x)  \ \right)   ds \ = \ 0, 
\ee
i.e.,  $\mu^{(0)}$ is a weak solution of the $\infty$-pop. \Sm equation.
In order to prove Theorem \ref{prop:sequential}, 
it remains to show that $\mu^{(0)}$ is the only proper solution.
This follows directly from the stochastic domination of Proposition \ref{lem:stoch-dom}.
Finally, the joint convergence with $(\tmu, \ts)$
follows form the fact that both marginals are deterministic at the limit.

\end{proof}

\begin{proof}[Proof of Theorem \ref{teo:speed}] 
Let $\rho_t$ be the number of gene lineages at time $t$. We need to show that
\[ \frac{1}{n^2}\rho_{t/n} \ \Longrightarrow \ \frac{2}{t^2} \ \int_0^\infty x \mu^{(0)}_t(x) dx \]
where $\mu^{(0)}$ is the proper solution of the \Sm equation.
By applying Proposition \ref{prop:tightness} and Theorem \ref{prop:sequential}, 
$\left(\frac{1}{n}s_{t/n}, \left< R^n\circ\mud_t,x \right> \right)$
converges to $\left(\frac{2}{t}; \frac{1}{t} \int_0^\infty x \mu_t^{(0)}(x) dx \right)$. 
The result follows from the observation that 
$$\frac{1}{n^2}\rho_{t/n} \ = \ \frac{1}{n}s_{t/n} \left< R^n\circ\mud_t,x \right>.$$
\end{proof}

%
%
%
%

\appendix

\section{}
\label{Appendix1}

Here, we complete the proof of Theorem \ref{thm:uniqueness-weak} (ii) by showing that $\mu_T$ \emph{defined as} $F({\bf T}, (W_i); 1\le i \le N_T (\bf T))$ indeed is solution to \eqref{eq:Sch}.

Let $f$ be a test-function as defined before Definition \ref{def:weak1}, i.e., $f\in\cC^1(\R^+)$ such that $f$ and $f' \psi$ are bounded. Hereafter, we continue to denote by $\PP_T$ the joint law of the pure-birth tree ${\bf T}$ started with one particle at time 0, birth rate $a(T-t)$, stopped at time $T$, and of the iid rvs $(W_i; 1\le i \le N_T (\bf T))$ with law $\nu$. We will abbreviate  $F({\bf T}, (W_i); 1\le i \le N_T (\bf T))$ into $F({\bf T})$. In particular, denoting $\mu_T$ as the law of $F({\bf T}, (W_i); 1\le i \le N_T (\bf T))$ under $\PP_T$, we have
$$
\mu_T(f):=\int_{\R^+} f(x)\, \mu_T(dx) = \EE_T( f\circ F({\bf T})),
$$
so that 
$$
\mu_{T+\varepsilon}(f) =  \EE_{T+\varepsilon}( f\circ F({\bf T}), N_\varepsilon =1) + a(T)\varepsilon\, \EE_T^{\otimes 2}( f\circ F({\bf T}+{\bf T}'))+o(\varepsilon),
 $$
 where ${\bf T}'$ is an independent copy of ${\bf T}$ and ${\bf T}+{\bf T}'$ denotes the tree splitting at time 0 into the two subtrees ${\bf T}$ and ${\bf T}'$. First recall that 
$$
F(\tr) = \lim_{x\downarrow 0}  x^{-1} \left(1-e^{-xM^{\tr}\left(1-\exp\left(-\sum_{i=1}^{N_T(\tr)}w_iZ_T^i\right)\right)}\right)= \lim_{x\downarrow 0}  x^{-1} \left(1-Q_x^\tr\left(e^{-\sum_{i=1}^{N_T(\tr)}w_iZ_T^i}\right)\right),
$$
so that
\begin{eqnarray*}
F(\tr+\tr') &=& \lim_{x\downarrow 0}  x^{-1} \left(1-Q_x^{\tr+\tr'}\left(e^{-(\sum_{i=1}^{N_T(\tr)}w_iZ_T^i+\sum_{i=1}^{N_T(\tr')}w_i'Z_T^{i'})}\right)\right)\\
	&=& \lim_{x\downarrow 0}  x^{-1} \left(1-Q_x^\tr\left(e^{-\sum_{i=1}^{N_T(\tr)}w_iZ_T^i}\right)Q_x^{\tr'}\left(e^{-\sum_{i=1}^{N_T(\tr')}w_i'Z_T^i}\right)\right)\\
	&=&\lim_{x\downarrow 0}  x^{-1} \left(1-e^{-xM^{\tr}\left(1-\exp\left(-\sum_{i=1}^{N_T(\tr)}w_iZ_T^i\right)\right)}e^{-xM^{\tr'}\left(1-\exp\left(-\sum_{i=1}^{N_T(\tr')}w_i'Z_T^i\right)\right)}\right)\\
	&=&F(\tr) + F(\tr').
\end{eqnarray*}
Second, if we denote by $\tr+\varepsilon$ the tree obtained from $\tr$ by merely adding a length $\varepsilon$ to its root edge, then  by the Markov property of the entrance measure of the CSBP at 0,
\begin{eqnarray*}
F(\tr +\varepsilon) &=&  M^{\tr+\varepsilon}\left(1-\exp\left(-\sum_{i=1}^{N_T(\tr)}w_iZ_{T+\varepsilon}^i\right)\right)\\
	&=& \int_{(0,\infty)}N(Z_\varepsilon \in dx )\, Q_x^\tr\left(1-\exp\left(-\sum_{i=1}^{N_T(\tr)}w_iZ_T^i\right)\right)\\
	&=& \int_{(0,\infty)}N(Z_\varepsilon \in dx ) \left(1-\exp\left(-xM^\tr\left(1-\sum_{i=1}^{N_T(\tr)}w_iZ_T^i\right)\right)\right)\\
	&=&N\left(1-\exp\left(-Z_\varepsilon F(\tr)\right)\right).
\end{eqnarray*}
Now as specified at the end of Subsection \ref{subsec:csbp}, for each fixed $\lambda$, $N\left(1-\exp\left(-\lambda Z_t\right)\right)$ is solution to $\dot x =-\psi(x)$ with initial condition $x(0)=\lambda$. As a consequence,
$$
\lim_{\varepsilon \downarrow 0}\varepsilon^{-1}\left(F(\tr +\varepsilon)- F(\tr)\right)  =\lim_{\varepsilon \downarrow 0}\varepsilon^{-1}\left(N\left(1-\exp\left(-Z_\varepsilon F(\tr)\right)\right)-F(\tr)\right)=-\psi(F(\tr)).
$$
Combining the last two results, we obtain
\begin{eqnarray*}
\mu_{T+\varepsilon}(f) &= & \EE_{T+\varepsilon}( f\circ F({\bf T}), N_\varepsilon =1) + a(T)\varepsilon\, \EE_T^{\otimes 2}( f\circ F({\bf T}+{\bf T}'))+o(\varepsilon)\\
 &=& (1-a(T)\varepsilon)\,\EE_{T}( f\circ F({\bf T}+\varepsilon))+ a(T)\varepsilon\, \EE_T^{\otimes 2}( f\circ (F({\bf T})+F({\bf T}')))+o(\varepsilon)\\
 &=& \mu_{T}(f) + (1-a(T)\varepsilon)\, \EE_{T}( f\circ F({\bf T}+\varepsilon)- f\circ F({\bf T})) +a(T)\varepsilon\, (\mu_T^{\star 2}(f)- \mu_T(f))+o(\varepsilon).
 \end{eqnarray*}
Next, since $\psi f'$ is bounded, by dominated convergence, we get
$$
\lim_{\varepsilon \downarrow 0}\varepsilon^{-1}\EE_{T}( f\circ F({\bf T}+\varepsilon)- f\circ F({\bf T})) = 
 -\EE_{T}\big(\psi (F({\bf T}))\, f'(F({\bf T}))\big)=-\mu_T(\psi f').
$$
As a consequence,
$$
\lim_{\varepsilon \downarrow 0}\varepsilon^{-1}(\mu_{T+\varepsilon}(f)- \mu_{T}(f)) = -\mu_T(\psi f')+a(T)\, (\mu_T^{\star 2}(f)- \mu_T(f)).
$$
So $t\mapsto\mu_t(f)$ is right-differentiable with continuous right-derivative equal to 
$$
\partial_t \mu_t(f) = -\mu_t(\psi f')+a(t)\, (\mu_t^{\star 2}(f)- \mu_t(f))\qquad t\ge 0.
$$
Also note that 
$$
F(\varnothing+\varepsilon) = M^{\varnothing+\varepsilon}\left(1-\exp\left(-w_1Z_\varepsilon\right)\right) =N \left(1-\exp\left(-w_1Z_\varepsilon\right)\right),
$$
so that $F(\varnothing)= w_1$ and $\mu_0(f)= \EE_0(f(F({\bf T})))=\EE_0(f(W))=\nu(f)$. This shows that $\mu_0=\nu$ so that $(\mu_t(f);t\ge 0)$ satisfies \eqref{eq:weak}.

\bibliographystyle{abbrv}
\bibliography{bib}

\end{document}